\let\OLDthebibliography\thebibliography
\renewcommand\thebibliography[1]{
	\OLDthebibliography{#1}
	\setlength{\parskip}{0pt}
	\setlength{\itemsep}{2pt} 
}
\theoremstyle{definition}
\newtheorem{df}{Definition}[section]
\newtheorem{eg}[df]{Example}
\newtheorem{rem}[df]{Remark}
\newtheorem{cv}[df]{Convention}
\newtheorem{sett}[df]{Setting}
\theoremstyle{plain}
\newtheorem{thm}[df]{Theorem}
\newtheorem{pp}[df]{Proposition}
\newtheorem{co}[df]{Corollary}
\newtheorem{lm}[df]{Lemma}
\DeclareMathOperator{\shom}{\mathscr{H}\text{\kern -3pt {\calligra\large om}}\,}
\DeclareMathOperator{\sext}{\mathscr{E}\text{\kern -3pt {\calligra\large xt}}\,}
\DeclareMathOperator{\Rel}{\mathscr{R}\text{\kern -3pt {\calligra\large el}~}\,}
\DeclareMathOperator{\sann}{\mathscr{A}\text{\kern -3pt {\calligra\large nn}}\,}
\DeclareMathOperator{\send}{\mathscr{E}\text{\kern -3pt {\calligra\large nd}}\,}
\DeclareMathOperator{\stor}{\mathscr{T}\text{\kern -3pt {\calligra\large or}}\,}
\DeclareMathOperator{\VVir}{\text{\Fontlukas V}\text{\kern -0pt {\Fontlukas\large ir}}\,}
\newcommand{\fk}{\mathfrak}
\newcommand{\mc}{\mathcal}
\newcommand{\wtd}{\widetilde}
\newcommand{\wht}{\widehat}
\newcommand{\wch}{\widecheck}
\newcommand{\ovl}{\overline}
\newcommand{\Tr}{\mathrm{Tr}}
\newcommand{\idt}{\mathbf{1}}
\newcommand{\id}{\mathrm{id}}
\newcommand{\Hom}{\mathrm{Hom}}
\newcommand{\Rep}{\mathrm{Rep}}
\newcommand{\Repf}{\mathrm{Rep}^{\mathrm f}}
\newcommand{\Dom}{\scr D}
\newcommand{\uni}{\mathrm{u}}
\newcommand{\Diffp}{\mathrm{Diff}^+}
\newcommand{\Diff}{\mathrm{Diff}}
\newcommand{\PSU}{\mathrm{PSU}(1,1)}
\newcommand{\Vir}{\mathrm{Vir}}
\newcommand{\Span}{\mathrm{Span}}
\newcommand{\bk}[1]{\langle {#1}\rangle}
\newcommand{\GA}{\mathscr G_{\mathcal A}}
\newcommand{\scr}{\mathscr}
\newcommand{\Jtd}{\widetilde{\mathcal J}}
\newcommand{\xk}{\mathfrak x}
\newcommand{\yk}{\mathfrak y}
\newcommand{\im}{\mathbf{i}}
\newcommand{\RepV}{{\mathrm{Rep}^\uni(V)}}
\newcommand{\RepU}{\mathrm{Rep}^\uni(U)}
\newcommand{\mbb}{\mathbb}
\newcommand{\blt}{\bullet}
\newcommand{\Cbb}{\mathbb C}
\newcommand{\Nbb}{\mathbb N}
\newcommand{\Zbb}{\mathbb Z}
\newcommand{\Rbb}{\mathbb R}
\newcommand{\Hbb}{\mathbb H}
\newcommand{\wt}{\mathrm{wt}}
\newcommand{\Rng}{\mathrm{Rng}}
\newcommand{\pr}{\mathrm {pr}}
\newcommand{\UPSU}{\widetilde{\mathrm{PSU}}(1,1)}
\newcommand{\Sbb}{{\mathbb S}}
\newcommand{\Gc}{\mathscr G_c}
\newcommand{\Obj}{\mathrm{Obj}}
\newcommand{\bpr}{{}^\backprime}
\newcommand{\Imag}{\mathrm{Im}}
\newcommand{\mk}{\mathfrak m}
\newcommand{\eps}{\varepsilon}
\newcommand{\CWX}{{\scriptscriptstyle \mathrm{CWX}}}
\newcommand{\FVCWX}{{\mathfrak F^V_{\scriptscriptstyle \mathrm{CWX}}}}
\newcommand{\tFVCWX}{{\widetilde{\mathfrak F}^V_{\scriptscriptstyle \mathrm{CWX}}}}
\newcommand{\VOA}{{\scriptscriptstyle \mathrm{VOA}}}
\newcommand{\CN}{{\scriptscriptstyle \mathrm{CN}}}
\newcommand{\RepUP}{\mathrm{Rep}^{\mathrm u}(U_P)}
\newcommand{\Connes}{\mathrm{Connes}}
\numberwithin{equation}{section}
\title{Comparison of Extensions of Unitary Vertex Operator Algebras and Conformal Nets}
\author{{\sc Bin Gui}
}
\date{}
\begin{document}\sloppy 
	\pagenumbering{arabic}
	\setcounter{section}{-1}


	\maketitle

\begin{abstract}
Let $V$ be one of the following unitary strongly-rational VOAs: unitary WZW models, discrete series $W$-algebras of type $ADE$, even lattice VOAs, parafermion VOAs, their tensor products, and their strongly-rational cosets.

Let $\mc A_V$ be the conformal net associated to $V$ in the sense of Carpi-Kawahigashi-Longo-Weiner (CKLW). According to \cite{Gui26},  the $*$-functor $\fk F_\CWX^V:\RepV\rightarrow\Rep(\mc A_V)$ of Carpi-Weiner-Xu (CWX) can be defined and extended to a braided $*$-functor $(\fk F^V_\CWX,\fk W^V)$ implementing an isomorphism of braided $C^*$-tensor categories $\RepV\simeq \Repf(\mc A_V)$ where $\RepV$ is the category of unitary $V$-modules, and $\Repf(\mc A_V)$ is the category of dualizable (i.e. finite-index) $\mc A_V$-modules. The tensorator $\fk W^V$, originally due to \cite{Was98}, is called the Wassermann tensorator.

In this paper, we show that if $U$ is a (conformal) VOA extension of $V$ (which is automatically unitary \cite{CGGH23}), then the CKLW net $\mc A_U$ and the CWX functor $\fk F^U_\CWX$ can be defined. (Namely, $U$ is strongly local, and all unitary $U$-modules are strongly integrable.) Moreover, identifying $\RepV$ with $\Repf(\mc A_V)$ via $(\fk F^V_\CWX,\fk W^V)$, we prove three comparison results: 
\begin{enumerate}
\item If $P$ is the $C^*$-Frobenius algebra in $\RepV$ associated to $U$, and if $\mc B$ is the conformal net extension of $\mc A_V$ defined by $P$, then $\mc B=\mc A_U$.
\item The inverse of the canonical braided $*$-functor $(\fk F_\VOA,\fk V^\boxdot):\Rep^0(P)\xrightarrow{\simeq}\RepU$ (where $\Rep^0(P)$ is the category of unitary dyslectic $P$-modules), composed with the canonical one $(\fk F_\CN,\fk N^\boxtimes):\Rep^0(P)\xrightarrow{\simeq}\Repf(\mc B)$, is equal to the CWX functor $\fk F^U_\CWX:\RepU\rightarrow\Repf(\mc B)$ together with a tensorator.
\item This tensorator is equal to the Wassermann tensorator whenever the later can be defined (e.g., when $V$ is one of the following VOAs: unitary  WZW models and discrete series $W$-algebras of type $ADE$, even lattice VOAs, parafermions VOAs of type $ADE$, their tensor products, their strongly-rational cosets, and their VOA extensions).
\end{enumerate}
\end{abstract}

\tableofcontents

\section{Introduction}

This article deals with two types of problems about the relationship between unitary extensions of VOAs and conformal nets in a uniform framework: 
\begin{itemize}
\item (Analytic problems) Strong locality and strong integrability.
\item (Algebraic problems) Comparison of extensions and braided $*$-functors.
\end{itemize}
Let me introduce these two types of questions in turn.

\subsection{Strong locality and strong integrability for unitary VOA extensions}

\subsubsection{Energy bounds and strong locality}

Unitary vertex operator algebras (VOAs) and conformal nets are two seemingly different but conjecturally equivalent mathematical formulations of two-dimensional chiral conformal field theories. A systematic study of the relationship between these two objects was initiated by Carpi-Kawahigashi-Longo-Weiner \cite{CKLW18}. In this work, the authors showed that many familiar unitary VOAs $V$ have an associated conformal net $\mc A_V$ defined on the Hilbert space completion $\mc H_0$ of $V$.

The construction of the \textbf{CKLW net} $\mc A_V$ from a unitary VOA $V$ belongs to the tradition of constructing a Haag-Kastler net from a Wightmann QFT. (See the Introduction of \cite{CKLW18} and the reference therein.) In this tradition, one of the biggest challenges in constructing nets of algebras satisfying the Haag-Kastler axioms \cite{Haag96} is to show that two smeared operators localized in spacelike separated regions are strongly commuting. In the present context, this means that if $f\in C_c^\infty(I)$ and $g\in C_c^\infty(J)$ where $I$ and $J$ are disjoint (nonempty non-dense open) intervals of the unit circle $\Sbb^1$, and if $u,v\in V$, then $Y(u,f)$ and $Y(v,g)$ \textbf{commute strongly} in the sense that the smallest von Neumann algebra to which $Y(u,f)$ is affiliated commutes with the one for $Y(v,g)$. Any unitary $V$ satisfying this property is called \textbf{strong locality}.

In \cite{CKLW18}, the treatment of strong locality is aided and simplified by another useful analytic property called \textbf{(polynomial) energy bounds}. It says roughly that smeared operators $Y(v,f)$ (and their products) are bounded by $(1+\ovl{L_0})^r$ for some $r\geq0$ (depending on $v$ but not on $f$), where $\ovl{L_0}\geq0$ is the energy operator. ($L_0$ is the restriction of $\ovl{L_0}$ to the subspace of vectors with finite $\ovl{L_0}$-spectra.) 

One of the most remarkable properties of energy bounds is that many important dense subspaces of $\mc H_0^\infty=\bigcap_{r\geq0}\Dom(1+\ovl{L_0}^r)$ (called \textbf{quasi-rotation invariant (QRI)} spaces in our paper, cf. Def. \ref{lb32}), including $\mc H_0^\infty$ itself, are cores for any linear combination of energy-bounded smeared operators, cf. \cite[Lem. 7.2]{CKLW18}. This property plays a crucial role in the proof of Thm. 8.1 of \cite{CKLW18}, which says that strong locality can be passed from a generating set of (quasi-primary) field operators to all field operators. (See also \cite{DSW86}.)

Thanks to \cite[Thm. 8.1]{CKLW18} and the fact that field operators with \textbf{linear energy bounds} (i.e., they are ``bounded by $1+\ovl L_0$") satisfy strong locality (cf. e.g. \cite{GJ87,DF77,BS90}), any unitary VOA generated by linearly energy-bounded (quasi-primary) fields (e.g. unitary affine VOAs, unitary Virasoro VOAs, Moonshine VOA, their tensor products) are strongly local. Consequently, all subalgebras of them (notably those obtained by coset construction and orbifold construction) are (energy-bounded and) strongly local. Recently, this technique has been generalized to unitary vertex operator superalgebras \cite{CGH23}.

\subsubsection{Strong locality for unitary VOA extensions}

Unfortunately, the above technique does not apply to many important unitary VOAs. To prove strong locality (and energy-bounds) for more examples, one has the following two obvious options:
\begin{enumerate}
\item[(1)] Weaken the condition of linear energy bounds.
\item[(2)] Besides taking tensor products and taking subalgebras, show that other ways of constructing new unitary VOAs from old ones also preserve (energy-boundedness and) strong locality. 
\end{enumerate} 
The first option was taken by Carpi-Tanimoto-Weiner: In \cite{CTW22}, the authors introduced the notion of local energy bounds, and used it to prove that $\mc W_3$ algebras (which are unitary by \cite{CTW23}) are strongly local.

In this paper, we pursue the second direction: Starting from a unitary (energy-bounded and) strongly local VOA $V$, let $U$ be a unitary VOA extension of $V$. (In this paper, we always assume that the extension has the same conformal vector as that of $V$.) We want to study the strong locality of $U$.

Given that proving strong locality for all unitary VOAs currently appears hopeless, establishing a general theorem on the strong locality of unitary VOA extensions is equally unattainable, since every unitary VOA can be viewed as a unitary extension of a Virasoro VOA. Therefore, we must impose additional restrictions on $V$.

Recently, Carpi and Tomassini proved in \cite{CT23} that if the field operators of $V$ acting on $U$ are energy-bounded, and if either (a) $V=U^G$ for a compact group $G$ of unitary automorphisms of $U$, or (b) $V$ is $C_2$-cofinite, then $U$ is energy-bounded. This powerful result implies for instance that all unitary extensions of unitary affine VOAs are energy-bounded. In view of \cite{CT23}, to study strong locality for a unitary energy-bounded extension $U$ of $V$ (where $V$ is strongly local), it is natural to focus on the following two cases:
\begin{enumerate}
\item[(a)] $V=U^G$ for a compact group $G$ of unitary automorphisms of $U$.
\item[(b)] $V$ is $C_2$-cofinite and rational.
\end{enumerate}

In \cite{Gui21a}, we studied (a) in the special case that $U$ is an even lattice VOA, and $V$ is the Heisenberg subalgebra of $U$. The result is that every even lattice VOA is (energy bounded \cite{TL04,Gui19c} and) strongly local, and it was generalized to all integer lattice VOSAs in \cite{CGH23}. Though we believe that it is worthwhile to study case (a) more thoroughly, in this paper we consider case (b).

\subsubsection{Categorical extensions and strong locality}

We assume that the VOA $V$ (which from now on is understood to be unitary and of CFT-type) is \textbf{strongly energy-bounded}, i.e., the field operators of $V$ are energy-bounded when acting on any unitary $V$-module. We also assume that $V$ is \textbf{completely unitary}, which means that $V$ is $C_2$-cofinite and rational, all $V$-modules are unitarizable, and the canonical Hermitian structure on the category $\RepV$ of unitary $V$-modules is positive-definite (i.e. unitary).  So $\RepV$ is a \textit{unitary} modular tensor category \cite{Hua08b,Gui19b}. (For example, $V$ can be any unitary affine VOA \cite{Gui19a,Gui19b,Gui19c,Gui21a,Ten19a,Ten19b,Ten24,Gui26}.) Let $U$ be a VOA extension of $V$, which is automatically a \textit{unitary} VOA extension in a unique way by \cite[Thm. 4.7]{CGGH23}. It follows that $U$ is strongly energy-bounded \cite{CT23} and completely unitary \cite{Gui22}. Finally, assume that $V$ is strongly local.

Our strategy of studying the strong locality of $U$ is the same as the treatment of even lattice VOAs in \cite{Gui21a}: we transform the problem about extensions to a problem about subalgebras. As pointed out in the Introduction of \cite{Gui21a}, the complete unitarity of $V$ implies that the intertwining operators (i.e. charged operators) of $V$ form the ``universal" extension of $V$ containing $U$ as a subtheory. This universal extension is called \textbf{categorical extension}. The relationship between the categorical extension and the unitary VOA extension $U$ is similar to that between a free group and a general group. A group is isomorphic to a free group modulo some relations. In the context of VOAs, the role of these relations is played by $C^*$-Frobenius algebras ($\approx$ Q-systems) in $\RepV$, originally introduced by Longo to understand type III subfactors \cite{Lon94}.

Thus, we view $U$ as the categorical extension of $V$ modulo the relations defined by a (haploid commutative) $C^*$-Frobenius algebra $P$. Therefore, the strong locality of $U$ will follow from that of the categorical extensions $V$, where the latter means the ``strong braiding" of smeared intertwining operators. Thanks to \cite{Gui26}, under the assumption that the intertwining operators involved are energy-bounded, the strong braiding follows from the \textbf{strong intertwining property}, a special case of strong braiding much easier to verify. It means roughly that if $\mc Y$ is an intertwining operator of $V$ of type $W_k\choose W_iW_j$ (where $W_i,W_j,W_k$ are unitary $V$-modules), then for each (homogeneous) $v\in V,w^{(i)}\in W_i$ and each $\wtd f=(f,\arg_I)$ where $f\in C_c^\infty(I)$ and $\arg_I$ is an arg-function of $I$, and $g\in C_c^\infty(J)$ where $I\cap J=\emptyset$, then ``$\mc Y(w^{(i)},f)$ intertwines $Y(u,g)$ strongly". (See Def. \ref{lb101} for details.) 

For example, if $V$ is a unitary affine VOA of type $ADE$, then all intertwining operators of $V$ are energy-bounded. Then, by the fact that $V$ is generated by linearly energy-bounded fields, the intertwining operators of $V$ satisfy the strong intertwining property. Thus, the categorical extension of $V$ satisfies strong braiding. (See \cite[Sec. 3.7]{Gui26} for details.) Therefore, all VOA extensions of $V$ are strongly local.

\subsubsection{Conditions I and II, and results on strong locality}

Recall that $V$ is always assumed to be completely unitary, strongly energy-bounded, and strongly local. $U$ is an (automatically unitary \cite{CGGH23}) VOA extension of $V$.

\begin{df}[$=$ Def. \ref{lb34}]
We say that $V$ satisfies \textbf{Condition I} if every intertwining operator is energy-bounded and satisfies the strong intertwining property. We say that $V$ satisfies \textbf{Condition II} (= Condition B of \cite{Gui26}) if there is a set $\mc F^V$ of unitary $V$-modules tensor-generating $\RepV$ such that every intertwining operator of type $\blt\choose W_i~\star$ (where $W_i\in\mc F^V$) is energy-bounded and satisfies the strong intertwining property.
\end{df}

It is clear that Condition I implies Condition II. The primary reason for introducing Condition I---despite not being able to verify it for all unitary affine VOAs---is that it is preserved under (unitary) VOA extensions, unlike Condition II; see Thm. \ref{lb104}.

\begin{eg}\label{lb111}
The following examples satisfy Condition I: All unitary affine VOAs of type $ADE$, all even lattice VOAs, all discrete series $W$-algebras of type $ADE$ (in the sense of \cite{ACL19}), all parafermion VOAs of type $ADE$ (in the sense of \cite{DR17}), their tensor products, the $C_2$-cofinite rational cosets of their $C_2$-cofinite rational unitary subalgebras, their (unitary) VOA extensions.
\end{eg}

\begin{eg}\label{lb112}
The following examples satisfy Condition II: All VOAs satisfying Condition I, all unitary affine VOAs, all parafermion VOAs, their tensor products, the $C_2$-cofinite rational cosets of their $C_2$-cofinite rational unitary subalgebras.
\end{eg}

\begin{proof}
See Exp. \ref{lb118} and Thm. \ref{lb116}.
\end{proof}

As explained above, if $V$ satisfies Condition I, then $U$ is strongly local. With the help of Thm. \ref{lb13} (proved in \cite[Sec. 2.6]{Gui26}), the categorical extension version of \cite[Thm. 8.1]{CKLW18}, we can weaken Condition I to Condition II:

\begin{thm}[$\subset$ Thm. \ref{lb78}]\label{lb103}
If $V$ satisfies Condition II, then $U$ is strongly local.
\end{thm}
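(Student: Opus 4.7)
The plan is to realize the strong locality of $U$ as an instance of the strong braiding of the categorical extension of $V$, and then to invoke Thm.~\ref{lb13} (the categorical-extension version of \cite[Thm.~8.1]{CKLW18}) to propagate that strong braiding from the generating family $\mc F^V$ to all of $\RepV$.

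First I would realize the vertex operators of $U$ as intertwining operators of $V$. Since $U$ is a (completely unitary) VOA extension of $V$, it is encoded by a haploid commutative $C^*$-Frobenius algebra $P$ in $\RepV$, and as a $V$-module the Hilbert space completion $\mc H_U$ decomposes as a finite direct sum of irreducible unitary $V$-modules. For each homogeneous $u\in U$, the vertex operator $Y_U(u,z)$ of $U$, regarded with respect to the $V$-module structure on $U$, is precisely a $V$-intertwining operator $\mc Y_u$ of type $\binom{U}{U\ U}$. Consequently, the strong locality of $U$---i.e., strong commutativity of $Y_U(u,f)$ and $Y_U(v,g)$ for $u,v\in U$ and $f\in C_c^\infty(I)$, $g\in C_c^\infty(J)$ with $I\cap J=\emptyset$---translates into a strong braiding statement for the smeared $V$-intertwining operators among the irreducible summands of $U$.

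Next I would invoke the main upgrading theorem of \cite{Gui20}: once the $V$-intertwining operators in question are energy-bounded, strong braiding between two such smeared operators follows from the (much more tractable) strong intertwining property, which concerns only strong commutation of a single smeared intertwining operator with a single smeared vertex operator on a QRI domain. So the proof reduces to establishing energy-boundedness and the strong intertwining property for \emph{every} $V$-intertwining operator between the irreducible summands of $U$, rather than only for those whose source module lies in the chosen family $\mc F^V$.

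This last reduction is exactly where Thm.~\ref{lb13} does the heavy lifting: starting from a tensor-generating family of unitary $V$-modules whose associated intertwining operators are energy-bounded and satisfy the strong intertwining property, it produces the same two properties for \emph{all} intertwining operators of $V$. Since $\mc F^V$ is such a tensor-generating family by Condition~II, Thm.~\ref{lb13} applies and yields both properties for every intertwining operator of $V$; combined with the previous two paragraphs, this gives strong locality of $U$. I expect the main technical obstacle to be the careful bookkeeping of a common QRI core on which the iterated compositions of smeared intertwining operators remain densely defined and on which strong commutation can actually be verified. This is precisely the kind of analytic subtlety the QRI machinery of \cite{CKLW18,Gui20} is tailored to handle, and its systematic deployment, together with the verification that the arguments of \cite{Gui20} apply under the weaker hypothesis of Condition~II rather than Condition~I, should complete the proof.
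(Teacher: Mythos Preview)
There is a genuine gap in your third paragraph. You assert that Thm.~\ref{lb13} ``produces the same two properties for \emph{all} intertwining operators of $V$,'' meaning both energy-boundedness and the strong intertwining property. But Thm.~\ref{lb13} does no such thing: it takes as input a weak left/right operator of a weak categorical extension---which must already be smooth and localizable, hence in particular already defined via an energy-bounded smeared operator---and concludes that its closure is a left/right operator of the closed categorical extension. It does not manufacture energy bounds. Under Condition~II alone, you have no guarantee that a $V$-intertwining operator with charge space equal to an irreducible summand of $U$ (which need not lie in $\mc F^V$) is energy-bounded; without that, you cannot even smear it, and the strong intertwining property and strong braiding are not formulated. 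The paper flags exactly this obstruction after Thm.~\ref{lb104}, and Prop.~\ref{lb79} makes the logic explicit: Condition~II plus energy bounds for all intertwining operators gives Condition~I, but the energy bounds are an \emph{input}, not an output.

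The paper circumvents this by never asking for $\Gamma^V$ with charge space $U$ to be energy-bounded. Instead, it passes to the categorical extension $\scr E_Q$ of the \emph{extended} net $\mc B_Q$ (Thm.~\ref{lb71}), whose $\mc L^Q,\mc R^Q$ operators are built from products of smeared intertwining operators with charge spaces in $\mc F^V\cup\{V\}$ (these \emph{are} energy-bounded by hypothesis). The vertex operator $Y^{U_P}=\mu\circ\Gamma^V$ is energy-bounded directly by Carpi--Tomassini (Lem.~\ref{lb77}); one then shows (Thm.~\ref{lb72}, Rem.~\ref{lb76}) that the smeared $Y^{U_P}_k(u,f)$ defines a weak left/right operator of $\scr E^w_Q$, and invokes Thm.~\ref{lb13} \emph{there}---at the level of $\mc B_Q$, not $\mc A_V$---to conclude that its closure is a genuine left/right operator of $\scr E_Q$. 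Strong locality of $U$ then falls out of the locality of $\scr E_Q$ (Thm.~\ref{lb7}). The essential point you are missing is that the passage to $\scr E_Q$ replaces the possibly non-energy-bounded $\Gamma^V|_{U\times U}$ by $\mu\circ\Gamma^V$, whose energy bounds come for free from the VOA extension structure.
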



\begin{co}
Any VOA extension of a tensor product of unitary affine VOAs and even lattice VOAs is strongly local.
\end{co}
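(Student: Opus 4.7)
The plan is to derive the corollary as an essentially bookkeeping consequence of Theorem \ref{lb103} together with Example \ref{lb112}. Theorem \ref{lb103} guarantees strong locality of a VOA extension $U$ whenever the base $V$ satisfies Condition II, in addition to the standing hypotheses of the section (CFT-type, completely unitary, strongly energy-bounded, and strongly local). The task therefore reduces to verifying these conditions for $V=V_1\otimes\cdots\otimes V_n$ where each $V_i$ is either a unitary affine VOA or an even lattice VOA.

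First I would check the standing hypotheses factor by factor. Each unitary affine VOA is strongly local because it is generated by linearly energy-bounded quasi-primary fields, so \cite[Thm.~8.1]{CKLW18} applies; it is completely unitary and strongly energy-bounded by the results quoted in the introduction. Each even lattice VOA enjoys the same three properties, strong locality having been established in \cite{Gui21a} and strong energy boundedness going back to \cite{TL04}. All three properties, as well as being of CFT-type, are preserved under tensor product (strong locality reduces to commutation of smeared fields supported on disjoint intervals of a single tensor factor, and complete unitarity reduces to the fact that the Deligne tensor product of unitary MTCs is a unitary MTC). Hence $V$ sits inside the framework of Theorem \ref{lb103}.

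Next I would invoke Example \ref{lb112}, which already lists among the VOAs satisfying Condition II: all unitary affine VOAs, all VOAs satisfying Condition I, and their tensor products. Since every even lattice VOA satisfies the stronger Condition I by Example \ref{lb111}, the VOA $V$ above is exactly of the form listed there and hence satisfies Condition II. Applying Theorem \ref{lb103} then yields strong locality of every VOA extension $U$ of $V$, proving the corollary.

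There is no genuine obstacle at this level: the analytic content—verification of the strong intertwining property and energy bounds on a tensor-generating family, and its stability under tensor products—is absorbed into Examples \ref{lb111}--\ref{lb112}, while the passage from Condition II on $V$ to strong locality of $U$ is absorbed into the categorical-extension argument of Theorem \ref{lb103}. The corollary itself is the assembly of these pieces.
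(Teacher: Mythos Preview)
Your proposal is correct and matches the paper's own treatment: the corollary is stated without a separate proof, being an immediate consequence of Theorem~\ref{lb103} together with Example~\ref{lb112} (and~\ref{lb111}). One minor remark: the ``standing hypotheses'' you check separately (complete unitarity, strong energy-boundedness, strong locality) are already built into Condition~II in Definition~\ref{lb34}, so once Example~\ref{lb112} gives Condition~II for the tensor product there is nothing further to verify.
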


Among the examples covered in this corollary we have:

\begin{itemize}
\item All holomorphic VOAs with central charge $c=24$, except the moonshine VOA.\footnote{The strong locality of moonshine VOA was proved in \cite{CKLW18}.} The strong locality of such VOAs was first proved in \cite[Thm. 5.5]{CGGH23}. 
\item The VOA $V_{G,k}$ associated to a compact connected Lie group $G$ and $k\in H_+^4(BG,\Zbb)$, cf. \cite{Hen17}. It is a simple current extension of a tensor product of unitary affine VOAs and lattice VOAs, and they correspond almost bijectively to (connected but non-necessarily simply-connected) chiral WZW models.
\item The classification of extensions of unitary affine VOAs with small ranks is an important topic in the literature. See e.g. \cite{CIZ87,Gan94,KO02,EP09,EM23,Gan23}.
\end{itemize}

\begin{co}
Any unitary VOA with central charge $c<1$ is strongly local.
\end{co}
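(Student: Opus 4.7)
The plan is to deduce this corollary from Theorem \ref{lb103} by letting $V$ be the Virasoro sub-VOA of $U$ generated by the conformal vector. For any unitary VOA $U$ of CFT-type with central charge $c < 1$, the sub-VOA $V$ is a unitary quotient of the universal Virasoro VOA at central charge $c$. Since unitary Virasoro VOAs with $c < 1$ exist only at the Friedan-Qiu-Shenker discrete series values $c = c_m := 1 - 6/(m(m+1))$ for integers $m \geq 2$, the central charge $c$ must equal one of these values and $V$ is necessarily the simple minimal model $L(c_m,0)$. By \cite{CGGH23}, the resulting VOA extension $V \subset U$ (with common conformal vector) is then automatically unitary.

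To apply Theorem \ref{lb103} it suffices to show that $V = L(c_m,0)$ satisfies Condition II. For this I would invoke the classical Goddard-Kent-Olive coset construction, which realizes
\begin{equation*}
L(c_m,0) \;\simeq\; \mathrm{Com}\bigl( L_{\mathfrak{sl}_2}(m-1,0),\; L_{\mathfrak{sl}_2}(m-2,0) \otimes L_{\mathfrak{sl}_2}(1,0) \bigr),
\end{equation*}
where $L_{\mathfrak{sl}_2}(m-1,0)$ sits inside the tensor product via the diagonal embedding. The ambient VOA is a tensor product of unitary affine VOAs of type $A_1$, the embedded subalgebra is itself a $C_2$-cofinite rational unitary affine VOA of type $A_1$, and the coset $V$ is known to be $C_2$-cofinite and rational. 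Hence $V$ fits precisely into the class described in Example \ref{lb111} and satisfies Condition I, which in particular implies Condition II; the degenerate case $m=2$ (where $V = \mathbb{C}$) is trivial.

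Combining these two steps, Theorem \ref{lb103} delivers the required strong locality of $U$. The one step requiring care is the identification in the first paragraph of $V$ with the simple minimal model rather than a larger Virasoro quotient: a sub-VOA of the unitary $U$ inherits a positive-definite invariant Hermitian form, so the Virasoro sub-VOA is itself unitary, and by Friedan-Qiu-Shenker the only Virasoro VOA at $c<1$ admitting such a form is $L(c_m,0)$. Beyond this classical input, the argument is essentially a bookkeeping application of the framework already developed in the paper.
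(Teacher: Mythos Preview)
Your proposal is correct and follows essentially the same route as the paper. The only difference is packaging: the paper identifies the unitary minimal model Virasoro VOA directly as a type $A$ discrete series $W$-algebra (an item listed verbatim in Example \ref{lb111}), whereas you unpack the GKO coset realization $L(c_m,0)\simeq\mathrm{Com}(L_{\mathfrak{sl}_2}(m-1,0),L_{\mathfrak{sl}_2}(m-2,0)\otimes L_{\mathfrak{sl}_2}(1,0))$ that underlies this identification and then invoke the ``cosets'' clause of the same example. Both land in Example \ref{lb111} and then apply Theorem \ref{lb103}; your citation of \cite{CGGH23} for the unitarity of the extension is harmless but not strictly needed here, since $U$ is already assumed unitary and the Virasoro sub-VOA of a unitary VOA is unitary.
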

This is due to the fact that such VOA is a unitary extension of a unitary minimal model Virasoro VOA, and that the latter is a type $A$ discrete series $W$-algebra.

\subsubsection{Strong intertwining property and strong integrability}

Carpi-Weiner-Xu introduced the notion of \textbf{strong integrability} for any unitary module $(W_i, Y_i)$ (with energy-bounded $Y_i$) of a strongly local VOA $V$ (not necessarily satisfying Condition II), which means that there is a (necessarily unique) $\mc A$-module $(\mc H_i,\pi_i)$ such that for every (homogeneous) $v\in V$, every interval $I$, and every $f\in C_c^\infty(I)$, we have $\pi_{i,I}(\ovl{Y(v,f)})=\ovl{Y_k(v,f)}$. If all unitary $V$-modules are strongly integrable, then the construction $W_i\mapsto\mc H_i$ gives a fully-faithful $*$-functor from the $C^*$-category of unitary $V$-modules to that of $\mc A_V$-modules. Cf. \cite{CWX,Gui19b}, or Thm. \ref{lb42} for details. We call this functor the \textbf{CWX functor} of $V$ and denote it by
\begin{align*}
\fk F^V_\CWX:\RepV\rightarrow\Rep(\mc A_V)
\end{align*}
The functor $\fk F_\CWX^V$ is a natural generalization of the construction of representations of loop group conformal nets from  (positive-energy highest weight) representations of affine Lie algebras \cite{GF93,Was98,TL04}.

It was proved in \cite{Gui19b} that the strong integrability follows from the strong intertwining property of sufficiently many intertwining operators. Returning to the setting that $V$ satisfies Condition II, we conclude that all unitary $V$-modules are strongly integrable, and hence $\fk F^V_\CWX$ can be defined. See \cite[Thm. 3.16]{Gui26} or Thm. \ref{lb48} for details.

Recall that $U$ is a (unitary) VOA extension of $V$. Recall that $Y_i$ is energy-bounded for every $W_i\in\Obj(\RepV)$ by (the proof of) \cite{CT23}. Then, similar to the proof of the strong locality of $V$, the strong locality of the categorical extension of $V$ implies that sufficiently many intertwining operators of $U$ satisfy the strong intertwining property, and that all intertwining operators satisfy the strong intertwining property if they are energy-bounded. (Note that an intertwining operator of $U$ is also one of $V$.) Thus, we have:

\begin{thm}[$\subset$ Thm. \ref{lb78}]\label{lb105}
Suppose that $V$ satisfies Condition II. Then all unitary $U$-modules are strongly integrable. Thus, we have the (fully-faithful) CWX functor $\fk F^U_\CWX:\RepU\rightarrow\Rep(\mc A_U)$ for $U$.
\end{thm}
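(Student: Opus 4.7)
The plan is to verify the hypotheses of Thm.\ \ref{lb48} with $U$ in place of $V$: namely, that $U$ is strongly local, and that $U$ satisfies the natural analogue of Condition II. Strong locality of $U$ is already established by Thm.\ \ref{lb103}, so the task reduces to checking two ingredients: (i) every unitary $U$-module has an energy-bounded vertex operator; and (ii) there exists a family $\mc F^U$ of unitary $U$-modules tensor-generating $\RepU$ such that every intertwining operator of type ${\blt\choose W_i\,\star}$ with $W_i\in\mc F^U$ is energy-bounded and satisfies the strong intertwining property. Given (i) and (ii), Thm.\ \ref{lb48} immediately yields the (fully faithful) CWX functor $\fk F^U_\CWX:\RepU\rightarrow\Rep(\mc A_U)$.

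For (i), I would observe that $V$ is $C_2$-cofinite (a consequence of complete unitarity) and strongly energy-bounded by hypothesis. Case (b) of the Carpi-Tomassini theorem \cite{CT23} then applies to the extension $V\subset U$ and shows that $U$ is strongly energy-bounded: every unitary $U$-module has an energy-bounded vertex operator. For (ii), the crucial observation is that every $U$-intertwining operator, when composed with the inclusion $V\hookrightarrow U$, restricts to a $V$-intertwining operator between the underlying $V$-module structures. Since $V$ satisfies Condition II, Thm.\ \ref{lb13} upgrades the strong intertwining property from the generating family $\mc F^V$ to every energy-bounded $V$-intertwining operator. Combining this with (i) shows that any $U$-intertwining operator satisfies the strong intertwining relation with respect to every smeared $V$-field $Y(v,g)$. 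One then promotes this to strong intertwining with respect to $Y_U(u,g)$ for arbitrary $u\in U$ by writing $u$ as the action of a smeared $V$-intertwining operator on the vacuum---using that $Y_U$ itself is a $V$-intertwining operator whose charged part takes values in $U$ regarded as a $V$-module---and applying the strong braiding of the categorical extension of $V$ a second time. Taking $\mc F^U$ to be any tensor-generating family of $\RepU$ (which exists since $\RepU$ is a unitary modular tensor category by complete unitarity of $U$, cf.\ \cite{Gui22}) then verifies (ii).

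The hard part is the second half of step (ii): strong commutation of a $U$-intertwining operator with $Y_U(u,g)$ when $u\in U\setminus V$. This requires going beyond the $V$-categorical extension and exploiting the Frobenius algebra $P$ in $\RepV$ corresponding to the extension $U\supset V$, so as to express $u$ as a ``charged'' value of a smeared $V$-intertwining operator and reduce the desired commutation to an iterated application of the strong braiding of $V$'s categorical extension. In spirit, this is the same argument that drives the strong locality of $U$ in Thm.\ \ref{lb103}, but now carried out for intertwining operators between possibly distinct $U$-modules rather than for $Y_U$ alone; the technical bookkeeping---in particular, keeping track of arg-functions and of which smeared operators are affiliated to which local algebras---constitutes the bulk of the work.
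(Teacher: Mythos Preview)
Your proposal has a genuine gap: you assume that sufficiently many $U$-intertwining operators are energy-bounded, but this is precisely what is \emph{not} known when $V$ only satisfies Condition~II. Step~(i) (via Carpi--Tomassini) gives energy bounds only for the vertex operators $Y^U_k$, not for general $U$-intertwining operators. Thus when you write ``Taking $\mc F^U$ to be any tensor-generating family of $\RepU$ \dots\ then verifies~(ii),'' you have not verified the energy-bounds half of~(ii) at all; and without energy bounds the smeared intertwining operators cannot even be defined, so the strong intertwining property is vacuous. The paper says this explicitly in the paragraph after Thm.~\ref{lb104}: ``Unfortunately, we cannot show that $U$ satisfies Condition~II if $V$ does \dots\ the only obstacle lies in proving that sufficiently many intertwining operators of $U$ are energy-bounded.'' Your route, in other words, is exactly the one that is blocked; Cor.~\ref{lb81} records the conditional statement you would need.

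The paper's proof (inside Thm.~\ref{lb78}) avoids this obstacle by \emph{not} verifying Condition~II for $U$. Instead it works directly with the categorical extension $\scr E_Q$ of $\mc B_Q$ built from the Frobenius algebra: by the Main Theorem~\ref{lb72} (with Rem.~\ref{lb76}), the closures $\ovl{Y^{U_P}_\blt(u,f)}$ are left/right operators of $\scr E_Q$---this requires only the energy bounds on the vertex operators $Y^{U_P}_k$, which Lem.~\ref{lb77} supplies. Then for any unitary $U_P$-module $W_k$, the \emph{bounded} R-operators of $\scr E_Q$ with charge space $\mc H_k$ (which come from the $V$-side, not from $U$-intertwining operators) commute strongly with $\ovl{Y^{U_P}(u,f)}$ and $\ovl{Y^{U_P}_k(u,f)}$ by the locality of $\scr E_Q$, and their ranges are dense by the density of fusion products. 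This is exactly the input to Lem.~\ref{lb43}(3), which yields strong integrability of $W_k$ directly. No $U$-intertwining operators beyond the vertex operators are ever smeared.
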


\begin{thm}[$=$ Cor. \ref{lb90}]\label{lb104}
Suppose that $V$ satisfies Condition I. Then $U$ also satisfies Condition I.
\end{thm}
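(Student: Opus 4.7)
The plan is to reduce the analysis of intertwining operators of $U$ to that of intertwining operators of $V$, for which Condition I is assumed. Two facts enable the reduction. First, since $V$ is $C_2$-cofinite and rational and all its unitary modules are completely reducible, every unitary $U$-module $M$ is, as a $V$-module, a finite direct sum of irreducible unitary $V$-modules; in particular $U$ itself splits as $U = \bigoplus_a U_a$ with each $U_a$ an irreducible $V$-module. Second, any intertwining operator $\mc Y$ of $U$ of type $M_3 \choose M_1~M_2$ satisfies the $V$-Jacobi identity (being weaker than the $U$-Jacobi identity), hence is automatically a $V$-intertwining operator of the same type when the $M_i$ are regarded as $V$-modules. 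Projecting along the $V$-decompositions of $M_1, M_2, M_3$ thus produces finitely many $V$-intertwining operators $\mc Y_{a,b,c}$ between the irreducible $V$-components.

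Energy-boundedness of $\mc Y$ then follows at once: each $\mc Y_{a,b,c}$ is energy-bounded by Condition I for $V$, and finiteness of the decomposition allows us to take a common exponent. For the strong intertwining property, we must show, given $u \in U$ and disjointly-supported $\wtd f \in C_c^\infty(I)$ and $g \in C_c^\infty(J)$, that $\mc Y(w,\wtd f)$ strongly intertwines with $Y_{M_k}(u,g)$. The key move is to decompose $u = \sum_e u_e$ with $u_e \in U_e$; then for each $e$, the $V$-components of $Y_{M_k}(u_e,g)$ are actions of $V$-intertwining operators (of type $\star\choose U_e~\star$) smeared against $g$. Condition I for $V$ now applies to every matched pair of $V$-components of $\mc Y(w,\wtd f)$ and $Y_{M_k}(u_e,g)$, delivering strong intertwining at the $V$-level.

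The main obstacle is to glue these $V$-level statements into a single $U$-level strong intertwining, since strong commutativity of closures of unbounded operators does not in general survive addition. The remedy is to work on a common quasi-rotation invariant core of $M_k$---e.g., the algebraic direct sum of finite $\ovl{L_0}$-eigenspaces---on which all smeared operators in play act as densely-defined restrictions with compatible domains, and then invoke the core-based framework developed in \cite{CKLW18} and used in \cite{Gui20} to pass from strong intertwining on the core to strong intertwining of the closures. This is the same strategy that underlies Thm. \ref{lb13}, and we expect it to transfer to the present setting with only cosmetic modifications, completing the verification of Condition I for $U$.
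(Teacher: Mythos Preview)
Your energy-boundedness argument is correct and matches the paper's: every $U$-intertwining operator is a $V$-intertwining operator, hence energy-bounded by Condition~I for $V$. You also correctly omit no hypotheses on the $V$-module decompositions.

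The gap is in the strong intertwining step. Condition~I for $V$ (Def.~\ref{lb34}) asserts that smeared $V$-intertwining operators commute strongly with smeared \emph{$V$-vertex operators} $Y_j(v,g)$ for $v\in V$; it does \emph{not} directly assert strong commutativity between two smeared $V$-intertwining operators with disjoint support (``strong braiding''). When you decompose $u=\sum_e u_e$ with $u_e\in U_e$ and $U_e$ a nontrivial irreducible $V$-summand of $U$, the operator $Y^U_{M_k}(u_e,g)$ is a smeared $V$-intertwining operator with charge space $U_e$, not a $V$-vertex operator. So the ``matched pairs'' you want to commute are pairs of $V$-intertwining operators, and Condition~I does not deliver this by itself. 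Strong braiding is a genuine theorem requiring the categorical-extension machinery: one shows (Thm.~\ref{lb47}, \ref{lb16}) that closures of smeared $V$-intertwining operators are left/right operators of $\scr E_V$, and then invokes the locality of left/right operators (Thm.~\ref{lb7}).

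This is exactly what the paper does, but packaged differently. The paper's proof of Cor.~\ref{lb90} routes through Cor.~\ref{lb81} and Thm.~\ref{lb80}, which in turn rest on the main Thm.~\ref{lb72}: smeared $U$-intertwining operators (in particular $Y^U_{M_k}(u,g)$) are left/right operators of the categorical extension $\scr E_Q$ associated to the Frobenius algebra $Q$, and then the strong intertwining property for $U$ is read off from the locality of $\scr E_Q$. Your gesture toward Thm.~\ref{lb13} and QRI cores is in the right neighborhood, but it cannot substitute for the missing strong-braiding input; Thm.~\ref{lb13} upgrades weak left/right operators to left/right operators \emph{given} an ambient categorical extension, which is precisely what you have not set up. (You also do not address complete unitarity of $U$, i.e.\ Condition~I(a); the paper obtains this from Thm.~\ref{lb67}.)
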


Unfortunately, we cannot show that $U$ satisfies Condition II if $V$ does: The fact that $V$ has sufficiently many energy-bounded intertwining operators does not imply the same property for $U$. (Thus, for example, we know that all extensions of type $A,D,E$ unitary affine VOAs satisfy Condition I and hence Condition II. But we do not know whether all extensions of type $B,C,F,G$ unitary affine VOAs satisfy Condition II.) However, the above mentioned argument shows that the only obstacle lies in proving that sufficiently many intertwining operators of $U$ are energy-bounded. See Cor. \ref{lb81} for details.

\subsubsection{Equivalence of braided $C^*$-tensor categories}\label{lb109}

Assume that $V$ satisfies Condition II, and let 
\begin{align}
\Rep^V(\mc A_V)=\fk F_\CWX^V(\RepV)
\end{align}
which is a full replete subcategory of $\Rep(\mc A_V)$. Then we have an isomorphism of $C^*$-tensor categories
\begin{align}
\fk F_\CWX^V:\RepV\xlongrightarrow{\simeq}\Rep^V(\mc A_V)
\end{align}
It was shown in \cite{Gui26} that $\fk F_\CWX^V$ can be extended to a braided $*$-functor implementing an isomorphism of braided $C^*$-tensor categories
\begin{align}\label{eq95}
(\fk F^V_\CWX,\fk W^V): \big(\RepV,\boxdot,\ss\big)\xlongrightarrow{\simeq}\big(\Rep^V(\mc A_V),\boxtimes,\mathbb B\big)
\end{align}
where $(\boxdot,\ss)\equiv(\boxdot_V,\ss^V)$ is the braided $C^*$-tensor structure on $\RepV$ defined by Huang-Lepowsky theory, and $(\boxtimes,\mathbb B)\equiv(\boxtimes_{\mc A_V},\ss^{\mc A_V})$ is the braided $C^*$-tensor structure on $\Rep(\mc A_V)$ (restricted to $\Rep^V(\mc A_V)$) defined by Connes fusion. For each $W_i,W_j\in\Obj(\RepV)$, set
\begin{align*}
\mc H_i=\fk F_\CWX^V(W_i)\qquad \mc H_j=\fk F_\CWX^V(W_j)\qquad \mc H_i\boxdot\mc H_j=\fk F_\CWX^V(W_i\boxdot W_j)
\end{align*}
Then $\fk W^V$ is a natural unitary map associating to each $W_i,W_j$ unitary $\mc A_V$-module morphisms
\begin{align*}
\fk W^V_{i,j}:\mc H_i\boxdot\mc H_j\xlongrightarrow{\simeq}\mc H_i\boxtimes\mc H_j
\end{align*}
intertwining the associators, unitors, and braiding operators of $\RepV$ and $\Rep^V(\mc A_V)$. The tensorator $\fk W^V$ is defined using smeared intertwining operators in the same spirit as Wassermann's computation of Connes fusion for loop group conformal nets \cite{Was98}. Thus, we call $\fk W^V$ the \textbf{Wassermann tensorator} for $V$.

Now, suppose that $V$ satisfies Condition I. Then any VOA extension $U$ also satisfies Condition I and hence Condition II. Then we have a similar isomorphism of braided $C^*$-tensor categories
\begin{align*}
(\fk F^U_\CWX,\fk W^U): \big(\RepV,\boxdot_U,\ss^U\big)\xlongrightarrow{\simeq}\big(\Rep^U(\mc A_U),\boxtimes_{\mc A_U},\mathbb B^{\mc A_U}\big)
\end{align*}
However, if $V$ only satisfies Condition II, then $U$ is not known to satisfy Condition II in general. So the Wassermann tensorator $\fk W^U$ cannot be defined although we still have $\fk F^U_\CWX$ by Thm. \ref{lb105}. Can $\fk F_\CWX^U:\RepV\xrightarrow{\simeq}\Rep^U(\mc A_U)$ be extended to a braided $*$-functor implementing an isomorphism of braided $C^*$-tensor categories? We will address this question in the next subsection.

\subsection{Comparison of extensions and braided $*$-functors}

\subsubsection{$C^*$-Frobenius algebras}

The notion of $C^*$-Frobenius algebras (or Q-systems) in a (braided) $C^*$-tensor category, originally introduced by Longo in \cite{Lon94}, is a powerful tool for the study of unitary extensions in 2d rational conformal field theory: If $\mc A$ is a conformal net, then haploid commutatitive $C^*$-Frobenius algebras $\Theta$ in $\Rep(\mc A)$ correspond bijectively to finite-index conformal net extensions $\mc B_\Theta$ of $\mc A$, and we have a canonical isomorphism of braided $C^*$-tensor categories
\begin{align}\label{eq93}
(\fk F_\CN,\fk N^\boxtimes):\big(\Rep^0(\Theta),\boxtimes_\Theta,\mathbb B^\Theta\big)\xlongrightarrow{\simeq}\big(\Rep(\mc B_\Theta),\boxtimes_{\mc B_\Theta},\mathbb B^{\mc B_\Theta}\big)
\end{align}
where the source is the category of (unitary) dyslectic $\Theta$-modules (together with the canonical braided $C^*$-tensor structure), and the target is (as before) the category of $\mc B_\Theta$-modules whose braided $C^*$-tensor structure is defined by Connes fusion/DHR superselection theory. Cf. for example \cite{LR95,EP03,BKL15,BKLR15,Gui21c}. (Our approach here follows \cite{Gui21c}. See Subsec. \ref{lb106}-\ref{lb40} for a detailed exposition.)

Similarly, if $V$ is completely unitary, then haploid commutative $C^*$-Frobenius algebras $P$ in $\RepV$ correspond bijectively to (automatically unitary) VOA extensions of $V$, and we have a canonical isomorphism of braided $C^*$-tensor categories
\begin{align}\label{eq94}
(\fk F_\VOA,\fk V^\boxdot):\big(\Rep^0(P),\boxdot_P,\ss^P\big)\xlongrightarrow{\simeq}\big(\RepUP,\boxdot_{U_P},\ss^{U_P} \big)
\end{align}
where the source is (as in \eqref{eq93}) the braided $C^*$-tensor category of (unitary) dyslectic $P$-modules, and the target is (as in \eqref{eq95}) the Huang-Lepowsky braided $C^*$-tensor category of unitary $U_P$-modules. Cf. \cite{KO02,HKL15,CKM24,Gui22,CGGH23}. (Here, our approach follows \cite{Gui22}. See Subsec. \ref{lb107} and \ref{lb108} for details.)

\subsubsection{Comparison of conformal net extensions}

Assume that $V$ satisfies Condition II. Recall from Subsec. \ref{lb109} that the braided $*$-functor $(\fk F^V_\CWX,\fk W^V)$ implements an isomorphism $(\RepV,\boxdot,\ss)\simeq(\Rep^V(\mc A_V),\boxtimes,\mathbb B)$. Thus, a haploid commutative $C^*$-Frobenius algebra $P$ in $\RepV$ corresponds to one $\Theta$ in $\Rep^V(\mc A_V)$. Through this correspondence, an extension $U_P$ of $V$ corresponds to an extension $\mc B_\Theta$ of $\mc A:=\mc A_V$. Thus, it is natural to ask:
\begin{itemize}
\item[\large\textcircled{\small 1}] Is $\mc B_\Theta$ isomorphic to $\mc A_{U_P}$, the CKLW net of $U_P$? \footnote{This question was communicated to me by Sebastiano Carpi.}
\end{itemize}

Let us recall the data $P=(W_a,\mu,\iota)$, where $W_a\in\Obj(\RepV)$, $\mu\in\Hom_V(W_a\boxdot W_a,W_a)$, and $\iota\in\Hom_V(V,W_a)$ is assumed for simplicity to be an isometry. Viewing $\mc H_a=\fk F^V_\CWX(W_a)$ as a Hilbert space, both $\mc B_\Theta$ and $\mc A_{U_P}$ are conformal nets acting on $\mc H_a$ (since $U_P$ equals $W_a$ as vector space).

Here is a natural way to think about problem \textcircled{\small 1}: The actions of $\mc B_\Theta$ and $\mc A_{U_P}$ on $\mc H_a$, when restricted to $\mc A_V$, are equal since they are given by the $\mc A_V$-module $\mc H_a$. This means that if we let $\Theta'$ be the haploid commutative $C^*$-Frobenius algebra such that $\mc A_{U_P}=\mc B_{\Theta'}$, then $\Theta$ and $\Theta'$ are equal as objects of $\Rep^V(\mc A_V)$. In many cases (e.g., $V$ is a unitary $c<1$ Virasoro VOA, or a type $A$ unitary affine VOA of small rank), the equality of $\Theta$ and $\Theta'$ as objects implies that they are equivalent (and hence unitarily equivalent \cite{CGGH23}) as $C^*$-Frobenius algebras, cf. e.g. \cite{Gan23} and the reference therein. 

Unfortunately, at this moment, it is not known in general whether two $C^*$-Frobenius algebras are isomorphic as \textit{algebras} if they are isomorphic as \textit{objects}. Therefore, in this paper, we do not use this approach for problem \textcircled{\small 1}. Instead, we prove directly that $\mc B_\Theta$ equals $\mc A_{U_P}$ as conformal nets acting on $\mc H_a$. We will explain the idea of the proof later. For now, let me make two comments on this result: 

\begin{rem}
Note that the braided $C^*$-tensor structure on $\Rep(\mc A_V)$ can be defined either using Connes fusion or using DHR superselection theory \cite{FRS89,FRS92}. These two structures are equivalent, cf. \cite[Sec. 6]{Gui21a}. However, to prove that $\mc B_\Theta$ is equal to (but not just isomorphic to) $\mc A_{U_P}$, we use Connes fusion. In fact, as suggested in \cite{Gui21a,Gui26}, for the purpose of comparing the representation categories of VOAs and conformal nets using smeared operators, it is more convenient to use Connes fusion than DHR superselection theory. 
\end{rem}

\begin{rem}
The equality $\mc B_\Theta=\mc A_{U_P}$ relies on our choice of $\Theta$, which is the pushforward of $P$ to $\Rep^V(\mc A_V)$ via the braided $*$-functor $(\fk F^V_\CWX,\fk W^V)$. In particular, \uline{the Wassermann tensorator $\fk W^V$ is essential for the equation $\mc B_\Theta=\mc A_{U_P}$ to hold}. Defining $\Theta$ by a different tensorator will only give $\mc B_\Theta\simeq\mc A_{U_P}$ but not $\mc B_\Theta=\mc A_{U_P}$.
\end{rem}

\subsubsection{Comparison of braided $*$-functors}\label{lb110}

In this subsubsection, we make the identification
\begin{align}\label{eq103}
\big(\RepV,\boxdot,\ss\big)=\big(\Rep^V(\mc A_V),\boxtimes,\mathbb B\big)
\end{align}
via the braided $*$-functor $(\fk F^V_\CWX,\fk W^V)$. Then the haploid $C^*$-Frobenius algebra $P$ in $\RepV$ is identified with its pushforward $\Theta=(\fk F^V_\CWX,\fk W^V)(P)$ in $\Rep^V(\mc A_V)$. Therefore, we also have a canonical identification
\begin{align}\label{eq102}
\big(\Rep^0(P),\boxdot_P,\ss^P\big)=\big(\Rep^0_{\Rep^V(\mc A_V)}(\Theta),\boxtimes_\Theta,\mathbb B^\Theta\big)
\end{align}
The subscript $\Rep^V(\mc A_V)$ in $\Rep^0_{\Rep^V(\mc A_V)}(\Theta)$ emphasizes that $\Rep^0_{\Rep^V(\mc A_V)}(\Theta)$ is the category of dyslectic $\Theta$-modules in $\Rep^V(\mc A_V)=\fk F^V_\CWX(\RepV)$ (but not e.g. in $\Rep(\mc A_V)$).

Now, by \eqref{eq93} and \eqref{eq94}, the braided $*$-functors $(\fk F_\CN,\fk N^\boxtimes)$ and $(\fk F_\VOA,\fk V^\boxdot)$ give an isomorphism of braided $C^*$-tensor categories
\begin{align}\label{eq96}
\big(\RepUP,\boxtimes_{U_P},\ss^{U_P} \big)\xlongrightarrow{\simeq}\big(\Rep_{\Rep^V(\mc A_V)}(\mc B_\Theta),\boxtimes_{\mc B_\Theta},\mathbb B^{\mc B_\Theta}\big)
\end{align}
where $\Rep_{\Rep^V(\mc A_V)}(\mc B_\Theta)$ is the category of $\mc B_\Theta$-modules whose restrictions to $\mc A_V$ are objects in $\Rep^V(\mc A_V)$. 

Since we have $\mc B_\Theta=\mc A_{U_P}$, \eqref{eq96} gives an isomorphism of braided $C^*$-tensor categories $\RepUP\simeq\Rep_{\Rep^V(\mc A_V)}(\mc A_{U_P})$. Can we give this abstract isomorphism a more concrete description? To understand the importance of this question, look at the following scenario:

\begin{eg}
According to \cite[Appendix]{DMNO13}, we have a VOA extension $V\subset U$ of unitary affine VOAs where $V$ is $G_2$ level $3$ and $U$ is $E_6$ level $1$. Note that both $V$ and $U$ satisfy Condition II. By \eqref{eq96}, the abstract machinery of $C^*$-Frobenius algebras gives a braided $C^*$-equivalence $\RepU\simeq\Rep_{\Rep^V(\mc A_V)}(\mc A_U)$ implemented by a braided $*$-functor. Is this braided $*$-functor equal to $(\fk F^U_\CWX,\fk W^U)$? In particular, forgetting the tensorator, this question asks whether this functor corresponds to the construction of loop group representations through exponentiating the corresponding affine Lie algebra representations (as in  \cite{GF93,Was98,TL99} for example).
\end{eg}

Let us summarize our questions:
\begin{itemize}
\item[\large\textcircled{\small 2}] Suppose that $V$ satisfies Condition II. Forgetting the tensorators, is the functor in \eqref{eq96} equal to $\fk F^{U_P}_\CWX$?
\item[\large\textcircled{\small 3}] Suppose that both $V$ and $U$ satisfy Condition II. Is the braided $*$-functor \eqref{eq96} equal to $(\fk F^{U_P}_\CWX,\fk W^{U_P})$?
\end{itemize}

In this paper, we show that the answers to \large\textcircled{\small 1}, \large\textcircled{\small 2}, \large\textcircled{\small 3} \normalsize are all affirmative: 

\begin{thm}
Assume that $V$ satisfies Condition II. Then, under the identifications \eqref{eq103} and \eqref{eq102}, we have $\mc A_{U_P}=\mc B_\Theta$, and we have a commutative diagram of $*$-functors
\begin{equation}\label{eq100}
\begin{tikzcd}[row sep=large, column sep=small]
             & \Rep^0(P) \arrow[ld,"\simeq","\fk F_\VOA"'] \arrow[rd,"\fk F_\CN","\simeq"'] &   \\
\RepUP \arrow[rr,"{\fk F^{U_P}_\CWX}","\simeq"'] &                         & \Rep_{\Rep^V(\mc A_V)}(\mc B_\Theta)
\end{tikzcd}
\end{equation}
which can be extended to the following commutative diagram of braided $*$-functors when $U$ also satisfies Condition II:
\begin{equation}\label{eq101}
\begin{tikzcd}[row sep=large, column sep=-0.3cm]
             & \big(\Rep^0(P),\boxdot_P,\ss^P\big) \arrow[ld,"\simeq","{(\fk F_\VOA,\fk V^\boxdot)}"'] \arrow[rd,"{(\fk F_\CN,\fk N^\boxtimes)}","\simeq"'] &   \\
\big(\RepUP,\boxdot_{U_P},\ss^{U_P}\big) \arrow[rr,"{(\fk F^{U_P}_\CWX,\fk W^{U_P})}","\simeq"'] &                         & \big(\Rep_{\Rep^V(\mc A_V)}(\mc B_\Theta),\boxtimes_{\mc B_\Theta},\mathbb B^{\mc B_\Theta}\big)
\end{tikzcd}
\end{equation}
\end{thm}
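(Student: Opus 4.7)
The plan is to separate the theorem into three stages corresponding to the three problems \large\textcircled{\small 1}, \large\textcircled{\small 2}, \large\textcircled{\small 3}\normalsize, and to use the categorical extension as the common bridge between the VOA side and the conformal-net side. The unifying principle is that under the identification \eqref{eq103} the pushforward $\Theta=(\fk F^V_\CWX,\fk W^V)(P)$ has the \emph{same} object (namely $\mc H_a$) as $P$ has in $\RepV$, and the Wassermann tensorator translates smeared $V$-intertwining operators into bounded operators implementing Connes fusion. This is what makes $=$ rather than merely $\simeq$ reasonable to hope for.

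First I would prove $\mc A_{U_P}=\mc B_\Theta$ as conformal nets on $\mc H_a$. Both nets contain $\mc A_V$ acting in the same way, so it suffices to identify them on a generating family for each $\mc B_\Theta(I)$ and $\mc A_{U_P}(I)$. On the net-extension side, $\mc B_\Theta(I)$ is generated by $\mc A_V(I)$ and the charged/DHR-like operators determined by the multiplication $m_\Theta:\Theta\boxtimes\Theta\to\Theta$ localized in $I$; concretely these are the bounded operators on $\mc H_a$ obtained from $\mu:W_a\boxdot W_a\to W_a$ transported by $\fk W^V$ (this is where the choice of Wassermann tensorator enters essentially). On the VOA side, $\mc A_{U_P}(I)$ is generated by $\mc A_V(I)$ together with the closures $\ovl{Y_{U_P}(u,f)}$ for $u\in W_a$, $f\in C_c^\infty(I)$, where $Y_{U_P}(u,\cdot)$ is, by construction of $U_P$ from $P$, a $V$-intertwining operator of type ${W_a\choose W_a\,W_a}$. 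By Theorem~\ref{lb103}/\ref{lb78} (Condition II) these intertwining operators are energy-bounded and satisfy strong intertwining. The strong intertwining property is exactly the input needed for the Wassermann computation in \cite{Gui20} that identifies the bounded version of such a smeared intertwining operator with the charged field attached to $m_\Theta$ acting on $\mc H_a$. Two inclusions $\mc A_{U_P}(I)\subset\mc B_\Theta(I)$ and $\mc B_\Theta(I)\subset\mc A_{U_P}(I)$ then follow by taking double commutants.

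Next I would prove the commutativity of \eqref{eq100}. Let $W_i\in\Rep^0(P)$ with $P$-module action $\mu_i:W_a\boxdot W_i\to W_i$. The three functors produce on the underlying Hilbert space $\mc H_i=\fk F^V_\CWX(W_i)$ three representations of $\mc A_V$ that agree by construction, plus an extension to $\mc A_{U_P}=\mc B_\Theta$. For $\fk F_\VOA$ this extension is implemented (after applying $\fk F^{U_P}_\CWX$) by closures of smeared $U_P$-module vertex operators $Y_{U_P,i}(u,f)$ with $u\in W_a$, which are $V$-intertwining operators of type ${W_i\choose W_a\,W_i}$ determined by $\mu_i$. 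For $\fk F_\CN$ the extension is given by the $\Theta$-action on $\mc H_i$ in $\Rep^V(\mc A_V)$, i.e.\ by the bounded operators corresponding to $\mu_i$ via $\fk W^V$. This is literally the same statement as in Stage 1, with $\mu$ replaced by $\mu_i$; once again strong intertwining from Condition II together with \cite{Gui20} gives the pointwise equality of the two $\mc A_{U_P}(I)$-actions on $\mc H_i$. Naturality in morphisms is automatic, since morphisms in $\Rep^0(P)$ are $V$-morphisms already sent to themselves (as bounded operators on $\mc H_i$) under all three functors.

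Finally I would compare tensorators assuming $U$ satisfies Condition II so that $\fk W^{U_P}$ exists by Theorem~\ref{lb104} and the discussion preceding \eqref{eq95}. Both $\fk W^{U_P}_{i,j}$ and the composed tensorator $(\fk N^\boxtimes)_{i,j}\circ(\fk V^\boxdot)^{-1}_{i,j}$ are natural unitaries $\mc H_i\boxdot_{U_P}\mc H_j\to\mc H_i\boxtimes_{\mc A_{U_P}}\mc H_j$, and each is characterized by a formula of the form ``$\mc Y(w^{(i)},f)\eta_j\mapsto L(\ovl{\mc Y(w^{(i)},f)})\eta_j$'' on dense subspaces, where $\mc Y$ runs over a generating set of energy-bounded $U_P$-intertwining operators. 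On the Frobenius-algebra side, the composite tensorator is built from $\fk W^V$ applied to $V$-intertwining operators subject to the $P$-module compatibility; but a $U_P$-intertwining operator is precisely a $V$-intertwining operator that intertwines the $P$-module structure (this is the content of the $\fk F_\VOA$-isomorphism \eqref{eq94}). Thus the two defining formulas coincide on the same generating family of intertwining operators, which by density gives equality of the natural unitaries, and hence of the braided $*$-functors (braiding compatibility follows automatically from the fact that both sides already implement braided equivalences).

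The main obstacle I expect is Stage 1: the genuine equality $\mc A_{U_P}=\mc B_\Theta$, rather than an abstract isomorphism, requires a bookkeeping-heavy verification that the pushforward of $m_P=\mu$ via $\fk W^V$ has as its associated charged operators on $\mc H_a$ precisely the closures $\ovl{Y_{U_P}(u,f)}$ of $U_P$-smeared vertex operators, with no auxiliary unitary rotation. This is exactly where the Wassermann tensorator --- as opposed to any other coherent tensorator --- is indispensable, and where the ``strong intertwining'' version of the Wassermann computation from \cite{Gui20} (rather than the more elementary braiding statements) has to be invoked in its full strength; Stages 2 and 3 are then essentially variations of Stage 1 with modules and tensor products in place of the algebra object.
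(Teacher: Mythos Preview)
Your strategic outline matches the paper's approach closely: the paper also treats the three statements in stages and uses smeared intertwining operators as the bridge. The paper's organizing principle, however, is sharper than what you describe. Everything is deduced from a single technical result (Thm.~\ref{lb72}): for any homogeneous $w^{(j)}$ in a $U_P$-module whose intertwining operators are energy-bounded, the closures $\ovl{\Gamma^{U_P}(w^{(j)},\wtd g)}$ and $\ovl{\Delta^{U_P}(w^{(j)},\wtd g)}$ are left and right operators of the categorical extension $\scr E_Q=\mu_{\blt,\star}\circ\scr E_V$. This is proved by first building a weak categorical extension $\scr E_Q^w$ from the weak extension of $\mc A_V$ (Thm.~\ref{lb71}), checking that the smeared $U_P$-operators are \emph{weak} left/right operators of $\scr E_Q^w$, and then invoking Thm.~\ref{lb13}. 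Your Stage~1 then follows because a left operator of $\scr E_Q$ with charge space $\mc H_a$ is exactly a closed operator affiliated with $\mc B_Q(I)$ (Exp.~\ref{lb85}); Stage~2 is the same computation with the module action $\mu^i$; and Stage~3 is the one-line identity $\scr E_{U_P}=\fk V^\boxdot\circ\scr E_Q$ (Thm.~\ref{lb92}), which gives $\fk W^{U_P}\circ\fk V^\boxdot=\fk N^\boxdot$ immediately.

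One point in your Stage~1 should be tightened. You write that ``two inclusions \ldots\ follow by taking double commutants,'' but the Wassermann computation only directly gives $\mc A_{U_P}(I)\subset\mc B_\Theta(I)$: it shows each $\ovl{Y^{U_P}(u,f)}$ is affiliated with $\mc B_\Theta(I)$. The reverse inclusion does not come from identifying generators of $\mc B_\Theta(I)$ with specific smeared $U_P$-operators (the bounded $\mu L^\boxtimes(\xi,\wtd I)$ for $\xi\in\mc H_a(I)$ are not of that form). Instead the paper uses Haag duality: from $\mc A_{U_P}(I)\subset\mc B_\Theta(I)$ one gets $\mc B_\Theta(I')=\mc B_\Theta(I)'\subset\mc A_{U_P}(I)'=\mc A_{U_P}(I')$, and then swaps $I\leftrightarrow I'$. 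You should make this step explicit.
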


This theorem is almost Thm. \ref{lb95}, the main comparison theorem of this article, with the only difference being that the identifications \eqref{eq103} and \eqref{eq102} are not assumed in Thm. \ref{lb95}.

In fact, if $V$ is one of the VOAs in Exp. \ref{lb112}, the commutative diagram \eqref{eq100} becomes
\begin{equation}
\begin{tikzcd}[row sep=large, column sep=small]
             & \Rep^0(P) \arrow[ld,"\simeq","\fk F_\VOA"'] \arrow[rd,"\fk F_\CN","\simeq"'] &   \\
\RepUP \arrow[rr,"{\fk F^{U_P}_\CWX}","\simeq"'] &                         & \Repf(\mc B_\Theta)
\end{tikzcd}
\end{equation}
where $\Repf(\mc B_\Theta)$ is the category of dualizable (i.e. finite index) representations of $\mc B_\Theta$. If $V$ is one of the VOAs in Exp. \ref{lb111}, then \eqref{eq101} becomes
\begin{equation}
\begin{tikzcd}[row sep=large, column sep=-0.3cm]
             & \big(\Rep^0(P),\boxdot_P,\ss^P\big) \arrow[ld,"\simeq","{(\fk F_\VOA,\fk V^\boxdot)}"'] \arrow[rd,"{(\fk F_\CN,\fk N^\boxtimes)}","\simeq"'] &   \\
\big(\RepUP,\boxdot_{U_P},\ss^{U_P}\big) \arrow[rr,"{(\fk F^{U_P}_\CWX,\fk W^{U_P})}","\simeq"'] &                         & \big(\Repf(\mc B_\Theta),\boxtimes_{\mc B_\Theta},\mathbb B^{\mc B_\Theta}\big)
\end{tikzcd}
\end{equation}
See Cor. \ref{lb117}.

\subsection{Structure the article}

In this paper, we present several closely related but slightly different comparison results, each serving a different purpose. The most useful one is Cor. \ref{lb117}. Accordingly, the reader may view the proof of Cor. \ref{lb117} as the main objective of this paper. Exp. \ref{lb111} and \ref{lb112} provide explicit examples where Cor. \ref{lb117} is applicable. A more formal presentation of these examples is provided in Exp. \ref{lb118} and Thm. \ref{lb116}.

Cor. \ref{lb117} addresses both the analytic problems of strong locality and strong integrability, as well as the algebraic problem of comparing extensions and braided $*$-functors. These two types of problems are treated in a unified way by Thm. \ref{lb72}, which roughly states that the smeared intertwining operators of $U_P$ are affiliated with the categorical extension of $\mc B_\Theta$. 

The notion of weak categorical extensions provides an abstract framework for the smeared intertwining operators of VOAs. The relationship between weak categorical extensions and categorical extensions of conformal nets is similar to that between an algebra of unbounded closed/closable operators and the von Neumann algebra generated by this algebra. In this paper, to prove Them. \ref{lb72}, we first establish an abstract version, Thm. \ref{lb71}, formulated in terms of weak categorical extensions.

In Sections 1 and 2, we review the basic properties of categorical extensions and weak categorical extensions. In Section 3, we describe the canonical braided $*$-functor from the representation category of a haploid commutative $C^*$-Frobenius algebra $Q$ in $\Rep(\mc A)$ (the representation category of a conformal net $\mc A$) to the representation category of the conformal net associated with $Q$, and we prove Thm. \ref{lb71} mentioned above. An analogous braided $*$-functor in the VOA setting is recalled in Section 4. In Section 5, we prove Thm. \ref{lb72} (and its enhancement, Theorem \ref{lb83}) based on Theorem \ref{lb71}. Finally, Section 6 is devoted to the proof of Corollary \ref{lb117}.

\subsection*{Acknowledgment}

I would like to thank Sebastiano Carpi and Andr\'e Henriques for many helpful discussions.

\section{Categorical extensions}\label{lb5}

\subsection{Preliminaries}\label{lb18}

In the notation of a (densely defined) unbounded linear operator $T:\mc H_1\rightarrow\mc H_2$, $\mc H_1$ and $\mc H_2$ are Hilbert spaces, and the domain $T$ is denoted by $\Dom(T)$ (which is a dense subspace of $\mc H_1$).

An interval in the unit circle $\Sbb^1$ means a nonempty nondense connected open subset of $\Sbb^1$. If $I\in\mc J$, we let $I'$ be the interior of $\Sbb^1\setminus I$.

Let $\mc J$ be the set of intervals in $\Sbb^1$. Let $\mc A$ be an irreducible local conformal covariant net on  $\Sbb^1$, or a \textbf{conformal net} for short. The vacuum Hilbert space for $\mc A$ is denoted by $\mc H_0$. The vacuum vector is noted by $\Omega$. All representations (i.e. modules) of $\mc A$ are understood to be separable and normal. Let $\Rep(\mc A)$ be the $C^*$-category of $\mc A$-modules whose hom spaces are
\begin{align*}
\Hom_{\mc A}(\mc H_i,\mc H_j)=\{&\text{bounded linear }T:\mc H_i\rightarrow H_j \text{ such that }\\
&T\pi_{i,I}(x)=\pi_{j,I}(x)T\text{ for all } I\in\mc J,x\in\mc A(I)\}
\end{align*}
Define the universal cover $\scr G=\wtd{\Diffp}(\Sbb^1)$ where $\Diff^+(\Sbb^1)$ is the group of the orientation preserving diffeomorphisms of $\Sbb^1$.  For each $I\in\mc J$, let $\scr G(I)\subset\scr G$ be the branch containing $1$ of the inverse image of $\{g\in\Diff^+(\Sbb^1):g\text{ has support in }I\}$ under the covering map $\scr G\rightarrow\Diff^+(\Sbb^1)$.

Let $\scr C$ be a full ($C^*$-)subcategory of $\Rep(\mc A)$ containing $\mc H_0$ and closed under taking submodules and finite (orthogonal) direct sums. Choose a tensor $*$-bifunctor $\boxdot$ on $\scr C$, together with unitary associators $(\mc H_i\boxdot\mc H_j)\boxdot\mc H_k\rightarrow\mc H_i\boxdot(\mc H_j\boxdot\mc H_k)$ and unitary unitors $\mc H_0\boxdot\mc H_i\rightarrow\mc H_i,\mc H_i\boxdot\mc H_0\rightarrow\mc H_i$ so that $\scr C$ becomes a $C^*$-tensor category. We identify the domain and the codomain of each of these maps so that the associators and the unitors are identity maps. (In this paper, we are mainly interested in the case where $\scr C=\Rep^V(\mc A_V)$, cf. Def. \ref{lb44}.)

An \textbf{arg-valued interval} is of the form $\wtd I=(I,\arg_I)$ where $I\in\mc J$ and $\arg_I$ is an arg function on $I$, i.e. a continuous function $\arg_I:I\rightarrow\Rbb$ such that $\arg(e^{\im t})-t\in 2\pi\Zbb$ where $e^{\im t}\in I$. The set of arg-valued intervals is denoted by $\wtd{\mc J}$. If $\wtd I=(I,\arg_I)$ and $\wtd J=(J,\arg_J)$ are arg-valued functions, we write 
\begin{align*}
\wtd I\subset\wtd J\qquad\Leftrightarrow\qquad I\subset J\text{ and }\arg_J|_I=\arg_I
\end{align*}
We say that $\wtd I$ and $\wtd J$ are \textbf{disjoint} if $I,J$ are disjoint; we say that $\wtd I$ is \textbf{anticlockwise} to $\wtd J$ (or equivalently, $\wtd J$ is \textbf{clockwise} to $\wtd I$), if $\arg_J<\arg_I<\arg_J+2\pi$ (in particular, $I$ and $J$ are disjoint). We define the 
\textbf{clockwise complement} of $\wtd I=(I,\arg_I)$ to be
\begin{align}
\wtd I'=(I',\arg_{I'})\qquad\text{where }\wtd I'\text{ is clockwise to }\wtd I
\end{align}
Define the \textbf{anticlockwise complement} of $\wtd I=(I,\arg_I)$ such that
\begin{align}
(\bpr\wtd I)'=\wtd I
\end{align}

According to \cite{Hen19}, every $\mc A$-module $\mc H_i$ is \textbf{conformal covariant}, which means that there is a (necessarily unique) strongly continuous unitary representation $U_i$ of $\scr G_{\mc A}$ on $\mc H_i$ satisfying that for each $I\in\mc J$ and each $g\in\scr G_{\mc A}(I)$, noting that $U(g)$ is defined according to the definition of conformal nets and is in $\mc A(I)$, we have
\begin{align}\label{eq9}
U_i(g)=\pi_i(U(g))
\end{align}
Here $\scr G_{\mc A}$ is the canonical central extension of $\scr G$ associated to $\mc A$:
\begin{gather*}
\scr G_{\mc A}=\{(g,V):g\in\scr G,V\text{is a unitary on $\mc H_0$ representing $U(g)$}\}\\
\scr G_{\mc A}(I)=\text{the inverse image of $\scr G(I)$ under $\scr G_{\mc A}\rightarrow\scr G$}
\end{gather*}
Since the projectively unitary representations of $\PSU$ is lifted in a unique way to a (strongly continuous) unitary representation of $\UPSU$ \cite{Bar54}, $\UPSU$ is naturally a subgroup of $\scr G_{\mc A}$.  See \cite[Sec. 2.1]{Gui21a} for details. 

\begin{rem}
Note that by \cite[Thm. A.1]{Car04}, $\mc A$ contains a Virasoro subnet $\Vir_c$ for some central charge $c\geq0$. It follows that we have a canonical equivalence of topological groups
\begin{align}
\scr G_{\mc A}\simeq\scr G_c\xlongequal{\mathrm{def}} \scr G_{\Vir_c}
\end{align}
(See \cite[Sec. 1]{Gui21c} for a detailed explanation.) We identify $\scr G_{\mc A}$ with $\Gc$ throughout this article.
\end{rem}

\begin{rem}\label{lb50}
The action of $\Diffp(\Sbb^1)$ on $\mc J$ can be lifted continuously to an action of $\scr G$ of $\Jtd$ (by viewing each $\wtd I\in\Jtd$ as an interval of the universal cover $\Rbb$ of $\Sbb^1$), and hence is lifted to an action
\begin{align*}
\scr G_c\curvearrowright\Jtd
\end{align*}
\end{rem}

For each $\mc A$-module $\mc H_i$ and each $I\in\mc J$, we let
\begin{align}
\mc H_i(I)=\Hom_{\mc A(I')}(\mc H_0,\mc H_i)\Omega  \label{eq13}
\end{align}
where $\Hom_{\mc A(I')}(\mc H_0,\mc H_i)$ is the set of bounded linear operators $T:\mc H_0\rightarrow \mc H_i$ satisfying $Tx=\pi_{i,I}(x)T$ for all $x\in\mc A(I')$. Thus $\mc H_0(I)=\mc A(I)\Omega$ by Haag duality.

Recall that two closable operators $A,B$ on a Hilbert space $\mc H$ are said to \textbf{commute strongly} if $\{A\}''$ commutes with $\{B\}''$, where $\{A\}''$ is the smallest von Neumann algebra that $\ovl A$ (the closure of $A$) is affiliated with. (In other words, if we let $\ovl A=UH$ be the polar decomposition where $H$ is self-adjoint and $U$ is a partial isometry, then $\{A\}''$ is the von Neumann algebra generated by $U$ and $(1+H)^{-1}$.) $\{B\}''$ is understood in a similar way. When $A$ is closed and $B$ is bounded (and everywhere defined), then $A$ commutes strongly with $B$ iff $BA\subset AB$ and $B^*A\subset AB^*$, iff there is a core $\scr D$ for $A$ such that $B\cdot A|_{\scr D}\subset A\cdot B$ and $B^*\cdot A|_\Dom\subset A\cdot B^*$. In particular, if $A,B$ are both bounded, then they commute strongly iff they \textbf{commute adjointly} (i.e. $AB=BA$ and $AB^*=B^*A$). See (for example) the Appendix of \cite{Gui26}  for more details. 

\begin{df}
Let $\mc P_0,\mc Q_0, \mc R_0,\mc S_0$ be pre-Hilbert spaces with completions $\mc P,\mc Q,\mc R,\mc S$ respectively. Let  $A:\mc P\rightarrow\mc R,B:\mc Q\rightarrow\mc S,C:\mc P\rightarrow\mc Q,D:\mc R\rightarrow\mc S$ be closable operators whose domains are subspaces of $\mc P_0,\mc Q_0, \mc P_0,\mc R_0$ respectively, and whose ranges are inside $\mc R_0,\mc S_0,\mc Q_0,\mc S_0$ respectively. We say that the diagram of closable operators 	\textbf{commutes strongly}
\begin{equation}\label{eq1}
\begin{tikzcd}
\mc P_0 \arrow[r,"C"] \arrow[d,"A"'] & \mc Q_0 \arrow[d,"B"] \\
\mc R_0 \arrow[r,"D"]           & \mc S_0        
\end{tikzcd}
\end{equation}
if the following holds: Let $\mc H=\mc P\oplus\mc Q\oplus\mc R\oplus\mc S$. Define unbounded closable operators $R,S$ on $\mc H$ with domains $\Dom(R)=\Dom(A)\oplus\Dom(B)\oplus\mc R\oplus \mc S$, $\Dom(S)=\Dom(C)\oplus\mc Q\oplus\Dom(D)\oplus \mc S$, such that
	\begin{gather*}
	R(\xi\oplus\eta\oplus\chi\oplus\varsigma)=0\oplus 0\oplus A\xi\oplus B\eta\qquad(\forall \xi\in\Dom(A),\eta\in\Dom(B),\chi\in\mc R,\varsigma\in \mc S),\\
	S(\xi\oplus\eta\oplus\chi\oplus\varsigma)=0\oplus C\xi\oplus 0\oplus D\chi   \qquad(\forall \xi\in\Dom(C),\eta\in\mc Q,\chi\in \Dom(D),\varsigma\in\mc S).
	\end{gather*}
Then (the closures of) $R$ and $S$ commute strongly.

When the four pre-Hilbert spaces are complete and the four closable operators are bounded, and when \eqref{eq1} commutes strongly, we also say that \eqref{eq1} \textbf{commutes adjointly}, since we have $DA=BC$ and $D^*B=AC^*$.
\end{df}

\begin{lm}\label{lb35}
Let $D$ be a self-adjoint positive (closed) operator on a Hilbert space $\mc H$. Let $\mc H^\infty=\bigcap_{n\in\Nbb}\Dom(D^n)$. Let $A_1,\dots,A_m$ and $B_1,\dots,B_n$ be closable operators on $\mc H$ with common domain $\mc H^\infty$. Suppose that for every $1\leq i\leq m$ and $1\leq j\leq n$ we have that $A_i\mc H^\infty\subset \mc H^\infty$ and $B_j\mc H^\infty\subset\mc H^\infty$, and that there exists $\eps>0$ such that $e^{\im tD}A_i e^{-\im tD}$ commutes strongly with $B_j$ for all $t\in (-\eps,\eps)$. Let $A$ be a polynomial of $A_1,\dots,A_n$, let $B$ be a polynomial of $B_1,\dots,B_n$, and assume that $A$ and $B$ (with domain $\mc H^\infty$) are closable. Then $A$ commutes strongly with $B$.
\end{lm}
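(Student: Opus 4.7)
My plan is to apply the standard core criterion for strong commutativity, with $\mc H^\infty$ serving as the common core for $\ovl A$ and $\ovl B$; by the closability assumption this is automatic. Setting $t=0$ in the hypothesis gives that $\ovl{A_i}$ strongly commutes with $\ovl{B_j}$ for every $i,j$, so, writing $\mc N := \bigvee_j \{B_j\}''$, one has $\{A_i\}'' \subset \mc N'$ for each $i$. The main task is to lift this from the individual $A_i$'s to the polynomial $A$: concretely, to show that for every bounded $b\in\mc N$ and every $\xi\in\mc H^\infty$ one has $b\xi\in\Dom(\ovl A)$ with $\ovl A(b\xi) = b\cdot A\xi$ (and likewise for $b^*$). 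Combined with the analogous statement for $\ovl B$, the core criterion then yields the strong commutativity.

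The obstruction is that, although $b$ preserves $\Dom(\ovl{A_i})$ for each single $i$, the vector $b\xi$ generically leaves $\mc H^\infty$, so naive iteration through a monomial $A_{i_1}\cdots A_{i_k}$ fails after one application of $b$. This is where the $t$-dependence of the hypothesis enters. For bounded $b\in\mc N$ and $|t|<\eps$ the conjugate $b(t) := e^{-\im tD}b e^{\im tD}$ strongly commutes with each $\ovl{A_i}$, and the map $t\mapsto b(t)$ is strongly continuous and uniformly bounded. For any $\phi\in C_c^\infty((-\eps,\eps))$ I would therefore form the smoothed operator
\begin{align*}
A_i^\phi \xi \;:=\; \int \phi(t)\, e^{\im tD} A_i e^{-\im tD}\, \xi\, dt \qquad (\xi\in\mc H^\infty),
\end{align*}
whose integrand is a continuous $\mc H^\infty$-valued function (since each $A_i(t)$ preserves $\mc H^\infty$ and the seminorms $\|D^k\,\cdot\,\|$ vary continuously under $e^{\im tD}$-conjugation of $A_i$). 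A Fubini-type swap combined with the strong commutativity of each $\ovl{A_i(t)}$ with $\ovl{B_j}$ shows that $A_i^\phi$ also strongly commutes with $\ovl{B_j}$; but crucially, $A_i^\phi$ maps $\mc H^\infty$ \emph{into} $\mc H^\infty$, so iterated products $A_{i_1}^{\phi_1}\cdots A_{i_k}^{\phi_k}$ are well-defined on $\mc H^\infty$ by successive integration without ever leaving $\mc H^\infty$. Propagating the swap through the iterated integral, these products too strongly commute with each $\ovl{B_j}$.

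The final step is to let each $\phi_l$ converge to a Dirac measure at $0$. Then $A_{i_1}^{\phi_1}\cdots A_{i_k}^{\phi_k}\xi \to A_{i_1}\cdots A_{i_k}\xi$ in $\mc H$, and by closedness of $\ovl{A_{i_1}\cdots A_{i_k}}$ together with the preserved commutativity relation $b\cdot A^{\bsb\phi}\xi = A^{\bsb\phi}\cdot b\xi$ (the right-hand side being defined via the same integral formula but with $\xi$ replaced by $b\xi\in\bigcap_{|t|<\eps}\Dom(\ovl{A_i(t)})$), one identifies the limit of the right-hand side as $\ovl{A_{i_1}\cdots A_{i_k}}(b\xi)$. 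Linearity extends this to arbitrary polynomials in the $A_i$'s, and the argument with the roles of $A$ and $B$ swapped yields $\ovl A$ strongly commutes with $\ovl B$.

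The main obstacle I anticipate is the rigorous manipulation of iterated integrals of unbounded-operator-valued functions in the second step: justifying that the domain of the iterated integral is all of $\mc H^\infty$, that the Fubini swap with the bounded operator $b$ is legitimate, and that the limit as $\phi_l\to\delta_0$ can be taken inside the closure in a way that keeps the operand $b\xi$ inside the right intersection of domains. The key technical fact that makes this tractable is that $e^{\im tD}$ and each $A_i(t)$ preserve $\mc H^\infty$ for $|t|<\eps$, so all intermediate vectors in the iterated integrals lie in $\mc H^\infty$ and the relevant convergence is governed by the Fr\'echet topology generated by the seminorms $\|D^k\,\cdot\,\|$.
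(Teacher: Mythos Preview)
The paper does not give a proof here; it simply defers to \cite[Lem.~4.17]{Gui21a} as a standard technique, so there is no in-paper argument to compare against directly. Your overall plan---use the core criterion with $\mc H^\infty$, and exploit the $t$-dependence of the hypothesis via smoothing to overcome the fact that $b\xi$ need not lie in $\mc H^\infty$---is indeed the standard one. But your implementation has a real gap in the limit step. You smooth the \emph{operators}, forming $A^{\bsb\phi}=A_{i_1}^{\phi_1}\cdots A_{i_k}^{\phi_k}$, obtain $b\cdot A^{\bsb\phi}\xi=A^{\bsb\phi}(b\xi)$, let $\phi_l\to\delta_0$, and then invoke ``closedness of $\ovl{A_{i_1}\cdots A_{i_k}}$'' to conclude $b\xi\in\Dom(\ovl{A'})$ with $\ovl{A'}(b\xi)=bA'\xi$. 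Closedness does not do this: it controls limits along sequences $\eta_n\in\Dom(\ovl{A'})$ with $\eta_n\to\eta$, whereas here the vector $b\xi$ is fixed and it is the \emph{operator} that varies. Convergence of $A^{\bsb\phi}(b\xi)$ says nothing about $\Dom(\ovl{A'})$. The standard fix is to smooth the \emph{vector} instead: put $(b\xi)_\phi=\int\phi(t)e^{\im tD}(b\xi)\,dt\in\mc H^\infty$, write $e^{\im tD}(b\xi)=b(t)e^{\im tD}\xi$ with $b(t)=e^{\im tD}be^{-\im tD}$, use that $b(t)\in\{A_i\}'$ for $|t|<\eps$ to commute $b(t)$ past each factor and get $A'(b\xi)_\phi=\int\phi(t)\,b(t)\,A'e^{\im tD}\xi\,dt$; then $(b\xi)_\phi\to b\xi$ and $A'(b\xi)_\phi\to bA'\xi$, and now closedness of $\ovl{A'}$ genuinely applies since $(b\xi)_\phi\in\Dom(A')$.

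A second, smaller gap: even after you obtain $\{A\}''\subset\mc N'$ with $\mc N=\bigvee_j\{B_j\}''$, literally ``swapping the roles of $A$ and $B$'' only yields $\{B\}''\subset\bigl(\bigvee_i\{A_i\}''\bigr)'$, and these two inclusions together do \emph{not} give $\{A\}''\subset\{B\}'$. The clean way is to replace $\mc N$ by the thickened algebra $\mc N_\delta=\bigvee_{|t|<\delta}\bigvee_j\{e^{\im tD}B_je^{-\im tD}\}''$ for some $0<\delta<\eps$: the vector-smoothing argument then yields $\{A\}''\subset\mc N_\delta'$ (taking $\phi$ supported in $(-(\eps-\delta),\eps-\delta)$) and $\{B\}''\subset\mc N_\delta$ (taking $\phi$ supported in $(-\delta,\delta)$), and these combine to the desired conclusion.
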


\begin{proof}
This is a standard technique in the literature. See for example \cite[Lem. 4.17]{Gui21a}.
\end{proof}

Strongly commuting operators are adjointly commuting:
\begin{lm}\label{lb14}
Let $S,T$ be strongly commuting closed operators on a Hilbert space $\mc K$. Let $\xi\in\Dom(ST)\cap\Dom(S)$. Then $\xi\in\Dom(TS)$, and $ST\xi=TS\xi$.
\end{lm}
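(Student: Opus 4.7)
My plan is to reduce the claim to a bounded-operator statement via simultaneous spectral projections of $|S|$ and $|T|$. First I would write the polar decompositions $S=U|S|$ and $T=V|T|$, where $|S|=(S^*S)^{1/2}$ and $|T|=(T^*T)^{1/2}$. By definition of strong commutativity, $\{S\}''$ and $\{T\}''$ commute; in particular the partial isometries $U,V$ and the spectral projections $P_n:=\idt_{[0,n]}(|S|)$, $Q_n:=\idt_{[0,n]}(|T|)$ commute adjointly pairwise. I would then set $R_n:=P_nQ_n=Q_nP_n$, a bounded orthogonal projection lying in both $\{S\}''$ and $\{T\}''$ and hence commuting strongly with each of $S,T$. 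Crucially, because $|S|$ and $|T|$ act with norm $\leq n$ on the range of $R_n$, this range sits inside $\bigcap_{k,l\geq 0}\Dom(|S|^k|T|^l)$ and is invariant under each of the four bounded factors $U,V,|S|P_n,|T|Q_n$.

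Given $\xi\in\Dom(ST)\cap\Dom(S)$, I would approximate by $\xi_n:=R_n\xi$. Since $R_n$ is bounded and commutes strongly with $S$, we have $R_n\Dom(S)\subset\Dom(S)$ and $SR_n=R_nS$ on $\Dom(S)$; likewise for $T$. This gives $\xi_n\to\xi$, $S\xi_n=R_nS\xi\to S\xi$, and $T\xi_n=R_nT\xi\to T\xi$. Applying the same principle a second time to the vector $T\xi\in\Dom(S)$ yields $T\xi_n\in\Dom(S)$ and $S(T\xi_n)=R_n(ST\xi)\to ST\xi$.

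The key step is then a direct calculation on $\xi_n$, which lies in the $R_n$-invariant subspace where the four factors act boundedly and commute pairwise:
\[
ST\xi_n=U|S|V|T|\xi_n=UV|S||T|\xi_n=VU|T||S|\xi_n=V|T|U|S|\xi_n=TS\xi_n.
\]
Combining with the previous step, $S\xi_n\to S\xi$ and $T(S\xi_n)=TS\xi_n=ST\xi_n\to ST\xi$. Since $T$ is closed, I conclude $S\xi\in\Dom(T)$ and $TS\xi=ST\xi$.

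The main obstacle I anticipate is verifying cleanly that on the range of $R_n$ the four polar-decomposition factors $U,V,|S|,|T|$ genuinely commute as \emph{bounded} operators, so that every equality in the displayed chain is honest (not merely a formal manipulation of strongly commuting unbounded operators). Equivalently, one needs $V$ to preserve the range of $P_n$ and $U$ to preserve the range of $Q_n$, which follows from $U\in\{S\}''$, $V\in\{T\}''$, $P_n\in\{S\}''$, $Q_n\in\{T\}''$ together with the commutativity of $\{S\}''$ and $\{T\}''$. Once that bookkeeping is in place, the approximation passage and the final invocation of closedness of $T$ are routine.
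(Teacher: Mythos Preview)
Your argument is correct and complete. The paper itself does not give a proof here---it simply cites \cite[Prop.~1.A.5]{Gui20}, calling the result well-known---so your spectral-truncation approach via the projections $R_n=P_nQ_n$ supplies a full proof where the paper has none.

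One minor caution: the claim that $\Rng(R_n)$ is invariant under each of $U,V,|S|P_n,|T|Q_n$ is slightly overstated. Since $U$ and $P_n$ both lie in $\{S\}''$ (which need not be abelian), $U$ need not commute with $P_n$ and hence need not preserve $\Rng(P_n)$; the same goes for $V$ and $Q_n$. But this does not affect your displayed chain of equalities, which---as you correctly isolate in your final paragraph---uses only the \emph{cross}-commutations ($U$ with $|T|$ and $Q_n$; $V$ with $|S|$ and $P_n$; $|S|P_n$ with $|T|Q_n$; $U$ with $V$). Each of these holds because one factor lies in $\{S\}''$ and the other in $\{T\}''$, and the domain checks (e.g.\ $|T|\xi_n\in\Rng(P_n)\subset\Dom(|S|)$ and $|S|\xi_n\in\Rng(Q_n)\subset\Dom(|T|)$) follow from the same principle applied to the bounded operators $|T|Q_n$ and $|S|P_n$. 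So the chain is fully justified, and the closing appeal to closedness of $T$ finishes the argument.
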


It follows that if $\Dom_0$ is a common invariant domain of strongly commuting $S,T$, then $ST|_{\Dom_0}=TS|_{\Dom_0}$.

\begin{proof}
This is well-known and can be proved easily. See for example \cite[Prop. A.5]{Gui26}.
\end{proof}

\subsection{The tensorators associated to categorical extensions}

Let us recall the definition of categorical extensions of $\mc A$ \cite{Gui21a}.

\begin{df}\label{lb31}
A \textbf{closed and vector labeled categorical extension} (or a \textbf{categorical extension}  for short\footnote{There are non-closed and non vector-labeled categorical extensions in \cite{Gui21a}. In this paper, we only consider closed and vector-labeled ones.}) of $\mc A$  in $\scr C$ is a quintuple $\scr E=(\mc A,\scr C,\boxdot,\ss,\mc H)$ associating to each $\wtd I\in\wtd{\mc J}$, $\mc H_i,\mc H_k\in\Obj(\scr C)$, and $\xi\in\mc H_i(I)$, bounded linear operators
\begin{align*}
L(\xi,\wtd I)\in\Hom_{\mc A(I')}(\mc H_k,\mc H_i\boxdot\mc H_k)\qquad R(\xi,\wtd I)\in\Hom_{\mc A(I')}(\mc H_k,\mc H_k\boxdot\mc H_i)
\end{align*}
called the \textbf{L operators} and the \textbf{R operators}, satisfying the following conditions:
\begin{enumerate}[label=(\alph*)]
\item (Isotony) If $\wtd I_1\subset\wtd I_2\in\Jtd$, and $\xi\in\mc H_i(I_1)$, then $L(\xi,\wtd I_1)=L(\xi,\wtd I_2)$, $R(\xi,\wtd I_1)=R(\xi,\wtd I_2)$ when acting on any  $\mc H_k\in\Obj(\scr C)$.
\item (Naturality) If $\mc H_i,\mc H_j,\mc H_{j'}\in\Obj(\scr C)$, $\wtd I\in\Jtd$, $G\in\Hom_{\mc A}(\mc H_j,\mc H_{j'})$,  $\xi\in\mc H_i(I)$, and $\eta\in\mc H_j$, then
\begin{align}
	(\idt_i\boxdot G)L(\xi,\wtd I)\eta=L(\xi,\wtd I)G\eta,\qquad (G\boxdot \idt_i)R(\xi,\wtd I)\eta=R(\xi,\wtd I)G\eta.
\end{align}
\item (State-field correspondence) For any $\mc H_i\in\Obj(\scr C)$, under the identifications $\mc H_i=\mc H_i\boxdot\mc H_0=\mc H_0\boxdot\mc H_i$ defined by the unitors, the relation
\begin{align}
	L(\xi,\wtd I)\Omega=R(\xi,\wtd I)\Omega=\xi\label{eq14}
\end{align}
holds for any $\wtd I\in\Jtd,\xi\in\mc H_i(I)$. It follows immediately that when acting on $\mc H_0$, $L(\xi,\wtd I)$ equals $R(\xi,\wtd I)$ and is independent of $\arg_I$.
\item (Density of fusion products) If $\mc H_i,\mc H_k\in\Obj(\scr C),\wtd I\in\Jtd$, then the set $L(\mc H_i(I),\wtd I)\mc H_k$ spans a dense subspace of $\mc H_i\boxdot\mc H_k$, and $R(\mc H_i(I),\wtd I)\mc H_k$ spans a dense subspace of $\mc H_k\boxdot\mc H_i$.\footnote{Indeed, they are equal to the full space $\mc H_i\boxdot\mc H_k$ and $\mc H_k\boxdot\mc H_i$ respectively by the fact that $\mc A(I)$ is a type III factor. See \cite[Lem. 6.1]{Gui21a}.}
\item (Locality) For any $\mc H_k\in\Obj(\scr C)$, disjoint $\wtd I,\wtd J\in\Jtd$ with $\wtd I$ anticlockwise to $\wtd J$, and any $\xi\in\mc H_i(I),\eta\in\mc H_j(J)$, the following diagram commutes adjointly.
\begin{equation}
\begin{tikzcd}
\quad \mc H_k\quad \arrow[rr,"{R(\eta,\wtd J)}"] \arrow[d, "{L(\xi,\wtd I)}"'] &&\quad \mc H_k\boxdot\mc H_j\quad \arrow[d, "{L(\xi,\wtd I)}"]\\
\mc H_i\boxdot\mc H_k\arrow[rr,"{R(\eta,\wtd J)}"] &&\mc H_i\boxdot\mc H_k\boxdot\mc H_j
\end{tikzcd}
\end{equation}
\item (Braiding) The operation $\ss$ (called the \textbf{braiding operator/operation}) associates to $\mc H_i,\mc H_j$ a unitary linear map $\ss_{i,j}:\mc H_i\boxdot\mc H_j\rightarrow\mc H_j\boxdot \mc H_i$ such that  
\begin{align}
\ss_{i,j} L(\xi,\wtd I)\eta=R(\xi,\wtd I)\eta\label{eq8}
\end{align}
whenever $\wtd I\in\Jtd,\xi\in\mc H_i(I)$, $\eta\in\mc H_j$.
\end{enumerate}
When we want to stress that the left and right operators $L,R$ depend on the choice of $\scr C$ and $\boxdot$, we write them as $L^\boxdot,R^\boxdot$. 

We say that $\scr E$ is \textbf{conformal covariant} if for every $g\in\Gc,\wtd I\in\Jtd,\mc H_i\in\Obj(\Rep(\mc A)),\xi\in\mc H_i(I)$, there exists a (necessarily unique) element denoted by $g\xi g^{-1}\in\mc H_i(gI)$ satisfying
\begin{align}
L(g\xi g^{-1},g\wtd I)=gL(\xi,\wtd I)g^{-1}\qquad R(g\xi g^{-1},g\wtd I)=gR(\xi,\wtd I)g^{-1}
\end{align}
when acting on any $\mc H_j\in\Obj(\Rep(\mc A))$. (Recall Rem. \ref{lb50} for the meaning of $g\wtd I$.) We say that $\scr E$ is \textbf{M\"obius covariant} if the above condition holds for all $g\in\UPSU$. (In this case, we write $g\xi g^{-1}$ as $g\xi$ if $g\in\UPSU$.)   \hfill\qedsymbol
\end{df}

\begin{rem}
Though we will not use this fact in this paper, we note that the braiding operator $\ss$ is automatically a homomorphism of $\mc A$-modules and makes $(\scr C,\boxdot,\ss)$ a braided $C^*$-tensor category. See \cite[Thm. 3.9]{Gui21a}.
\end{rem}

From the fact that $L(\xi,\wtd I),R(\xi,\wtd I)$ intertwine the actions of $\mc A(I')$, it is easy to see that if $x\in\mc A(I)$, then
\begin{align}\label{eq3}
L(x\Omega,\wtd I)|_{\mc H_k}=R(x\Omega,\wtd I)|_{\mc H_k}=\pi_{k,I}(x)
\end{align}

\begin{thm}\label{lb12}
There is a canonical (closed and vector labeled) conformal covariant categorical extension
\begin{align*}
\scr E_{\mathrm{Connes}}=(\mc A,\Rep(\mc A),\boxtimes_{\mc A},\mbb B^{\mc A},\mc H)
\end{align*}
defined by Connes fusion product. $\scr E_{\mathrm{Connes}}$ is called the \textbf{Connes categorical extension}. We
\begin{align*}
\text{abbreviate $\boxtimes_{\mc A}$ to $\boxtimes$ and $\mbb B^{\mc A}$ to $\mbb B$}
\end{align*}
when no confusion arises. Then
\begin{align*}
(\Rep(\mc A),\boxtimes_{\mc A},\mbb B^{\mc A})\equiv (\Rep(\mc A),\boxtimes,\mbb B)
\end{align*}
is a braided $C^*$-tensor category, called the \textbf{Connes braided $C^*$-tensor category}.
\end{thm}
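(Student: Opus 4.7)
The plan is to construct $\boxtimes$ via Connes fusion of $\mc A$-modules, extract the $L$ and $R$ operators from the bounded localized intertwiners $\mc H_i(I)=\Hom_{\mc A(I')}(\mc H_0,\mc H_i)\Omega$, define the braiding $\mathbb B$ by the compatibility \eqref{eq8}, and verify the six axioms plus conformal covariance.

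For a reference interval $I_0$, I would realize $\mc H_i\boxtimes\mc H_j$ as the Connes fusion over the type III factor $\mc A(I_0')$, exploiting that each $\mc A$-module is canonically an $\mc A(I_0)$-$\mc A(I_0')$-bimodule. Haag duality and the type III property guarantee that the result carries a canonical $\mc A$-module structure and is, up to canonical unitary isomorphism, independent of $I_0$. Informally, vectors of $\mc H_i\boxtimes\mc H_j$ are limits of formal tensors $T\boxtimes\eta$ with $T\in\Hom_{\mc A(I_0')}(\mc H_0,\mc H_i)$ and $\eta\in\mc H_j$. For $\xi\in\mc H_i(I)$, let $T_\xi\in\Hom_{\mc A(I')}(\mc H_0,\mc H_i)$ be the unique operator with $T_\xi\Omega=\xi$, and set $L(\xi,\wtd I)\eta:=T_\xi\boxtimes\eta\in\mc H_i\boxtimes\mc H_k$ and $R(\xi,\wtd I)\eta:=\eta\boxtimes T_\xi\in\mc H_k\boxtimes\mc H_i$, with the evident transport when $I\neq I_0$. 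Axioms (a)--(d) are then essentially formal: isotony is tautological, naturality is built in, state-field correspondence reduces to $T_\xi\boxtimes\Omega=T_\xi\Omega=\xi$ under $\mc H_i\boxtimes\mc H_0\cong\mc H_i$, and density uses that $\Hom_{\mc A(I')}(\mc H_0,\mc H_i)\Omega$ is dense in $\mc H_i$ (a consequence of $\mc A(I)$ being type III). Locality (e) amounts to showing that a triple balanced tensor $T_\xi\boxtimes\eta\boxtimes T_{\eta'}$ is formed unambiguously when $T_\xi,T_{\eta'}$ are localized in disjoint intervals, which follows from Haag duality combined with the bimodule associativity of Connes fusion.

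The delicate step is the braiding. I would attempt to declare $\mathbb B_{i,j}L(\xi,\wtd I)\eta:=R(\xi,\wtd I)\eta$ via \eqref{eq8} and show this extends to a well-defined unitary $\mc H_i\boxtimes\mc H_j\to\mc H_j\boxtimes\mc H_i$ independent of $\wtd I$. The natural strategy is a rotation argument: the arg function $\arg_I$ selects a canonical path in $\Gc$ that continuously carries $I$ through the disk to an interval anticlockwise of itself, and implementing this rotation via $U_i\boxtimes U_k$ should convert the $L$-picture into the $R$-picture, up to a monodromy that is precisely absorbed by the $2\pi$-periodicity of $\arg_I$. Conformal covariance of $\scr E_\Connes$ then follows by setting $g\xi g^{-1}:=U_i(g)T_\xi U_0(g)^{-1}\Omega$ for $g\in\Gc$ and checking naturality of the fusion product under the $\Gc$-action. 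The main obstacle, and the central analytic content of the theorem, is exactly this well-definedness of $\mathbb B$: one must verify that the putative braiding map is unambiguous on the dense subspace spanned by $L$-vectors and reproduces the DHR statistics operator, so that the hexagon axioms hold automatically. Once $\mathbb B$ is in place and all six axioms of Def.~\ref{lb31} are verified, the assertion that $(\Rep(\mc A),\boxtimes,\mathbb B)$ forms a braided $C^*$-tensor category follows from the general categorical-extension framework recorded in the preceding remark, namely \cite[Thm. 3.9]{Gui21a}.
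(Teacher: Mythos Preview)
The paper does not actually give a proof of this theorem: it simply cites \cite[Thm.~3.4, 3.5]{Gui21a} and points to \cite[Sec.~A, B]{Gui21b} for a sketch. So there is no argument in the paper to compare against; the content lives entirely in the cited references.

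Your outline is broadly consistent with the construction carried out in \cite{Gui21a}, and the ingredients you name (Connes fusion over $\mc A(I_0')$, $L(\xi,\wtd I)\eta=T_\xi\boxtimes\eta$, density from the type~III property, locality from bimodule associativity plus Haag duality) are the right ones. One point of divergence: your rotation/monodromy description of the braiding is closer in spirit to the DHR picture than to what is done in \cite{Gui21a}. There the braiding is defined directly by the formula $\mathbb B_{i,j}L(\xi,\wtd I)\eta=R(\xi,\wtd I)\eta$, and the work goes into proving well-definedness (that this prescription extends to a unitary on the whole of $\mc H_i\boxtimes\mc H_j$ and is independent of the choice of $\wtd I$) using the transport/path-independence machinery built into the Connes fusion construction, rather than by literally rotating through $\Gc$. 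The two pictures are equivalent, but your phrasing ``continuously carries $I$ through the disk'' would need to be made precise via the path groupoid of intervals, which is exactly the apparatus developed in \cite{Gui21a}. Also, the braided $C^*$-tensor category structure is part of the cited theorems themselves, not a separate consequence of \cite[Thm.~3.9]{Gui21a} applied after the fact.
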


\begin{proof}
This is \cite[Thm. 3.4, 3.5]{Gui21a}. See \cite{Gui21a} for details, and \cite[Sec. A, B]{Gui21b} for a sketch of the construction.
\end{proof}

\begin{thm}\label{lb1}
Let $\scr E=(\mc A,\scr C,\boxdot,\ss,\mc H)$ be a categorical extension. Then $\scr E$ can be embedded canonically into the Connes categorical extension $\scr E_{\mathrm{Connes}}=(\mc A,\Rep(\mc A),\boxtimes,\mbb B,\mc H)$. More precisely, this means the following:

There is a natural $\Psi$ (called the \textbf{tensorator associated to $\scr E$}) associating to each $\mc H_i,\mc H_j\in\Obj(\scr C)$ a unitary isomorphism of $\mc A$-modules
\begin{align}
\Psi=\Psi_{i,j}:\mc H_i\boxdot\mc H_j\rightarrow\mc H_i\boxtimes\mc H_j
\end{align}
(where ``natural" means  $\Psi_{i',j'}(F\boxdot G)=(F\boxtimes G)\Psi_{i,j}$ for all $F\in\Hom_{\mc A}(\mc H_i,\mc H_{i'})$ and $G\in\Hom_{\mc A}(\mc H_j,\mc H_{j'})$) such that 
\begin{align}\label{eq90}
(\id,\Psi):(\scr C,\boxdot,\ss)\rightarrow(\Rep(\mc A),\boxtimes,\mbb B)
\end{align}
is a braided $*$-functor, i.e., for each $\mc H_i,\mc H_j,\mc H_k\in\Obj(\scr C)$ the following three statements are true:
\begin{enumerate}[label=(\alph*)]
\item The following diagram commutes.
\begin{equation}
\begin{tikzcd}[column sep=large]
\mc H_i\boxdot\mc H_j\boxdot\mc H_k \arrow[r,"\Psi_{i\boxdot k,j}"] \arrow[d,"\Psi_{i,k\boxdot j}"'] &  \arrow[d,"\Psi_{i,k}\boxtimes\idt_j"](\mc H_i\boxdot\mc H_k)\boxtimes\mc H_j \\
\mc H_i\boxtimes(\mc H_k\boxdot\mc H_j) \arrow[r,"\idt_i\boxtimes \Psi_{k,j}"]           & \mc H_i\boxtimes\mc H_k\boxtimes\mc H_j          
\end{tikzcd}
\end{equation}
\item The following two maps equal the identity map $\idt_i:\mc H_i\rightarrow\mc H_i$.
\begin{subequations}\label{eq61}
	\begin{gather}
	\mc H_i\simeq \mc H_0\boxdot\mc H_i\xrightarrow{\Psi_{0,i}} \mc H_0\boxtimes\mc H_i\simeq\mc H_i,\\
	\mc H_i\simeq \mc H_i\boxdot\mc H_0\xrightarrow{\Psi_{i,0}} \mc H_i\boxtimes\mc H_0\simeq\mc H_i. 
	\end{gather}
\end{subequations}
\item The following diagram commutes.
\begin{equation*}
\begin{tikzcd}
\mc H_i\boxdot\mc H_j \arrow[d,"\Psi_{i,j}"'] \arrow[r,"\ss_{i,j}"] & \mc H_j\boxdot\mc H_i \arrow[d,"\Psi_{j,i}"] \\
\mc H_i\boxtimes\mc H_j \arrow[r,"\mbb B_{i,j}"]           & \mc H_j\boxtimes\mc H_i          
\end{tikzcd}
\end{equation*}
\end{enumerate}
Moreover, for any $\wtd I\in\wtd{\mc J},\mc H_i,\mc H_j\in\Obj(\scr C),\xi\in\mc H_i(I)$ we have
\begin{subequations}\label{eq12}
\begin{gather}
\Psi_{i,j}L^{\boxdot}(\xi,\wtd I)|_{\mc H_j}=L^{\boxtimes}(\xi,\wtd I)|_{\mc H_j}\\
\Psi_{j,i}R^{\boxdot}(\xi,\wtd I)|_{\mc H_j}=R^{\boxtimes}(\xi,\wtd I)|_{\mc H_j}
\end{gather}
\end{subequations}
\end{thm}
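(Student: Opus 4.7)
My plan is to take equation \eqref{eq12} as the \emph{definition} of $\Psi$ rather than a consequence. By the density axiom (d) of Def.~\ref{lb31}, for any fixed $\wtd I\in\Jtd$ the vectors $L^\boxdot(\xi,\wtd I)\eta$ with $\xi\in\mc H_i(I)$, $\eta\in\mc H_j$ span a dense subspace of $\mc H_i\boxdot\mc H_j$, and similarly on the Connes side. Thus if the prescription $L^\boxdot(\xi,\wtd I)\eta\mapsto L^\boxtimes(\xi,\wtd I)\eta$ extends to a well-defined isometry, it automatically extends to a unitary $\Psi_{i,j}:\mc H_i\boxdot\mc H_j\to\mc H_i\boxtimes\mc H_j$; since both $L^\boxdot(\xi,\wtd I)$ and $L^\boxtimes(\xi,\wtd I)$ intertwine the $\mc A(I')$-actions on $\mc H_j$, $\Psi_{i,j}$ is then automatically an $\mc A(I')$-intertwiner, and will become an $\mc A$-module morphism once independence of $\wtd I$ is established. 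Naturality in the morphisms of $\scr C$ is immediate from naturality of the L-operators (Def.~\ref{lb31}(b)).

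The crux is the inner-product identity. I would further restrict to $\eta = \zeta \in \mc H_j(J)$ with $\wtd J$ clockwise to $\wtd I$; such vectors are dense in $\mc H_j$, and by state-field correspondence $\zeta = R^\boxdot(\zeta,\wtd J)\Omega$. Locality (axiom (e)) gives adjoint commutation of $L^\boxdot(\xi,\wtd I)$ with $R^\boxdot(\zeta,\wtd J)$, so
\[
\big\langle L^\boxdot(\xi,\wtd I)\zeta,\;L^\boxdot(\xi',\wtd I)\zeta'\big\rangle_{\mc H_i\boxdot\mc H_j} = \big\langle R^\boxdot(\zeta,\wtd J)\xi,\;R^\boxdot(\zeta',\wtd J)\xi'\big\rangle
\]
after using $L^\boxdot(\xi,\wtd I)\Omega=\xi$. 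A second application of locality together with \eqref{eq3} and Haag duality on the vacuum module reduces this pairing to a scalar expression built entirely from the bounded intertwiners $\xi,\xi',\zeta,\zeta'\in\Hom_{\mc A(-)}(\mc H_0,-)$ and the net structure of $\mc A$, independent of the choice of categorical extension. The analogous computation for $\scr E_{\mathrm{Connes}}$ yields the same scalar, establishing the desired isometry. Independence of $\wtd I$ follows from isotony (axiom (a)) for nested pairs, extended to arbitrary pairs either via a M\"obius transformation combined with conformal covariance or via a finite chain of nested intervals.

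The braided $*$-functor axioms are then essentially formal. Axiom (b) reads, after identifying $\mc H_0\boxdot\mc H_i=\mc H_i=\mc H_0\boxtimes\mc H_i$, as $\Psi_{0,i}(L^\boxdot(\xi,\wtd I)\Omega)=L^\boxtimes(\xi,\wtd I)\Omega$, i.e.\ $\xi\mapsto\xi$ by state-field correspondence; similarly for $\Psi_{i,0}$. Axiom (c), which is exactly the second equation of \eqref{eq12}, is obtained from the braiding axiom \eqref{eq8}: $\Psi_{j,i}\ss_{i,j}L^\boxdot(\xi,\wtd I)\eta=\Psi_{j,i}R^\boxdot(\xi,\wtd I)\eta=R^\boxtimes(\xi,\wtd I)\eta=\mbb B_{i,j}\Psi_{i,j}L^\boxdot(\xi,\wtd I)\eta$, where the middle equality is the analog of \eqref{eq12} for $R$-operators, proved by repeating the well-definedness argument with $R$ in place of $L$. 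Axiom (a) (the hexagon for associators) is verified on vectors of the form $L^\boxdot(\xi,\wtd I_1)L^\boxdot(\zeta,\wtd I_2)\eta$ using isotony to place $\wtd I_1,\wtd I_2$ in a common larger interval, plus the defining equation.

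The main obstacle is the inner-product identity in the second paragraph, whose rigorous derivation requires careful use of locality, Haag duality, and the net structure of $\mc A$; once this is settled, the remaining verifications are formal manipulations of \eqref{eq12}. It is worth noting that the argument closely parallels the uniqueness statements for categorical extensions already established in \cite{Gui21a}, from which the present result may alternatively be extracted.
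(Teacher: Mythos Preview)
Your proposal is correct and essentially reconstructs the argument of \cite[Thm.~3.10]{Gui21a}, which is precisely what the paper cites as its entire proof. The paper gives no independent argument here; your sketch of defining $\Psi$ via \eqref{eq12}, establishing the isometry by reducing $L^\boxdot(\xi',\wtd I)^*L^\boxdot(\xi,\wtd I)|_{\mc H_j}$ to $\pi_{j,I}(x)$ with $x\in\mc A(I)$ depending only on the net data (via locality, state-field correspondence, \eqref{eq3}, and Haag duality), and then reading off the functor axioms formally, is the standard route and matches what is done in the cited reference.

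One small remark: in your verification of axiom~(c), rather than separately repeating the well-definedness argument for $R$-operators, it is slightly cleaner to check directly that $\Psi_{j,i}R^\boxdot(\xi,\wtd I)\eta = R^\boxtimes(\xi,\wtd I)\eta$ by taking $\eta\in\mc H_j(J)$ with $\wtd J$ anticlockwise to $\wtd I$ and applying locality on both sides to rewrite each as $L^{\boxdot/\boxtimes}(\eta,\wtd J)\xi$; this avoids having to reconcile two a~priori different definitions of $\Psi_{j,i}$. Also, the chain-of-nested-intervals argument for independence of $\wtd I$ is the right one to use, since conformal covariance of $\scr E$ is only established \emph{after} this theorem (cf.\ Rem.~\ref{lb20}).
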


\begin{proof}
This is \cite[Thm. 3.10]{Gui21a}.
\end{proof}

\begin{rem}\label{lb38}
The braided $*$-functor \eqref{eq90} clearly restricts to an equivalence of braided $C^*$-tensor categories $(\scr C,\boxdot,\ss)\xlongrightarrow{\simeq}(\wht{\scr C},\boxtimes,\mathbb B)$ where $\wht{\scr C}$ is the \textbf{repletion of $\scr C$}, i.e., the full $C^*$-subcategory of all $\mc H_i\in\Obj(\Rep(\mc A))$ unitarily equivalent to some object in $\scr C$. (In particular, $\wht{\scr C}$ is closed under $\boxtimes$.)
\end{rem}

\begin{rem}\label{lb99}
It is illuminating to write \eqref{eq12} as
\begin{align}
\Psi\circ\scr E=\scr E_\Connes|_{\scr C}\subset\scr E_{\mathrm{Connes}}
\end{align}
Then Thm. \ref{lb1} can be summarized by the elegant statement: for every categorical extension $\scr E$ there is a natural unitary $\Psi$ making \eqref{eq90} a braided $*$-functor such that $\Psi\circ\scr E=\scr E_{\mathrm{Connes}}|_{\scr C}$. 

It is easy to see that the converse is also true: Suppose that we have a braided $*$-functor \eqref{eq90} where $\Psi$ is natural and unitary, then  $\Psi^*\circ\scr E_{\mathrm{Connes}}|_{\scr C}$ is a categorical extension. (Namely, \eqref{eq12} defines a categorical extension with L and R operators $L^\boxdot,R^\boxdot$.)
\end{rem}

\begin{rem}\label{lb20}
Since $\scr E_\Connes$ is conformal covariant, by Thm. \ref{lb1}, we see that any (closed and vector-labeled) categorical extension is conformal covariant.
\end{rem}

\subsection{Left and right operators}\label{lb22}

\begin{df}\label{lb29}
For each $\mc H_i\in\Obj(\Rep(\mc A))$ and $I\in\mc J$, we let $\mc H_i^\pr(I)$ be the set of all $\xi\in\mc H_i$ such that the following densely defined linear map is closable (preclosed):
\begin{align}
x\Omega\in\mc A(I')\Omega\qquad\mapsto\qquad x\xi\in\mc H_i
\end{align}
Clearly $\mc H_i(I)\subset\mc H_i^\pr(I)$.
\end{df}

\begin{thm}\label{lb7}
Let $\scr E=(\mc A,\scr C,\boxdot,\ss,\mc H)$ be a categorical extension. Then there exist unique operations $\scr L^\boxdot$ and $\scr R^\boxdot$ (or simply $\scr L,\scr R$) associating to each $\mc H_i\in\Obj(\scr C),\wtd I\in\Jtd,\xi\in\mc H_i^\pr(I)$, and each $\mc H_k\in\Obj(\scr C)$, closable operators
\begin{gather*}
\scr L(\xi,\wtd I)|_{\mc H_k}:\mc H_k\rightarrow\mc H_i\boxdot\mc H_k \qquad \scr R(\xi,\wtd I)|_{\mc H_k}:\mc H_k\rightarrow\mc H_k\boxdot\mc H_i
\end{gather*}
with common core $\mc H_i(I')$ such that the following conditions are satisfied for all $\mc H_i,\mc H_j,\mc H_k,\mc H_{k'}\in\Obj(\scr C)$:
\begin{enumerate}[label=(\alph*)]
\item If $\xi\in\mc H_i(I)$, then $\scr L(\xi,\wtd I)=L(\xi,\wtd I)$ and $\scr R(\xi,\wtd I)=R(\xi,\wtd I)$ (when acting on any object of $\scr C$).
\item (State-field correspondence) If $\wtd I\in\Jtd$ and $\xi\in\mc H_i^\pr(I)$, then $\scr L(\xi,\wtd I)\Omega=\scr R(\xi,\wtd I)\Omega=\xi$.
\item (Isotony)  If $\wtd I_1\subset\wtd I_2\in\Jtd$, and $\xi\in\mc H_i^\pr(I_1)$, then $\scr L(\xi,\wtd I_1)\supset \scr L(\xi,\wtd I_2)$, $\scr R(\xi,\wtd I_1)\supset \scr R(\xi,\wtd I_2)$ when acting on   $\mc H_k$.
	
\item (Naturality) If $G\in\Hom_{\mc A}(\mc H_k,\mc H_{k'})$, then for any $\wtd I\in\Jtd,\xi\in\mc H_i^\pr(I)$, the following diagrams of closed operators commute strongly.
\begin{equation*}
\begin{tikzcd}
\mc H_k \arrow[d,"{\scr L(\xi,\wtd I)}"'] \arrow[r,"G"] & \mc H_{k'} \arrow[d,"{\scr L(\xi,\wtd I)}"] \\
\mc H_i\boxdot\mc H_k \arrow[r,"\idt_i\boxdot G"]           & \mc H_i\boxdot\mc H_{k'}          
\end{tikzcd}
\qquad
\begin{tikzcd}
\mc H_k \arrow[r,"{\scr R(\xi,\wtd I)}"] \arrow[d,"G"'] & \mc H_k\boxdot\mc H_i \arrow[d,"G\boxdot\idt_i"] \\
\mc H_{k'} \arrow[r,"{\scr R(\xi,\wtd I)}"]           & \mc H_{k'}\boxdot\mc H_i        
\end{tikzcd}
\end{equation*}

\item (Locality) For any  disjoint $\wtd I,\wtd J\in\Jtd$ with $\wtd J$ clockwise to $\wtd I$, and any $\xi\in\mc H_i^\pr(I),\eta\in\mc H_j^\pr(J)$, the following diagram  commutes strongly.
\begin{equation}\label{eq2}
\begin{tikzcd}[column sep=huge]
\quad \mc H_k\quad \arrow[r,"{\scr R(\eta,\wtd J)}"] \arrow[d, "{\scr L(\xi,\wtd I)}"'] &\quad \mc H_k\boxdot\mc H_j\quad \arrow[d, "{\scr L(\xi,\wtd I)}"]\\
\mc H_i\boxdot\mc H_k\arrow[r,"{\scr R(\eta,\wtd J)}"] &\mc H_i\boxdot\mc H_k\boxdot\mc H_j
\end{tikzcd}
\end{equation}
	
\item (Braiding) For any $\wtd I\in\Jtd,\xi\in\mc H_i^\pr(I)$, we have
	\begin{align}
	\ss_{i,j} \scr L(\xi,\wtd I)|_{\mc H_j}=\scr R(\xi,\wtd I)|_{\mc H_j}.
	\end{align}
	
\item (M\"obius covariance) For any $g\in\UPSU,\wtd I\in\wtd{\mc J},\xi\in\mc H_i^\pr(I)$, we have $g\xi\in\mc H_i^\pr(g I)$, and
	\begin{align}
	\scr L(g\xi,g\wtd I)=g\scr L(\xi,\wtd I)g^{-1},\qquad \scr R(g\xi,g\wtd I)=g\scr R(\xi,\wtd I)g^{-1}
	\end{align}
	when acting on $\mc H_j$.
\end{enumerate}
\end{thm}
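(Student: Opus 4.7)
My plan is to give a direct definition on the natural core $\mc H_k(I')$ and then verify the listed properties, transferring the unbounded-operator-theoretic claims from the Connes categorical extension $\scr E_\Connes$ via the tensorator $\Psi$ of Thm. \ref{lb1}. The guiding principle is that the locality axiom (e), applied at $\Omega$, forces the formulas
\begin{align*}
\scr L(\xi,\wtd I)\eta = R(\eta,\wtd I')\xi, \qquad \scr R(\xi,\wtd I)\eta = L(\eta,\bpr\wtd I)\xi
\end{align*}
for any $\xi\in\mc H_i^\pr(I)$ and $\eta\in\mc H_k(I')$, with right-hand sides given by the bounded operators $R(\eta,\wtd I')|_{\mc H_i}\colon\mc H_i\to\mc H_i\boxdot\mc H_k$ and $L(\eta,\bpr\wtd I)|_{\mc H_i}\colon\mc H_i\to\mc H_k\boxdot\mc H_i$ of the given categorical extension $\scr E$. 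I take these formulas as the definition on the common domain $\mc H_k(I')$ and let $\scr L(\xi,\wtd I),\scr R(\xi,\wtd I)$ be their closures; the theorem statement's ``common core $\mc H_i(I')$'' is read as $\mc H_k(I')$ throughout.

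Specializing to $\mc H_k=\mc H_0$ and $\eta=x'\Omega$ with $x'\in\mc A(I')$, equation \eqref{eq3} reduces the first formula to $\scr L(\xi,\wtd I)x'\Omega=x'\xi$, which is closable precisely by the definition of $\mc H_i^\pr(I)$ in Def. \ref{lb29}. For general $\mc H_k$, I would write each $\eta\in\mc H_k(I')$ as $T\Omega$ for a bounded $T\in\Hom_{\mc A(I)}(\mc H_0,\mc H_k)$ and reduce closability on $\mc H_k(I')$ to closability on $\mc H_0(I')$, most transparently by identifying $R(T\Omega,\wtd I')|_{\mc H_i}$ with the map $\idt_i\boxtimes_{\mc A(I)}T$ in $\scr E_\Connes$ and then transporting back through $\Psi$. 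With closability in hand, properties (a), (b), (c), (f) are routine: (a) follows from \eqref{eq3} for $\xi\in\mc H_i(I)$; (b) from $R(\Omega,\wtd I')|_{\mc H_i}=\idt_i$; (c) from the inclusion $\mc H_k(I_2')\subset\mc H_k(I_1')$ when $\wtd I_1\subset\wtd I_2$; and (f) from the braiding axiom of $\scr E$ combined with the $2\pi$-monodromy relating $L(\eta,\bpr\wtd I)$ and $L(\eta,\wtd I')$.

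The main obstacle is the \emph{strong} commutativity in the naturality (d) and locality (e) diagrams, which is substantially stronger than commutativity on a common core. My plan is to transfer these from $\scr E_\Connes$ using $\Psi$: equation \eqref{eq12} together with the naturality of $\Psi$ yields $\Psi_{i,k}\circ\scr L^\boxdot(\xi,\wtd I)=\scr L^\boxtimes(\xi,\wtd I)$ on the core, so the unitary $\Psi_{i,k}$ canonically intertwines the closures and hence transfers strong commutation verbatim. The corresponding statements for $\scr E_\Connes$ are available from prior work (cf. \cite[Thm.~2.4.2]{Gui20} and the preparatory results of \cite{Gui21a}), and are typically proved by combining Möbius covariance (g) with Lem. \ref{lb35} to reduce strong commutation of the unbounded closed operators to strong commutation of compactly localized, bounded approximants—which is built into the locality axiom of $\scr E$. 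Möbius covariance (g) transfers from $\scr E_\Connes$ since $\Psi$ intertwines the $\Gc$-actions on both sides. Uniqueness of the operations $\scr L,\scr R$ is automatic: the formulas above are forced by (b) and (e), and the core property fixes the closed extensions.
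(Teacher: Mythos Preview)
Your plan is correct and coincides with the paper's proof in all essential points. The paper defines $\scr L^\boxdot,\scr R^\boxdot$ directly by pulling back the operations $\scr L^\boxtimes,\scr R^\boxtimes$ (already constructed for the Connes categorical extension in \cite[Sec.~2]{Gui20}) through the tensorator $\Psi$ of Thm.~\ref{lb1} via \eqref{eq10}, and then derives the formula $\scr L(\xi,\wtd I)\eta=R(\eta,\wtd I')\xi$ from (b) and (e) to establish uniqueness; you take this formula as the \emph{definition} on the core and then invoke the same transfer through $\Psi$ for closability and for the strong-commutation properties (d), (e), (g). The two orderings are equivalent, and your uniqueness argument is exactly the paper's. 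One minor point: your citation for the Connes-side input should be \cite[Sec.~2]{Gui20} rather than Thm.~2.4.2 there (which concerns strong integrability); and note that with the pullback definition, (f) is immediate from the braided-functor property of $\Psi$ in Thm.~\ref{lb1}, so you need not invoke a separate $2\pi$-monodromy argument.
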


\begin{proof}
It was proved in \cite[Sec. 2.2]{Gui26} that $\scr L^\boxtimes$ and $\scr R^\boxtimes$ exist for the Connes categorical extension. Pulling back these operations using the unitary operation $\Psi$ in Thm. \ref{lb1}, i.e., defining 
\begin{align}\label{eq10}
\scr L^\boxdot(\xi,\wtd I)|_{\mc H_k}=\Psi_{i,k}^{-1}\scr L^\boxtimes(\xi,\wtd I)|_{\mc H_k}\qquad  \scr R^\boxdot(\xi,\wtd I)|_{\mc H_k}=\Psi_{k,i}^{-1}\scr R^\boxtimes(\xi,\wtd I)|_{\mc H_k}
\end{align}
we obtain the $\scr L^\boxdot$ and $\scr R^\boxdot$ operations for $\scr E$. (It is easy to see that they satisfy all the axioms in the theorem. For example, the locality can be proved in the same way as in \cite[Cor. 2.36]{Gui26}.)

To prove the uniqueness, we only need the state-field correspondence and the locality (among all the axioms): Choose any $\xi\in\mc H_i^\pr(I)$ and $\eta\in\mc H_j(I')$. Recall that $\wtd I'$ is the clockwise complement of $\wtd I$. By state-field correspondence and \eqref{eq2} (setting $\mc H_k=\mc H_0$), we get
\begin{align}\label{eq4}
\scr L(\xi,\wtd I)\eta=R(\eta,\wtd I')\xi
\end{align}
Since $\mc H(I')$ is a core for $\scr L(\xi,\wtd I)|_{\mc H_j}$, we know that $\scr L(\xi,\wtd I)$ is uniquely determined. The same can be said about $\scr R$.
\end{proof}

\begin{rem}\label{lb15}
In Thm. \ref{lb7}, the following \textbf{intertwining property} is satisfied: For any $\mc H_i\in\Obj(\scr C),\wtd I\in\Jtd,\xi\in\mc H_i(I),x\in\mc A(I')$, the following diagrams commute strongly:
\begin{equation*}
\begin{tikzcd}[column sep=large]
\mc H_k \arrow[d,"{\scr L(\xi,\wtd I)}"'] \arrow[r,"{\pi_{k,I'}(x)}"] & \mc H_k\arrow[d,"{\scr L(\xi,\wtd I)}"] \\
\mc H_i\boxdot\mc H_k \arrow[r,"{\pi_{i\boxdot k,I'}(x)}"]           & \mc H_i\boxdot\mc H_k        
\end{tikzcd}
\qquad
\begin{tikzcd}[column sep=large]
\mc H_k\arrow[r,"{\scr R(\xi,\wtd I)}"] \arrow[d,"{\pi_{k,I'}(x)}"'] &\mc H_k\boxdot\mc H_i \arrow[d,"{\pi_{k\boxdot i,I'}(x)}"] \\
\mc H_k \arrow[r,"{\scr R(\xi,\wtd I)}"]           & \mc H_k\boxdot\mc H_i    
\end{tikzcd}
\end{equation*}
This is due to condition (a) and the locality in Thm. \ref{lb7}, together with \eqref{eq3}.
\end{rem}

\begin{df}\label{lb2}
Let $\scr E=(\mc A,\scr C,\boxdot,\ss,\mc H)$ be a categorical extension. Let $\mc H_i\in\Obj(\scr C)$. A \textbf{left operator}  of $\scr E$ with charge space $\mc H_i$ and localized in $\wtd I$ is an operation $\fk L(\xk,\wtd I)$ where $\xk$ is a formal element, $\wtd I\in\Jtd$, and for any $\mc H_k\in\Obj(\scr C)$ we have a closed operator $\fk L(\xk,\wtd I)|_{\mc H_k}:\mc H_k\rightarrow\mc H_i\boxdot\mc H_k$ satisfying
\begin{subequations}\label{eq17}
\begin{align}
\fk L(\xk,\wtd I)|_{\mc H_k}=\scr L^\boxdot(\xi,\wtd I)|_{\mc H_k}
\end{align}
for some $\xi\in\mc H_i^\pr(I)$ independent of $\mc H_k$.

Let $\mc H_j\in\Obj(\scr C)$. Similarly, a \textbf{right operator} with charge space $\mc H_j$ is an operation $\fk R(\yk,\wtd J)$ where $\yk$ is a formal element, $\wtd J\in\Jtd$, and for any $\mc H_k\in\Obj(\scr C)$ we have a closed operator $\fk R(\yk,\wtd J)|_{\mc H_k}:\mc H_k\rightarrow\mc H_k\boxdot\mc H_j$ with core $\mc H_k(J')$ satisfying \begin{align}
\fk R(\yk,\wtd J)|_{\mc H_k}=\scr R^\boxdot(\eta,\wtd J)|_{\mc H_k}
\end{align}
for some $\eta\in\mc H_j^\pr(J)$ independent of $\mc H_k$.
\end{subequations}
\end{df}

\begin{rem}
The $\xi$ resp. $\eta$ making \eqref{eq17} true must be unique. Indeed, if we apply \eqref{eq17} to the vacuum vector $\Omega$, by the state-field correspondence in Thm. \ref{lb7}, we have 
\begin{align}
\xi=\fk L(\xi,\wtd I)\Omega\qquad \eta=\fk R(\xi,\wtd J)\Omega
\end{align}
\end{rem}

\begin{rem}\label{lb23}
Thus, roughly speaking, left and right operators are almost the same as those of the forms $\scr L(\xi,\wtd I)$ and $\scr R(\eta,\wtd J)$. The only difference is that general left and right operators are not vector-labeled. Thus, we call operators of the forms $\scr L(\xi,\wtd I)$ and $\scr R(\eta,\wtd J)$ respectively \textbf{vector-labeled left and right operators} (or simply $\scr L$ and $\scr R$ operators) of $\scr E$.
\end{rem}

The following noteworthy example was used in the proof of \cite[Prop. 3.12]{Gui26}. 

\begin{eg}\label{lb85}
Let $\wtd I\in\Jtd$. Then the left operators of $\scr E=(\mc A,\scr C,\boxdot,\ss,\mc H)$ with charge space $\mc H_0$ localized in $\wtd I$ are also the right operators of the same type, and vice versa. They are precisely of the form
\begin{align}\label{eq66}
\fk L(\xk,\wtd I)|_{\mc H_i}=\fk R(\xk,\wtd I)|_{\mc H_i}=\pi_{i,I}(X)
\end{align}
for all $\mc H_i\in\Obj(\scr C)$ where $X$ is a closed operator on $\mc H$ with core $\mc H(I')$, and $X$ is affiliated with $\mc A(I)$.
\end{eg}

\begin{rem}\label{lb84}
Recall that $\pi_{i,I}(X)$ is defined as follows: Let $X=UH$ be the polar decomposition where $U$ is the partial isometry and $H$ is positive. Then $\pi_{i,I}(X)=\pi_{i,I}(U)\pi_{i,I}(H)$ where $\pi_{i,I}(H)$ is the unique positive operator on $\mc H_i$ such that $(1+\pi_{i,I}(H))^{-1}=\pi_{i,I}((1+H)^{-1})$.
\end{rem}

\begin{proof}[Proof of Exp. \ref{lb85}]
Suppose that $\fk L(\xk,\wtd I)=\scr L(\xi,\wtd I)$ is a left operator where $\xi\in\mc H_0^\pr(I)$. Then by \eqref{eq3} and the locality of $\scr E$, for each $y\Omega\in\mc A(I')\Omega=\mc H_0(I')$ we have $\scr L(\xi,\wtd I)y\Omega=y\scr L(\xi,\wtd I)\Omega=y\xi$. This implies $y\scr L(\xi,\wtd I)|_{\mc H_0(I')}\subset\scr L(\xi,\wtd I)y$ for all $y\in\mc A(I')$, and hence that $X=\scr L(\xi,\wtd I)|_{\mc H_0}$ is affiliated with $\mc A(I)$. By locality again, for every $\eta\in\mc H_i(I')$, the following diagram commutes strongly:
\begin{equation*}
\begin{tikzcd}[column sep=huge]
\mc H_0 \arrow[r,"X"] \arrow[d,"{R(\eta,\wtd I')}"'] & \mc H_0 \arrow[d,"{R(\eta,\wtd I')}"] \\
\mc H_i  \arrow[r,"{\scr L(\xi,\wtd I)|_{\mc H_i}}"]           & \mc H_i       
\end{tikzcd}
\end{equation*}
Since the same is true when $\scr L(\xi,\wtd I)|_{\mc H_i}$ is replaced by $\pi_{i,I}(X)$, by the density of fusion product in $\scr E$ (or by choosing $\eta$ such that $R(\eta,\wtd I')$ is unitary, cf. \cite[Lem. 6.1]{Gui21a}), we conclude $\scr L(\xi,\wtd I)|_{\mc H_i}=\pi_{i,I}(X)$.

Conversely, let $X$ be as described in Exp. \ref{lb85}. Since $yX\subset Xy$ for all $y\in\mc A(I')$, it is clear that $\xi=X\Omega$ belongs to $\mc H_0^\pr(I)$ (since $y\Omega\in\mc H_0(I')\mapsto y\xi$ has closure $X$). Since $X$ and $\scr L(\xi,\wtd I)$ both have core $\mc H(I')$, and since $\scr L(\xi,\wtd I)y\Omega=y\xi=yX\Omega=Xy\Omega$, we conclude $X=\scr L(\xi,\wtd I)|_{\mc H_0}$. By the first paragraph, we conclude $\scr L(\xi,\wtd I)|_{\mc H_i}=\pi_{i,I}(X)$. This proves a half of \eqref{eq66}. The other half (about right operators) can be proved in a similar way.
\end{proof}

\section{Weak categorical extensions}

\subsection{Preliminaries}

Let $\varrho:\Rbb\rightarrow\UPSU$ be the one-parameter rotation group. For each $\mc H_i\in\Obj(\Rep(\mc A))$, let $\ovl L_0$ be the generator of the one-parameter unitary group $U_i\circ\varrho$, i.e. $U_i\circ\varrho(t)=e^{\im t\ovl L_0}$. Then $\ovl L_0\geq0$ by \cite[Thm. 3.8]{Wei06}. We let 
\begin{align}
\mc H_i^\infty=\bigcap_{n\in\Nbb}\Dom(\ovl{L_0}^n)\label{eq43}
\end{align}

\begin{df}
A densely defined linear operator $T:\mc H_i\rightarrow\mc H_j$ is called \textbf{smooth} if $\mc H_i^\infty\subset\Dom(T)$, if $\mc H_j^\infty\subset\Dom(T^*)$, if $T\mc H_i^\infty\subset\mc H_j^\infty$, and if $T^*\mc H_j^\infty\subset\mc H_i^\infty$. In particular, a smooth operator is closable since $T^*$ (whose domain is the set of all $\eta\in\mc H_j$ such that $\xi\in\mc H_i\mapsto\bk{T\xi,\eta}$ is bounded) is densely defined. 
\end{df}

\begin{rem}\label{lb3}
By the general fact $(A+B)^*\supset A^*+B^*$ and $(AB)^*\supset B^*A^*$ for densely defined linear operators, it is obvious that products of smooth operators are smooth, and that linear combinations of smooth operators are smooth. Elements in $\Hom_{\mc A}(\mc H_i,\mc H_j)$ are smooth since they intertwine the actions of $\varrho$ by \eqref{eq9}.
\end{rem}

\begin{df}\label{lb33}
For each $\mc H_i\in\Obj(\Rep(\mc A))$ and $I\in\mc J$, we let 
\begin{align*}
\mc H_i^\infty(I)=\{\xi\in\mc H_i(I):L^\boxtimes(\xi,\wtd I)|_{\mc H_j}:\mc H_j\rightarrow\mc H_i\boxtimes\mc H_j\text{ is smooth }\forall \mc H_j\in\Obj(\Rep(\mc A))\}
\end{align*}
Here $L^\boxtimes$ is the L operation of the Connes categorical extension $\scr E_{\mathrm{Connes}}$. Then by the braiding axiom \eqref{eq8},  $R^\boxtimes(\xi,\wtd I)$ is also smooth, since the braiding operator $\mbb B$ is smooth by Rem. \ref{lb3}. Define
\begin{align*}
\mc A^\infty(I)=\{x\in\mc A(I):\pi_{i,I}(x)\text{ is smooth for every }\mc H_i\in\Obj(\Rep(\mc A))\}
\end{align*}
Then by \eqref{eq3}, we clearly have
\begin{align*}
\mc H_0^\infty(I)=\mc A^\infty(I)\Omega
\end{align*}
\end{df}

\begin{rem}
Suppose that $\scr E=(\mc A,\scr C,\boxdot,\ss,\mc H)$ is a categorical extension, and $\mc H_i\in\Obj(\scr C),\xi\in\mc H_i^\infty(I)$. Then $L^\boxdot(\xi,\wtd I)$ and $R^\boxdot(\xi,\wtd I)$ are smooth when acting on any object of $\scr C$ because $L^\boxtimes(\xi,\wtd I)$ and $R^\boxtimes(\xi,\wtd I)$ are smooth, and because $\Psi\circ\scr E\subset\scr E_\Connes$ for some tensorator $\Psi$, cf. Thm. \ref{lb1}.
\end{rem}

\begin{pp}\label{lb4}
Let $\scr E=(\mc A,\scr C,\boxdot,\ss,\mc H)$ be a categorical extension. Let $\mc H_i\in\Obj(\scr C)$, $\wtd I\in\Jtd$. The following are true.
\begin{enumerate}[label=(\alph*)]
\item For each $\xi\in\mc H_i(I)$, there is a sequence $(\xi_n)_{n\in\Zbb_+}$ in $\mc H_i^\infty(I)$ converging to $\xi$ such that $\sup_{n\in\Zbb_+}\Vert L(\xi_n,\wtd I)\Vert\leq\Vert L(\xi,\wtd I)\Vert$, and that $L(\xi_n,\wtd I)|_{\mc H_k}$ converges $*$-strongly to $L(\xi,\wtd I)|_{\mc H_k}$ for any $\mc H_k\in\Obj(\scr C)$.
\item For each $\xi\in\mc H_i^\pr(I)$ and $\mc H_k\in\Obj(\scr C)$, the space $\mc H_k^\infty(I')$ is a core for $\scr L^\boxdot(\xi,\wtd I)|_{\mc H_k}$ and $\scr R^\boxdot(\xi,\wtd I)|_{\mc H_k}$. 
\end{enumerate}
\end{pp}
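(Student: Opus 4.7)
The plan is to obtain the approximating sequence in (a) via mollification along the Möbius dilation one-parameter subgroup that stabilizes $I$ (the modular flow of $(\mc A(I),\Omega)$), and then to deduce (b) from (a) using the locality identity $\scr L^\boxdot(\xi,\wtd I)\eta=R(\eta,\wtd I')\xi$ (for $\eta\in\mc H_k(I')$) established in the proof of Theorem \ref{lb7}.

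For (a), write $\xi=T\Omega$ with $T=L(\xi,\wtd I)|_{\mc H_0}\in\Hom_{\mc A(I')}(\mc H_0,\mc H_i)$, so $\Vert T\Vert=\Vert L(\xi,\wtd I)\Vert$. Let $(g_s)_{s\in\Rbb}\subset\UPSU\subset\Gc$ be the Möbius one-parameter subgroup stabilizing $I$; by Bisognano–Wichmann its image is the modular flow of $(\mc A(I),\Omega)$, so $g_s$ stabilizes $\mc A(I')$ and fixes the vacuum. For a nonnegative mollifier $\phi_n\in C_c^\infty(\Rbb)$ with $\int\phi_n=1$ narrowing to $\delta_0$, set
\begin{align*}
T_n:=\int_\Rbb\phi_n(s)\,U_i(g_s)\,T\,U_0(g_s)^{-1}\,ds,\qquad \xi_n:=T_n\Omega=\int_\Rbb\phi_n(s)U_i(g_s)\xi\,ds.
\end{align*}
A direct check (using that $g_s$ stabilizes $\mc A(I')$ and $U_0(g_s)\Omega=\Omega$) gives $T_n\in\Hom_{\mc A(I')}(\mc H_0,\mc H_i)$, whence $\xi_n\in\mc H_i(I)$ with $\Vert L(\xi_n,\wtd I)\Vert=\Vert T_n\Vert\leq\Vert T\Vert$; standard mollifier arguments give $T_n\to T$ $*$-strongly.

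The delicate point is verifying $\xi_n\in\mc H_i^\infty(I)$. Writing $K_I$ for the self-adjoint generator $U_i(g_s)=e^{isK_I}$, we have $\xi_n=\wht\phi_n(-K_I)\xi\in\bigcap_m\Dom(K_I^m)$ by Schwartz functional calculus. In a positive-energy unitary representation of $\UPSU$, smoothness for any Möbius one-parameter subgroup coincides with smoothness for $\ovl L_0$: this follows from the energy bounds $\Vert\ovl L_{\pm 1}\eta\Vert\lesssim\Vert(1+\ovl L_0)\eta\Vert$ and the commutation $2\ovl L_0=[\ovl L_1,\ovl L_{-1}]$, since $K_I$ is a real linear combination of $\ovl L_0$ and $\ovl L_{\pm 1}$. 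Hence $\xi_n\in\mc H_i^\infty$. Conformal covariance of $\scr E_\Connes$ (via \eqref{eq12} and Remark \ref{lb20}) yields
\begin{align*}
L^\boxtimes(\xi_n,\wtd I)|_{\mc H_j}=\int_\Rbb\phi_n(s)\,U_{i\boxtimes j}(g_s)L^\boxtimes(\xi,\wtd I)|_{\mc H_j}U_j(g_s)^{-1}\,ds,
\end{align*}
and the same argument, now applied on the $\UPSU$-representations on $\mc H_j$ and $\mc H_i\boxtimes\mc H_j$, shows $L^\boxtimes(\xi_n,\wtd I)|_{\mc H_j}$ maps smooth vectors to smooth vectors; the adjoint is handled symmetrically. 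Thus $\xi_n\in\mc H_i^\infty(I)$. Finally, $*$-strong convergence $L(\xi_n,\wtd I)|_{\mc H_k}\to L(\xi,\wtd I)|_{\mc H_k}$ follows from locality: for $\eta\in\mc H_k(I')$ we have $L(\xi_n,\wtd I)\eta=R(\eta,\wtd I')\xi_n\to R(\eta,\wtd I')\xi=L(\xi,\wtd I)\eta$, and the uniform norm bound with density of $\mc H_k(I')$ promotes this to strong (hence, by the analogous adjoint computation, to $*$-strong) convergence on $\mc H_k$.

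Part (b) is then essentially formal. Since Theorem \ref{lb7} gives $\mc H_k(I')$ as a core for $\scr L^\boxdot(\xi,\wtd I)|_{\mc H_k}$, it suffices to show $\mc H_k^\infty(I')$ is dense in $\mc H_k(I')$ in the graph norm. Given $\eta\in\mc H_k(I')$, apply (a) with $(\mc H_i,\wtd I,\xi)$ replaced by $(\mc H_k,\wtd I',\eta)$ to obtain $\eta_m\in\mc H_k^\infty(I')$ with $\eta_m\to\eta$ and $\sup_m\Vert R(\eta_m,\wtd I')\Vert\leq\Vert R(\eta,\wtd I')\Vert<\infty$. Using the identity $\scr L^\boxdot(\xi,\wtd I)\zeta=R(\zeta,\wtd I')\xi$ (for $\zeta\in\mc H_k(I')$) from the uniqueness portion of the proof of Theorem \ref{lb7}, we get
\begin{align*}
\scr L^\boxdot(\xi,\wtd I)\eta_m=R(\eta_m,\wtd I')\xi\longrightarrow R(\eta,\wtd I')\xi=\scr L^\boxdot(\xi,\wtd I)\eta
\end{align*}
from the $*$-strong convergence applied at the single vector $\xi$ combined with the uniform norm bound. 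This is exactly graph-norm convergence $\eta_m\to\eta$ for $\scr L^\boxdot(\xi,\wtd I)$. The core property for $\scr R^\boxdot(\xi,\wtd I)|_{\mc H_k}$ is analogous, using $\scr R^\boxdot(\xi,\wtd I)\zeta=L(\zeta,\bpr\wtd I)\xi$.

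The main obstacle is the smoothness step in (a): the modular-flow mollification directly delivers only $K_I$-smoothness, and promoting this to $\ovl L_0$-smoothness relies on the positive-energy structure of $\UPSU$-representations. If one prefers to avoid that identification, an alternative is to combine the modular mollification with a subsequent small rotational mollification (which enlarges the localization slightly to $I_\eps\supsetneq I$) and then recover localization in $I$ via a diagonal/intersection argument using $\mc H_i(I)=\bigcap_{I\sjs J}\mc H_i(J)$.
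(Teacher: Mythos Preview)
The paper does not prove this proposition directly; it cites Prop.~1.3.3 and 1.3.5 of \cite{Gui20} for the Connes categorical extension and then transports to an arbitrary categorical extension via the tensorator $\Psi$ of Theorem~\ref{lb1} (cf.\ \eqref{eq10}). So you are attempting a self-contained argument where the paper simply defers to an external reference. Your treatment of (b) is correct and is exactly the argument one wants (it is also how the analogous core statement is handled later in the paper, e.g.\ at the end of the proof of Thm.~\ref{lb71}).

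There is, however, a genuine gap in your argument for (a). The dilation-mollified operator $L^\boxtimes(\xi_n,\wtd I)$ maps $\mc H_j^\infty$ into $K_I$-smooth vectors, and you then assert that $K_I$-smooth coincides with $\ovl L_0$-smooth in any positive-energy representation of $\UPSU$. Your justification, ``energy bounds $\Vert \ovl L_{\pm1}\eta\Vert\lesssim\Vert(1+\ovl L_0)\eta\Vert$ together with $2\ovl L_0=[\ovl L_1,\ovl L_{-1}]$,'' only establishes the easy inclusion $\mc H^\infty\subset\bigcap_m\Dom(K_I^m)$: the energy bounds let you control arbitrary Lie-algebra words by powers of $\ovl L_0$, not the other way around. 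Going from smoothness for a \emph{single} one-parameter subgroup to $\ovl L_0$-smoothness is a nontrivial statement (compare the abelian case $G=\Rbb^2$, where smoothness along one coordinate axis does not imply joint smoothness). Even if the equivalence happens to hold for positive-energy $\UPSU$-representations, it needs a real argument---e.g.\ via an oscillator-type model of the discrete series or a Nelson--Laplacian comparison---not the one-line reduction you give.

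Your fallback at the end also does not quite close the gap as written: dilation-mollifying first and then rotation-mollifying produces vectors in $\mc H_i^\infty(I_\eps)$ for $I_\eps\supsetneq I$, and the intersection identity $\mc H_i(I)=\bigcap_{I\sjs J}\mc H_i(J)$ concerns $\mc H_i(\cdot)$, not $\mc H_i^\infty(\cdot)$, so no diagonal argument puts the sequence back into $\mc H_i^\infty(I)$. The fix is to reverse the order: first apply a M\"obius map $h_\eps\in\UPSU$ carrying $I$ into a proper subinterval $I^{(-\eps)}\sjs I$ (so $h_\eps\xi\in\mc H_i(I^{(-\eps)})$ with $\Vert L(h_\eps\xi,\cdot)\Vert=\Vert L(\xi,\cdot)\Vert$), and \emph{then} rotation-mollify with support narrow enough that the enlargement stays inside $I$. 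The resulting vectors lie in $\mc H_i^\infty(I)$, the norm bound is preserved, and letting $\eps\to0$ along with the mollifier width gives the required sequence; $*$-strong convergence on each $\mc H_k$ then follows exactly as in your last paragraph for (a). This is presumably what the cited reference does.
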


As a special case of part (a), $\mc A^\infty(I)$ is strongly-$*$ dense in $\mc A(I)$.

\begin{proof}
When $\scr E$ is the Connes categorical extension, this was proved in Prop. 2.21 and 2.23 of \cite{Gui26}. The general case can be proved by pulling back the left and right operators, cf. \eqref{eq10}.
\end{proof}

\begin{df}\label{lb32}
Let $T:\mc H_i\rightarrow\mc H_j$ be a closable smooth operator with dense domain $\Dom(T)=\mc H_i^\infty$. Then $T^*$ sends $\mc H_j^\infty$ into $\mc H_i^\infty$. A dense linear subspace $\Dom_0$ of $\mc H_i^\infty$ is called \textbf{quasi-rotation invariant (QRI)} if there exit $\delta>0$ and a dense linear subspace $\Dom_\delta\subset\Dom_0$ such that $\varrho(t)\Dom_\delta\subset\Dom_0$ for all $t\in(-\delta,\delta)$. We say that $T$ is \textbf{localizable} if every dense and QRI subspace of $\mc H_i^\infty$ is a core for $T$. In particular, if $T$ is localizable, then $\mc H_i^\infty$ is a core for $T$ since $\mc H_i^\infty$ is QRI. Unless otherwise stated, we assume that
\begin{align*}
\text{a smooth localizable $T:\mc H_i\rightarrow\mc H_j$ has domain $\Dom(T)=\mc H_i^\infty$}
\end{align*}
\end{df}

\begin{rem}\label{lb25}
Suppose that a closable operator $T:\mc H_i\rightarrow\mc H_j$ (with domain $\mc H_i^\infty$) is smooth and localizable. Let $\mc H_k\in\Obj(\Rep(\mc A))$ and $A\in\Hom_{\mc A}(\mc H_j,\mc H_k)$. Then $AT$ is also (closable and) smooth and localizable: It is smooth since both $A$ and $T$ are smooth (cf. Rem. \ref{lb3}); it is localizable since $T$ is localizable and $A$ is bounded.
\end{rem}

\begin{df}
Consider the diagram
\begin{equation}\label{eq11}
\begin{tikzcd}
\mc H_i^\infty \arrow[r,"C"] \arrow[d,"A"'] & \mc H_j^\infty \arrow[d,"B"] \\
\mc H_k^\infty \arrow[r,"D"]           & \mc H_l^\infty          
\end{tikzcd}
\end{equation}
where $A:\mc H_i\rightarrow\mc H_k$, $B:\mc H_j\rightarrow\mc H_l$, $C:\mc H_i\rightarrow\mc H_j$, $D:\mc H_k\rightarrow\mc H_l$ are smooth closable operators with domains $\mc H_i^\infty,\mc H_j^\infty,\mc H_i^\infty,\mc H_k^\infty$ respectively. We say that \eqref{eq11} \textbf{commutes adjointly} if $DA=BC$ on $\mc H_i^\infty$ and $CA^*=B^* D$ on $\mc H_k^\infty$.
\end{df}

\subsection{Weak categorical extensions and their closures}

Let $(\scr C,\boxdot)$ be as in Sec. \ref{lb5}. We recall the definition of weak categorical extensions introduced in \cite{Gui26}. The main example considered in this paper is given by Thm. \ref{lb47}.

\begin{df}\label{lb8}
Let $\fk H$ assign, to each $\wtd I\in\Jtd$ and $\mc H_i\in\Obj(\scr C)$, a set $\fk H_i(\wtd I)$  such that $\fk H_i(\wtd I_1)\subset\fk H_i(\wtd I_2)$  whenever $\wtd I_1\subset\wtd I_2$. A \textbf{weak categorical extension} $\scr E^w=(\mc A,\scr C,\boxdot,\ss,\fk H)$ of $\mc A$ associates to any $\mc H_i,\mc H_k\in\Obj(\scr C),\wtd I\in\Jtd,\fk a\in\fk H_i(\wtd I)$, smooth and localizable operators
\begin{gather*}	
\mc L(\fk a,\wtd I):\mc H_k\rightarrow\mc H_i\boxdot\mc H_k\\
\mc R(\fk a,\wtd I):\mc H_k\rightarrow\mc H_k\boxdot\mc H_i
\end{gather*}
with common domain $\mc H_k^\infty$ such that for any $\mc H_i,\mc H_j,\mc H_k,\mc H_{k'}\in\Obj(\scr C)$ and any $\wtd I,\wtd J,\wtd I_1,\wtd I_2\in\Jtd$, the following conditions are satisfied:
\begin{enumerate}[label=(\alph*)]
\item 	(Isotony) If  $\fk a\in\fk H_i(\wtd I_1)$ and $\wtd I_1\subset\wtd I_2$, then $\mc L(\fk a,\wtd I_1)=\mc L(\fk a,\wtd I_2)$, $\mc R(\fk a,\wtd I_1)=\mc R(\fk a,\wtd I_2)$ when acting on $\mc H_k^\infty$.
	
\item (Naturality) If  $G\in\Hom_{\mc A}(\mc H_k,\mc H_{k'})$,  the following diagrams commute\footnote{Note that they also commute adjointly and hence strongly since we have $(F\otimes G)^*=F^*\otimes G^*$ for any morphisms $F,G$ in a $C^*$-tensor category.}   for any $\fk a\in\fk H_i(\wtd I)$.
\begin{equation*}
\begin{tikzcd}[column sep=large]
\quad\mc H_k^\infty\quad \arrow[d,"{\mc L(\fk a,\wtd I)}"'] \arrow[r,"G"] & \quad\mc H_{k'}^\infty\quad \arrow[d,"{\mc L(\fk a,\wtd I)}"] \\
(\mc H_i\boxdot\mc H_k)^\infty \arrow[r,"\idt_i\boxdot G"]           & (\mc H_i\boxdot\mc H_{k'})^\infty          
\end{tikzcd}
\qquad
\begin{tikzcd}[column sep=large]
\mc H_k^\infty \arrow[r,"{\mc R(\fk a,\wtd I)}"] \arrow[d,"G"'] & (\mc H_k\boxdot\mc H_i)^\infty \arrow[d,"G\boxdot\idt_i"] \\
\mc H_{k'}^\infty \arrow[r,"{\mc R(\fk a,\wtd I)}"]           & (\mc H_{k'}\boxdot\mc H_i)^\infty        
\end{tikzcd}
\end{equation*}
	
\item (Neutrality) Under the identifications $\mc H_i=\mc H_i\boxdot\mc H_0=\mc H_0\boxdot\mc H_i$ through the unitors, for any $\fk a\in\fk H_i(\wtd I)$ we have
	\begin{align}
	\mc L(\fk a,\wtd I)|_{\mc H_0^\infty}=\mc R(\fk a,\wtd I)|_{\mc H_0^\infty}
	\end{align}
	
\item (Reeh-Schlieder property) Under the identification $\mc H_i=\mc H_i\boxdot\mc H_0$, the set $\mc L(\fk H_i(\wtd I),\wtd I)\Omega$ spans a dense subspace of $\mc H_i$.
	
\item (Density of fusion products) The set $\mc L(\fk H_i(\wtd I),\wtd I)\mc H_k^\infty$ spans a dense subspace of $\mc H_i\boxdot\mc H_k$, and $\mc R(\fk H_i(\wtd I),\wtd I)\mc H_k^\infty$ spans a dense subspace of $\mc H_k\boxdot\mc H_i$.
	
\item (Intertwining property) 	For any $\fk a\in\fk H_i(\wtd I)$ and $x\in\mc A^\infty(I')$, the following diagrams commute adjointly:
\begin{equation}\label{eq18}
\begin{tikzcd}[column sep=large]
\mc H_k^\infty \arrow[d,"{\mc L(\fk a,\wtd I)}"'] \arrow[r,"{\pi_{k,I'}(x)}"] & \mc H_k^\infty \arrow[d,"{\mc L(\fk a,\wtd I)}"] \\
(\mc H_i\boxdot\mc H_k)^\infty \arrow[r,"{\pi_{i\boxdot k,I'}(x)}"]           & (\mc H_i\boxdot\mc H_k)^\infty          
\end{tikzcd}
\qquad
\begin{tikzcd}[column sep=large]
\mc H_k^\infty \arrow[r,"{\mc R(\fk a,\wtd I)}"] \arrow[d,"{\pi_{k,I'}(x)}"'] & (\mc H_k\boxdot\mc H_i)^\infty \arrow[d,"{\pi_{k\boxdot i,I'}(x)}"] \\
\mc H_k^\infty \arrow[r,"{\mc R(\fk a,\wtd I)}"]           & (\mc H_k\boxdot\mc H_i)^\infty        
\end{tikzcd}
\end{equation}

\item (Weak locality) Assume that $\wtd J$ is clockwise to $\wtd I$. Then for  any $\fk a\in\fk H_i(\wtd I),\fk b\in\fk H_j(\wtd J)$, the following diagram   commutes adjointly.
\begin{equation}
\begin{tikzcd}[column sep=huge]
\quad \mc H_k^\infty\quad \arrow[r,"{\mc R(\fk b,\wtd J)}"] \arrow[d, "{\mc L(\fk a,\wtd I)}"'] &\quad (\mc H_k\boxdot\mc H_j)^\infty\quad \arrow[d, "{\mc L(\fk a,\wtd I)}"]\\
(\mc H_i\boxdot\mc H_k)^\infty\arrow[r,"{\mc R(\fk b,\wtd J)}"] &(\mc H_i\boxdot\mc H_k\boxdot\mc H_j)^\infty
\end{tikzcd}
\end{equation}
	
\item (Braiding) There is a unitary linear map $\ss_{i,k}:\mc H_i\boxdot\mc H_k\rightarrow\mc H_k\boxdot \mc H_i$ (the \textbf{braiding operator}) such that for any  $\fk a\in\fk H_i(\wtd I)$ and $\eta\in\mc H_k^\infty$,
	\begin{align}
	\ss_{i,k} \mc L(\fk a,\wtd I)\eta=\mc R(\fk a,\wtd I)\eta
	\end{align}
	
\item (Rotation covariance) 	For any $\fk a\in\fk H_i(\wtd I)$ and $g=\varrho(t)$ where $t\in\mbb R$, there exists an element $g\fk a$ inside $\fk H_i(g\wtd I)$, \footnote{(Recall Rem. \ref{lb50} for the meaning of $g\wtd I$.)} such that for any $\mc H_l\in\Obj(\scr C)$ and $\eta\in\mc H_l^\infty$, the following two equivalent equations are true.
\begin{subequations}
	\begin{gather}
	\mc L(g\fk a,g\wtd I)\eta=g\mc L(\fk a,\wtd I)g^{-1}\eta\\
	\mc R(g\fk a,g\wtd I)\eta=g\mc R(\fk a,\wtd I)g^{-1}\eta.
	\end{gather}
\end{subequations}
\end{enumerate}
If for each $\mc H_k\in\Obj(\scr C)$ and $g\in\UPSU$ we have $g\mc H_k^\infty\subset g\mc H_k^\infty$, and if the statements in (i) are true for any $g\in\UPSU$, we say that $\scr E^w$ is \textbf{M\"obius covariant}.
\end{df}

\begin{rem}\label{lb6}
In the above definition, for each $\fk a\in\fk H_i(\wtd I)$, the vector $\mc L(\fk a,\wtd I)\Omega$ (which equals $\mc R(\fk a,\wtd I)\Omega$ by neutrality) is an element of $\mc H_i^\pr(I)$. See \cite[Prop. 2.33]{Gui26}.
\end{rem}

We let $\ovl{\mc L(\fk a,\wtd I)}$ denote the operation associating to each $\mc H_k\in\Obj(\scr C)$ the closed operator $\ovl{\mc L(\fk a,\wtd I)}|_{\mc H_k}:\mc H_k\rightarrow\mc H_i\boxdot\mc H_k$ which is the closure of $\mc L(\fk a,\wtd I)|_{\mc H_k}$. $\ovl{\mc R(\fk a,\wtd I)}$ is understood in a similar way.

\begin{rem}
Since $\mc A^\infty(I)$ generates the von Neumann algebra $\mc A(I)$ (cf. Prop. \ref{lb4}), in the intertwining property in Def. \ref{lb8}, each of the two diagrams in \eqref{eq18} commutes strongly for all $x\in\mc A(I')$, not just for $x\in\mc A^\infty(I')$.
\end{rem}

\begin{thm}\label{lb16}
Let $\scr E^w=(\mc A,\scr C,\boxdot,\ss,\fk H)$ be a weak categorical extension. Then there is a unique (closed and vector labeled) categorical extension $\scr E=(\mc A,\scr C,\boxdot,\ss,\mc H)$ (called the \textbf{closure} of $\scr E^w$) satisfying that for every $\mc H_i,\mc H_k\in\Obj(\scr C),\wtd I\in\Jtd,\fk a\in\fk H_i(\wtd I)$, by setting $\xi=\mc L(\fk a,\wtd I)\Omega$ (which equals $\mc R(\fk a,\wtd I)\Omega$ and is in $\mc H_i^\pr(I)$, cf. Rem. \ref{lb6}), we have for each $\mc H_k\in\Obj(\scr C)$ that
\begin{align}
\ovl{\mc L(\fk a,\wtd I)}\big|_{\mc H_k}=\scr L(\xi,\wtd I)\big|_{\mc H_k}\qquad \ovl{\mc R(\fk a,\wtd I)}\big|_{\mc H_k}=\scr R(\xi,\wtd I)\big|_{\mc H_k}
\end{align}
\end{thm}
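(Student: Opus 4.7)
The plan is to construct the closed vector-labeled categorical extension $\scr E$ in two stages: first define the closed operators $\scr L^\boxdot(\xi,\wtd I),\scr R^\boxdot(\xi,\wtd I)$ for $\xi\in\mc H_i^\pr(I)$ as closures built out of the weak data of $\scr E^w$; then extract the bounded $L,R$ operators of $\scr E$ as the restrictions to the subset $\mc H_i(I)\subset\mc H_i^\pr(I)$. Uniqueness will follow from the uniqueness statement in Thm.~\ref{lb7}.

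For existence, fix $\fk a\in\fk H_i(\wtd I)$ and set $\xi:=\mc L(\fk a,\wtd I)\Omega\in\mc H_i^\pr(I)$ (Rem.~\ref{lb6}). The intertwining property Def.~\ref{lb8}(f) applied with $\mc H_k=\mc H_0$, together with the strong-$*$ density of $\mc A^\infty(I')$ in $\mc A(I')$ from Prop.~\ref{lb4}, shows that $T_{\fk a}:=\ovl{\mc L(\fk a,\wtd I)}|_{\mc H_0}$ is affiliated with $\Hom_{\mc A(I')}(\mc H_0,\mc H_i)$; in particular, whenever $\xi\in\mc H_i(I)$ this closure is bounded. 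For $\xi=A\Omega\in\mc H_i(I)$ with $A\in\Hom_{\mc A(I')}(\mc H_0,\mc H_i)$ bounded but not necessarily arising from the weak data, polar decomposition of various $T_{\fk a_n}$, bounded Borel functional calculus on $|T_{\fk a_n}|$, and Kaplansky-type density provide bounded approximants $A_n$ with $A_n\to A$ strongly and $\|A_n\|$ uniformly bounded; the already-defined bounded $L(A_n\Omega,\wtd I)|_{\mc H_k}$ then converge strongly, defining $L(\xi,\wtd I)|_{\mc H_k}$. The $R$ operators are obtained in the symmetric way.

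The axioms of Def.~\ref{lb31} are then verified by transport from Def.~\ref{lb8} via closure/density arguments. Isotony, naturality, state-field correspondence, density of fusion products, and braiding are routine. The main obstacle is upgrading the weak locality (adjoint commutation on $\mc H^\infty$) to the adjoint commutativity of bounded $L,R$ required by Def.~\ref{lb31}(e). For this Lem.~\ref{lb35} is essential: by rotation covariance Def.~\ref{lb8}(i), the conjugates $e^{\im t\ovl{L_0}}\mc L(\fk a,\wtd I)e^{-\im t\ovl{L_0}}$ take the form $\mc L(g\fk a,g\wtd I)$ with $g=\varrho(t)$, and for all sufficiently small $t$ the intervals $g\wtd I$ and $\wtd J$ remain disjoint in the correct order, so Def.~\ref{lb8}(g) yields adjoint commutation with $\mc R(\fk b,\wtd J)$ on $\mc H^\infty$. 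Lem.~\ref{lb35} then promotes this to strong commutation of the closures; for bounded operators strong commutation is adjoint commutation.

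Finally, for the closure relation, fix $\fk a\in\fk H_i(\wtd I)$ and $\xi=\mc L(\fk a,\wtd I)\Omega$. Both $\ovl{\mc L(\fk a,\wtd I)}|_{\mc H_k}$ and $\scr L(\xi,\wtd I)|_{\mc H_k}$ are closed operators $\mc H_k\to\mc H_i\boxdot\mc H_k$; both admit $\mc H_k(I')$ as a core (for the former by the localizability of $\mc L$ and Prop.~\ref{lb4}(b), for the latter by Thm.~\ref{lb7}); and on this common core both send $\eta\in\mc H_k(I')$ to $R(\eta,\wtd I')\xi$---for $\ovl{\mc L(\fk a,\wtd I)}$ this follows from weak locality and neutrality applied to $\eta=\mc R(\fk b,\wtd I')\Omega$ and extended by density (using the bounded $R$ constructed in step~(i)), and for $\scr L(\xi,\wtd I)$ it is exactly the identity \eqref{eq4}. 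Therefore the two operators coincide, and the analogous argument yields $\ovl{\mc R(\fk a,\wtd I)}=\scr R(\xi,\wtd I)$. Uniqueness of $\scr E$ now follows at once: the closure relation determines $\scr L,\scr R$ on the dense set $\{\mc L(\fk a,\wtd I)\Omega:\fk a\in\fk H_i(\wtd I)\}\subset\mc H_i^\pr(I)$, and by Thm.~\ref{lb7} this determines the categorical extension entirely.
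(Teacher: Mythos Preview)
Your existence argument has a gap in the construction of the bounded $L(\xi,\wtd I)|_{\mc H_k}$ for general $\xi\in\mc H_i(I)$. Two specific problems. First, even for $\xi=\mc L(\fk a,\wtd I)\Omega\in\mc H_i(I)$, knowing that $T_{\fk a}=\ovl{\mc L(\fk a,\wtd I)}|_{\mc H_0}$ is bounded does not by itself imply that $\ovl{\mc L(\fk a,\wtd I)}|_{\mc H_k}$ is bounded for other $\mc H_k$; the weak axioms provide no such transfer of boundedness across modules. Second, your approximation step is circular: the operators $L(A_n\Omega,\wtd I)|_{\mc H_k}$ you propose to take limits of are ``already defined'' only if $A_n\Omega=\mc L(\fk a_n,\wtd I)\Omega$ for some $\fk a_n\in\fk H_i(\wtd I)$, but the $A_n$ produced by polar decomposition and bounded functional calculus on $|T_{\fk a}|$ have no reason to be of this form. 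What is actually needed is a device that transports an arbitrary bounded $A\in\Hom_{\mc A(I')}(\mc H_0,\mc H_i)$ to a bounded map $\mc H_k\to\mc H_i\boxdot\mc H_k$, compatibly with the weak data and uniformly in $\mc H_k$. This is precisely the role of the tensorator $\Psi:\boxdot\to\boxtimes$, whose existence the paper simply imports from \cite[Thm.~1.5.1]{Gui20}; once $\Psi$ is in hand one sets $L^{\boxdot}(\xi,\wtd I)|_{\mc H_j}:=\Psi_{i,j}^{-1}L^{\boxtimes}(\xi,\wtd I)|_{\mc H_j}$ and every axiom is inherited from the Connes extension at a stroke. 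Your invocation of Lem.~\ref{lb35} to upgrade weak locality is also mismatched: as stated in this paper, that lemma takes \emph{strong} commutation of the rotated building blocks as hypothesis, not the adjoint commutation on $\mc H^\infty$ that weak locality gives you.

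Your uniqueness argument, when unpacked via \eqref{eq4}, is essentially the paper's: the closure relation forces $R(\eta,\wtd I')\xi_{\fk a}=\ovl{\mc L(\fk a,\wtd I)}\eta$, and Reeh--Schlieder makes $\{\xi_{\fk a}\}$ dense in $\mc H_i$, which determines the bounded $R$ operators directly. Citing Thm.~\ref{lb7} alone obscures this, since that theorem gives uniqueness of $\scr L,\scr R$ \emph{given} the bounded $L,R$, not the direction you require.
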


Thus, the closure of $\scr E^w$ is the unique categorical extension $\scr E$ such that the closure of every $\mc L$ resp. $\mc R$ operator is a left resp. right operator of $\scr E$ (Def. \ref{lb2}). The relationship between $\scr E^w$ and its closure is similar to that between a set of closed operators and the von Neumann algebra generated by them.

\begin{proof}
Uniqueness: Choose any $\wtd I$, and choose $\wtd J$ clockwise to $\wtd I$. Let $\mc H_i,\mc H_j\in\Obj(\scr C)$. Let $\eta\in\mc H_j(J)$. Choose any $\fk a\in\fk H_i(\wtd I)$, and let $\xi=\mc L(\fk a,\wtd I)\Omega$. By the locality in Thm. \ref{lb7} and the boundedness of $\scr R(\eta,\wtd J)=R(\eta,\wtd J)$, we have that $R(\eta,\wtd J)\Dom(\scr L(\xi,\wtd I)|_{\mc H_0})\subset\Dom(\scr L(\xi,\wtd I)|_{\mc H_j})$, and that
\begin{align*}
R(\eta,\wtd J)\mc L(\fk a,\wtd I)\Omega=R(\eta,\wtd J)\scr L(\xi,\wtd I)\Omega=\scr L(\xi,\wtd I)R(\eta,\wtd J)\Omega=\ovl{\mc L(\fk a,\wtd I)}\eta
\end{align*}
Therefore, $R(\eta,\wtd J)|_{\mc H_i}$ is uniquely determined by vectors of the form $\mc L(\fk a,\wtd I)\Omega$. By the Reeh-Schlieder property in Def. \ref{lb8}, such vectors span a dense subset of $\mc H_i$. So the R-operators are unique. Similarly, the L-operators are unique.

Existence: By \cite[Thm. 2.35]{Gui26}, there exists a natural unitary $\Psi$ satisfying all the descriptions in Thm. \ref{lb1}, except that \eqref{eq12} is replaced by
\begin{subequations}\label{eq15}
\begin{gather}
\Psi_{i,j}\ovl{\mc L(\fk a,\wtd I)}|_{\mc H_j}=\scr L^{\boxtimes}(\xi,\wtd I)|_{\mc H_j}\\
\Psi_{j,i}\ovl{\mc R(\fk a,\wtd I)}|_{\mc H_j}=\scr R^{\boxtimes}(\xi,\wtd I)|_{\mc H_j}
\end{gather}
for all $\fk a\in\fk H_i(\wtd I)$ and $\xi$ is set to be $\mc L(\fk a,\wtd I)\Omega$.
\end{subequations}
Then similar to the proof of Thm. \ref{lb7} (cf. also Rem. \ref{lb99}), the closure $\scr E$ can be defined by setting $L(\xi,\wtd I)|_{\mc H_j}=\Psi_{i,j}^{-1} L^\boxtimes(\xi,\wtd I)|_{\mc H_j}$ and $R(\xi,\wtd I)|_{\mc H_j}=\Psi_{j,i}^{-1} R^\boxtimes(\xi,\wtd I)|_{\mc H_j}$ for each $\mc H_i,\mc H_j\in\Obj(\scr C)$, $\wtd I\in\Jtd$, and $\xi\in\mc H_i(I)$. 
\end{proof}

\subsection{Weak left and right operators}

In the following, we fix a M\"obius covariant weak categorical extension $\scr E^w=(\mc A,\scr C,\boxdot,\ss,\fk H)$.

\begin{df}\label{lb9}
A \textbf{weak left operator} of $\scr E^w$ with charge space $\mc H_i\in\Obj(\scr C)$ is an operation $A(\fk x,\wtd I)$, where $\fk x$ is a formal element, $\wtd I\in\Jtd$, and for any $\mc H_k\in\Obj(\scr C)$, there is a smooth and localizable operator $A(\fk x,\wtd I):\mc H_k^\infty\rightarrow(\mc H_i\boxdot\mc H_k)^\infty$ such that the following conditions are satisfied:
\begin{enumerate}[label=(\alph*)]
\item If $\mc H_k,\mc H_{k'}\in\Obj(\scr C)$, $G\in\Hom_{\mc A}(\mc H_k,\mc H_{k'})$, then  the following diagram commutes.
\begin{equation}
\begin{tikzcd}[column sep=large]
\quad\mc H_k^\infty\quad \arrow[d,"{A(\xk,\wtd I)}"'] \arrow[r,"G"] & \quad\mc H_{k'}^\infty\quad \arrow[d,"{A(\xk,\wtd I)}"] \\
(\mc H_i\boxdot\mc H_k)^\infty \arrow[r,"\idt_i\boxdot G"]           & (\mc H_i\boxdot\mc H_{k'})^\infty          
\end{tikzcd}
\end{equation}
\item For any $\mc H_l,\mc H_k\in\Obj(\scr C)$,  $\wtd J\in\Jtd$  clockwise to $\wtd I$, and any $\fk b\in\fk H_l(\wtd J)$, the following diagram  commutes (non-necessarily adjointly).
\begin{equation}
\begin{tikzcd}[column sep=huge]
\quad \mc H_k^\infty\quad \arrow[r,"{\mc R(\fk b,\wtd J)}"] \arrow[d, "{A(\xk,\wtd I)}"'] &\quad (\mc H_k\boxdot\mc H_j)^\infty\quad \arrow[d, "{A(\xk,\wtd I)}"]\\
(\mc H_i\boxdot\mc H_k)^\infty\arrow[r,"{\mc R(\fk b,\wtd J)}"] &(\mc H_i\boxdot\mc H_k\boxdot\mc H_j)^\infty
\end{tikzcd}
\end{equation}
\end{enumerate}
\end{df}

\begin{df}\label{lb10}
A \textbf{weak right operator} of $\scr E^w$ with charge space $\mc H_j\in\Obj(\scr C)$ is an operation $B(\yk,\wtd J)$, where $\fk y$ is a formal element, $\wtd J\in\Jtd$, and for any $\mc H_k\in\Obj(\scr C)$, there is a smooth and localizable operator $B(\fk y,\wtd J):\mc H_k^\infty\rightarrow(\mc H_k\boxdot\mc H_j)^\infty$, such that the following conditions are satisfied:

\begin{enumerate}[label=(\alph*)]
\item If $\mc H_k,\mc H_{k'}\in\Obj(\scr C)$, $G\in\Hom_{\mc A}(\mc H_k,\mc H_{k'})$, then  the following diagram commutes.
\begin{equation}
\begin{tikzcd}[column sep=large]
\mc H_k^\infty \arrow[r,"{B(\yk,\wtd J)}"] \arrow[d,"G"'] & (\mc H_k\boxdot\mc H_j)^\infty \arrow[d,"G\boxdot\idt_j"] \\
\mc H_{k'}^\infty \arrow[r,"{B(\yk,\wtd J)}"]           & (\mc H_{k'}\boxdot\mc H_j)^\infty  
\end{tikzcd}  
\end{equation}
\item For any $\mc H_i,\mc H_k\in\Obj(\scr C)$,  $\wtd I\in\Jtd$  anticlockwise to $\wtd J$, and any $\fk a\in\fk H_i(\wtd I)$, the following diagram  commutes.
\begin{equation}
\begin{tikzcd}[column sep=huge]
\quad \mc H_k^\infty\quad \arrow[r,"{B(\yk,\wtd J)}"] \arrow[d, "{\mc L(\fk a,\wtd I)}"'] &\quad (\mc H_k\boxdot\mc H_j)^\infty\quad \arrow[d, "{\mc L(\fk a,\wtd I)}"]\\
(\mc H_i\boxdot\mc H_k)^\infty\arrow[r,"{B(\yk,\wtd J)}"] &(\mc H_i\boxdot\mc H_k\boxdot\mc H_j)^\infty
\end{tikzcd}
\end{equation}
\end{enumerate}
\end{df}

Recall Def. \ref{lb2} for the meaning of left and right operators of $\scr E$.

\begin{thm}\label{lb13}
Let $\scr E$ be the closure of $\scr E^w$. Assume that $\scr C$ is rigid, i.e., any object of $\scr C$ has a dual object in $\scr C$. If $A(\xk,\wtd I)$ is a weak left operator of $\scr E^w$ with charge space $\mc H_i$, then its closure is a left operator of $\scr E$, i.e., the operation $\ovl{ A(\fk x,\wtd I)}|_{\mc H_k}:\mc H_k\rightarrow\mc H_i\boxdot\mc H_k$ defines a left operator of $\scr E$. Similarly, the closure of a weak right operator of $\scr E^w$ is a right operator of $\scr E$.
\end{thm}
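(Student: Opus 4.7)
Define the candidate charge vector $\xi := A(\xk,\wtd I)\Omega \in \mc H_i$, using the identification $\mc H_i\boxdot\mc H_0 = \mc H_i$. The goal is to verify (i) $\xi\in\mc H_i^\pr(I)$, so that $\scr L^\boxdot(\xi,\wtd I)$ is defined by Thm.~\ref{lb7}, and (ii) $\ovl{A(\xk,\wtd I)}|_{\mc H_k}=\scr L^\boxdot(\xi,\wtd I)|_{\mc H_k}$ for every $\mc H_k\in\Obj(\scr C)$. The backbone is to exploit the weak commutation of $A$ with weak right operators (Def.~\ref{lb9}(b)) to couple $\ovl{A(\xk,\wtd I)}$ to the $\scr R^\boxdot$-operators of the closure $\scr E$, then to invoke the locality of $\scr E$ (Thm.~\ref{lb7}(e)) and the Reeh--Schlieder density (Def.~\ref{lb8}(d)) to match it with $\scr L^\boxdot(\xi,\wtd I)$. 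Rigidity of $\scr C$ enters at a single but essential step, upgrading one-sided commutation to strong commutation of closures.

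\textbf{Step 1 (strong commutation).} For every $\mc H_l\in\Obj(\scr C)$, every $\wtd J$ clockwise to $\wtd I$, and every $\fk b\in\fk H_l(\wtd J)$, I would show that $\ovl{A(\xk,\wtd I)}$ and $\scr R^\boxdot(\eta_{\fk b},\wtd J) = \ovl{\mc R(\fk b,\wtd J)}$ (with $\eta_{\fk b}:=\mc R(\fk b,\wtd J)\Omega$, via Thm.~\ref{lb16}) commute strongly as a square diagram. Def.~\ref{lb9}(b) provides only one-sided commutation on $\mc H_k^\infty$; the missing adjoint direction is supplied by rigidity, by dualising $A(\xk,\wtd I)^*$ through evaluation/coevaluation morphisms for a dual $\mc H_{\ovl i}\in\Obj(\scr C)$ of $\mc H_i$ (composed with the naturality of Def.~\ref{lb9}(a)) into a weak right operator of $\scr E^w$ with charge $\mc H_{\ovl i}$; the adjoint weak locality of $\scr E^w$ itself (Def.~\ref{lb8}(g)) then yields the adjoint commutation of the original square. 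A diagram version of Lem.~\ref{lb35} (lifted to the direct sum $\mc H_k\oplus(\mc H_k\boxdot\mc H_l)\oplus(\mc H_i\boxdot\mc H_k)\oplus(\mc H_i\boxdot\mc H_k\boxdot\mc H_l)$ as in the definition of strong commutation of squares in Sec.~\ref{lb18}), using smoothness and $\varrho$-invariance of $\mc H_k^\infty$, promotes adjoint commutation on the smooth subspace to strong commutation of the closures.

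\textbf{Steps 2 and 3 (closability and equality).} Taking $\mc H_l=\mc H_0$ in Step 1 and combining Exp.~\ref{lb85} with Prop.~\ref{lb4}, the right operators of $\scr E$ with vacuum charge are precisely $\pi_{\cdot,J}(y)$ for $y$ affiliated with $\mc A(J)$, $J\subset I'$; hence $\ovl{A(\xk,\wtd I)}|_{\mc H_0}$ strongly commutes with $\pi_{\cdot,I'}(\mc A(I'))$. Applying this at $\Omega$ yields $y\Omega\in\Dom(\ovl{A(\xk,\wtd I)}|_{\mc H_0})$ and $\ovl{A(\xk,\wtd I)}|_{\mc H_0}(y\Omega)=\pi_{i,I'}(y)\xi$ for $y\in\mc A(I')$, so $y\Omega\mapsto y\xi$ is a restriction of a closable operator, proving $\xi\in\mc H_i^\pr(I)$. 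For (ii): given $\eta_{\fk b}$ as above, Lem.~\ref{lb14} applied to Step 1 at $\Omega$ gives $\ovl{A(\xk,\wtd I)}|_{\mc H_l}\eta_{\fk b}=\scr R^\boxdot(\eta_{\fk b},\wtd J)|_{\mc H_i}\xi$; Lem.~\ref{lb14} applied to Thm.~\ref{lb7}(e) at $\Omega$ gives the same identity with $\ovl{A(\xk,\wtd I)}|_{\mc H_l}$ replaced by $\scr L^\boxdot(\xi,\wtd I)|_{\mc H_l}$. Thus the two closed operators agree on the linear span of $\{\eta_{\fk b}\}$, which by Reeh--Schlieder (Def.~\ref{lb8}(d)) and rotation covariance (Def.~\ref{lb8}(i)) is a dense QRI subspace of $\mc H_l^\infty$; by localizability of $A(\xk,\wtd I)$ (Def.~\ref{lb32}) and Prop.~\ref{lb4}(b), it is a common core, giving equality. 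The statement for weak right operators follows by the symmetric argument (or by applying the braiding $\ss$ to reduce to the already-proved left case).

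\textbf{Main obstacle.} Step~1 is the crux. The one-sided commutation in Def.~\ref{lb9}(b) does not formally imply its adjoint, and the rigidity-based dualisation of $A^*$ into a weak right operator of $\scr E^w$ with charge $\mc H_{\ovl i}$ is the conceptual heart of the proof. Carrying it out rigorously requires checking that the operator produced by the duality construction genuinely satisfies Def.~\ref{lb10}---smoothness, localizability, naturality with $\mc A$-morphisms, and commutation with weak left operators of $\scr E^w$---all within the categorical tensor-product structure.
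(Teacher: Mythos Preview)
The paper does not give a proof; it simply cites \cite[Thm.~1.6.2]{Gui20}. Your outline captures the correct overall architecture (define $\xi=A(\xk,\wtd I)\Omega$, identify rigidity as the device that supplies the missing adjoint direction, then match with $\scr L^\boxdot(\xi,\wtd I)$ on a dense set), and you are right that the dualisation via ev/coev is the conceptual crux.

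There are, however, two genuine gaps in the execution. First, Lem.~\ref{lb35} does not do what you claim in Step~1: its hypothesis is that the rotated $e^{\im tD}A_ie^{-\im tD}$ already commute \emph{strongly} with $B_j$; it propagates strong commutation to polynomials, but it does not manufacture strong commutation from adjoint commutation on $\mc H^\infty$. The passage from adjoint commutation of smooth localizable operators to strong commutation of their closures is a separate (and more delicate) argument in \cite{Gui20}. Second, your common-core argument in Step~3 is incomplete: you correctly observe that $\mathrm{Span}\{\eta_{\fk b}\}$ is QRI and hence a core for $A$, giving $\ovl{A}\subset\scr L^\boxdot(\xi,\wtd I)$, but Prop.~\ref{lb4}(b) tells you that $\mc H_l^\infty(I')$ is a core for $\scr L^\boxdot(\xi,\wtd I)$, not that $\mathrm{Span}\{\eta_{\fk b}\}$ is. For the reverse inclusion you would need $A\eta=R^\boxdot(\eta,\wtd I')\xi$ for $\eta\in\mc H_l^\infty(I')$, and here $R^\boxdot(\eta,\wtd I')$ is a bounded $R$ operator of the \emph{closure} $\scr E$, not an $\mc R$ operator of $\scr E^w$---so Def.~\ref{lb9}(b) does not apply directly, and one must first establish that $\ovl{A}$ commutes strongly with these bounded operators. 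This is precisely where the rigidity-based adjoint commutation feeds in, but the bridge from ``adjoint commutation with $\mc R(\fk b,\wtd J)$'' to ``strong commutation with all $R^\boxdot(\eta,\wtd I')$'' is the technical heart of the argument and is not covered by the tools you cite.
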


\begin{proof}
This is \cite[Thm. 2.38]{Gui26}, generalizing \cite[Thm. 8.1]{CKLW18} to the setting of categorical extensions.
\end{proof}

\section{$C^*$-Frobenius algebras and (weak) categorical extensions}\label{lb24}

Recall $\scr C$ described in Sec. \ref{lb5}. We fix a (closed and vector-labeled) categorical extension $\scr E=(\mc A,\scr C,\boxdot,\ss,\mc H)$ with braiding operation $\ss$ and with L and R operators $L^\boxdot,R^\boxdot$, abbreviated to $L,R$ when no confusion arises. (Recall from Rem. \ref{lb20} that $\scr E$ is automatically conformal covariant.) Then $(\scr C,\boxdot,\ss)$ is braided $C^*$-tensor category, and is canonically isomorphic to a braided $C^*$-tensor subcategory of $(\Rep(\mc A),\boxtimes,\mbb B)$, the Connes braided $C^*$-category for $\mc A$. (Recall Thm. \ref{lb1}.)

\subsection{Haploid commutative $C^*$-Frobenius algebras $Q$ and their representation categories}\label{lb106}

Recall the following definition: 

\begin{df}
A \textbf{$C^*$-Frobenius algebra} in $\scr C$ is a triple $Q=(\mc H_a,\mu,\iota)$ where $\mc H_a\in\Obj(\scr C)$, $\mu\in\Hom_{\mc A}(\mc H_a\boxdot\mc H_a,\mc H_a)$, $\iota\in\Hom_{\mc A}(\mc H_0,\mc H_a)$ satisfy the following conditions:
	\begin{itemize}
		\item (Unit) $\mu(\iota\boxdot\idt_a)=\idt_a=\mu(\idt_a\boxdot\iota)$.
		\item (Associativity+Frobenius relation) The following diagram commutes adjointly.
		\begin{equation}
			\begin{tikzcd}
				\mc H_a\boxdot\mc H_a\boxdot H_a\arrow[rr,"\idt_a\boxdot\mu"]\arrow[d,"\mu\boxdot\idt_a"'] && \mc H_a\boxdot\mc H_a\arrow[d,"\mu"]\\
				\quad\mc H_a\boxdot\mc H_a\quad\arrow[rr,"\mu"] && \quad\mc H_a\quad
			\end{tikzcd}	
		\end{equation} 
\end{itemize}
$Q$ is called
\begin{itemize}
\item \textbf{normalized} if $\iota^*\iota=\idt_{\mc H_0}$;
\item \textbf{commutative} if $\mu\circ\ss_{a,a}=\mu$;
\item \textbf{haploid} (or \textbf{irreducible}) if $\dim\Hom_{\mc A}(\mc H_0,\mc H_a)=1$. 
\end{itemize}
Unless otherwise stated, \uwave{all $C^*$-Frobenius algebras considered in this article are assumed to be normalized}.
\end{df}

In the following, we fix a haploid commutative $C^*$-Frobenius algebra $Q=(\mc H_a,\mu,\iota)$ in $\scr C$. 

\begin{df}\label{lb66}
A \textbf{dyslectic (unitary) $Q$-module} (in $\scr C$) denotes a pair $(\mc H_i,\mu^i)$ where $\mc H_i\in\Obj(\scr C)$, $\mu^i\in\Hom_{\mc A}(\mc H_a\boxdot\mc H_i,\mc H_i)$, and the following are satisfied:	
	\begin{itemize}
		\item (Unit) $\mu^i(\iota\boxdot\idt_i)=\idt_i$.
		\item (Associativity+Frobenius relation) The following diagram commutes adjointly.
		\begin{equation}
			\begin{tikzcd}
				\mc H_a\boxdot\mc H_a\boxdot H_i\arrow[rr,"\idt_a\boxdot\mu^i"]\arrow[d,"\mu\boxdot\idt_i"'] && \mc H_a\boxdot\mc H_i\arrow[d,"\mu^i"]\\
				\quad\mc H_a\boxdot\mc H_i\quad\arrow[rr,"\mu^i"] && \quad\mc H_i\quad
			\end{tikzcd}	
		\end{equation} 
\item (Dyslectic condition) $\mu^i\circ\ss_{i,a}=\mu^i\circ\ss_{a,i}^{-1}$.
	\end{itemize}	
If $\mc H_i$ and $\mc H_j$ are dyslectic $Q$-modules, a \textbf{(homo)morphism of dyslectic $Q$-modules} $\alpha:\mc H_i\rightarrow\mc H_j$ means that $\alpha\in\Hom_{\mc A}(\mc H_i,\mc H_j)$, and that
\begin{align*}
\mu^j(\idt_a\boxdot\alpha)=\alpha\mu^i
\end{align*}
We let
\begin{align*}
\Hom_Q(\mc H_i,\mc H_j)=\{\text{morphisms of dyslectic $Q$-modules }\mc H_i\rightarrow\mc H_j\}
\end{align*}
The class of all dyslectic $Q$-modules, together with the morphisms defined above, is a $C^*$-category (cf.  \cite[Sec. 6.1]{NY16} or \cite[Prop. 2.24]{Gui22}). We let 
\begin{align*}
\pmb{\Rep^0_{\scr C}(Q)}=\text{the $C^*$-category of dyslectic $Q$-modules in $\scr C$}
\end{align*}
\end{df}

\begin{df}
Let $\mc H_i,\mc H_j\in\Obj(\scr C)$. A \textbf{(unitary) fusion product of $\mc H_i,\mc H_j$ over $Q$} denotes a pair $(\mc H_i\boxdot_Q\mc H_j,\mu_{i,j})$ where $\mc H_i\boxdot_Q\mc H_j$ (abbreviated to $\mc H_{i\boxdot j}$ or simply $\mc H_{ij}$) together with $\mu^{i\boxdot j}\equiv\mu^{ij}\in(\mc H_a\boxdot\mc H_{ij},\mc H_{ij})$ is a dyslectic $Q$-module in $\scr C$. Moreover, we have
\begin{align}
\mu_{i,j}\in\Hom_{\mc A}(\mc H_i\boxdot\mc H_j,\mc H_i\boxdot_Q\mc H_j)
\end{align}
and the following conditions are satisfied:
\begin{itemize}
\item (Intertwining property) The actions of $Q$ on $\mc H_i,\mc H_j,\mc H_{ij}$ are invariant, i.e.,
\begin{align*}
\mu^{ij}(\idt_a\boxdot \mu_{i,j})=\mu_{i,j}(\mu^i\boxdot\idt_j)=\mu_{i,j}(\idt_i\boxdot\mu^j)(\ss_{a,i}\boxdot\idt_j)
\end{align*}
\item (Universal property) If $(\mc H_k,\mu^k)\in\Obj(\Rep_{\scr C}^0(Q))$, and if $\alpha\in\Hom_{\mc A}(\mc H_i\boxdot\mc H_j,\mc H_k)$ is a \textbf{type $k\choose i~j$ intertwining operator of $Q$} in the sense that
\begin{align*}
\mu^k(\idt_a\boxdot \alpha)=\alpha(\mu^i\boxdot\idt_j)=\alpha(\idt_i\boxdot\mu^j)(\ss_{a,i}\boxdot\idt_j)
\end{align*}
then there exists a unique $\wtd\alpha\in\Hom_Q(\mc H_i\boxdot_Q\mc H_j,\mc H_k)$ such that $\alpha=\wtd\alpha\mu_{i,j}$.
\item (Unitarity) The following diagram commutes adjointly:
\begin{equation}
\begin{tikzcd}
\mc H_i\boxdot\mc H_a\boxdot H_j\arrow[rr,"\idt_i\boxdot\mu^j"]\arrow[d,"(\mu^i\circ\ss_{i,a})\boxdot\idt_j"'] && \mc H_i\boxdot\mc H_j\arrow[d,"\mu_{i,j}"]\\
\mc H_i\boxdot\mc H_j\arrow[rr,"\mu_{i,j}"] && \mc H_i\boxdot_Q\mc H_j
\end{tikzcd}	
\end{equation} 
(Note that the commutativity follows from the intertwining property of $\mu^{ij}$.)
\end{itemize}
\end{df}

\begin{rem}\label{lb17}
Fusion products over $Q$ always exist and are unique up unitaries: if $(\mc H_i\boxdot_Q\mc H_k,\mu_{i,j})$ and $(\mc H_i\wht\boxdot_Q\mc H_k,\wht\mu_{i,j})$ are both fusion products of $\mc H_i,\mc H_j$ over $Q$ in $\scr C$, there there is a (necessarily unique) unitary $\Phi_{i,j}\in\Hom_Q(\mc H_i\boxdot_Q\mc H_k,\mc H_i\wht\boxdot_Q\mc H_k)$ (called the \textbf{linking map} between the two systems) such that
\begin{align}\label{eq64}
\wht\mu_{i,j}=\Phi_{i,j}\circ\mu_{i,j}
\end{align}
See \cite[Sec. 3.1]{Gui22} for details.
\end{rem}

\begin{thm}\label{lb19}
Suppose that for each $\mc H_i,\mc H_j\in\Obj(\Rep_{\scr C}^0(Q))$ a fusion product $\mc H_i\boxdot_Q\mc H_j=\mc H_{i,j}$ (with $\mu_{i,j}$) is assigned. (We call such an assignment $(\boxdot_Q,\mu_{\blt,\star})$ (or simply call $\boxdot$) a \textbf{system of fusion products in $\Rep^0_{\scr C}(Q)$}.) Then 
\begin{align*}
(\Rep_{\scr C}^0(Q),\boxdot_Q,\ss^Q)
\end{align*}
defined in the following way is a braided $C^*$-tensor category (called the \textbf{braided $C^*$-tensor category associated to $(\boxdot_Q,\mu_{\blt,\star})$}):
\begin{subequations}\label{eq50}
\begin{itemize}
\item If $F\in\Hom_Q(\mc H_i,\mc H_{\wtd i})$ and $G\in\Hom_Q(\mc H_j,\mc H_{\wtd j})$, then $F\boxdot_Q G$ is the unique element in $\Hom_Q(\mc H_i\boxdot_Q\mc H_j,\mc H_{\wtd i}\boxdot_Q\mc H_{\wtd j})$ satisfying
\begin{align}
\mu_{\wtd i,\wtd j}(F\boxdot G)=(F\boxdot_QG)\mu_{i,j} \label{eq19}
\end{align}
\item The (unitary) associator $\fk A_{i,j,k}\in\Hom_Q\big((\mc H_i\boxdot_Q\mc H_j)\boxdot_Q\mc H_k,\mc H_i\boxdot_Q(\mc H_j\boxdot_Q\mc H_k)\big)$ is determined by
\begin{align}
\mu_{i,jk}(\idt_i\boxdot\mu_{j,k})=\fk A_{i,j,k}\circ \mu_{ij,k}(\mu_{i,j}\boxdot\idt_k)
\end{align}
\item The (unitary) unitors $\fk l_{a,i}\in\Hom_Q(\mc H_a\boxdot_Q\mc H_i,\mc H_i)$ and $\fk r:_{i,a}:\Hom_Q(\mc H_i\boxdot_Q\mc H_a,\mc H_i)$ are determined by 
\begin{align}
\mu^i=\fk l_i \mu_{a,i}\qquad \mu^i\ss_{i,a}=\fk r_i\mu_{i,a}\label{eq21}
\end{align}
\item The (unitary) braiding operator $\ss^Q_{i,j}\in\Hom_Q(\mc H_i\boxdot_Q\mc H_j,\mc H_j\boxdot_Q\mc H_i)$ is determined by
\begin{align}
\mu_{j,i}\ss_{i,j}=\ss^Q_{i,j}\mu_{i,j} \label{eq20}
\end{align}
\end{itemize}
\end{subequations}
\end{thm}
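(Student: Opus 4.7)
\medskip

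\noindent\textbf{Proof proposal.} The plan is to build every piece of the claimed braided $C^*$-tensor structure by systematically invoking the universal property of the fusion product over $Q$. I will first check that each right-hand side of the defining equations \eqref{eq19}--\eqref{eq20} is a $Q$-module map factoring through the appropriate fusion product, and only then turn to the coherence axioms. Unitarity will be handled at the end via Rem.~\ref{lb17}.

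First, I would verify that $\mu_{\wtd i,\wtd j}(F\boxdot G):\mc H_i\boxdot\mc H_j\to\mc H_{\wtd i}\boxdot_Q\mc H_{\wtd j}$ is a type ${\wtd i\wtd j\choose i~j}$ intertwining operator of $Q$; this is an immediate diagram chase using that $F,G$ are morphisms of dyslectic modules, the naturality of $\ss$, and the intertwining property of $\mu_{\wtd i,\wtd j}$. The universal property then yields a unique $F\boxdot_Q G\in\Hom_Q(\mc H_i\boxdot_Q\mc H_j,\mc H_{\wtd i}\boxdot_Q\mc H_{\wtd j})$ satisfying \eqref{eq19}. Bifunctoriality ($\idt_i\boxdot_Q\idt_j=\idt_{i\boxdot_Q j}$ and compatibility with composition) is then forced by uniqueness in the universal property, since both candidates in each identity solve the same factorization problem and differ only by a possibly non-trivial $Q$-module endomorphism. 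The $*$-compatibility $(F\boxdot_Q G)^*=F^*\boxdot_Q G^*$ is obtained by taking adjoints in \eqref{eq19} and using the unitarity axiom of the fusion product (which forces $\mu_{i,j}^*\mu_{i,j}$ to factor nicely through the $Q$-action).

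Next, I would construct the associator, unitors, and braiding as linking maps between fusion systems. For $\fk A_{i,j,k}$: both $\mu_{i,jk}(\idt_i\boxdot\mu_{j,k})$ and $\mu_{ij,k}(\mu_{i,j}\boxdot\idt_k)$ realize a ternary fusion product $\mc H_i\boxdot_Q\mc H_j\boxdot_Q\mc H_k$ (verified by checking the intertwining and universal properties of the iterated systems), so Rem.~\ref{lb17} provides a unique unitary linking map $\fk A_{i,j,k}$ satisfying the displayed equation. The unitors $\fk l,\fk r$ and the braiding $\ss^Q_{i,j}$ are introduced similarly: the right-hand side of each defining equation in \eqref{eq21} and \eqref{eq20} is checked to be a type ${i\choose a~i}$, ${i\choose i~a}$, or ${ji\choose i~j}$ intertwining operator of $Q$ (for the braiding one uses commutativity of $Q$ and naturality of $\ss$), and the resulting factorizations through the corresponding fusion products yield unitaries by the uniqueness part of Rem.~\ref{lb17}.

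For the coherence axioms (pentagon, triangle, and the two hexagons), the strategy is the standard one: compose every candidate identity with the appropriate $\mu_{\bullet,\star}\circ\cdots\circ\mu_{\bullet,\star}$ at the ``input side'' to reduce the identity in $\Rep^0_{\scr C}(Q)$ to an identity of morphisms in $\scr C$ between iterated $\boxdot$-products; by the universal property, equality after such a precomposition implies equality of the original $Q$-module maps. The reduced identities in $\scr C$ follow from the braided $C^*$-tensor axioms of $(\scr C,\boxdot,\ss)$ (Thm.~\ref{lb1}), commutativity of $Q$, and the dyslectic condition on the modules. Naturality of $\fk A,\fk l,\fk r,\ss^Q$ is established by the same uniqueness-after-precomposition argument. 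The main technical obstacle I anticipate is keeping the bookkeeping of the $Q$-actions under control in the two hexagon diagrams, because there one must carefully reshuffle $\mu^i$'s past braidings using the dyslectic condition $\mu^i\ss_{i,a}=\mu^i\ss_{a,i}^{-1}$ to line up with the defining equation \eqref{eq20}; everything else is routine universal-property chasing, and unitarity of all structural morphisms is automatic from Rem.~\ref{lb17}.
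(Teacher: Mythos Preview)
Your proposal is correct and follows the standard universal-property approach to constructing the braided $C^*$-tensor structure on $\Rep^0_{\scr C}(Q)$. The paper itself does not give an argument here: its entire proof is a citation (``This is well-known. See e.g.\ Thm.~3.13 and 3.23 of \cite{Gui22}''), and your sketch is precisely the outline of the proof carried out in that reference, including the use of linking maps (Rem.~\ref{lb17}) for unitarity and the reduction of coherence diagrams to identities in $\scr C$ by precomposing with the surjective $\mu_{\bullet,\star}$'s.
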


\begin{proof}
This is well-known. See e.g. Thm. 3.13 and 3.23 of \cite{Gui22}.
\end{proof}

\begin{rem}\label{lb28}
Given a system of fusion products of dyslectic $Q$-modules, we identify $(\mc H_i\boxdot_Q\mc H_j)\boxdot_Q\mc H_k$ with $\mc H_i\boxdot_Q(\mc H_j\boxdot_Q\mc H_k)$ using the associator, and identify $\mc H_i,\mc H_a\boxdot_Q\mc H_i,\mc H_i\boxdot_Q\mc H_i$ using the unitors. Then we have
\begin{align}
\mu^i=\mu_{a,i}\qquad \mu^i\ss_{i,a}=\mu_{i,a}.
\end{align}
Moreover, given the above identifications, we have:
\end{rem}

\begin{thm}\label{lb30}
For any $\mc H_i,\mc H_j,\mc H_k\in\Obj(\Rep_{\scr C}^0(Q))$, the following diagram commutes adjointly:
\begin{equation}\label{eq58}
\begin{tikzcd}[column sep=huge]
\mc H_i\boxdot\mc H_k\boxdot\mc H_j \arrow[r,"{\idt_i\boxdot\mu_{k,j}}"] \arrow[d, "{\mu_{i,k}\boxdot\idt_j}"'] &\mc H_i\boxdot(\mc H_k\boxdot_Q\mc H_j)\arrow[d, "{\mu_{i,kj}}"]\\
(\mc H_i\boxdot_Q\mc H_k)\boxdot\mc H_j\arrow[r,"{\mu_{ik,j}}"] &\mc H_i\boxdot_Q\mc H_k\boxdot_Q\mc H_j
	\end{tikzcd}
\end{equation}
\end{thm}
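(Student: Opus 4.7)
The plan is to handle the two halves of ``commutes adjointly'' separately: ordinary commutativity $DA = BC$, which reads
\[
\mu_{i,kj}(\idt_i\boxdot\mu_{k,j}) = \mu_{ik,j}(\mu_{i,k}\boxdot\idt_j),
\]
and the $*$-relation $D^*B = AC^*$, which reads
\[
\mu_{ik,j}^*\,\mu_{i,kj} = (\mu_{i,k}\boxdot\idt_j)(\idt_i\boxdot\mu_{k,j})^*.
\]
The commutativity is immediate: it is the defining relation of the associator $\fk A_{i,k,j}$ in Thm. \ref{lb19} after the identification $\fk A_{i,k,j} = \idt$ spelled out in Rem. \ref{lb28}. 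No further work is required for this half.

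For the adjoint relation, I would first use Rem. \ref{lb17} to fix a normalization in which each fusion morphism $\mu_{i,j}$ is a coisometry, so that $\mu_{i,j}\mu_{i,j}^* = \idt$ and $e_{i,j} := \mu_{i,j}^*\mu_{i,j}$ is the balancing projection $((\mu^i\circ\ss_{i,a})\boxdot\idt_j)(\idt_i\boxdot\mu^j)^*$ read off from the unitarity diagram of the fusion product. Taking the adjoint of the commutativity equation and post-composing on the left with $(\mu_{i,k}\boxdot\idt_j)$ (and using $\mu_{i,k}\mu_{i,k}^* = \idt$) gives
\[
\mu_{ik,j}^* \;=\; (\mu_{i,k}\boxdot\idt_j)(\idt_i\boxdot\mu_{k,j})^*\,\mu_{i,kj}^*.
\]
Composing this on the right with $\mu_{i,kj}$ then reduces the target $*$-relation to the claim that the operator $(\mu_{i,k}\boxdot\idt_j)(\idt_i\boxdot\mu_{k,j})^*$ annihilates $\ker(\mu_{i,kj}) = (\idt - e_{i,kj})\bigl(\mc H_i\boxdot(\mc H_k\boxdot_Q\mc H_j)\bigr)$.

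This last annihilation is the main obstacle. It is verified by expanding $e_{i,kj}$ via the Frobenius formula and using the intertwining identity $\mu^{kj}(\idt_a\boxdot\mu_{k,j}) = \mu_{k,j}(\mu^k\boxdot\idt_j) = \mu_{k,j}(\idt_k\boxdot\mu^j)(\ss_{a,k}\boxdot\idt_j)$ determining the induced $Q$-action on $\mc H_k\boxdot_Q\mc H_j$, together with the dyslectic condition on $\mc H_k$; one thereby shows that the image of $(1-e_{i,kj})(\idt_i\boxdot\mu_{k,j})^*$ lies inside $\ker(\mu_{i,k})\boxdot\mc H_j = \ker(\mu_{i,k}\boxdot\idt_j)$. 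The cleanest execution is diagrammatic, amounting to sliding the $Q$-action strand across the fusion vertices via the Frobenius relation. An alternative, less computational route is to apply Thm. \ref{lb1} to transport $(\scr C,\boxdot,\ss)$ into the Connes braided category $(\Rep(\mc A),\boxtimes,\mbb B)$, where $Q$ corresponds to a $C^*$-Frobenius algebra $\Theta$ and the statement reduces to standard subfactor / $Q$-system theory (cf. \cite{BKLR15,Gui21c}).
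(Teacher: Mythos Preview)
The paper does not give a proof here; it simply cites \cite{Gui22}, equation (3.10) and Thm.~3.14. So you are attempting to supply what the paper outsources.

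Your commutativity half is correct and is exactly the content of the associator relation in Thm.~\ref{lb19} under the identification of Rem.~\ref{lb28}; this is what (3.10) of \cite{Gui22} records.

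The adjoint half has two problems. First, the normalization step is not justified by Rem.~\ref{lb17} as you claim: that remark produces \emph{unitary} linking maps $\Phi_{i,j}$ between systems of fusion products, and since $\Phi_{i,j}\mu_{i,j}\mu_{i,j}^*\Phi_{i,j}^{*}=\mu_{i,j}\mu_{i,j}^*$ when the latter is scalar, the constant $\mu_{i,j}\mu_{i,j}^*$ is invariant under such changes. Forcing each $\mu_{i,j}$ to be a coisometry requires rescaling by positive scalars, and you then have to check that the associator identification $\fk A=\idt$ survives; this amounts to a cocycle condition on the scalars which you have not verified. Second, and more seriously, the ``annihilation'' claim---that $(\mu_{i,k}\boxdot\idt_j)(\idt_i\boxdot\mu_{k,j})^*$ vanishes on $\ker(\mu_{i,kj})$---is exactly the substantive content of Thm.~3.14 in \cite{Gui22}, and you have only gestured at it. Saying ``the cleanest execution is diagrammatic, amounting to sliding the $Q$-action strand'' is not a proof; the actual computation uses the Frobenius relation and the unitarity axiom for $\mu_{i,kj}$ in a specific way that you have not carried out.

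Your alternative route via Thm.~\ref{lb1} is not circular (Thm.~\ref{lb1} is independent of the present statement), but it does not avoid the work: after transporting to $(\Rep(\mc A),\boxtimes,\mbb B)$ you still need the same adjoint-commutativity statement for the pushforward $C^*$-Frobenius algebra there, and the references \cite{BKLR15,Gui21c} you cite do not contain this exact lemma in ready-to-use form.
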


\begin{proof}
This follows from (3.10) and Thm. 3.14 of \cite{Gui22}.
\end{proof}

\begin{rem}\label{lb39}
Suppose that we choose two systems of fusion products associating to each $\mc H_i,\mc H_j\in\Obj(\Rep_{\scr C}^0(Q))$ the fusion products $(\mc H_i\boxdot_Q\mc H_j,\mu_{i,j})$ and $(\mc H_i\wht\boxdot_Q\mc H_j,\wht\mu_{i,j})$ respectively. Then the unitary linking map $\Phi$ in Rem. \ref{lb17} is natural, and $(\id_{\Rep_{\scr C}^0(Q)},\Phi)$ is a braided $*$-functor implementing an isomorphism of the braided $C^*$-tensor categories 
\begin{align}
(\id_{\Rep_{\scr C}^0(Q)},\Phi):(\Rep^0_{\scr C}(Q),\boxdot_Q,\ss^Q)\xlongrightarrow{\simeq}(\Rep^0_{\scr C}(Q),\wht\boxdot_Q,\wht\ss^Q)
\end{align}
See Thm. 3.16 and 3.24 of \cite{Gui22} for the proof and the details.
\end{rem}

\subsection{The functor $\fk F_\CN:\Rep^0_{\scr C}(Q)\rightarrow\Rep_{\scr C}(\mc B_Q)$}

\begin{df}
A \textbf{(normalized and irreducible) conformal net extension} of $\mc A$ is a triple $(\mc B,\mc H_a,\iota)$ (or simply $\mc B$), where $(\mc B,\mc H_a)$ is an (irreducible local) conformal net, the Hilbert space $\mc H_a$ is associated with an $\mc A$-module structure $(\mc H_a,\pi_a)$,  $\iota\in\Hom_{\mc A}(\mc H_0,\mc H_a)$ satisfies $\iota^*\iota=\idt_{\mc H_0}$,\index{This is the condition of being normalized.} and the following conditions are satisfied:
\begin{enumerate}[label=(\arabic*)]
\item For each $I\in\mc J$ we have $\pi_{a,I}(\mc A(I))\subset\mc B(I)$.
\item If $\Omega_{\mc A}$ and $\Omega_{\mc B}$ denote the vacuum vectors of $\mc A,\mc B$ respectively, then $\Omega_{\mc B}=\iota\Omega_{\mc A}$.
\item The projective representation of $\Diffp(\Sbb^1)$ on $\mc H_a$ (defined by the conformal net $\mc B$) commutes with $\iota\iota^*$ and is pulled back by $\iota$ to the projective representation of $\Diffp(\Sbb^1)$ on $\mc H_0$.
\end{enumerate}
We say that $\mc B$ is a conformal net extension of $\mc A$ \textbf{in} \pmb{$\scr C$} if $\mc H_a\in\Obj(\scr C)$. We say that the extension has \textbf{finite index} if $[\mc B(I):\pi_{a,I}(\mc A(I))]$ is finite for some (and hence for all) $I\in\mc J$.
\end{df}

\begin{rem}
Let $(\mc B,\mc H_a,\iota)$ be a conformal net extension of $\mc A$. We often identify $\mc H_0$ with its image in $\mc H_a$ under $\iota$, identify $\mc A(I)$ with $\pi_{a,I}(\mc A(I))$, and identify the vacuum vectors of $\mc A$ and $\mc B$ via $\iota$ and denote both by $\Omega$. Then $\mc A$ becomes a conformal subnet of $\mc B$. In this case, we simply write the extension as $(\mc B, \mc H_a)$.
\end{rem}

\begin{rem}
Recall from Subsec. \ref{lb18} that $\mc A$ has a Virasoro subnet $\Vir_c$ determined by the central charge $c\in\Rbb_{\geq0}$, and that $\GA$ is identified with $\Gc=\scr G_{\Vir_c}$ in a canonical way. Now suppose that $(\mc B,\mc H_a)$ is a conformal net extension of $\mc A$. Then $\scr G_{\mc B}$ consists of $(g,V)$ where $g\in\scr G$ and $V$ is a unitary operator on $\mc H_a$ representing the projective action of $g$ on $\mc H_a$. By the definition of conformal net extension, $V|_{\mc H_0}$ represents the projective action of $g$ on $\mc H_0$. Thus, we have an isomorphism $\scr G_{\mc B}\xrightarrow{\simeq}\GA$ defined by $(g,V)\mapsto(g,V|_{\mc H_0})$. Therefore, we can make the identifications
\begin{align*}
\Gc=\GA=\scr G_{\mc B}
\end{align*}
\end{rem}

\begin{df}\label{lb100}
Let $\mc B$ be a conformal net extension of $\mc A$ in $\scr C$. If $(\mc H_i,\pi_i)$ is a $\mc B$-module whose restriction to $\mc A$ gives an object in $\scr C$, we say that $\mc H_i$ is a \textbf{$\mc B$-module in $\scr C$}. We let
\begin{align*}
\pmb{\Rep_{\scr C}(\mc B)}=\text{the $C^*$-category of $\mc B$-modules in $\scr C$}
\end{align*}
Note that if $\scr C$ is full and replete, then $\Rep_{\scr C}(\mc B)$ is clearly also full and replete.
\end{df}

\begin{thm}\label{lb26}
There is a one-to-one correspondence between a haploid commutative $C^*$-Frobenius algebra $Q=(\mc H_a,\mu,\iota)$ in $\scr C$ and a finite-index conformal net extension $(\mc B_Q,\mc H_a,\iota)$ of $\mc A$ in $\scr C$. $\mc B_Q$ is determined by the fact that for every $\wtd I\in\Jtd$,
\begin{align}
\mc B_Q(I)=\big\{\mu L^\boxdot(\xi,\wtd I)|_{\mc H_a}:\xi\in\mc H_a(I)  \big\}
\end{align}
The unitary representation of $\Gc$ on the $\mc A$-module $\mc H_a$ gives the projectively unitary representation of $\Diffp(\Sbb^1)$ associated to $\mc B_Q$.
\end{thm}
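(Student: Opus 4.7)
The plan is to construct $\mc B_Q$ directly from $Q$ using the categorical extension $\scr E$, verify the Haag–Kastler axioms, and then sketch why the assignment $Q \mapsto \mc B_Q$ is inverted by the standard Q-system recovery. Throughout, write
\begin{align*}
b(\xi,\wtd I) := \mu\, L^\boxdot(\xi,\wtd I)\big|_{\mc H_a} \in \Hom_{\mc A(I')}(\mc H_a,\mc H_a), \qquad \xi\in\mc H_a(I),
\end{align*}
so that membership of $b(\xi,\wtd I)$ in the commutant of $\pi_{a,I'}(\mc A(I'))$ is automatic from the $\mc A(I')$-intertwining property of the $L$ operators; this will be the only way elements of $\mc B_Q(I)$ appear.

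First, I would verify algebraic closure: for $\xi,\eta\in\mc H_a(I)$, the product $b(\xi,\wtd I)b(\eta,\wtd I)$ equals $\mu(\idt_a\boxdot\mu)L(\xi,\wtd I)L(\eta,\wtd I)\Omega$, and the Frobenius/associativity axiom for $\mu$ combined with the density property in Def.~\ref{lb31}(d) rewrites this as $b(\zeta,\wtd I)$ for a suitable $\zeta\in\mc H_a(I)$. Adjoints are handled by the Frobenius relation, which furnishes a coproduct dual to $\mu$ and lets us express $b(\xi,\wtd I)^*$ in the same form. Hence the set $\{b(\xi,\wtd I):\xi\in\mc H_a(I)\}$ is a $*$-algebra; its weak closure is $\mc B_Q(I)$. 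Isotony follows directly from Def.~\ref{lb31}(a). For locality, I would take $\wtd J$ clockwise to $\wtd I$ and combine (i) commutativity $\mu\circ\ss_{a,a}=\mu$, (ii) the braiding identity $\ss_{a,a}L(\xi,\wtd I)=R(\xi,\wtd I)|_{\mc H_a}$, and (iii) the adjoint commutativity of the locality square in Def.~\ref{lb31}(e); this produces $b(\xi,\wtd I)b(\eta,\wtd J)=b(\eta,\wtd J)b(\xi,\wtd I)$. Haploidness guarantees that $\iota\Omega$ is cyclic and separating: cyclicity follows from the Reeh–Schlieder-type density of $\mc H_a(I)$ in $\mc H_a$ combined with $b(\xi,\wtd I)\iota\Omega=\xi$ (using the state–field correspondence \eqref{eq14} and unit axiom), and separation from the standard Takesaki–Tomita argument using locality and a choice of complementary interval.

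For covariance and positivity of energy, I would use that $\scr E$ is conformal covariant (Rem.~\ref{lb20}), so for $g\in\Gc$ the map $b(\xi,\wtd I)\mapsto g\, b(\xi,\wtd I)g^{-1}$ equals $b(g\xi g^{-1}, g\wtd I)$, defining an action of $\Gc$ on the net $I\mapsto\mc B_Q(I)$ with positive generator (positivity being inherited from the positive energy on $\mc H_a$ qua $\mc A$-module, via \cite[Thm. 3.8]{Wei06} already invoked for $\ovl L_0$). Finite index is Longo's characterization of dualizable objects: $\mc H_a$ is dualizable in $\scr C$, and the associated $\mu$ gives the Longo–Roberts standard solution, so $[\mc B_Q(I):\mc A(I)]=\dim(\mc H_a)^2<\infty$.

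For the inverse direction, given $(\mc B,\mc H_a,\iota)$ of finite index containing $\mc A$ in $\scr C$, I would invoke the Longo Q-system construction: fixing any $\wtd I_0$, the conditional expectation $E:\mc B(I_0)\to\mc A(I_0)$ produces a canonical endomorphism whose image under the DHR-to-tensor-category dictionary (equivalently, under the isomorphism \eqref{eq90} with $\scr E_\Connes$) is precisely the data $(\mc H_a,\mu,\iota)$ of a haploid commutative $C^*$-Frobenius algebra in $\scr C$; commutativity reflects locality of $\mc B$, haploidness reflects irreducibility of the extension. Checking that the two constructions are mutually inverse then reduces to the identity $b(\xi,\wtd I_0)=\mu L^\boxdot(\xi,\wtd I_0)|_{\mc H_a}$ on generators, which is essentially Longo's formula for the extended net in terms of the Q-system. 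The main obstacle I anticipate is the matching of the explicit formula for $\mc B_Q(I)$ with Longo's abstract construction: this requires identifying $\mu L^\boxdot(\xi,\wtd I)|_{\mc H_a}$ with an element of $\mc B(I)$ rather than merely with $\mc A(I)'$, and the cleanest route is via the tensorator $\Psi$ of Thm.~\ref{lb1}, pushing the computation to the Connes categorical extension where the comparison to \cite{LR95,BKLR15} is standard.
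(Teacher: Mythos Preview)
The paper does not prove this theorem at all: its proof reads in full ``See Thm.\ 2.6, 2.12, 5.7 of \cite{Gui21c}.'' Your sketch is therefore not comparable to anything in the present paper, but it is essentially the outline of the argument carried out in the cited reference \cite{Gui21c}: one defines the candidate local algebras via the operators $b(\xi,\wtd I)=\mu L^\boxdot(\xi,\wtd I)|_{\mc H_a}$, checks the Haag--Kastler axioms using the axioms of the categorical extension and of the $C^*$-Frobenius algebra, and inverts the construction via Longo's Q-system machinery.

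Two small corrections to your sketch. First, the theorem asserts that the set $\{b(\xi,\wtd I):\xi\in\mc H_a(I)\}$ \emph{is} $\mc B_Q(I)$, not merely generates it; no weak closure is needed. This relies on the type~III property of $\mc A(I')$, which forces $\mc H_a(I)=\mc H_a$ (there is a unitary in $\Hom_{\mc A(I')}(\mc H_0,\mc H_a)$), and on the fact that $\xi\mapsto b(\xi,\wtd I)$ is bounded-linear and injective with closed range. Second, your appeal to the density axiom Def.~\ref{lb31}(d) for closure under products is not the right mechanism: the product $b(\xi,\wtd I)b(\eta,\wtd I)$ lies in $\Hom_{\mc A(I')}(\mc H_a,\mc H_a)$, so composing with $\iota$ and applying to $\Omega$ yields $\zeta\in\mc H_a(I)$; one then checks $b(\xi,\wtd I)b(\eta,\wtd I)=b(\zeta,\wtd I)$ directly from naturality, associativity of $\mu$, and the state--field correspondence, not from density of fusion products. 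Apart from these points your outline is sound and matches the cited source.
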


We call $\mc B_Q$ the \textbf{conformal net extension associated to} $Q$.

\begin{proof}
See Thm. 2.6, 2.12, 5.7 of \cite{Gui21c}.
\end{proof}

\begin{thm}\label{lb27}
There is a $*$-functor $\fk F_\CN$ inducing an isomorphism of $C^*$-categories
\begin{subequations}
\begin{gather}
\begin{gathered}
\fk F_\CN:\Rep^0_{\scr C}(Q)\xlongrightarrow{\simeq}\Rep_{\scr C}(\mc B_Q)\\
(\mc H_i,\mu^i)\in\Obj(\Rep_{\scr C}^0(Q))\quad\mapsto\quad (\mc H_i,\pi_i)\in\Obj(\Rep_{\scr C}(\mc B_Q))\\
F\in\Hom_Q(\mc H_i,\mc H_j)\quad\mapsto\quad F\in\Hom_{\mc B_Q}(\mc H_i,\mc H_j)
\end{gathered}
\end{gather}
where $\pi_i$ is determined the fact that for each $\wtd I\in\Jtd$ and $\xi\in\mc H_a(I)$,
\begin{align}
\pi_{i,I}\big(\mu L^\boxdot(\xi,\wtd I\big)|_{\mc H_a})=\mu^i L^\boxdot(\xi,\wtd I)|_{\mc H_i}\equiv \mu_{a,i} L^\boxdot(\xi,\wtd I)|_{\mc H_i}
\end{align}
\end{subequations} 
\end{thm}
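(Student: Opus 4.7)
The plan is to construct $\fk F_\CN$ on objects via the stated formula, verify it yields a genuine $\mc B_Q$-module using the Frobenius and dyslectic structure, and produce an explicit inverse on objects; full faithfulness will then follow from the formulas by a short diagram chase.

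First I would check that $\pi_{i,I}$ is unambiguously defined. Applying naturality of $L$ against $\iota \in \Hom_{\mc A}(\mc H_0,\mc H_a)$ and the unit axiom of $Q$ gives $\mu L(\xi,\wtd I)\iota\Omega = \mu(\idt_a\boxdot\iota)\xi = \xi$, so the map $\mc H_a(I) \to \mc B_Q(I)$, $\xi\mapsto \mu L(\xi,\wtd I)|_{\mc H_a}$, is injective. Hence $\xi$ is uniquely recoverable from $B = \mu L(\xi,\wtd I)|_{\mc H_a}$, and $\pi_{i,I}(B) := \mu^i L(\xi,\wtd I)|_{\mc H_i}$ is well-defined on all of $\mc B_Q(I)$. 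That $\pi_{i,I}|_{\mc A(I)}$ agrees with the preexisting $\mc A(I)$-action on $\mc H_i$ follows from $\mu^i(\iota\boxdot\idt_i)=\idt_i$ combined with \eqref{eq3}.

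The main algebraic step is multiplicativity. Naturality of $L$ against the $\mc A$-module map $\mu^i$ gives $L(\xi_1,\wtd I)\mu^i = (\idt_a\boxdot\mu^i)L(\xi_1,\wtd I)$, so
\[
\mu^i L(\xi_1,\wtd I)\,\mu^i L(\xi_2,\wtd I)\eta = \mu^i(\idt_a\boxdot\mu^i)\,L(\xi_1,\wtd I)L(\xi_2,\wtd I)\eta.
\]
The Frobenius relation on the dyslectic module converts the leading factor to $\mu^i(\mu\boxdot\idt_i)$. Using the fusion-of-$L$ identity $L(\xi_1,\wtd I)L(\xi_2,\wtd I)\eta = L(L(\xi_1,\wtd I)\xi_2,\wtd I)\eta$ (verified at $\eta=\Omega$ by state-field correspondence and extended via naturality against $R$-operators with dense range from the density-of-fusion axiom), followed by $(\mu\boxdot\idt_i)L(\zeta,\wtd I) = L(\mu\zeta,\wtd I)$ with $\zeta = L(\xi_1,\wtd I)\xi_2$, one obtains $\pi_{i,I}(B_1)\pi_{i,I}(B_2)\eta = \mu^i L(\mu L(\xi_1,\wtd I)\xi_2,\wtd I)\eta = \pi_{i,I}(B_1 B_2)\eta$, the last equality because $B_1 B_2$ corresponds to the charge vector $B_1\xi_2 = \mu L(\xi_1,\wtd I)\xi_2$. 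The $*$-preservation follows symmetrically from the adjoint form of the Frobenius relation; isotony, locality, and $\scr G_{\mc B_Q}=\Gc$-covariance of $\pi_i$ pass directly from the corresponding axioms of $\scr E$ together with Thm.~\ref{lb26}.

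For essential surjectivity I would, conversely, define $\mu^i$ on the dense subspace $L(\mc H_a(I),\wtd I)\mc H_i\subset \mc H_a\boxdot\mc H_i$ by $\mu^i L(\xi,\wtd I)\eta := \pi_{i,I}(\mu L(\xi,\wtd I)|_{\mc H_a})\eta$. The principal obstacle is showing that this prescription is well-defined and extends to a bounded $\mc A$-linear map on all of $\mc H_a\boxdot\mc H_i$: after expressing general vectors as linear combinations of $L(\xi,\wtd I)R(\eta',\wtd J)\iota\Omega$ with $\wtd J$ clockwise to $\wtd I$ (permissible by density of fusion and weak locality), boundedness and $\mc A$-linearity reduce to the corresponding statements for the regular dyslectic module $(\mc H_a,\mu)$, using the commutant inclusion $\mc B_Q(I)\subset \mc A(I')'$. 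The Frobenius relation for $\mu^i$ then comes from associativity of $\pi_i$, and the dyslectic condition $\mu^i\ss_{a,i}^{-1} = \mu^i\ss_{i,a}$ translates, via $\ss_{a,i}L(\xi,\wtd I)=R(\xi,\wtd I)$, into the locality of $\mc B_Q$. Mutual inverseness is then a direct check. Finally, full faithfulness is automatic: for $F\in\Hom_{\mc A}(\mc H_i,\mc H_j)$, the dyslectic-morphism condition $F\mu^i = \mu^j(\idt_a\boxdot F)$ and the $\mc B_Q$-intertwiner condition $F\pi_{i,I}(B) = \pi_{j,I}(B)F$ both reduce, via the defining formula and the density of $L(\mc H_a(I),\wtd I)\mc H_i$, to the single identity $F\mu^i L(\xi,\wtd I)\eta = \mu^j L(\xi,\wtd I)F\eta$, which is nothing but naturality of $L$ against $F$ postcomposed with $\mu^j$.
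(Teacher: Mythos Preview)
The paper does not give a self-contained proof here; it simply cites Main Theorem~A of \cite{Gui21c}. Your outline is the natural direct construction and is, in spirit, what that reference carries out. The multiplicativity argument via the fusion identity $L(\xi_1,\wtd I)L(\xi_2,\wtd I)=L(L(\xi_1,\wtd I)\xi_2,\wtd I)$ and the Frobenius relation is correct, and full faithfulness is indeed the one-line reduction you indicate.

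Two points deserve more care. First, you never verify that $\pi_{i,I}$ is a \emph{normal} representation of the von Neumann algebra $\mc B_Q(I)$, which is part of the definition of an $\mc A$-module in this paper. A $*$-homomorphism between von Neumann algebras need not be normal; here one typically argues either by exhibiting $\pi_{i,I}$ as a cut-down of an amplification of the vacuum representation (using that $\mc A(I)$ is a type~III factor, so $\mc H_i$ is unitarily equivalent to $\mc H_0$ as an $\mc A(I)$-module), or by observing that the formula $\xi\mapsto \mu^i L(\xi,\wtd I)|_{\mc H_i}$ is strong-operator continuous on bounded sets in the parametrization of $\mc B_Q(I)$ by $\mc H_a(I)$. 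Second, your inverse construction is the thinnest part: the sentence ``boundedness and $\mc A$-linearity reduce to the corresponding statements for the regular dyslectic module'' hides the real work. In \cite{Gui21c} (and in the Connes-fusion model underlying Thm.~\ref{lb1}) this is handled by identifying $\mc H_a\boxdot\mc H_i$ with the Connes relative tensor product, so that vectors $L(\xi,\wtd I)\eta$ are literally of the form $T_\xi\otimes_{\mc A(I')}\eta$ with $T_\xi\in\Hom_{\mc A(I')}(\mc H_0,\mc H_a)$; well-definedness and boundedness of $\mu^i$ then follow from the universal property of the relative tensor product together with the operator inequality $T_\xi^*T_\xi\leq \|B_\xi\|^2\cdot\iota^*\iota$ in $\mc A(I')'$, where $B_\xi=\mu L(\xi,\wtd I)|_{\mc H_a}$. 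Your decomposition via $L(\xi,\wtd I)R(\eta',\wtd J)\iota\Omega$ can be made to work but requires this kind of norm estimate to close.
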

Thus, $F$ and $\fk F_\CN(F)$ are the same map of Hilbert spaces $\mc H_i\rightarrow\mc H_j$.

\begin{proof}
See Main Theorem A in \cite[Sec. 2]{Gui21c}.
\end{proof}


\begin{df}\label{lb70}
For each haploid commutative $C^*$-Frobenius algebra $Q$ in $\scr C$ and a system of fusion products $(\boxdot_Q,\mu_{\blt,\star})$ in $\Rep^0_{\scr C}(Q)$ we define \textbf{a braided \pmb{$C^*$}-tensor structure on} \pmb{$\Rep_{\scr C}(\mc B_Q)$} to be the \textbf{image of \pmb{$\Rep^0_{\scr C}(Q)$} under \pmb{$(\fk F_\CN,\id)$}}. 

More precisely: Recall by Thm. \ref{lb19} that $(\Rep^0_{\scr C}(Q),\boxdot_Q,\ss^Q)$ is the braided $C^*$-tensor category associated to $(\boxdot_Q,\mu_{\blt,\star})$. The pushforward of this braided $C^*$-tensor structure to $\Rep_{\scr C}(\mc B_Q)$ gives the desired $C^*$-tensor category
\begin{align*}
\big(\Rep_{\scr C}(\mc B_Q),\boxdot_Q,\ss^Q\big)
\end{align*}
Thus, the braided $*$-functor $(\fk F_\CN,\id)$ (where $\fk F_\CN$ is described in Thm. \ref{lb27}) gives an isomorphism of braided $C^*$-tensor category
\begin{align*}
(\fk F_\CN,\id): \big(\Rep^0_{\scr C}(Q),\boxdot_Q,\ss^Q\big)\xlongrightarrow{\simeq}\big(\Rep_{\scr C}(\mc B_Q),\boxdot_Q,\ss^Q\big)
\end{align*}
where the natural map $\id$ means that for each objects $X=(\mc H_i,\mu^i)$ and $Y=(\mc H_j,\mu^j)$ of $\Rep_{\scr C}^0(Q)$ we have
\begin{align*}
\fk F_\CN(X)\boxdot_Q \fk F_\CN(Y)=\fk F_\CN(X\boxdot_Q Y)
\end{align*}
\end{df}

\begin{rem}
Briefly speaking, if we identify the $C^*$-tensor category $\Rep_{\scr C}^0(Q)$ with $\Rep_{\scr C}(\mc B_Q)$, then the braided $C^*$-tensor structure on $\Rep_{\scr C}(\mc B_Q)$ is the same as the one $\Rep_{\scr C}^0(Q)$ defined by the system of fusion products $\boxdot_Q$. By Thm. \ref{lb19}, a different system of fusion products defines a different but isomorphic $C^*$-tensor structure on $\Rep_{\scr C}(\mc B_Q)$.
\end{rem}

\subsection{Categorical extension $\scr E_Q$ associated to $Q$; the tensorator $\fk N^\boxdot:\mc H_i\boxdot_Q\mc H_j\rightarrow\mc H_i\boxtimes_{\mc B_Q}\mc H_j$}\label{lb40}

In this subsection and the next one, we fix a haploid commutative $C^*$-Frobenius algebra $Q=(\mc H_a,\mu,\iota)$ in $\scr C$. 

\begin{lm}
For each $\mc H_i\in\Obj(\Rep_{\scr C}(\mc B_Q))$ and $I\in\mc J$, we have
\begin{align*}
\Hom_{\mc A(I')}(\mc H_0,\mc H_i)\Omega=\Hom_{\mc B_Q(I')}(\mc H_a,\mc H_i)\Omega
\end{align*}
So $\mc H_i(I)$ can denote both sides unambiguously (recall \eqref{eq13}). 
\end{lm}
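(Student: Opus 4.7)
The plan is to prove the two inclusions of subspaces of $\mc H_i$ separately. The inclusion ``$\supset$'' is immediate: given $S \in \Hom_{\mc B_Q(I')}(\mc H_a, \mc H_i)$, the composition $T := S \circ \iota : \mc H_0 \to \mc H_i$ lies in $\Hom_{\mc A(I')}(\mc H_0, \mc H_i)$ because $\iota \in \Hom_{\mc A}$ and $\pi_{a, I'}(\mc A(I')) \subset \mc B_Q(I')$ by the definition of a conformal net extension; moreover $T\Omega = S\Omega$ under the identification $\iota\Omega_{\mc A} = \Omega_{\mc B}$.

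For the nontrivial inclusion ``$\subset$'', given $T \in \Hom_{\mc A(I')}(\mc H_0, \mc H_i)$, I will construct $S \in \Hom_{\mc B_Q(I')}(\mc H_a, \mc H_i)$ with $S\Omega = T\Omega$. Set $\xi := T\Omega \in \mc H_i(I)$. Fix an arg-function $\wtd I$ for $I$, and select an arg-function $\wtd{I}^\flat$ on $I'$ so that $\wtd{I}^\flat$ is anticlockwise to $\wtd I$ (always possible for disjoint $I,I'$). The candidate is
$$S := \mu^i \circ R(\xi, \wtd I)\big|_{\mc H_a} : \mc H_a \to \mc H_i,$$
where $\mu^i : \mc H_a \boxdot \mc H_i \to \mc H_i$ is the dyslectic $Q$-module structure on $\mc H_i$ induced by its $\mc B_Q$-module structure via $\fk F_\CN^{-1}$ (Thm.~\ref{lb27}). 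Combining the naturality of $R$ at $\iota$, the state-field correspondence $R(\xi, \wtd I)|_{\mc H_0}\Omega = \xi$, and the unit axiom $\mu^i(\iota \boxdot \idt_i) = \idt_i$ immediately yields $S\Omega = \xi = T\Omega$.

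To verify $S \in \Hom_{\mc B_Q(I')}(\mc H_a, \mc H_i)$, I will use Thm.~\ref{lb26} to write every $y \in \mc B_Q(I')$ as $y = \mu L(\zeta, \wtd{I}^\flat)|_{\mc H_a}$ with $\zeta \in \mc H_a(I')$, and Thm.~\ref{lb27} to identify $\pi_{i, I'}(y) = \mu^i L(\zeta, \wtd{I}^\flat)|_{\mc H_i}$. The intertwining equation $Sy = \pi_{i,I'}(y) S$ will then reduce, via the naturality of $R$ at $\mu$, the dyslectic-module associativity $\mu^i(\mu \boxdot \idt_i) = \mu^i(\idt_a \boxdot \mu^i)$, and the naturality of $L$ at $\mu^i$, to the commutation of $R(\xi, \wtd I)$ with $L(\zeta, \wtd{I}^\flat)$ acting on $\mc H_a$, which is a direct instance of the locality axiom in $\scr E$. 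The main subtlety is the orientation hypothesis in the locality axiom (Def.~\ref{lb31}(e))---it demands the $L$-interval be anticlockwise to the $R$-interval, so the default clockwise complement $\wtd I'$ of $\wtd I$ is the wrong choice, and this forces the use of $\wtd{I}^\flat$ instead. This substitution is harmless, since Thm.~\ref{lb26} still realizes all of $\mc B_Q(I')$ through such expressions as $\zeta$ varies over $\mc H_a(I')$ with any chosen arg-function lift of $I'$.
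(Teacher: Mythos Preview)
Your proof is correct. The paper itself simply cites \cite[Lem.~5.8]{Gui21c} without reproducing the argument, so there is no direct comparison to make; your write-up supplies a self-contained proof using precisely the machinery developed in the paper (the categorical extension $\scr E$, the description of $\mc B_Q(I')$ from Thm.~\ref{lb26}, the module action via $\fk F_\CN^{-1}$ from Thm.~\ref{lb27}, and the associativity/unit axioms for the dyslectic $Q$-module). The orientation bookkeeping you flag---choosing the anticlockwise complement $\bpr\wtd I$ rather than the clockwise one so that Def.~\ref{lb31}(e) applies---is exactly right, and harmless since Thm.~\ref{lb26} realizes $\mc B_Q(I')$ through any arg-function lift of $I'$.
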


\begin{proof}
\cite[Lem. 5.8]{Gui21c}.
\end{proof}

\begin{thm}\label{lb21}
Let $(\boxdot_Q,\mu_{\blt,\star})$ be a system of fusion products in $\Rep^0_{\scr C}(Q)$ which determines a braided $C^*$-tensor category $(\Rep^0_{\scr C}(Q),\boxdot_Q,\ss^Q)$ as in Thm. \ref{lb19}. Let $(\Rep_{\scr C}(\mc B_Q),\boxdot_Q,\ss^Q)$ be its image under $(\fk F_\CN,\id)$, cf. Def. \ref{lb70}. Then there is a (closed vector-labeled) conformal covariant categorical extension
\begin{align*}
\scr E_Q=(\mc B_Q,\Rep_{\scr C}(\mc B_Q),\boxdot_Q,\ss^Q,\mc H)
\end{align*}
determined by the following fact: If we let $L^Q,R^Q$ denote the (bounded) L and R operators of $\scr E_Q$, then for each $\mc H_i,\mc H_k\in\Obj(\Rep_{\scr C}(\mc B_Q))$, $\wtd I\in\Jtd$, and $\xi\in\mc H_i(I)$, we have
\begin{gather}\label{eq63}
L^Q(\xi,\wtd I)|_{\mc H_k}=\mu_{i,k}\circ L^\boxdot(\xi,\wtd I)|_{\mc H_k}\qquad R^Q(\xi,\wtd I)|_{\mc H_k}=\mu_{k,i}\circ R^\boxdot(\xi,\wtd I)|_{\mc H_k}
\end{gather}
which are bounded linear maps $\mc H_k\rightarrow\mc H_i\boxdot_Q\mc H_k$ and $\mc H_k\rightarrow\mc H_k\boxdot_Q\mc H_i$ respectively.
\end{thm}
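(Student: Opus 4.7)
The plan is to verify, one by one, the axioms (a)--(f) of Def.\ \ref{lb31} for $\scr E_Q$ (plus conformal covariance), transporting the corresponding axioms from $\scr E$ across the fusion multiplications $\mu_{i,k}$ and $\mu_{k,i}$. The uniqueness is immediate from the prescription \eqref{eq63}, so the real content is existence, i.e.\ that the formula does define a categorical extension of $\mc B_Q$ with respect to the braided $C^*$-tensor structure $(\Rep_{\scr C}(\mc B_Q),\boxdot_Q,\ss^Q)$ of Def.\ \ref{lb70}.

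\emph{Step 1: Intertwining with $\mc B_Q(I')$.} First, boundedness of $L^Q(\xi,\wtd I)|_{\mc H_k},R^Q(\xi,\wtd I)|_{\mc H_k}$ is obvious. The key point is to check that $L^Q(\xi,\wtd I)|_{\mc H_k}$ intertwines the $\mc B_Q(I')$-actions on $\mc H_k$ and $\mc H_i\boxdot_Q\mc H_k$. By Thm.\ \ref{lb26}, $\mc B_Q(I')$ is generated as a von Neumann algebra by operators of the form $x=\mu L^\boxdot(\eta,\wtd I')|_{\mc H_a}$ with $\eta\in\mc H_a(I')$, so it suffices to treat such $x$. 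The dyslectic condition $\mu^k\ss_{k,a}=\mu^k\ss_{a,k}^{-1}$ combined with \eqref{eq21} and the braiding axiom $\ss_{a,k}L^\boxdot(\eta,\wtd I')=R^\boxdot(\eta,\wtd I')$ rewrites $\pi_{k,I'}(x)=\mu_{a,k}L^\boxdot(\eta,\wtd I')|_{\mc H_k}$ as $\mu_{k,a}R^\boxdot(\eta,\wtd I')|_{\mc H_k}$. Then, using naturality of $L^\boxdot(\xi,\wtd I)$ with respect to $\mu_{k,a}$, the locality of $\scr E$ applied to $\xi$ and $\eta$ (since $\wtd I'$ is clockwise to $\wtd I$), and naturality of $R^\boxdot(\eta,\wtd I')$ with respect to $\mu_{i,k}$, the desired identity reduces to the associativity/intertwining identity $\mu_{i,k}(\idt_i\boxdot\mu_{k,a})=\mu_{ik,a}(\mu_{i,k}\boxdot\idt_a)$, which is the instance $(i,k,a)$ of the diagram \eqref{eq58} in Thm.\ \ref{lb30}. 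An analogous computation treats $R^Q$.

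\emph{Step 2: Axioms (a)--(d) and (f).} Isotony is inherited term-by-term from isotony of $L^\boxdot,R^\boxdot$. Naturality is obtained by combining naturality of $L^\boxdot,R^\boxdot$ for $\mc A$-module maps with the defining identity \eqref{eq19} for $F\boxdot_Q G$ (which expresses exactly that $\mu_{\wtd i,\wtd j}(F\boxdot G)=(F\boxdot_Q G)\mu_{i,j}$). For state-field correspondence with the vacuum $\iota\Omega_{\mc A}$ of $\mc B_Q$, one applies naturality of $L^\boxdot(\xi,\wtd I)$ with respect to $\iota$, the state-field correspondence $L^\boxdot(\xi,\wtd I)\Omega_{\mc A}=\xi$, and $\mu_{i,a}(\idt_i\boxdot\iota)=\idt_{\mc H_i}$ (which, in the identifications of Rem.\ \ref{lb28}, is the unit axiom for $Q$-modules combined with \eqref{eq21}); the $R^Q$ case is symmetric. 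Density of fusion products for $\scr E_Q$ reduces to density for $\scr E$ together with surjectivity of $\mu_{i,k}:\mc H_i\boxdot\mc H_k\to\mc H_i\boxdot_Q\mc H_k$. The braiding axiom follows from a one-line chain $\ss^Q_{i,k}\mu_{i,k}L^\boxdot(\xi,\wtd I)=\mu_{k,i}\ss_{i,k}L^\boxdot(\xi,\wtd I)=\mu_{k,i}R^\boxdot(\xi,\wtd I)$ using \eqref{eq20} and the braiding of $\scr E$.

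\emph{Step 3: Locality, and conformal covariance.} For $\wtd I$ anticlockwise to $\wtd J$ and $\xi\in\mc H_i(I)$, $\eta\in\mc H_j(J)$, the square for $\scr E_Q$ reads
\begin{align*}
\mu_{i,kj}L^\boxdot(\xi,\wtd I)|_{\mc H_k\boxdot_Q\mc H_j}\,\mu_{k,j}R^\boxdot(\eta,\wtd J)|_{\mc H_k}=\mu_{ik,j}R^\boxdot(\eta,\wtd J)|_{\mc H_i\boxdot_Q\mc H_k}\,\mu_{i,k}L^\boxdot(\xi,\wtd I)|_{\mc H_k}.
\end{align*}
By naturality of $L^\boxdot$ and $R^\boxdot$ with respect to $\mu_{k,j}$ and $\mu_{i,k}$ respectively, this is equivalent to
\begin{align*}
\mu_{i,kj}(\idt_i\boxdot\mu_{k,j})\bigl[L^\boxdot(\xi,\wtd I)R^\boxdot(\eta,\wtd J)\bigr]=\mu_{ik,j}(\mu_{i,k}\boxdot\idt_j)\bigl[R^\boxdot(\eta,\wtd J)L^\boxdot(\xi,\wtd I)\bigr],
\end{align*}
and the equality of the two prefactors is exactly Thm.\ \ref{lb30}, after which what remains is adjoint commutation of $L^\boxdot(\xi,\wtd I)$ with $R^\boxdot(\eta,\wtd J)$, which is locality for $\scr E$. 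Finally, conformal covariance is transferred via the identification $\scr G_{\mc A}=\scr G_{\mc B_Q}$: the unitary $\Gc$-action on each $\mc H_k\in\Obj(\Rep_{\scr C}(\mc B_Q))$ is the one inherited from its $\mc A$-module structure, and the morphisms $\mu_{i,k}$, being $\mc A$-module maps, automatically intertwine these $\Gc$-actions; combining this with conformal covariance of $\scr E$ (Rem.\ \ref{lb20}) gives the covariance of $L^Q,R^Q$.

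The main obstacle is Step~1 (the intertwining with the larger algebra $\mc B_Q(I')$), because this is where the braided/dyslectic structure of $Q$ genuinely enters: one must convert the $L^\boxdot(\eta,\wtd I')$-description of the $\mc B_Q(I')$-action into its $R^\boxdot(\eta,\wtd I')$-description via the dyslectic identity in order to apply locality of $\scr E$ between operators localized in $\wtd I$ and $\wtd I'$. Once this conversion is available, Step~3 (locality of $\scr E_Q$) is morally a repeat of the same mechanism with a generic $\mc H_j$ in place of $\mc H_a$, and the remaining axioms are formal.
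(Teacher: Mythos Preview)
Your proposal is correct and follows the same strategy as the referenced proof in \cite[Sec.~5]{Gui21c}: direct axiom-by-axiom verification, with the key ingredients being (i) the dyslectic identity to convert the $\mc B_Q(I')$-action from its $L^\boxdot$-form into $R^\boxdot$-form, (ii) naturality of $L^\boxdot,R^\boxdot$ to push the $\mu_{\blt,\star}$'s past the operators, and (iii) the adjointly-commuting square of Thm.~\ref{lb30} to reconcile the two ``associativity'' prefactors. One minor simplification you can make: for conformal covariance you need not argue directly via $\Gc$-intertwining of the $\mu_{i,k}$; the paper observes (Rem.~\ref{lb20}) that \emph{any} closed vector-labeled categorical extension is automatically conformal covariant, so once Steps~1--3 establish that $\scr E_Q$ is a categorical extension, covariance comes for free.
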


One can write \eqref{eq63} as
\begin{align}
\scr E_Q=\mu_{\blt,\star}\circ\scr E
\end{align}

\begin{proof}
See the proof of Main Theorem C in \cite[Sec. 5]{Gui21c}. Note that the conformal covariance of $\scr E_Q$ is automatic by Rem. \ref{lb20}.
\end{proof}

\begin{df}\label{lb69}
The $\scr E_Q$ in Thm. \ref{lb21} is called the \textbf{categorical extension associated to $Q$, $\scr E$, and $(\boxdot_Q,\mu_{\blt,\star})$}. Its vector-labeled left and right operators are written as $\scr L^Q(\xi,\wtd I)$ and $\scr R^Q(\xi,\wtd I)$ where $\xi\in\mc H_i^\pr(I)$ (cf. Rem. \ref{lb23}); when $\xi\in\mc H_i(I)$, we write them as $L^Q(\xi,\wtd I),R^Q(\xi,\wtd I)$.  
\end{df}

Recall that $L=L^\boxdot$ and $R=R^\boxdot$ are the L and R operators of the categorical extension $\scr E=(\mc A,\scr C,\boxdot,\ss,\mc H)$ fixed at the beginning of this chapter. In the next corollary, we let $L^{\boxtimes_{\mc B_Q}}$ and $R^{\boxtimes_{\mc B_Q}}$ be the L and R operators of the Connes categorical extension $\scr E_{\mc B_Q,\Connes}=(\mc B_Q,\Rep(\mc B_Q),\boxtimes_{\mc B_Q},\mbb B^{\mc B_Q},\mc H)$ for $\mc B_Q$ (cf. Thm. \ref{lb12}).

\begin{co}\label{lb41}
Let $(\boxdot_Q,\mu_{\blt,\star})$ be a system of fusion products in $\Rep^0_{\scr C}(Q)$ giving a braided $C^*$-tensor category $(\Rep_{\scr C}(\mc B_Q),\boxdot_Q,\ss^Q)$ (cf. Thm. \ref{lb19} and Def. \ref{lb70}). Let $\fk F_\CN:\Rep^0_{\scr C}(Q)\rightarrow\Rep_{\scr C}(\mc B_Q)$ be the $*$-functor in Thm. \ref{lb27}. Let 
\begin{align*}
\fk N^\boxdot:\mc H_\blt\boxdot_Q\mc H_\star\rightarrow\mc H_\blt\boxtimes_{\mc B_Q}\mc H_\star
\end{align*}
be the tensorator associated to $\scr E_Q=(\mc B_Q,\Rep_{\scr C}(\mc B_Q),\boxdot_Q,\ss^Q,\mc H)$ (cf. Thm. \ref{lb1}), i.e., for each $\mc H_i,\mc H_j\in\Obj(\Rep_{\scr C}(\mc B_Q))$, $\fk N^\boxdot_{i,j}\in\Hom_{\mc B_Q}(\mc H_i\boxdot_Q\mc H_j,\mc H_i\boxtimes_{\mc B_Q}\mc H_j)$ and $\fk N^\boxdot_{j,i}\in\Hom_{\mc B_Q}(\mc H_j\boxdot_Q\mc H_i,\mc H_j\boxtimes_{\mc B_Q}\mc H_i)$ are the unique unitary maps satisfying
\begin{subequations}\label{eq32}
\begin{gather}
\fk N^\boxdot_{i,j}\circ \mu_{i,j}L^{\boxdot}(\xi,\wtd I)|_{\mc H_j}=L^{\boxtimes_{\mc B_Q}}(\xi,\wtd I)|_{\mc H_j}\\
\fk N^\boxdot_{j,i}\circ \mu_{j,i}R^{\boxdot}(\xi,\wtd I)|_{\mc H_j}=R^{\boxtimes_{\mc B_Q}}(\xi,\wtd I)|_{\mc H_j}
\end{gather}
\end{subequations}
Let $\wht{\scr C}$ be the repletion of $\scr C$ (Rem. \ref{lb38}). Then $(\fk F_\CN,\fk N^\boxdot)$ is a braided $*$-functor, and
\begin{align}\label{eq104}
(\fk F_\CN,\fk N^\boxdot):(\Rep_{\scr C}^0(Q),\boxdot_Q,\ss^Q)\xlongrightarrow{\simeq} (\Rep_{\wht{\scr C}}(\mc B_Q),\boxtimes_{\mc B_Q},\mbb B^{\mc B_Q})
\end{align}
is an equivalence of braided $C^*$-tensor categories, which is also an isomorphism of braided $C^*$-tensor categories when $\scr C=\wht{\scr C}$.
\end{co}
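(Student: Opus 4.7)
The plan is to obtain $\fk N^\boxdot$ as an instance of the canonical tensorator from Thm.~\ref{lb1} applied to the categorical extension $\scr E_Q$ supplied by Thm.~\ref{lb21}, and then to compose the resulting braided $*$-functor with the isomorphism $(\fk F_\CN,\id)$ of Def.~\ref{lb70}.

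First I would apply Thm.~\ref{lb1} directly to the (closed, vector-labeled, conformal covariant) categorical extension
$$\scr E_Q=(\mc B_Q,\Rep_{\scr C}(\mc B_Q),\boxdot_Q,\ss^Q,\mc H).$$
This yields a natural unitary $\Psi$ attached to $\scr E_Q$ such that $(\id,\Psi):(\Rep_{\scr C}(\mc B_Q),\boxdot_Q,\ss^Q)\to(\Rep(\mc B_Q),\boxtimes_{\mc B_Q},\mbb B^{\mc B_Q})$ is a braided $*$-functor, and that $\Psi_{i,j}L^Q(\xi,\wtd I)|_{\mc H_j}=L^{\boxtimes_{\mc B_Q}}(\xi,\wtd I)|_{\mc H_j}$ together with the analogous identity for $R^Q$. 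Substituting the formula \eqref{eq63}, these become precisely the identities in \eqref{eq32} defining $\fk N^\boxdot$; by the uniqueness half of Rem.~\ref{lb99}, it follows that $\Psi=\fk N^\boxdot$. In particular $(\id,\fk N^\boxdot)$ is a braided $*$-functor with target $(\Rep(\mc B_Q),\boxtimes_{\mc B_Q},\mbb B^{\mc B_Q})$.

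Next I would compose with the isomorphism of braided $C^*$-tensor categories
$$(\fk F_\CN,\id):(\Rep^0_{\scr C}(Q),\boxdot_Q,\ss^Q)\xlongrightarrow{\simeq}(\Rep_{\scr C}(\mc B_Q),\boxdot_Q,\ss^Q),$$
which is provided by Thm.~\ref{lb27} and Def.~\ref{lb70}. The composition is literally $(\fk F_\CN,\fk N^\boxdot)$, so it is a braided $*$-functor into $(\Rep(\mc B_Q),\boxtimes_{\mc B_Q},\mbb B^{\mc B_Q})$.

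To land in $\Rep_{\wht{\scr C}}(\mc B_Q)$ and obtain the equivalence \eqref{eq104}, I would invoke Rem.~\ref{lb38}, which says that the canonical tensorator restricts to an equivalence of braided $C^*$-tensor categories between the source and the repletion of $\Rep_{\scr C}(\mc B_Q)$ inside $\Rep(\mc B_Q)$. A short set-theoretic check identifies this repletion with $\Rep_{\wht{\scr C}}(\mc B_Q)$: if a $\mc B_Q$-module $\mc H_j$ has an $\mc A$-module unitary equivalence to some object of $\scr C$, one pushes the $\mc B_Q$-action forward along this equivalence to obtain a unitarily isomorphic object in $\Rep_{\scr C}(\mc B_Q)$; the converse inclusion is immediate since $\mc B_Q$-module unitaries are $\mc A$-module unitaries. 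When $\scr C=\wht{\scr C}$, the two subcategories coincide and the resulting equivalence is an isomorphism.

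The only non-formal step is matching \eqref{eq32} with the identity characterizing the tensorator of Thm.~\ref{lb1}, and this matching is a direct consequence of the formula \eqref{eq63} defining $\scr E_Q$. I expect the main conceptual point—rather than any analytic obstacle—to be the recognition that $\fk N^\boxdot$ is literally the canonical tensorator of $\scr E_Q$; once this is noted, the corollary reduces to combining Thm.~\ref{lb1}, Rem.~\ref{lb38}, Thm.~\ref{lb27}, and Def.~\ref{lb70}.
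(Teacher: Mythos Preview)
Your proposal is correct and follows essentially the same route as the paper, which simply states that the corollary is immediate from Thm.~\ref{lb27}, Thm.~\ref{lb21}, and Thm.~\ref{lb1}. Your explicit identification of the repletion of $\Rep_{\scr C}(\mc B_Q)$ with $\Rep_{\wht{\scr C}}(\mc B_Q)$ is a detail the paper leaves implicit, but your argument for it is sound.
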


\begin{proof}
This is immediate from Thm. \ref{lb27}, \ref{lb21}, and \ref{lb1}.
\end{proof}

\begin{rem}\label{lb68}
Let $\scr E_{\mc B_Q,\Connes}$ be the Connes categorical extension for $\mc B_Q$. Then one can write \eqref{eq32} as
\begin{align}
\fk N^\boxdot\circ\mu_{\blt,\star}\circ\scr E=\fk N^\boxdot\circ\scr E_Q=\scr E_{\mc B_Q,\Connes}\big|_{\Rep_{\scr C}(\mc B_Q)}
\end{align}
Suppose that $(\wht\boxdot_Q,\wht\mu_{\blt,\star})$ is another system of fusion products in $\Rep^0_{\scr C}(Q)$ which together with $Q,\scr E$ give a categorical extension $\wht{\scr E}_Q=(\mc B_Q,\Rep_{\scr C}(\mc B_Q),\wht\boxdot_Q,\wht\ss^Q,\mc H)$. Then, similarly, we have
\begin{align*}
\wht{\fk N}^\boxdot\circ\wht\mu_{\blt,\star}\circ\scr E=\wht{\fk N}^\boxdot\circ\wht{\scr E}_Q=\scr E_{\mc B_Q,\Connes}\big|_{\Rep_{\scr C}(\mc B_Q)}
\end{align*}
and hence $\fk N^\boxdot\circ\mu_{\blt,\star}=\wht{\fk N}^\boxdot\circ\wht\mu_{\blt,\star}$. Thus, since $\wht\mu_{\blt,\star}=\Phi\circ\mu_{\blt,\star}$ where $\Phi$ is the unitary linking map from $(\boxdot_Q,\mu_{\blt,\star})$ to $(\wht\boxdot_Q,\wht\mu_{\blt,\star})$ (Rem. \ref{lb17}), we have
\begin{subequations}
\begin{gather}
\wht{\fk N}^\boxdot\circ\Phi=\fk N^\boxdot \label{eq55}\\
\wht{\scr E}^Q=\Phi\circ\scr E^Q \label{eq33}
\end{gather}
\end{subequations}

In the case that $\scr C$ is replete (and hence \eqref{eq104} is an isomorphism), if we identify the $C^*$-categories $\Rep^0_{\scr C}(Q)$ and $\Rep_{\scr C}(\mc B_Q)$ via $\fk F_\CN$, we can choose $\wht\boxdot_Q=\boxtimes_{\mc B_Q}$ and $\wht\mu_{\blt,\star}=\fk N^\boxdot\circ\mu_{\blt,\star}$. Then $\Phi$ becomes $\fk N^\boxdot$, and hence $\wht{\fk N}^\boxdot=\id$. We conclude that there exists a system $(\wht\boxdot_Q,\wht\mu_{\blt,\star})$ of fusion products in $\Rep^0_{\scr C}(Q)$ such that $\wht{\fk N}^\boxdot=\id$ and $\wht\boxdot_Q=\boxtimes_{\mc B_Q}$. 
\end{rem}

\subsection{Weak categorical extensions associated to $Q$}

Fix a haploid commutative $C^*$-Frobenius algebra $Q=(\mc H_a,\mu,\iota)$ in $\scr C$. Recall Thm. \ref{lb16} for the meaning of the closure of a weak categorical extension. Recall that the categorical extension $\scr E=(\mc A,\scr C,\boxdot,\ss,\mc H)$ is fixed at the beginning of Sec. \ref{lb24}. The following theorem is the key result that ensures the comparison theorems for VOA and conformal net extensions.

\begin{thm}\label{lb71}
Let $(\boxdot_Q,\mu_{\blt,\star})$ be a system of fusion products in $\Rep^0_{\scr C}(Q)$ giving a braided $C^*$-tensor category $(\Rep_{\scr C}(\mc B_Q),\boxdot_Q,\ss^Q)$ (as described in Thm. \ref{lb19} and Def. \ref{lb70}). Let $\scr E^w=(\mc A,\scr C,\boxdot,\ss,\fk H)$ be a weak categorical extension whose closure equals $\scr E=(\mc A,\scr C,\boxdot,\ss,\mc H)$. Then there is a (necessarily unique) weak categorical extension
\begin{align}
\scr E^w_Q=(\mc B_Q,\Rep_{\scr C}(\mc B_Q),\boxdot_Q,\ss^Q,\fk H)
\end{align}
associating to each $\mc H_i,\mc H_j\in\Obj(\Rep_{\scr C}(\mc B_Q)),\wtd I\in\Jtd,\fk a\in\fk H_i(I)$ smooth and localizable operators
\begin{gather*}
\mc L^Q(\fk a,\wtd I):\mc H_k\rightarrow\mc H_i\boxdot_Q\mc H_k\\
\mc R^Q(\fk a,\wtd I):\mc H_k\rightarrow\mc H_k\boxdot_Q\mc H_i
\end{gather*}
(with domains $\mc H_k^\infty$) defined by
\begin{align}\label{eq16}
\mc L^Q(\fk a,\wtd I)|_{\mc H_k^\infty}=\mu_{i,k}\circ \mc L(\fk a,\wtd I)|_{\mc H_k^\infty}\qquad \mc R^Q(\fk a,\wtd I)|_{\mc H_k^\infty}=\mu_{k,i}\circ \mc R(\fk a,\wtd I)|_{\mc H_k^\infty}
\end{align}
Moreover, the closure of $\scr E_Q^w$ equals $\scr E_Q=\mu_{\blt,\star}\circ\scr E$, the categorical extension associated to $Q$, $\scr E$, and $(\boxdot_Q,\mu_{\blt,\star})$ (cf. Def. \ref{lb69}).
\end{thm}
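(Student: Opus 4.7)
The plan is to verify directly that $\mc L^Q(\fk a,\wtd I)$ and $\mc R^Q(\fk a,\wtd I)$ defined by \eqref{eq16} satisfy each axiom (a)--(i) of Def. \ref{lb8}, and then to identify the closure using Thm. \ref{lb16}. Because $\mu_{i,k}:\mc H_i\boxdot\mc H_k\to\mc H_i\boxdot_Q\mc H_k$ is a morphism in $\scr C$ (hence an $\mc A$-map), post-composition with $\mu_{i,k}$ preserves smoothness and localizability (Rem. \ref{lb25}); moreover $\mu_{i,k}$ is surjective (it is a coisometry by the unitarity axiom of the fusion product), which handles the density of fusion products. Isotony, naturality, braiding, and rotation covariance are routine: they are obtained by composing the corresponding axiom of $\scr E^w$ with $\mu_{i,k}$ and using respectively the isotony of $\scr E^w$; the $Q$-morphism identity \eqref{eq19}; the braiding identity \eqref{eq20}; and the fact that $\mu_{i,k}$ is $\scr G_{\mc A}$-equivariant. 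Neutrality is a short calculation using the unitor identities \eqref{eq21} together with the braiding axiom of $\scr E^w$, which reduces both $\mc L^Q(\fk a,\wtd I)|_{\mc H_a^\infty}$ and $\mc R^Q(\fk a,\wtd I)|_{\mc H_a^\infty}$ to $\mu^i\mc R(\fk a,\wtd I)|_{\mc H_a^\infty}$. Reeh--Schlieder follows by computing $\mc L^Q(\fk a,\wtd I)\Omega$ via naturality of $\scr E^w$ w.r.t. $\iota:\mc H_0\to\mc H_a$, then invoking the unit property $\mu^i(\iota\boxdot\idt_i)=\idt_i$ (and triviality of $\ss_{i,0}$) to identify it with $\mc L(\fk a,\wtd I)\Omega\in\mc H_i=\mc H_i\boxdot\mc H_0$, whose density is the Reeh--Schlieder of $\scr E^w$.

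The two axioms requiring real work are weak locality and the intertwining property. For weak locality with $\wtd J$ clockwise to $\wtd I$, after substituting \eqref{eq16} on both sides, the required adjoint commutativity of the outer square on $\mc H_k^\infty\to(\mc H_i\boxdot_Q\mc H_k\boxdot_Q\mc H_j)^\infty$ follows from the adjoint commutativity of the inner square (which is the weak locality of $\scr E^w$ applied to $\mc L(\fk a,\wtd I)$ and $\mc R(\fk b,\wtd J)$) together with the adjointly commuting diagram \eqref{eq58} of Thm. \ref{lb30}, which lets one push $(\idt_i\boxdot\mu_{k,j})\mu_{i,kj}$ past $(\mu_{i,k}\boxdot\idt_j)\mu_{ik,j}$. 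For the intertwining property, Thm. \ref{lb26} shows that $\mc B_Q(I')$ is generated by operators $y=\mu L^\boxdot(\xi',\wtd I')|_{\mc H_a}$ with $\xi'\in\mc H_a(I')$; by the smoothing Prop. \ref{lb4}(a) applied at $\wtd I'$, the smooth subalgebra $\mc B_Q^\infty(I')$ is $*$-strongly generated by such $y$ with $\xi'\in\mc H_a^\infty(I')$, so it suffices to check adjoint commutation for these. For such $y$, $\pi_{k,I'}(y)=\mu^k L^\boxdot(\xi',\wtd I')|_{\mc H_k}$ and $\pi_{i\boxdot_Q k,I'}(y)=\mu^{ik}L^\boxdot(\xi',\wtd I')|_{\mc H_i\boxdot_Q\mc H_k}$; the calculation $\pi_{i\boxdot_Q k,I'}(y)\mc L^Q(\fk a,\wtd I)=\mc L^Q(\fk a,\wtd I)\pi_{k,I'}(y)$ then reduces---via naturality of $L^\boxdot$ w.r.t. $\mu_{i,k}$, the fusion-action identity $\mu^{ik}(\idt_a\boxdot\mu_{i,k})=\mu_{i,k}(\idt_i\boxdot\mu^k)(\ss_{a,i}\boxdot\idt_k)$, the braiding relation $\ss_{a,\blt}L^\boxdot=R^\boxdot$, the dyslectic identity $\mu^k\ss_{a,k}^{-1}=\mu^k\ss_{k,a}$, and naturality of $\scr E^w$ w.r.t. the $\mc A$-morphism $\mu^k\ss_{a,k}^{-1}:\mc H_k\boxdot\mc H_a\to\mc H_k$---to the adjoint commutation of $\mc L(\fk a,\wtd I)$ with $R^\boxdot(\xi',\wtd I')$, which holds by the locality in the closure $\scr E$ (Thm. \ref{lb7}) restricted to smooth vectors (using Lem. \ref{lb14} to pass the bounded factor through the domain). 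The adjoint half is obtained by an entirely parallel computation.

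Once all axioms of Def. \ref{lb8} are verified, the closure claim reduces to a short computation: since $\mu_{i,k}$ is bounded, $\ovl{\mc L^Q(\fk a,\wtd I)}|_{\mc H_k}=\mu_{i,k}\,\ovl{\mc L(\fk a,\wtd I)}|_{\mc H_k}=\mu_{i,k}\scr L^\boxdot(\xi,\wtd I)|_{\mc H_k}$ with $\xi=\mc L(\fk a,\wtd I)\Omega\in\mc H_i^\pr(I)$, where the second equality is the hypothesis that $\scr E$ is the closure of $\scr E^w$ (Thm. \ref{lb16}); by \eqref{eq63}, extended to vector-labeled operators via the approximation in Prop. \ref{lb4}(a), this is precisely $\scr L^Q(\xi,\wtd I)|_{\mc H_k}$, and an identical argument works for $\mc R^Q$. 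Uniqueness in Thm. \ref{lb16} then yields that the closure of $\scr E^w_Q$ equals $\scr E_Q$. I expect the intertwining property to be the main obstacle, since it is the only place where the commutativity and dyslecticness of $Q$, encoded in the fusion-action identity for $\mu^{ik}$, must be combined non-trivially with the locality of $\scr E$; all other axioms reduce essentially to bookkeeping against the corresponding axioms of $\scr E^w$.
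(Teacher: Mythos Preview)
Your overall strategy—verify the axioms of Def.~\ref{lb8} directly and then invoke Thm.~\ref{lb16}—matches the paper's. Your treatments of isotony, naturality, neutrality, Reeh--Schlieder, density, braiding, and rotation covariance are essentially the paper's Step~1. Your direct proof of weak locality (pasting the weak locality square of $\scr E^w$ with two naturality cells and the adjointly-commuting square of Thm.~\ref{lb30}) is correct and in fact \emph{different} from the paper: the paper does not prove weak locality directly but instead deduces it in Step~3 as a corollary of the closure identification $\ovl{\mc L^Q(\fk a,\wtd I)}=\scr L^Q(\xi,\wtd I)$ via Thm.~\ref{lb7} and Lem.~\ref{lb14}. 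Your intertwining-property computation (converting $L^\boxdot(\xi',\wtd I')$ to $R^\boxdot(\xi',\wtd I')$ via the braiding and using the dyslectic identity) is a legitimate alternative to the paper's $3\times 3$ diagram~\eqref{eq24}, which instead routes everything through Thm.~\ref{lb30}.

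There is, however, a genuine gap in your closure argument. The assertion ``since $\mu_{i,k}$ is bounded, $\ovl{\mc L^Q(\fk a,\wtd I)}|_{\mc H_k}=\mu_{i,k}\,\ovl{\mc L(\fk a,\wtd I)}|_{\mc H_k}$'' is false in general: for bounded $A$ and closable $T$ one only has $A\,\ovl T\subset\ovl{AT}$, with strict inclusion possible whenever $\ker A\neq 0$ (and $\mu_{i,k}$, being a coisometry up to scalar, typically has nontrivial kernel). Likewise, ``$\mu_{i,k}\scr L^\boxdot(\xi,\wtd I)=\scr L^Q(\xi,\wtd I)$ via Prop.~\ref{lb4}(a)'' is unjustified: Prop.~\ref{lb4}(a) approximates bounded $\xi\in\mc H_i(I)$, not $\xi\in\mc H_i^\pr(I)$, and in any case $\mu_{i,k}\scr L^\boxdot(\xi,\wtd I)$ need not be closed. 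What is true is
\[
\ovl{\mc L^Q(\fk a,\wtd I)}\big|_{\mc H_k}=\ovl{\mu_{i,k}\scr L^\boxdot(\xi,\wtd I)\big|_{\mc H_k}}=\scr L^Q(\xi,\wtd I)\big|_{\mc H_k},
\]
with the closure taken \emph{outside} the composite. The first equality holds because $\mc H_k^\infty$ is a core for $\scr L^\boxdot(\xi,\wtd I)$ and graph-convergence for $\scr L^\boxdot$ implies graph-convergence for $\mu_{i,k}\scr L^\boxdot$; the second holds because $\mc H_k(I')$ is a common core for both sides (Thm.~\ref{lb7} applied to $\scr E$ and to $\scr E_Q$, using the lemma that $\mc H_k(I')$ is the same whether computed for $\mc A$ or for $\mc B_Q$), and on $\mc H_k(I')$ both equal $\eta\mapsto \mu_{i,k}R^\boxdot(\eta,\wtd I')\xi=R^Q(\eta,\wtd I')\xi$. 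The paper's Step~3 carries out an equivalent but more elaborate version of this, working instead with the smaller core $\mc H_k^\infty(I')$ and verifying separately (via a $3\times 3$ diagram and a limiting argument based on Prop.~\ref{lb4}(a)) that it is a core for $\scr L^Q(\xi,\wtd I)$.
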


Note that $\scr E^w$ and $\scr E^w_Q$ have the same label set $\fk H_i(I)$ for each $\mc H_i\in\Obj(\Rep_{\scr C}(\mc B_Q))$ and $I\in\mc J$.

In the following proof, symbols such as $\mc H_i^\infty(I)$ are defined as in Def. \ref{lb33} using $\mc A$, not using $\mc B_Q$. (There is no such ambiguity for the notation $\mc H_i^\infty$.)

\begin{proof}
Step 1. The $\mc L^Q$ and $\mc R^Q$ operations defined by \eqref{eq16} are smooth and localizable (cf. Rem. \ref{lb25}). In particular, they are closable. Let us prove that $\scr E_Q^w$ satisfies the axioms of a weak categorical extension (cf. Def. \ref{lb8}). Choose $\mc H_i,\mc H_k,\mc H_{k'}\in \Obj(\Rep_{\scr C}(\mc B_Q))$. We leave the verification of the intertwining property and the weak locality to the later steps.

Isotony: This is obvious.

Naturality: Let $\fk a\in\fk H_i(I)$ and $G\in\Hom_Q(\mc H_k,\mc H_{k'})$ We have a commutativity diagram
\begin{equation}
\begin{tikzcd}[column sep=large]
\quad\mc H_k^\infty\quad \arrow[d,"{\mc L(\fk a,\wtd I)}"'] \arrow[r,"G"] & \quad\mc H_{k'}^\infty\quad \arrow[d,"{\mc L(\fk a,\wtd I)}"]  \\
(\mc H_i\boxdot\mc H_k)^\infty \arrow[r,"\idt_i\boxdot G"]  \arrow[d,"\mu_{i,k}"']         & (\mc H_i\boxdot\mc H_{k'})^\infty  \arrow[d,"{\mu_{i,{k'}}}"]        \\
(\mc H_i\boxdot_Q\mc H_k)^\infty \arrow[r,"\idt_i\boxdot_Q G"]&(\mc H_i\boxdot_Q\mc H_{k'})^\infty
\end{tikzcd}
\end{equation}
where the first diagram commutes by the naturality of $\scr E^w$, and the second one commutes due to \eqref{eq19}. Their composition gives the naturality of the $\mc L^Q$ operators. Similarly, $\mc R^Q$ satisfies the naturality.

Braiding: For each $\eta\in\mc H_k^\infty$,  by \eqref{eq20} we have
\begin{align*}
\mc R^Q(\fk a,\wtd I)\eta=\mu_{k,i}\mc R(\fk a,\wtd I)\eta=\mu_{k,i}\ss_{i,k}\mc L(\fk a,\wtd I)\eta=\ss_{i,k}^Q\mu_{i,k}\mc L(\fk a,\wtd I)\eta=\ss_{i,k}^Q\mc L^Q(\fk a,\wtd I)\eta
\end{align*}

Neutrality: This is due to the above braiding and the fact that $\ss_{i,a}^Q$ is the identity map if we identify $\mc H_i\boxdot_Q\mc H_a$ and $\mc H_a\boxdot_Q\mc H_i$ with $\mc H_i$ through the unitors in \eqref{eq21}: 
\begin{align*}
\ss_{i,a}^Q\mu_{i,a}\xlongequal{\eqref{eq20}}\mu_{a,i}\ss_{i,a}\xlongequal{\eqref{eq21}}\mu^i\ss_{i,a}\xlongequal{\eqref{eq21}}\mu_{i,a}
\end{align*}

Reeh-Schlieder + density of fusion products: These follow from the corresponding properties of $\scr E^w$ and the fact that $\mu_{i,a},\mu_{i,k}$ are surjective. (It is well-known that $\mu_{i,k}\mu_{i,k}^*$ is a scalar, cf. for example \cite[Prop. 3.3]{Gui22}.)

Rotation covariance: This follows from the corresponding property of $\scr E^w$ and the fact that $\mu_{i,k}$ intertwines the actions of the rotation group.\\[-1ex]

Sep 2. We now check that $\scr E^w_Q$ satisfies the intertwining property. By Thm. \ref{lb26}, elements of $\mc B_Q(I')$ are precisely of the form 
\begin{align}
X=\mu L(\xi,\bpr\wtd I)|_{\mc H_a} \label{eq22}
\end{align}
where $\xi\in\mc H_a(I')$, and recall that $\bpr\wtd I$ is the anticlockwise complement of $\wtd I$. By Thm. \ref{lb27}, we have (recalling Rem. \ref{lb28})
\begin{gather*}
\pi_{k,I'}(\mu L(\xi,\bpr\wtd I)|_{\mc H_a})=\mu_{a,k}L(\xi,\bpr\wtd I)|_{\mc H_k}\\
\pi_{k\boxdot_Q i,I'}(\mu L(\xi,\bpr\wtd I)|_{\mc H_a})=\mu_{a,k\boxdot_Q i}L(\xi,\bpr\wtd I)|_{\mc H_{k\boxdot_Q i}}
\end{gather*}
Thus, checking the intertwining property for $\mc R^Q$ means proving that for every $\fk a\in\fk H_i(\wtd I)$ and any $X=\eqref{eq22}$, the following diagram of closable operators commutes strongly:
\begin{equation}\label{eq23}
\begin{tikzcd}[column sep=large]
\mc H_k\arrow[r,"{\mc R^Q(\fk a,\wtd I)}"] \arrow[d,"{\mu_{a,k}\circ L(\xi,\bpr\wtd I)}"'] & (\mc H_k\boxdot_Q\mc H_i) \arrow[d,"{\mu_{a,k\boxdot_Q i}\circ L(\xi,\bpr\wtd I)}"] \\
\mc H_k \arrow[r,"{\mc R^Q(\fk a,\wtd I)}"]           & (\mc H_k\boxdot_Q\mc H_i)        
\end{tikzcd}
\end{equation}
By Prop. \ref{lb4}, there exists a sequence $(\xi_n)_{n\in\Zbb_+}$ in $\mc H_a^\infty(I')$ such that $L(\xi_n,\bpr\wtd I)|_{\mc H_j}$ converges strongly-* to $L(\xi,\bpr\wtd I)|_{\mc H_j}$ for each $\mc H_j\in\Obj(\Rep_{\scr C}(\mc B_Q))$. It suffices to prove \eqref{eq23} when $\xi$ is replaced by each $\xi_n$. Thus, it suffices to prove that \eqref{eq23} commutes strongly for every $\xi\in\mc H_a^\infty(I')$. Note that in this case $L(\xi,\bpr\wtd I)$ is bounded and smooth, and the same is true for $\mu_{a,\star}\circ L(\xi,\bpr\wtd I)$. Thus, it suffices to prove that \eqref{eq23} commutes adjointly. 

Consider the diagrams
\begin{equation}\label{eq24}
\begin{tikzcd}[column sep=large]
\mc H_k^\infty \arrow[r,"{\mc R(\fk a,\wtd I)}"] \arrow[d,"{L(\xi,\bpr\wtd I)}"'] & (\mc H_k\boxdot\mc H_i)^\infty \arrow[r,"\mu_{k,i}"] \arrow[d,"{L(\xi,\bpr\wtd I)}"] & (\mc H_k\boxdot_Q\mc H_i)^\infty \arrow[d,"{L(\xi,\bpr\wtd I)}"] \\
(\mc H_a\boxdot\mc H_k)^\infty \arrow[r,"{\mc R(\fk a,\wtd I)}"] \arrow[d,"\mu_{a,k}"'] & (\mc H_a\boxdot\mc H_k\boxdot\mc H_i)^\infty \arrow[r,"\idt_a\boxdot\mu_{k,i}"] \arrow[d,"\mu_{a,k}\boxdot\idt_i"] & (\mc H_a\boxdot(\mc H_k\boxdot_Q\mc H_i))^\infty \arrow[d,"{\mu_{a,k\boxdot_Q i}}"] \\
\mc H_k^\infty \arrow[r,"{\mc R(\fk a,\wtd I)}"]           & (\mc H_k\boxdot\mc H_i)^\infty \arrow[r,"\mu_{k,i}"]           & (\mc H_k\boxdot_Q\mc H_i)^\infty          
\end{tikzcd}
\end{equation}
By Thm. \ref{lb16}, the closure of $\mc R(\fk a,\wtd I)$ is a right operator of $\scr E$, and hence commutes strongly with $L(\xi,\bpr\wtd I)$ by the locality in Thm. \ref{lb7}. Thus, the upper left cell of \eqref{eq24} commutes adjointly. The upper right and the lower left cells commute adjointly by the naturality axioms in Def. \ref{lb31} and \ref{lb8}, and by $(\mu_{a,k}\boxdot\idt_i)^*=\mu_{a,k}^*\boxdot\idt_i$ and $(\idt_a\boxdot\mu_{k,i})^*=\idt_a\boxdot\mu_{k,i}^*$. By Thm. \ref{lb30}, the lower right cell commutes adjointly. Thus, the largest rectangle (which is just \eqref{eq23}) commutes adjointly.

We have finished proving that the $\mc R^Q$ operators satisfy the strong intertwining property. Combining this fact with the braiding axiom proved in Step 1, we see that the $\mc L^Q$ operators also satisfy the strong intertwining property. \\[-1ex]

Step 3. We now show that the closure of $\mc L^Q(\fk a,\wtd I)$ resp. $\mc R^Q(\fk a,\wtd I)$ is a left resp. right operator of $\scr E_Q$. Then by Lem. \ref{lb14} and the locality (satisfied by the left and right operators of $\scr E_Q$) in Thm. \ref{lb7}, we immediately know that $\scr E_Q^w$ satisfies the axiom of weak locality in Def. \ref{lb8}, and hence that $\scr E_Q^w$ is a weak categorical extension. It also follows from Thm. \ref{lb16} that $\scr E_Q$ is the unique closure of $\scr E_Q^w$.

Let us study $\mc L^Q(\fk a,\wtd I)$; the treatment of $\mc R^Q(\fk a,\wtd I)$ is similar and hence omitted. Let $\xi=\mc L^Q(\fk a,\wtd I)\Omega$, which is in $\mc H_i^\infty$. Let us show that $\xi\in\mc H_i^{Q,\pr}(I)$ in the sense of Def. \ref{lb29}, i.e., the linear map
\begin{align}
Y\Omega\in\mc B_Q(I')\Omega\qquad\mapsto \qquad Y\xi\in\mc H_i \label{eq25}
\end{align}
is closable. In fact, since we have proved in Step 2 that $\mc L^Q(\fk a,\wtd I)$ satisfies the strong intertwining property, the closableness of \eqref{eq25} follows directly from \cite[Prop. 2.33]{Gui26}. We present the proof below since it is short: 

Choose any $Y\in\mc B_Q(I')$. By the intertwining property proved in Step 2, we have $Y\cdot \ovl{\mc L^Q(\fk a,\wtd I)}|_{\mc H_a}\subset \ovl{\mc L^Q(\fk a,\wtd I)} Y|_{\mc H_a}$. In particular, $Y\Omega$ is in the domain of $\ovl{\mc L^Q(\fk a,\wtd I)}$. This proves that $\eqref{eq25}\subset \ovl{\mc L^Q(\fk a,\wtd I)} |_{\mc H_a}$. So \eqref{eq25} is closable since $\ovl{\mc L^Q(\fk a,\wtd I)} |_{\mc H_a}$ is closed.

It remains to prove for each $\mc H_j\in\Obj(\Rep_{\scr C}(\mc B_Q))$ that
\begin{align}
\ovl{\mc L^Q(\fk a,\wtd I)}|_{\mc H_j}=\scr L^Q(\xi,\wtd I)|_{\mc H_j} \label{eq28}
\end{align}
For each $\mc H_k\in\Obj(\Rep_{\scr C}(\mc B_Q))$ and $\eta\in\mc H_j^\infty(I')$, consider 
\begin{equation}\label{eq26}
\begin{tikzcd}[column sep=large]
\mc H_k^\infty \arrow[r,"{R(\eta,\wtd I')}"] \arrow[d,"{\mc L(\fk a,\wtd I)}"'] & (\mc H_k\boxdot\mc H_j)^\infty \arrow[r,"\mu_{k,j}"] \arrow[d,"{\mc L(\fk a,\wtd I)}"] & (\mc H_k\boxdot_Q\mc H_j)^\infty \arrow[d,"{\mc L(\fk a,\wtd I)}"] \\
(\mc H_i\boxdot\mc H_k)^\infty \arrow[r,"{R(\eta,\wtd I')}"] \arrow[d,"\mu_{i,k}"'] & (\mc H_i\boxdot\mc H_k\boxdot\mc H_j)^\infty \arrow[r,"\idt_i\boxdot\mu_{k,j}"] \arrow[d,"\mu_{i,k}\boxdot\idt_j"] & (\mc H_i\boxdot(\mc H_k\boxdot_Q\mc H_j))^\infty \arrow[d,"{\mu_{i,k\boxdot_Q j}}"] \\
(\mc H_i\boxdot_Q\mc H_k)^\infty \arrow[r,"{R(\eta,\wtd I')}"]           & ((\mc H_i\boxdot_Q\mc H_k)\boxdot\mc H_j)^\infty \arrow[r,"\mu_{i\boxdot_Qk,j}"]           & (\mc H_k\boxdot_Q\mc H_j)^\infty          
\end{tikzcd}
\end{equation}
where each of the four cells commutes adjointly by the same reasons for \eqref{eq24}. Thus, the largest rectangle commutes adjointly, i.e., 
\begin{equation}\label{eq27}
\begin{tikzcd}[column sep=large]
\mc H_k^\infty \arrow[r,"{R^Q(\eta,\wtd I')}"] \arrow[d,"{\mc L^Q(\fk a,\wtd I)}"'] & (\mc H_k\boxdot_Q\mc H_j)^\infty\arrow[d,"{\mc L^Q(\fk a,\wtd I)}"] \\
(\mc H_i\boxdot_Q\mc H_k)^\infty \arrow[r,"{R^Q(\eta,\wtd I')}"] &(\mc H_k\boxdot_Q\mc H_j)^\infty
\end{tikzcd}
\end{equation}
commutes adjointly. Setting $\mc H_k=\mc H_a$, we get $\mc L^Q(\fk a,\wtd I)R^Q(\eta,\wtd I')\Omega=R^Q(\eta,\wtd I')\mc L^Q(\fk a,\wtd I)\Omega$, i.e., 
\begin{align*}
\mc L^Q(\fk a,\wtd I)\eta=R^Q(\eta,\wtd I')\xi
\end{align*}
By the locality of left and right operators in $\scr E_Q$ (cf. Thm. \ref{lb7}), we have $R^Q(\eta,\wtd I')\scr L^Q(\xi,\wtd I)\subset \scr L^Q(\xi,\wtd I)R^Q(\eta,\wtd I')$. Thus $\eta=R^Q(\eta,\wtd I')\Omega$ is in the domain of $\scr L^Q(\xi,\wtd I)$, and 
\begin{align}
\scr L^Q(\xi,\wtd I)\eta=R^Q(\eta,\wtd I')\xi\label{eq29}
\end{align}
This proves that $\scr L^Q(\xi,\wtd I)|_{\mc H_j}$ equals $\ovl{\mc L^Q(\fk a,\wtd I)}|_{\mc H_j}$ when restricted to $\mc H_j^\infty(I')$, a dense subspace of the domains of the two operators. 

Note that $\mc H_j^\infty(I')$ is QRI (recall Def. \ref{lb32}). (Choose any $J\in\mc J$ compactly supported in $I'$, then $\mc H_j^\infty(J)$ is a dense subspace of $\mc H_j^\infty(I')$, and $\varrho(t)\mc H_j^\infty(J)\subset\mc H_j^\infty(I')$ for any $t$ such that $\varrho(t)J\subset I$.) Thus, since $\mc L^Q(\fk a,\wtd I)|_{\mc H_j^\infty}$ is smooth and localizable (by Rem. \ref{lb25}), $\mc H_j^\infty(I')$ is a core for $\mc L^Q(\fk a,\wtd I)|_{\mc H_j^\infty}$. This proves that ``$\subset$" holds in \eqref{eq28}.

To finish proving \eqref{eq28}, it remains to prove that $\mc H_j^\infty(I')$ is a core for $\scr L^Q(\xi,\wtd I)|_{\mc H_j}$. Note that we cannot directly use Prop. \ref{lb4}-(b), since $\mc H_j^\infty(I')$ is defined using the smooth L and R operators of $\mc A$ but not those of $\mc B_Q$. However, we can use Prop. \ref{lb4}-(a), which says in part that for every $\eta\in\mc H_j(I')$ there is a sequence $(\eta_n)$ in $\mc H_j^\infty(I')$ converging to $\eta$ such that $R(\eta_n,\wtd I)$ converges strongly to $R(\eta,\wtd I)$ when acting on every object of $\scr C$. It follows that $R^Q(\eta_n,\wtd I)|_{\mc H_i}=\mu_{i,j}R(\eta_n,\wtd I)|_{\mc H_i}$ converges strongly to $R^Q(\eta,\wtd I)|_{\mc H_i}=\mu_{i,j}R(\eta,\wtd I)|_{\mc H_i}$. Thus, by \eqref{eq29} (with $\eta$ replaced by $\eta_n$) we have
\begin{align*}
\lim_n\scr L^Q(\xi,\wtd I)\eta_n=\lim_nR^Q(\eta_n,\wtd I')\xi=R^Q(\eta,\wtd I')\xi
\end{align*}
The existence of the limit on the LHS finishes the proof that $\mc H_j^\infty(I')$ is a core for $\scr L^Q(\xi,\wtd I)|_{\mc H_j(I')}$. Therefore,  $\mc H_j^\infty(I')$ is a core for $\scr L^Q(\xi,\wtd I)|_{\mc H_j}$ since $\mc H_j(I')$ is so (cf. Thm. \ref{lb7}).
\end{proof}

\section{(Weak) categorical extensions associated to unitary VOAs}

Throughout this paper, we assume that \uwave{any VOA $V$ is of CFT-type} (i.e., $V$ has $L_0$-grading $V=\bigoplus_{n\in\Nbb}V(n)$ where the weight-$0$ subspace $V(0)$ is spanned by the \textbf{vacuum vector $\Omega$}). Moreover, we assume that \uwave{$V(n)$ is finite-dimensional for each $n\in\Zbb$.}  All $V$-modules are understood to be semisimple, i.e., they are \textit{finite} direct sums of irreducible (ordinary) $V$-modules. In particular, for any $V$-module $W$, the operator $L_0$ is diagonalizable. We assume that each $L_0$-eigenspace of $W$ is finite-dimensional. (This is automatic when $V$ is $C_2$-cofinite, cf. \cite{Buhl02}.)

Recall that if a VOA $V$ is self-dual (e.g. when $V$ is unitary), $C_2$-cofinite, and rational, then the category $\Rep(V)$ of (semisimple ordinary) $V$-modules is a modular tensor category by \cite{Hua08a,Hua08b}. See \cite{DL14,CKLW18} for details about unitary VOAs.

\subsection{Preliminaries}

The goal of this subsection is to explain the concepts in Conditions I and II (cf. Def. \ref{lb34}).

\begin{df}\label{lb64}
We say that $V$ is \textbf{complete unitarity} if the following conditions hold:
\begin{itemize}
\item $V$ is CFT-type, $C_2$-cofinite and rational.
\item $V$ is a unitary VOA. Moreover, every irreducible $V$-module (and hence every $V$-module) is \textbf{unitarizable}, i.e., admits a unitary structure. (See \cite{DL14,Gui19a} for the definition of unitary $V$-modules.)
\item The canonical non-degenerate sesquilinear forms on the spaces of unitary intertwining operators (defined in \cite[Sec. 6]{Gui19b}) are positive definite.
\end{itemize}
\end{df}

The most important property about complete unitarity is the following fact proved in \cite[Thm. 7.9]{Gui19b} based on Huang-Lepowsky's theory of vertex tensor categories \cite{HL95a,HL95b,HL95c,Hua95,Hua05a,Hua05b,Hua08a,Hua08b}.

\begin{thm}\label{lb61}
Let $V$ be completely unitary. Let
\begin{align*}
\RepV=\text{the $C^*$-category of unitary $V$-modules}
\end{align*}
whose hom space is
\begin{align*}
\Hom_V(W_i,W_j)=\{&\text{linear map }T:W_i\rightarrow W_j\text{ such that }\\
&TY_i(v)_n=Y_j(v)_nT\text{ for all }v\in V,n\in\Zbb\}
\end{align*}
and whose $*$-structure is the adjoint. Then $\RepV$, together with the tensor and braiding structures
\begin{align*}
\boxdot_V=\boxdot\qquad \ss^V=\ss
\end{align*}
defined by Huang-Lepowsky's vertex tensor category theory and the canonical inner products in Def. \ref{lb64}, is a unitary modular tensor category. (In particular, $\RepV$ is a braided $C^*$-tensor category, called the \textbf{Huang-Lepowsky braided $C^*$-tensor category} of $V$.)
\end{thm}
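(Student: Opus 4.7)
The plan is to bootstrap the unitary modular tensor structure on $\RepV$ from the non-unitary modular tensor structure on $\Rep(V)$ established by Huang \cite{Hua08a,Hua08b}. Since $V$ is $C_2$-cofinite, rational and self-dual (the latter being automatic in the unitary setting), Huang's theorems give that $\Rep(V)$, equipped with the Huang-Lepowsky fusion product $\boxdot$, the associators, unitors, the braiding $\ss$, and the rigidity data, is a modular tensor category. Under the complete unitarity assumption, $\Rep(V)$ and $\RepV$ have the same objects (every module is unitarizable), and $\RepV$ carries a canonical $*$-structure on morphism spaces given by the adjoint with respect to the chosen unitary structures. The task therefore reduces to constructing compatible Hilbert space inner products on each fusion product $W_i\boxdot W_j$ and verifying that the structural isomorphisms are unitary with respect to them.

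To produce the inner product on $W_i\boxdot W_j$, I would use the universal intertwining operator $\mc Y_{i,j}$ of type $\binom{i\boxdot j}{i\ j}$ characterising the fusion product. The space of such intertwining operators carries the canonical non-degenerate invariant sesquilinear form of \cite[Sec.~6]{Gui19b}, constructed essentially from the genus-$0$ two-point correlator of a charged and a charge-conjugate intertwining operator together with the modular $S$-matrix. By the complete unitarity hypothesis this form is positive definite, and via the universal property of the fusion product it transfers to a positive definite inner product on $W_i\boxdot W_j$ such that matrix coefficients of $\mc Y_{i,j}$ become coefficients of an isometric embedding. Functoriality of $\mc Y_{i,j}$ in $W_i$ and $W_j$ then yields that $\boxdot$ is a $*$-bifunctor on $\RepV$.

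Next I would verify unitarity of the structural morphisms. Each of the two unitors, the braiding $\ss_{i,j}$, and the associator $\fk A_{i,j,k}$ arises in Huang-Lepowsky theory from a specific monodromy or analytic continuation of a correlator built out of intertwining operators. One shows that the canonical sesquilinear form is invariant under each of these operations: for the unitors this comes from the normalisation of the vacuum, for the braiding it comes from the rotation covariance and the unitarity of the twist, and for the associator it follows from the compatibility of the canonical form with the sewing of intertwining operators at two different insertion points. The rigidity data can be identified with intertwining operators involving the contragredient module, and the unitary structure on the contragredient makes the evaluation and coevaluation mutually adjoint up to the standard normalisation, so the category is rigid in the unitary sense. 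Modularity of the braiding is inherited directly from Huang's theorem and does not rely on unitarity.

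The step I expect to be the main obstacle is the unitarity of the associator $\fk A_{i,j,k}:(W_i\boxdot W_j)\boxdot W_k\xrightarrow{\simeq}W_i\boxdot(W_j\boxdot W_k)$. Here one must identify the canonical inner product on each side with a specific evaluation of a genus-zero four-point correlator involving three intertwining operators, and show that Huang's associativity isomorphism, which is constructed via analytic continuation between two different sewing regions of this four-point function, preserves this evaluation. This reduces to a compatibility between iterated $P(z)$-tensor product structures and the canonical sesquilinear pairings, which is technically the most delicate ingredient; once it is in place, the remaining coherence checks required for a unitary modular tensor category follow from standard diagrammatic manipulations.
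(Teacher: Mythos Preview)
The paper does not prove this theorem; it simply records it as \cite[Thm.~7.9]{Gui19b}, built on the Huang--Lepowsky vertex tensor category theory \cite{HL95a,HL95b,HL95c,Hua95,Hua05a,Hua05b,Hua08a,Hua08b}. Your outline is broadly consistent with the strategy of that reference: start from Huang's modular tensor structure on $\Rep(V)$, use the canonical positive-definite sesquilinear forms on intertwining operator spaces to equip each $W_i\boxdot W_j$ with a unitary $V$-module structure, and then check that the associators, unitors, braiding, and rigidity data are unitary. You are also right that the associator is the delicate step. So as a high-level sketch your proposal is on target, though in the context of this paper the correct ``proof'' is simply the citation.
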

Note that for each $W_i,W_j\in\Obj(\RepV)$, every $T\in\Hom_V(W_i,W_j)$ is bounded and $T^*\in\Hom_V(W_j,W_i)$ can be defined. (To see this, restrict $T$ to the (finitely many) irreducible components of $W_i$ and $W_j$.)

Unless otherwise stated, we identify $(W_i\boxdot W_j)\boxdot W_k$ with $W_i\boxdot (W_j\boxdot W_k)$ via the associator, and identify $V\boxdot W_i$ and $W_i\boxdot V$ with $W_i$ via the unitors.

\begin{df}
Assume that $V$ is completely unitary. Given a set of unitary $V$-modules $\mc F^V$, we say that $\RepV$ is \textbf{$\boxdot$-generated by $\mc F^V$} if each irreducible unitary $V$-module is (unitarily) equivalent to a submodule of $W_1\boxdot\cdots\boxdot W_n$ for some $W_1,\dots,W_n\in\mc F^V$.
\end{df}

For each (semisimple) unitary $V$-module $W_i$, we let
\begin{align*}
Y_{W_i}(v,z)=Y_i(v,z)=\sum_{n\in\Zbb} Y_i(v)_nz^{-n-1}
\end{align*}
denote the vertex operation of $W_i$. We refer the readers to \cite{CKLW18} for the meaning of energy-bounds. 

The Virasoro operators $\{L_n\}$ act on each unitary VOA module $W_i$ and satisfies $\bk{L_nw_1|w_2}=\bk{w_1|L_{-n} w_2}$. In particular, the closure of ${\ovl L_0}$ is a self-adjoint positive operator. Note also that $L_0$ gives the grading $W_i=\bigoplus_{s\in\Rbb_{\geq0}}W_i(s)$ where each weight-$s$ eigenspace $W_i(s)$ is finite-dimensional. Eigenvectors of $L_0$ are called \textbf{homogeneous vectors}.

\begin{df}\label{lb53}
If $W_i$ is a unitary $V$-module, we let $\mc H_i$ be the Hilbert space completion of the inner product space $W_i$. We let
\begin{align}
\mc H_i^\infty=\bigcap_{n\in\Nbb}\Dom(\ovl{L_0}^n)
\end{align}
\end{df}

More generally, a \textbf{type $W_k\choose W_iW_j$ intertwining operator $\mc Y$} of $V$ (where $W_i,W_j,W_k$ are $V$-modules) is defined as in \cite{FHL93}, and is written as
\begin{align*}
\mc Y(w_i,z)=\sum_{s\in\Rbb}\mc Y(w_i)_s z^{-s-1}
\end{align*}
where $w_i\in W_i$, and each $\mc Y(w_i)_s$ is a linear map $W_j\rightarrow W_k$. 

\begin{cv}
If $\mc Y$ is a type $W_k\choose W_iW_j$ intertwining operator and $T\in\Hom_V(W_j,W_l)$, then $T\mc Y$ denotes the type $W_l\choose W_iW_j$ intertwining operator defined by
\begin{align*}
T\mc Y(w^{(i)},z)w^{(j)}=\sum_{s\in\Rbb}T\circ \mc Y(w^{(i)})_s w^{(j)}z^{-n-1}
\end{align*}
for all $w^{(i)}\in W_i,w^{(j)}\in W_j$.
\end{cv}

\begin{df}
Given a type $W_k\choose W_iW_j$ intertwining operator $\mc Y$, we call $W_i,W_j,W_k$ respectively the \textbf{charge space}, the \textbf{source space}, and the \textbf{target space} of $\mc Y$. We say that $\mc Y$ is \textbf{unitary} resp. \textbf{irreducible} if $W_i,W_j,W_k$ are unitary resp. irreducible $V$-modules.
\end{df}

The condition of energy bounds for unitary intertwining operators was defined in \cite[Sec. 3.1]{Gui19a} in the same spirit as that of \cite{CKLW18}: 
\begin{df}
Let $\mc Y$ be a type $W_k\choose W_iW_j$ \textit{unitary} intertwining operator. Let $w^{(i)}\in W_i$ be homogeneous. We say that $\mc Y(w^{(i)},z)$ is \textbf{energy-bounded} if there exist $M,a,b\geq0$ such that for every $s\in\Rbb,w^{(j)}\in W_j$ we have
\begin{align*}
\Vert \mc Y(w^{(i)})_sw^{(j)}\Vert\leq M(1+|s|)^b\Vert(1+L_0)^a w^{(j)}\Vert
\end{align*}
We say that the intertwining operator $\mc Y$ is \textbf{energy-bounded} if $\mc Y(w^{(i)},z)$ is energy-bounded for every homogeneous vector $w^{(i)}$ of the charge space.
\end{df}

\begin{df}
We say that $V$ is \textbf{energy-bounded} if the vertex operator $Y$ for the vacuum module $V$ is energy-bounded. We say that $V$ is \textbf{strongly energy-bounded} if for every unitary $V$-module $W_i$ (equivalently, for every unitary irreducible $V$-module $W_i$), the vertex operator $Y_{W_i}=Y_i$, as a unitary intertwining operator of type $W_i\choose V W_i$, is energy-bounded.
\end{df}

\begin{df}
Let $\wtd I=(I,\arg_I)\in\Jtd$. A (smooth) \textbf{arg-valued function supported in $\wtd I$} is a pair $\wtd f=(f,\arg_I)$ where $f\in C_c^\infty(I)$. We let
\begin{align*}
C_c^\infty(\wtd I)=\{\text{smooth arg-valued functions supported in }\wtd I\}
\end{align*}
If $\mc Y$ is an energy-bounded type $W_k\choose W_iW_j$ intertwining operator of $V$, for each homogeneous $w\in W_i$ and $\wtd f\in C_c^\infty(\wtd I)$, we define the \textbf{smeared intertwining operator}
\begin{gather}
\begin{gathered}
\mc Y(w,\wtd f)=\int_{\theta\in\arg_I(I)}\mc Y(w^{(i)},e^{\im\theta})f(e^{\im\theta})\frac{e^{\im\theta}}{2\pi}d\theta\\
\Dom\big(\mc Y(w,\wtd f)\big)=\mc H_j^\infty
\end{gathered}
\end{gather}  
viewed a (densely-defined) unbounded smooth linear maps $\mc H_j\rightarrow\mc H_k$ with dense domain $\mc H_j^\infty$. (Cf. \cite[Sec. 4.4]{Gui21a} and \cite[Ch. 3]{Gui19a} for details.) For the vertex operator $Y_i$ for a unitary $V$-module $W_i$, we write
\begin{align*}
Y_i(v,f)=Y_i(v,\wtd f)
\end{align*}
since $Y_i(v,\wtd f)$ is independent of the arg-values.
\end{df}

\begin{rem}\label{lb54}
Let $\mc Y_1(w_1,\wtd f_1),\dots,\mc Y_n(w_n,\wtd f_n)$ be smeared intertwining operators. Assume that $W_j$ is the source space of $\mc Y_n$ and $W_k$ is the target space of $\mc Y_1$. We understand the product
\begin{align}
\mc Y_1(w_1,\wtd f_1)\cdots\mc Y_n(w_n,\wtd f_n)\label{eq30}
\end{align}
as a densely defined unbounded linear map $\mc H_j\rightarrow\mc H_k$ with domain $\mc H_j^\infty$. It is the product defined in the usual way if the target space of $\mc Y_i$ equals the source space of $\mc Y_{i+1}$ (if $1\leq i<n$); otherwise, it is zero. Moreover,
\begin{align*}
\text{the product \eqref{eq30} is smooth and localizable}
\end{align*}
(recall Def. \ref{lb32}). The smoothness is obvious because each factor $\mc Y_i(w_i,\wtd f_i)$ is smooth. The localizablity of \eqref{eq30} follows from that of $\ovl {L_0}^n$ (\cite[Lem. 7.2]{CKLW18}); see \cite[Prop. 3.2]{Gui26} for more explanation.   
\end{rem}

\begin{df}\label{lb101}
Let $\mc Y$ be a type $W_k\choose W_i W_j$ intertwining operator. We say that $\mc Y$ satisfies the \textbf{strong intertwining property} if for every homogeneous $w\in W_i,v\in V$, every $\wtd I\in\Jtd$, and every $\wtd f\in C_c^\infty(\wtd I),g\in C_c^\infty(I')$, the following diagram of closable smooth operators commutes strongly:
\begin{equation}\label{eq42}
\begin{tikzcd}[column sep=large]
\mc H_j^\infty \arrow[d,"{\mc Y(w,\wtd f)}"'] \arrow[r,"{Y_j(v,g)}"] & \mc H_j^\infty \arrow[d,"{\mc Y(w,\wtd f)}"] \\
\mc H_k^\infty \arrow[r,"{Y_k(v,g)}"]           & \mc H_k^\infty          
\end{tikzcd}
\end{equation}
\end{df}

\begin{df}
We say that $V$ is \textbf{strongly local} (\cite{CKLW18}) if the vacuum vertex operator $Y(\cdot,z)$ (viewed as a type $V\choose VV$ intertwining operator) satisfies the strong intertwining property.
\end{df}

\begin{rem}\label{lb56}
Let $W_i$ be a unitary module of a unitary VOA $V$. If we let $U_i$ denote the unitary representation of $\UPSU$ on $\mc H_i$ integrated from $L_0,L_{\pm1}$, then for each $g\in\UPSU$ we have $U_i(g)\mc H_i^\infty=\mc H_i^\infty$. Therefore, $U_i(g)$ is smooth.
\end{rem}

\begin{proof}
This is due to the fact that $\UPSU$ is generated by the $e^{\im t l_0},e^{\im t (l_1+l_{-1})},e^{t(l_1-l_{-1})}$ (where $l_0,l_{\pm 1}$ are the standard generators of the Lie algebra of $\UPSU$) and that $e^{\im t \ovl{L_0}},e^{\im t (\ovl{L_1+L_{-1}})},e^{t(\ovl{L_1-L_{-1}})}$ preserve $\mc H_i^\infty$ by \cite[Prop. 2.1]{TL99}. In fact, $U_i(g)$ is smooth for every $g\in\Diffp(\Sbb^1)$; see \cite[Sec. 3.2]{Gui26} for more explanation.
\end{proof}

\begin{pp}\label{lb51}
Let $V$ be a unitary VOA. Let $\mc Y$ be an energy-bounded type $W_k\choose W_i W_j$ intertwining operator. Let $w^{(i)}\in W_i$ be quasi-primary (resp. homogeneous) with $L_0w^{(i)}=dw^{(i)}$ ($d\in\Rbb$). Then for every $\wtd I\in\Jtd$ and $\wtd f\in C_c^\infty(\wtd I)$, and for every $g\in\UPSU$ (resp. every $g=e^{\im t\ovl{L_0}}$ where $t\in\Rbb$), there is $\wtd f_{g,d}\in C_c^\infty(g\wtd I)$ depending only on $\wtd f,g,d$ such that
\begin{align}
U_k(g)\mc Y(w^{(i)},\wtd f)U_j(g)^*=\mc Y(w^{(i)},\wtd f_{g,d})
\end{align}
where $U_j$ and $U_k$ are the unitary representations of $\UPSU$ on $\mc H_j,\mc H_k$ integrated from $L_0,L_{\pm1}$.
\end{pp}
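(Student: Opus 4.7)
The plan is to lift the standard formal conformal/rotation covariance of intertwining operators to the level of smeared operators, and then recognize the resulting integrand as a smeared intertwining operator with a modified test function obtained via change of variables.

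First I would recall the formal covariance identities. For a homogeneous $w^{(i)}$ with $L_0 w^{(i)} = d w^{(i)}$, the commutator $[L_0, \mc Y(w^{(i)}, z)] = (z\partial_z + d)\mc Y(w^{(i)}, z)$ exponentiates to the rotation covariance $e^{\im t L_0}\mc Y(w^{(i)},z)e^{-\im t L_0} = e^{\im t d}\mc Y(w^{(i)}, e^{\im t}z)$ on algebraic modules. For quasi-primary $w^{(i)}$, the commutators with $L_{-1}, L_0, L_1$ together with $L_1 w^{(i)} = 0$ integrate to the M\"obius covariance
\begin{align*}
U_k(g)\, \mc Y(w^{(i)}, z)\, U_j(g)^{-1} = g'(z)^d\, \mc Y(w^{(i)}, g(z)) \qquad (g \in \UPSU),
\end{align*}
viewed as an operator-valued analytic identity. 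Both formulas are standard consequences of the Jacobi identity for intertwining operators and the finite-dimensional representation theory of $\mathfrak{sl}_2$ acting on quasi-primary vectors.

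Next I would promote these formal identities to identities of smeared operators. For $\eta \in \mc H_j^\infty$, the energy bounds on $\mc Y$ together with the homogeneity of $w^{(i)}$ imply that $\theta \mapsto \mc Y(w^{(i)}, e^{\im\theta})\eta$ is a smooth $\mc H_k$-valued function; since $U_j(g)^*$ preserves $\mc H_j^\infty$ by Rem.~\ref{lb56}, I can multiply both sides by $f(e^{\im\theta})\frac{e^{\im\theta}}{2\pi}$ and integrate over $\arg_I(I)$ to obtain
\begin{align*}
U_k(g)\,\mc Y(w^{(i)},\wtd f)\,U_j(g)^*\,\eta = \int_{\arg_I(I)} g'(e^{\im\theta})^d\,\mc Y(w^{(i)}, g(e^{\im\theta}))\eta \cdot f(e^{\im\theta})\,\frac{e^{\im\theta}}{2\pi}\,d\theta
\end{align*}
(with $e^{\im t d}$ replacing $g'(z)^d$ in the rotation case). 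Then I would perform the change of variable $e^{\im\theta'} = g(e^{\im\theta})$, a smooth bijection $\arg_I(I) \to \arg_{gI}(gI)$ compatible with the arg-functions since $g \in \UPSU$ (resp. $g = \varrho(t)$); combining the Jacobian $d\theta/d\theta'$ with the factor $g'(e^{\im\theta})^d$ and the phase $e^{\im\theta}/e^{\im\theta'}$ rewrites the integrand as $\mc Y(w^{(i)}, e^{\im\theta'})\eta \cdot \wtd f_{g,d}(e^{\im\theta'}) \cdot \frac{e^{\im\theta'}}{2\pi}$ for a smooth $\wtd f_{g,d} \in C_c^\infty(g\wtd I)$ that depends only on $\wtd f, g, d$. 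Since $\eta \in \mc H_j^\infty$ is arbitrary, the claim follows.

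The main technical obstacle will be the rigorous justification in the second paragraph: upgrading the formal power-series identity to an actual equality of $\mc H_k$-valued smooth functions of $\theta$ that can be integrated against $f$. This is purely an application of the energy bounds on $\mc Y$ to control the mode expansion in the Sobolev-type norms $\Vert(1+\ovl{L_0})^n\cdot\Vert$ uniformly in $\theta$, and follows the standard patterns developed in \cite{CKLW18,TL99,Gui19a,Gui20}; no genuinely new technique is required.
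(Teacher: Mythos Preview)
Your proposal is correct and follows the standard approach: exponentiate the formal $\mathfrak{sl}_2$-commutator identities to the M\"obius covariance formula $U_k(g)\mc Y(w^{(i)},z)U_j(g)^{-1}=g'(z)^d\mc Y(w^{(i)},g(z))$, then smear and change variables. The paper does not give its own proof but simply cites \cite[Prop.~2.2.2]{Gui20}, where exactly this argument (together with the explicit formula for $\wtd f_{g,d}$) is carried out.
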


\begin{proof}
See \cite[Prop. 3.4]{Gui26} for details and the explicit formula of $\wtd f_{g,d}$.
\end{proof}

\subsection{Conditions I and II}

\begin{df}\label{lb34}
We say that $V$ satisfies \textbf{Condition I} if the following hold:
\begin{enumerate}[label=(\alph*)]
\item $V$ is completely unitary.
\item Every irreducible unitary intertwining operator of $V$ is energy-bounded and satisfies the strong intertwining property. 
\end{enumerate}
We say that $V$ satisfies \textbf{Condition II} if the following hold: 
\begin{enumerate}[label=(\arabic*)]
\item $V$ is completely unitary, strongly energy-bounded, and strongly local.
\item A set $\mc F^V$ of unitary $V$-modules $\boxdot$-generating $\RepV$ is chosen. 
\item If $\mc Y$ is a unitary intertwining operator of $V$ whose charge space belongs to $\mc F^V$ and whose source space and target space are irreducible, then $\mc Y$ is energy-bounded and satisfies the strong intertwining property. 
\end{enumerate}
\end{df}

The irreducibilities  assumed in I and II are redundant: see Rem. \ref{lb46}. Also, unlike in \cite{Gui26}, we do not assume that the objects in $\mc F^V$ are irreducible.  While this relaxation does not lead to any essential differences, it simplifies certain aspects of our discussion in this paper.

\begin{rem}\label{lb36}
Note that if $V$ satisfies Condition I-(b), then $V$ is automatically strongly energy-bounded and strongly local. Thus, 
\begin{align*}
\text{Condition I}\qquad\Longrightarrow\qquad\text{Condition II}
\end{align*}
Moreover, it is clear that
\begin{gather*}
\text{Condition I}\quad\Longleftrightarrow\quad\text{Condition II}\\
\text{if every irreducible $V$-module is isomorphic to an object in $\mc F^V$}
\end{gather*}
\end{rem}

\begin{rem}\label{lb46}
Let $W_i$ be a unitary $V$-module with (finite orthogonal) irreducible decomposition $W_i=\bigoplus_a W_{i,a}$. Suppose that for every $a$, any \textit{irreducible} unitary intertwining operator with charge space $W_{i,a}$ is energy-bounded resp. satisfies the strong intertwining property. Then any unitary intertwining operator with charge space $W_i$ is energy-bounded resp. satisfies the strong intertwining property.

Therefore, if $V$ satisfies Condition II, then every unitary intertwining operator of $V$ with charge space in $\mc F^V$ is energy-bounded and satisfies the strong intertwining property; if $V$ satisfies Condition I, then every unitary intertwining operator of $V$ is energy-bounded and satisfies the strong intertwining property.
\end{rem}

\begin{proof}
The claim about energy bounds is obvious. The claim about the strong intertwining property follows from Lem. \ref{lb35}. See \cite[Rem. 3.14]{Gui26} for details.
\end{proof}


\begin{rem}
We have
\begin{align*}
\text{Condition II in Def. \ref{lb34}}\qquad\Longleftrightarrow\qquad\text{Condition B in \cite[Sec. 3.4]{Gui26}}
\end{align*}
\end{rem}

\begin{proof}
It is clear that ``$\Rightarrow$" is true. Condition B seems weaker than Condition II in the following aspects. First, in Condition B we assumed that $V$ is $E$-strongly local where $E$ is a set of generating quasi-primary vectors. But this automatically implies that $V$ is energy-bounded by \cite[Thm. 8.1]{CKLW18}. Second, in Condition B, the ``strong unitarity" was assumed instead of the ``complete unitarity" in Condition II. But Condition B implies the complete unitarity by \cite[Thm. 3.15]{Gui26}. Third, the assumptions in Condition B of the energy bounds and the strong intertwining property on the intertwining operators  are seemly weaker than those in Condition II; but they are in fact equivalent under the other assumptions of Condition B due to \cite[Prop. 3.28]{Gui26}.
\end{proof}

\begin{pp}\label{lb79}
Suppose that $V$ satisfies Condition II, and that every irreducible unitary intertwining operator of $V$ is energy-bounded. Then $V$ satisfies Condition I.
\end{pp}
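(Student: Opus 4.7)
The plan is to realize $\mc Y(w,\wtd f)$ as the composition of a weak left operator of the weak categorical extension $\scr E^w_V=(\mc A_V,\RepV,\boxdot,\ss,\fk H^V)$ (furnished by Condition II via Thm.~\ref{lb47}) with a module morphism, and then to invoke Thm.~\ref{lb13}, which upgrades the closure of any weak left operator to a genuine left operator of the categorical extension $\scr E_V$. The advantage of this detour is that left operators of $\scr E_V$ automatically commute strongly with every $\pi_{\blt,I'}(x)$ for $x\in\mc A_V(I')$ by Rem.~\ref{lb15}, and under strong locality of $V$, the smeared vertex operators $Y_l(v,g)$ with $g\in C_c^\infty(I')$ fall into this class by Exp.~\ref{lb85}.

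Fix an irreducible, energy-bounded intertwining operator $\mc Y$ of type $\binom{W_k}{W_i\,W_j}$, a homogeneous $w\in W_i$, an arg-interval $\wtd I\in\Jtd$, and $\wtd f\in C_c^\infty(\wtd I)$. Let $\wtd{\mc Y}\in\Hom_V(W_i\boxdot W_j,W_k)$ be the module morphism corresponding to $\mc Y$ via the universal property of $\boxdot_V$. The main step is to produce a weak left operator $A=\{A_{W_l}\}_{W_l\in\Obj(\RepV)}$ of $\scr E^w_V$ with charge $W_i$ such that $\mc Y(w,\wtd f)=\wtd{\mc Y}\circ A_{W_j}$ on $\mc H_j^\infty$; informally, each $A_{W_l}:\mc H_l^\infty\to(W_i\boxdot\mc H_l)^\infty$ should be the smearing of the canonical fusion-type intertwining operator of type $\binom{W_i\boxdot W_l}{W_i\,W_l}$ evaluated at $w$ against $\wtd f$. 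To construct and analyze $A$, I use that $\mc F^V$ $\boxdot$-generates $\RepV$: embed $W_i$ as an orthogonal direct summand of $W_1\boxdot\cdots\boxdot W_n$ with $W_\ell\in\mc F^V$, and invoke Huang's associativity for intertwining operators to rewrite the canonical fusion-type intertwining operator with charge $W_i$ as a sum of iterated products of $\mc F^V$-charged intertwining operators $\mc Y^1\cdots\mc Y^n$. Smearing over clockwise-ordered sub-intervals of $\wtd I$ and using that products of smeared energy-bounded intertwining operators are smooth and localizable (Rem.~\ref{lb54}) produces the operators $A_{W_l}$; their naturality and weak-locality axioms (commutation, not necessarily adjoint, with $\mc R^V$-operators localized clockwise to $\wtd I$) follow from the analogous properties of the $\mc F^V$-smeared building blocks, transported along the summand $W_i\hookrightarrow W_1\boxdot\cdots\boxdot W_n$ via the functoriality of $\boxdot$.

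Once this weak left operator is in hand, Thm.~\ref{lb13} (applicable since $\RepV$ is rigid as a modular tensor category by Thm.~\ref{lb61}) gives $\ovl{A_{W_j}}=\scr L^\boxdot(\xi,\wtd I)|_{\mc H_j}$ for a unique $\xi\in\mc H_i^\pr(I)$. By Rem.~\ref{lb15}, $\scr L^\boxdot(\xi,\wtd I)$ commutes strongly with $\pi_{j,I'}(x)$ and $\pi_{i\boxdot j,I'}(x)$ for every $x\in\mc A_V(I')$. Strong locality of $V$ (Condition II-(1)) together with Exp.~\ref{lb85} identifies $Y_l(v,g)$ with $\pi_{l,I'}(\ovl{Y(v,g)})$ on $\mc H_l^\infty$, where $\ovl{Y(v,g)}$ is affiliated with $\mc A_V(I')$; and $\wtd{\mc Y}$ is a $V$-module morphism, hence intertwines the vertex operators. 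Composing yields the strong commutation of $\mc Y(w,\wtd f)$ with both $Y_j(v,g)$ and $Y_k(v,g)$, which is exactly the strong intertwining property required for Condition I-(b).

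The main obstacle is producing the family $A_{W_l}$ and verifying the weak-locality axiom against $\mc R^V$-operators; this is essentially an operator-algebraic form of Huang's associativity for smeared intertwining operators and requires careful coordination of arg-interval decompositions and smearing functions. All remaining steps are bookkeeping applications of Thm.~\ref{lb13}, Rem.~\ref{lb15}, and Exp.~\ref{lb85}, together with the observation that Condition I-(a) is already part of Condition II-(1).
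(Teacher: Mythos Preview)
Your overall strategy is correct and matches the route behind the cited result: define $A_{W_l}=\Gamma^V(w,\wtd f)|_{\mc H_l^\infty}$, verify it is a weak left operator of $\scr E^w_V$, invoke Thm.~\ref{lb13} (rigidity holds by Thm.~\ref{lb61}) to get that its closure is a left operator of $\scr E_V$, and then conclude the strong intertwining property from Rem.~\ref{lb15} together with strong integrability (Thm.~\ref{lb48}, Def.~\ref{lb37}), which gives $\ovl{Y_l(v,g)}=\pi_{l,I'}(\ovl{Y(v,g)})$. Composing with $\wtd{\mc Y}\in\Hom_V(W_i\boxdot W_j,W_k)=\Hom_{\mc A_V}(\mc H_{i\boxdot j},\mc H_k)$ then transports the strong commutation to $\mc Y(w,\wtd f)$.

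The detour you flag as the ``main obstacle'' --- embedding $W_i$ into $W_1\boxdot\cdots\boxdot W_n$ with $W_\ell\in\mc F^V$ and rewriting $\Gamma^V(w,\wtd f)$ via Huang's associativity as a product of $\mc F^V$-charged smeared operators --- is both unnecessary and does not work as stated. The iterate (OPE) formula expresses $\Gamma^V(w_1,z_1)\Gamma^V(w_2,z_2)$ in terms of $\Gamma^V(\Gamma^V(w_1,z_1-z_2)w_2,z_2)$, i.e.\ with a \emph{power series} in the first slot, not a single vector; there is no way to write $\Gamma^V(w,\wtd f)$ for a fixed $w\in W_i$ and a single test function $\wtd f$ as a finite sum of products $\Gamma^V(w^{(1)},\wtd f_1)\cdots\Gamma^V(w^{(n)},\wtd f_n)$. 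Fortunately you do not need this. Since by hypothesis \emph{every} unitary intertwining operator of $V$ is energy-bounded, $\Gamma^V(w,\wtd f)|_{\mc H_l^\infty}$ is directly defined and smooth/localizable (Rem.~\ref{lb54}); naturality is \eqref{eq36}; and the weak locality axiom (commutation with each factor $\Delta^V(w^{(j_m)},\wtd g_m)$ in the $\mc R$-operators of \eqref{eq46}) is exactly Rem.~\ref{lb58}/\eqref{eq38}, whose energy-bound proviso is now satisfied for all four intertwining operators in the square. That is the whole point of the extra hypothesis: it lets you bypass the $\mc F^V$-decomposition entirely. (Minor correction: the identification $\ovl{Y_l(v,g)}=\pi_{l,I'}(\ovl{Y(v,g)})$ is strong integrability, not Exp.~\ref{lb85}.)
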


\begin{proof}
\cite[Cor. 3.29]{Gui26}.
\end{proof}

The following theorem gives many examples of VOAs satisfying Condition I or II. We refer the readers to \cite[Sec. 5.4]{CKLW18} for the basic properties about \textbf{unitary subalgebras} and \textbf{(unitary) coset subalgebras}. If $V$ is a unitary subalgebra of a unitary VOA $U$, we let $V^c$ be the coset of $V$ in $U$, and let $V^{cc}$ be the coset of $V^c$ in $U$.

\begin{thm}\label{lb102}
The following are true.
\begin{enumerate}[label=(\arabic*)]
\item $V,V'$ are unitary VOAs satisfying Condition I (resp. II) if and only if the tensor product unitary VOA $V\otimes V'$ satisfies Condition I (resp. II).
\item Assume that $U$ is a unitary VOA satisfying Condition I (resp. II). Assume that $V$ is a unitary subalgebra of $U$ such that both $V$ and $V^c$ are $C_2$-cofinite and rational. Then $V^c$ satisfied Condition I (resp. II).
\end{enumerate}
\end{thm}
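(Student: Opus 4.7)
The plan is to prove (1) directly via the tensor-product decomposition of unitary modules and intertwining operators of $V \otimes V'$, and to reduce (2) to (1) through the theory of $U$ as a finite rational extension of $V \otimes V^c$.

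For (1), the structural input is that when $V, V'$ are rational $C_2$-cofinite and unitary, the irreducible unitary $(V \otimes V')$-modules are precisely $W \otimes W'$ for irreducibles $W, W'$, and irreducible unitary intertwining operators are the tensor products $\mc Y \otimes \mc Y'$. Complete unitarity transfers in both directions because the canonical sesquilinear forms on spaces of intertwining operators are multiplicative under tensor product. Energy bounds transfer via the identity $\ovl{L_0^{V \otimes V'}} = \ovl{L_0^V} \otimes 1 + 1 \otimes \ovl{L_0^{V'}}$: from factors to product by a direct product estimate, and from product to factors by evaluating on vectors $\xi \otimes \Omega'$ or $\Omega \otimes \xi'$ and absorbing the fixed constants $\|(1+L_0^{V'})^a \Omega'\|$, $\|(1+L_0^V)^a \Omega\|$. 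For the strong intertwining property of $\mc Y \otimes \mc Y'$ against the vertex operators of $V \otimes V'$, I would apply Lem.~\ref{lb35} to reduce to checking strong commutation against the generating set $V \otimes \Omega' \cup \Omega \otimes V'$; the corresponding smeared vertex operators take the tensor-factor form $Y_V(v,g) \otimes \id$ or $\id \otimes Y_{V'}(v', g)$, and their strong commutation with $(\mc Y \otimes \mc Y')(w \otimes w', \wtd f)$ follows by approximating the latter on the algebraic tensor product of $\mc H_j^\infty$ and $\mc H_{j'}^\infty$ and applying the strong intertwining property of $\mc Y$ (resp.\ $\mc Y'$) on each factor. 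The reverse direction uses the embedding $V \hookrightarrow V \otimes V'$, $v \mapsto v \otimes \Omega'$. For Condition II, choose $\mc F^{V \otimes V'} = \{W \otimes V' : W \in \mc F^V\} \cup \{V \otimes W' : W' \in \mc F^{V'}\}$, which $\boxdot$-generates $\Rep^\uni(V \otimes V')$ since $W \otimes W' \simeq (W \otimes V') \boxdot (V \otimes W')$.

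For (2), under the hypotheses $V \otimes V^c$ is rational $C_2$-cofinite, $U$ is a finite rational extension of it, and the extension theory for such inclusions yields: (a) every irreducible unitary $V^c$-module $M$ appears as a multiplicity space in the $V \otimes V^c$-decomposition of some irreducible $U$-module; (b) every irreducible unitary $V \otimes V^c$-intertwining operator is realized as a component of an irreducible $U$-intertwining operator. Complete unitarity of $V^c$ follows by first establishing it for $V \otimes V^c$ as a $C_2$-cofinite rational unitary subalgebra of the completely unitary $U$ (cf.\ \cite{Gui22, CGGH23}), then applying (1) in reverse. For strong energy-boundedness, realize $M$ as a factor of $W \otimes M \subset X$ for an irreducible $V$-module $W$ and an irreducible $U$-module $X$; then $Y_{V^c}(v, \cdot)$ on $M$ equals the restriction of $Y_U(v, \cdot)$, acting as $\id_W \otimes Y_{V^c}(v, \cdot)$ on $W \otimes M$. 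On each $W(h) \otimes M$ one has $\ovl{L_0^U} = h \cdot \id + \id \otimes \ovl{L_0^{V^c}}$, so energy bounds on $Y_U$ transfer to energy bounds on $Y_{V^c}$. Strong locality of $V^c$ follows by the same reduction applied to the vacuum intertwining operator.

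Finally, every irreducible unitary intertwining operator $\mc Y$ of $V^c$ of type $\binom{P}{MN}$ is realized as the $V^c$-factor of a $V \otimes V^c$-intertwining operator $\mc Y^V \otimes \mc Y$ for some auxiliary irreducible $V$-intertwining operator $\mc Y^V$, which in turn is a component of an irreducible $U$-intertwining operator $\mc Y^U$. Energy bounds and the strong intertwining property for $\mc Y$ then follow from those for $\mc Y^U$ via the same $L_0$-shift argument, noting that the relevant smeared $V^c$-vertex operators are restrictions of smeared $U$-vertex operators. For Condition I on $V^c$, this covers every irreducible intertwining operator since $U$ satisfies Condition I on all irreducibles. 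For Condition II on $V^c$, choose $\mc F^{V^c}$ to consist of the irreducible unitary $V^c$-modules occurring in the $V \otimes V^c$-decomposition of $U$-modules drawn from $\mc F^U$ (or suitable finite fusion products thereof); the $\boxdot$-generation property transfers from $\mc F^U$ to $\mc F^{V^c}$ through the restriction functor. The hard part will be the realization step: showing that $V^c$-intertwining operators embed as components of $U$-intertwining operators in a way that cleanly transfers both energy bounds and the strong intertwining property. This requires the full finite-extension theory for rational $C_2$-cofinite VOAs and careful bookkeeping of $L_0^U = L_0^V + L_0^{V^c}$ across the $V \otimes V^c$-decomposition of $U$-modules.
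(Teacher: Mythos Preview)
Your outline follows the same underlying strategy as the results the paper cites from \cite{Gui20} (Thm.~2.6.5 and~2.6.8), so the approaches are essentially aligned; the paper's proof is almost entirely by citation. Two structural points are worth noting. First, the paper derives the ``$\Leftarrow$'' direction of (1) as a special case of (2) by taking $U=V\otimes V'$ (so that $V'=V^c$), which avoids a separate argument for extracting Condition~I/II on a tensor factor. Second, before invoking \cite[Thm.~2.6.5]{Gui20} for (2), the paper uses \cite{CMSY24} to deduce that $V^{cc}$ is $C_2$-cofinite and rational; the cited coset result is stated for the pair $(V^c,V^{cc})$ rather than $(V^c,V)$, so this step is needed to match its hypotheses. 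Your direct approach via $V\otimes V^c\subset U$ sidesteps this, but you should then check that everything you need (finite decomposition of $U$-modules, surjectivity of restriction on intertwining operators) genuinely only requires $V$ and $V^c$ to be $C_2$-cofinite rational.

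On the analytic side, your step ``approximating $(\mc Y\otimes\mc Y')(w\otimes w',\wtd f)$ on the algebraic tensor product and applying the strong intertwining property of $\mc Y$ on each factor'' is too vague as written: the smeared operator $(\mc Y\otimes\mc Y')(w\otimes w',\wtd f)$ is not a tensor product of smeared operators (there is a single smearing variable), so strong commutation with $\ovl{Y(v,g)}\otimes\id$ does not reduce pointwise to the strong intertwining property of $\mc Y$. One needs an additional argument---e.g.\ expressing the modes as finite sums of tensor products and invoking Lem.~\ref{lb35} with the tensor-product rotation group, or the more careful analysis in \cite[Sec.~2.6]{Gui20}. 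Finally, the realization step you correctly flag as ``the hard part'' is exactly the content of \cite[Thm.~2.6.5]{Gui20}: the claim that every irreducible $V^c$-intertwining operator arises as a component of some $U$-intertwining operator is not automatic (restriction of intertwining operators need not be surjective for arbitrary extensions) and requires the full induction/restriction theory for commutative algebra objects in modular tensor categories.
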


\begin{proof}
(1): ``$\Rightarrow$" follows from \cite[Thm. 3.37]{Gui26}. ``$\Leftarrow$" is a special case of the following part (2) (by setting $U=V\otimes V'$).

(2): Assume that $U$ satisfies Condition II. By \cite[Thm. 1.1]{CMSY24}, any (CFT-type) unitary VOA extension of $V$ is $C_2$-cofinite and rational. Therefore, both $V^c=V^{ccc}$ and $V^{cc}$ are $C_2$-cofinite and rational. Therefore, by \cite[Thm. 3.34]{Gui26}, $V^c$ satisfies  Condition II.

The claim about Condition I also follows by slightly adapting the proof of \cite[Thm. 3.34]{Gui26}: If $U$ satisfies Condition I, then in view of Rem. \ref{lb36}, the set $\mc F^U$ in the proof of Thm. 3.34  can be chosen to contain all irreducible unitary $U$-modules up to equivalence. Then the $\mc F^{V^c}$ defined as in that proof also contains all irreducible unitary $V^c$-modules up to equivalence. Therefore, $V^c$ satisfies Condition I.
\end{proof}

\subsection{The CKLW net $\mc A_V$ and the CWX functor $\fk F^V_\CWX:\RepV\rightarrow\Rep(\mc A_V)$}

\begin{df}
Suppose that $V$ is a unitary VOA. Let
\begin{align*}
\mc H_0=\text{the Hilbert space completion of $V$}
\end{align*}
Let $\mc A_V$ be the unique conformal net acting on $\mc H_0$ whose vacuum vector $\Omega$ equals that of $V$, whose projective representation of $\Diffp(\Sbb^1)$ is integrated from that of $\{L_n\}$, and which satisfies that for every $I\in\mc J$, the von Neumann algebra $\mc A_V(I)$ is generated by all $\ovl{Y(v,f)}$ where $v\in V$ is homogeneous and $f\in C_c^\infty(I)$. (Cf. \cite{CKLW18}.) We call $\mc A_V$ the \textbf{CKLW net} associated to $V$. 
\end{df}

\begin{df}\label{lb37}
Let $V$ be a strongly local VOA. We say that a $V$-module $W_i$ is \textbf{strongly integrable} (\cite{CWX}) if the following are true:
\begin{itemize}
\item $W_i$ is unitary. Moreover, the vertex operator $Y_i(\cdot,z)$, as a type $W_i\choose V W_i$ intertwining operator, is energy-bounded.
\item There is a (necessarily unique) $\mc A_V$-module $(\mc H_i,\pi_i)$ such that for any homogeneous $v\in V$ and any $I\in\mc J,f\in C_c^\infty(I)$,
\begin{align}
\pi_{i,I}(\ovl{Y(v,f)})=\ovl{Y_i(v,f)}\label{eq31}
\end{align}
\end{itemize}
\end{df}
Recall Rem. \ref{lb84} for the meaning of $\pi_{i,I}(\ovl{Y(v,f)})$.

\begin{rem}\label{lb52}
Suppose that $V$ is strongly local with central charge $c$, and let $W_i$ be a strongly integrable $V$-module. Let $\mc A_V$ be the CKLW net. Then the unitary representation of $\Gc\simeq\scr G_{\mc A_V}$ on $\mc H_i$ (as described in Subsec. \ref{lb18}) is integrated from the unitary representations of the Virasoro subalgebra of $V$.  

In particular, the action of $\UPSU$ on $\mc H_i$ inherited from that of $\scr G_c$ is integrated from $L_0,L_{\pm1}$, and the rotation group on $\mc H_i$ acts as $e^{\im t\ovl{L_0}}$. Therefore, the definition of $\mc H_i^\infty$ as in Def. \ref{lb53} agrees with that in \eqref{eq43}.
\end{rem}

\begin{proof}
See \cite[Prop. 3.10]{Gui26} or \cite[Prop. 4.9]{Gui21a}.
\end{proof}

\begin{thm}[\cite{CWX}, \cite{Gui19b} Thm. 4.3]\label{lb42}
Let $V$ be completely unitary and strongly local. Assume that all unitary $V$-modules are strongly integrable. Then the $*$-functor 
\begin{gather*}
\fk F_\CWX^V:\RepV\rightarrow\Rep(\mc A_V)\\
(W_i,Y_i)\mapsto (\mc H_i,\pi_i)\\
T\in\Hom_V(W_i,W_j)\mapsto T\in\Hom_{\mc A_V}(\mc H_i, \mc H_j)
\end{gather*}
(where $(\mc H_i,\pi_i)$ is defined as in Def. \ref{lb37}), called the \textbf{CWX functor}, is a fully faithful $*$-functor. Thus, $\fk F_\CWX^V$ implements an isomorphism of $C^*$-tensor categories from $\RepV$ to 
\begin{align}
\scr C=\fk F_\CWX^V(\RepV)
\end{align}
a full replete $C^*$-subcategory of $\Rep(\mc A_V)$ containing the identity object $\mc H_0$ and closed under taking submodules and finite direct sums.
\end{thm}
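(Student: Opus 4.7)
The statement splits into three parts: (i) $\fk F^V_\CWX$ is a well-defined $*$-functor; (ii) it is fully faithful; (iii) its image $\scr C$ is full and replete in $\Rep(\mc A_V)$, contains $\mc H_0$, and is closed under submodules and finite direct sums. Part (i) is almost routine: any $T\in\Hom_V(W_i,W_j)$ is bounded because $W_i,W_j$ decompose into finitely many irreducible unitary summands on which Schur's lemma forces $T$ to be a scalar multiple of an isometry, so $T$ extends to a bounded map $\mc H_i\to\mc H_j$. By \eqref{eq31} and the generating property of $\mc A_V(I)$, checking that the extension intertwines the $\mc A_V$-actions reduces to showing that $T$ strongly commutes with $\ovl{Y_i(v,f)}=\pi_{i,I}(\ovl{Y(v,f)})$ mapped to $\ovl{Y_j(v,f)}=\pi_{j,I}(\ovl{Y(v,f)})$ for every homogeneous $v\in V$ and $f\in C_c^\infty(I)$. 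The mode-wise relation $TY_i(v)_n=Y_j(v)_nT$ yields $TY_i(v,f)=Y_j(v,f)T$ on the algebraic $W_i$, which by energy bounds and rotation invariance is a core for $\ovl{Y_i(v,f)}$; passing to closures gives the required strong commutation (the adjoint relation is symmetric).

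For (ii), faithfulness follows from the density of $W_i$ in $\mc H_i$. For fullness, let $S\in\Hom_{\mc A_V}(\mc H_i,\mc H_j)$. By Rem.~\ref{lb52}, the representations of $\scr G_{\mc A_V}$ on $\mc H_i$ and $\mc H_j$ are integrated from the Virasoro action, and $S$ intertwines them; restricting to the rotation subgroup, $S$ commutes with $e^{\im t\ovl{L_0}}$, hence preserves every $\ovl{L_0}$-eigenspace: $S(W_i(s))\subset W_j(s)$ for all $s\geq 0$, so $S$ restricts to a linear map $T:W_i\to W_j$. By Rem.~\ref{lb3}, $S$ is smooth and sends $\mc H_i^\infty$ into $\mc H_j^\infty$, so the intertwining $S\pi_{i,I}(\ovl{Y(v,f)})=\pi_{j,I}(\ovl{Y(v,f)})S$ restricts to the identity $SY_i(v,f)=Y_j(v,f)S$ of smooth maps on $\mc H_i^\infty$. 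Applied to a homogeneous $w\in W_i$ of weight $s$ with $v$ of weight $d$, and projected onto the finite-dimensional weight space $W_j(s+d-n-1)$, this identity -- tested against $f\in C_c^\infty(\mbb S^1)$ with nonzero $n$-th Fourier coefficient -- extracts the mode-wise relation $TY_i(v)_n w=Y_j(v)_n Tw$. Hence $T\in\Hom_V(W_i,W_j)$, and $\fk F^V_\CWX(T)=S$ by density.

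For (iii), fullness is precisely (ii), and $\mc H_0=\fk F^V_\CWX(V)\in\Obj(\scr C)$ by construction. Repleteness is automatic: if $U:\mc H_i\to\mc K$ is a unitary $\mc A_V$-isomorphism onto some $\mc A_V$-module $\mc K$, then transporting the $V$-module structure along $U$ (restricted to finite-energy vectors, which are intrinsically characterized by the $\mc A_V$-action via $\ovl{L_0}$) realizes $\mc K$ as $\fk F^V_\CWX$ of a unitary $V$-module. For closure under submodules, an $\mc A_V$-submodule $\mc K\subset\mc H_i$ corresponds to a projection $p\in\End_{\mc A_V}(\mc H_i)$; by (ii), $p=\fk F^V_\CWX(q)$ for a unique $q\in\End_V(W_i)$, and $p=p^*=p^2$ transported back to the dense core $W_i$ forces $q=q^*=q^2$ in the $C^*$-category $\RepV$, so $W_k:=q(W_i)$ is a unitary $V$-submodule with Hilbert completion $\mc K$, whence $\mc K=\fk F^V_\CWX(W_k)\in\Obj(\scr C)$. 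Closure under finite direct sums is immediate from the uniqueness clause of Def.~\ref{lb37}: $(\mc H_i\oplus\mc H_j,\pi_i\oplus\pi_j)$ is the only $\mc A_V$-module associated to the strongly integrable $W_i\oplus W_j$, hence equals $\fk F^V_\CWX(W_i\oplus W_j)$.

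The principal technical subtlety is the Fourier extraction at the end of (ii); it is tractable precisely because $S$ preserves the $L_0$-grading and each weight space is finite-dimensional, so a single mode can be isolated by projecting onto a finite-dimensional target and varying $f$ with appropriate Fourier support. Every other step is a formal consequence of strong integrability, the generating property of $\mc A_V(I)$, and the finite-dimensionality of weight spaces, so I expect no further serious obstacle.
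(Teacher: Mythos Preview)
Your proposal is correct and complete. Note, however, that the paper does not supply its own proof of this theorem: it is stated with citations to \cite{CWX} and \cite[Thm.~4.3]{Gui19b}, followed only by the two brief remarks that $T\in\Hom_V(W_i,W_j)$ is automatically bounded (by restricting to irreducible summands) and that repleteness of $\scr C$ is obvious (by transporting the $V$-module structure along a unitary). Your argument reconstructs what the cited references contain, and your treatment of fullness---commuting $S$ past rotations via Rem.~\ref{lb3}, hence preserving the $L_0$-grading, and then recovering the mode-wise intertwining relation from the smeared one by Fourier analysis---is exactly the standard route. The only point worth tightening is the Fourier extraction: the relation $SY_i(v,f)=Y_j(v,f)S$ is initially available only for $f\in C_c^\infty(I)$, so to isolate a single mode $Y_i(v)_n$ you should invoke a partition of unity on $\Sbb^1$ to pass to arbitrary smooth $f$ (in particular $f(e^{\im\theta})=e^{\im n\theta}$), after which the mode drops out directly. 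This is routine and does not affect the validity of your plan.
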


Thus we have
\begin{align}
(\mc H_i,\pi_i)=\fk F_\CWX^V(W_i,Y_i)\qquad\text{or simply}\qquad \mc H_i=\fk F_\CWX^V(W_i)
\end{align}
Note that the repleteness of $\scr C$ is obvious: if an $\mc A_V$-module is unitary equivalent (via a unitary $U$) to some $\fk F_\CWX^V(W_i,Y_i)$, then it is equal to $\fk F_\CWX^V(U^{-1}W_i,U^{-1}Y_i U)$. Also, it makes sense to set $\fk F_\CWX^V(T)=T$ since $T\in\Hom_V(W_i,W_j)$ must be bounded (since it is clearly so when restricted to each irreducible component of $W_i$).

\begin{rem}
The inverse functor
\begin{align*}
(\fk F_\CWX^V)^{-1}:\scr C\rightarrow\RepV
\end{align*}
can be written down explicitly. Indeed, if $(W_i,Y_i)$ and $(W_j,Y_j)$ are both sent by $\fk F_\CWX^V$ to $(\mc H_i,\pi_i)$, then by the full-faithfulness of $\fk F_\CWX^V$, there is a unique $T\in\Hom_V(W_i,W_j)$ that extends to $\idt_{\mc H_i}$. So $T$ is the identity map, which implies $W_i\subset W_j$ and $Y_j|_{W_i}=Y_i$. Switching $W_i$ and $W_j$, we obtain $W_i=W_j$ and $Y_i=Y_j$. Thus, we can define $(\fk F_\CWX^V)^{-1}$ by sending each $\fk F_\CWX^V(W_i,Y_i)$ to $(W_i,Y_i)$. The definition of $(\fk F_\CWX^V)^{-1}$ on Hom spaces is obvious. 
\end{rem}

\begin{df}\label{lb44}
In the remaining part of this article, we always let 
\begin{align}\label{eq105}
\scr C=\Rep^V(\mc A_V):=\fk F^V_\CWX(\RepV)
\end{align}
(which is a full and replete $C^*$-subcategory of $\Rep(\mc A_V)$), and let $(\scr C,\boxdot,\ss)$ be the image of the Huang-Lepowsky braided $C^*$-tensor category $(\RepV,\boxdot,\ss)$ under the braided $*$-functor $(\fk F^V_\CWX,\id)$, which is
\begin{align}
\boxed{~(\Rep^V(\mc A_V),\boxdot,\ss)\equiv (\Rep^V(\mc A_V),\boxdot_V,\ss^V):=(\fk F^V_\CWX,\id)(\RepV,\boxdot,\ss)~}
\end{align}
In other words, we have an isomorphism of braided $C^*$-tensor categories:
\begin{align}
(\fk F^V_\CWX,\id):(\RepV,\boxdot,\ss)\xlongrightarrow{\simeq} (\Rep^V(\mc A_V),\boxdot,\ss)
\end{align}
where $\id$ is the identity tensorator. Thus, since $\mc H_i=\fk F_\CWX^V(W_i)$ and $\mc H_j=\fk F_\CWX^V(W_j)$, we have
\begin{align}
\fk F_\CWX^V(W_i\boxdot W_j)=\mc H_i\boxdot\mc H_j
\end{align}
\end{df}

In particular, we see that $\mc H_i\boxdot\mc H_j$, as a Hilbert space, is the completion of the inner product space $W_i\boxdot W_j$.

The strong intertwining property is closely related to the strong integrability due to the following elementary fact:

\begin{lm}\label{lb43}
Let $V$ be an energy-bounded VOA. Let $W_i$ be a unitary $V$-module whose vertex operator $Y_i$ is energy-bounded. Then the following are equivalent:
\begin{enumerate}[label=(\arabic*)]
\item $W_i$ is strongly integrable.
\item For each $I\in\mc J$ there is a set $\fk A_I$ of partial isometries $\mc H_0\rightarrow\mc H_i$ such that $\bigvee_{T\in\fk A_I}\Rng(T)$ is dense in $\mc H_i$, and that for each $T\in\fk A_I$, each homogeneous $v\in V$, and each $f\in C_c^\infty(I)$, we have
\begin{align}
T\ovl{Y(v,f)}\subset \ovl{Y_i(v,f)}T\qquad T^*\ovl{Y_i(v,f)}\subset \ovl{Y(v,f)}T^*
\end{align}
\item For each $I\in\mc J$ there is a set $\fk B_I$ of closable operators $\mc H_0\rightarrow\mc H_i$ (with dense domains in $\mc H_0$) such that $\bigvee_{T\in\fk B_I}\Rng(T)$ is dense in $\mc H_i$, and that for each $T\in\fk B_I$, each homogeneous $v\in V$, and each $f\in C_c^\infty(I)$, the following diagram of closable operators commutes strongly:
\begin{equation}
\begin{tikzcd}[column sep=large]
\mc H_0 \arrow[d,"T"'] \arrow[r,"{\ovl{Y(v,f)}}"] & \mc H_0 \arrow[d,"T"] \\
\mc H_i \arrow[r,"{\ovl{Y_i(v,f)}}"]           & \mc H_i        
\end{tikzcd}
\end{equation}
\end{enumerate}
\end{lm}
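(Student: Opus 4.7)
The plan is to prove the chain $(1) \Rightarrow (2) \Rightarrow (3) \Rightarrow (1)$, with the substantial content concentrated in the last implication. The implication $(1) \Rightarrow (2)$ is the standard construction: given the $\mc A_V$-module structure $(\mc H_i, \pi_i)$ from (1), take $\fk A_I$ to consist of partial isometries arising from polar decompositions of elements of $\Hom_{\mc A_V(I')}(\mc H_0, \mc H_i)$. Density of $\bigvee_{T \in \fk A_I} \Rng(T)$ in $\mc H_i$ is the Reeh-Schlieder property for $\mc A_V$-modules, i.e., the density of $\mc H_i(I)$ in $\mc H_i$ (cf. Def. \ref{lb29}). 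For each such partial isometry $T$, the intertwining relations $T \ovl{Y(v,f)} \subset \ovl{Y_i(v,f)} T$ and $T^* \ovl{Y_i(v,f)} \subset \ovl{Y(v,f)} T^*$ follow from \eqref{eq31} and affiliation of $\ovl{Y(v,f)}$ with $\mc A_V(I)$, since $T$ is then in the adjoint commutant of the diagonal $\mc A_V(I)$-action $x \oplus \pi_{i,I}(x)$ on $\mc H_0 \oplus \mc H_i$. The implication $(2) \Rightarrow (3)$ is immediate on setting $\fk B_I := \fk A_I$: for bounded $T$, the pair of inclusions in (2) is exactly the strong commutativity of the diagram in (3), by the bounded-vs-closed characterization recorded in Subsec. \ref{lb18}.

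The core content is $(3) \Rightarrow (1)$. I would split this into two steps: first reduce (3) to (2), then use (2) to construct the representation. For the reduction, given $T \in \fk B_I$, polar-decompose its closure as $\ovl T = U_T H_T$, where $U_T: \mc H_0 \to \mc H_i$ is a partial isometry and $H_T \geq 0$ is positive self-adjoint on $\mc H_0$. The strong commutativity of the diagram in (3) with respect to $\ovl{Y(v,f)}$ and $\ovl{Y_i(v,f)}$ can be rephrased as the strong commutativity of two closed operators $R,S$ on $\mc H_0 \oplus \mc H_0 \oplus \mc H_i \oplus \mc H_i$ (as in the definition of strong commutativity of diagrams). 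Polar-decomposing $R$ block-wise then shows that $H_T$ strongly commutes with $\ovl{Y(v,f)}$ (as closed operators on $\mc H_0$) and that $U_T$ inherits the bounded intertwining property $U_T \ovl{Y(v,f)} \subset \ovl{Y_i(v,f)} U_T$ together with its adjoint inclusion. Since $\bigvee_T \Rng(U_T)$ equals the closure of $\bigvee_T \Rng(\ovl T)$ and is therefore dense, the collection $\{U_T\}$ verifies (2).

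With (2) in hand, construct $\pi_{i,I}: \mc A_V(I) \to B(\mc H_i)$ by first declaring $\pi_{i,I}(\ovl{Y(v,f)}) := \ovl{Y_i(v,f)}$ on generators (equivalently, on bounded functions of the closures via functional calculus) and then extending to the generated von Neumann algebra. The intertwining partial isometries $U_T \in \fk A_I$ witness the well-definedness and normality of $\pi_{i,I}$: any $*$-algebraic or weak/strong-topological relation among the $\ovl{Y(v,f)}$'s is transported, by the bounded intertwining, to the same relation among the $\ovl{Y_i(v,f)}$'s; density of $\bigvee_T \Rng(U_T)$ in $\mc H_i$ eliminates any ambiguity and ensures that $\pi_{i,I}$ lands in $B(\mc H_i)$. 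Isotony $\pi_{i,I_1} \subset \pi_{i,I_2}$ (for $I_1 \subset I_2$) is obvious from the definition; conformal covariance is inherited from the transformation law of smeared Virasoro operators (which is the same on $\mc H_0$ and on $\mc H_i$, since the $L_n$'s are defined on both), yielding the projective action of $\Diffp(\Sbb^1)$ and hence the unitary representation of $\scr G_c$ on $\mc H_i$ required by Rem. \ref{lb52}. This completes the construction of the $\mc A_V$-module $(\mc H_i,\pi_i)$, and \eqref{eq31} holds by construction.

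The main obstacle is the reduction step from (3) to (2): extracting strong commutation for the polar partial isometry $U_T$ from that of the (a priori unbounded) closable $T$. This argument follows classical techniques for operators affiliated with von Neumann algebras, in the spirit of the strong-locality analysis in \cite[Sec. 6-8]{CKLW18}, applied here to the graph Hilbert space $\mc H_0 \oplus \mc H_i$ and the block form of the diagram-commutation operators $R,S$.
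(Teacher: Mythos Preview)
Your overall scheme $(1)\Rightarrow(2)\Rightarrow(3)\Rightarrow(1)$ and the polar-decomposition reduction from (3) to (2) match the paper's approach exactly. However, there are two issues worth flagging.

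In $(1)\Rightarrow(2)$ you take $\fk A_I$ to consist of partial isometries from $\Hom_{\mc A_V(I')}(\mc H_0,\mc H_i)$. Such $T$ intertwine the action of $\mc A_V(I')$, not $\mc A_V(I)$; your subsequent claim that $T$ lies in the commutant of the diagonal $\mc A_V(I)$-action on $\mc H_0\oplus\mc H_i$ is therefore unjustified. Since $\ovl{Y(v,f)}$ for $f\in C_c^\infty(I)$ is affiliated with $\mc A_V(I)$, the desired inclusions $T\ovl{Y(v,f)}\subset\ovl{Y_i(v,f)}T$ require intertwiners of $\mc A_V(I)$. The Reeh--Schlieder density of $\mc H_i(I)$ that you invoke concerns the wrong interval. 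The paper instead appeals directly to the structure theory of normal representations of the von Neumann algebra $\mc A_V(I)$ (noting that $\mc A_V(I)$ is a type III factor, so in fact a single unitary suffices) to produce $T$ with $Tx=\pi_{i,I}(x)T$ for all $x\in\mc A_V(I)$.

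In $(3)\Rightarrow(1)$, your construction of $\pi_{i,I}$ by declaring it on the generators $\ovl{Y(v,f)}$ and extending leaves well-definedness and normality to informal transport arguments. The paper's construction is more direct: after passing to partial isometries, use Zorn's lemma (with a further polar decomposition) to refine $\fk A_I$ to a family with pairwise orthogonal final projections summing to $\idt_{\mc H_i}$. The isometry $\xi\mapsto\bigoplus_{T\in\fk A_I}T^*\xi$ then embeds $\mc H_i$ into $\bigoplus_{\fk A_I}\mc H_0$, and $\pi_{i,I}$ is the pullback of the diagonal representation $\bigoplus_{\fk A_I}\mathrm{id}_{\mc A_V(I)}$. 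This makes normality automatic and sidesteps any extension argument. Neither you nor the paper spells out the compatibility of the $\pi_{i,I}$ across intervals; the paper defers this to \cite[Prop.~4.7]{Gui19b}.
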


Here, $\Rng(T)$ is the range of $T$, i.e. $T(\Dom(T))$.

\begin{proof}
(1)$\Rightarrow$(2): Let $\pi_i$ be the representation of $\mc A_V$ on $\mc H_i$ as in Def. \ref{lb37}. By the basic property of normal representations of von Neumann algebras, there is a set $\fk A_I$ of partial isometries whose ranges spanning a dense subspace of $\mc H_i$ and satisfying $Tx=\pi_{i,I}(x)T$ and $T^*\pi_{i,I}(x)=xT^*$ for all $x\in\mc A_V(I)$. (Indeed, since $\mc A_V(I)$ is a type III factor, $\fk A_I$ can be chosen to have only one element which is a unitary.) So (2) follows.

(2)$\Rightarrow$(3): Obvious.

(3)$\Rightarrow$(1): By replacing each $T\in\fk B_I$ with the partial isometry in the polar decomposition of $\ovl T$, it suffices to assume that each $T\in\fk B_I$ is bounded (with domain $\mc H_0$). Then by a standard argument using Zorn's lemma (together will another application of polar decomposition), one can find a set $\fk A_I$ satisfying the statements in (2), and satisfying moreover that $T_1T_1^*\cdot T_2T_2^*=0$ if $T_1,T_2\in\fk A_I$ are distinct, and that $\sum_{T\in\fk A_I}TT^*=\idt_{\mc H_i}$. Then the pullback of the $\mc A_V(I)$-representation $(\bigoplus_{\fk A_I}\mc H_0,\bigoplus_{\fk A_I}\pi_{0,I})$ to $\mc H_i$  via the isometry $\xi\in\mc H_i\mapsto \bigoplus_{T\in\fk A_I}T^*\xi$ gives a (normal) representation $\pi_{i,I}$ of $\mc A_V(I)$ on $\mc H_i$. The collection $(\pi_{i,I})_{I\in\mc J}$ gives the desired representation of $\mc A_V$ on $\mc H_i$ making $W_i$ strongly integrable. (See \cite[Prop. 4.7]{Gui19b} for more details.)
\end{proof}

\begin{rem}
From (2) and (3) of Lem. \ref{lb43}, it is clear that if $W_i$ is an energy-bounded module of a unitary VOA $V$, and if $W_i=\bigoplus_a W_{i,a}$ is an (orthogonal) decomposition of $W_i$ into unitary submodules, then $W_i$ is strongly integrable if and only if every $W_{i,a}$ is strongly integrable.
\end{rem}

\begin{thm}\label{lb48}
Suppose that $V$ satisfies Condition II. Then all unitary $V$-modules are strongly integrable. Thus Thm. \ref{lb42} applies to $V$. 
\end{thm}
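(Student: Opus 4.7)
The plan is to verify condition (3) of Lem.~\ref{lb43} for each irreducible unitary $V$-module $W_i$; this reduces the full statement by the direct-sum remark following Lem.~\ref{lb43}. Since $V$ is strongly energy-bounded, the vertex operator $Y_i$ is automatically energy-bounded, so only the production of the set $\fk B_I$ of closable operators with the strong commutativity property is at stake. The basic idea is to realize $W_i$ inside an iterated fusion product of modules from $\mc F^V$ and to fill $\mc H_i$ densely with products of smeared intertwining operators whose charge spaces belong to $\mc F^V$; since each such smeared factor satisfies the strong intertwining property by Condition II together with Rem.~\ref{lb46}, a multi-step iteration will transport the strong commutativity to the whole product.

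Concretely, using that $\mc F^V$ $\boxdot$-generates $\RepV$, fix an isometric embedding $W_i\hookrightarrow W_{j_1}\boxdot\cdots\boxdot W_{j_n}$ with each $W_{j_k}\in\mc F^V$, and let $P\in\Hom_V(W_{j_1}\boxdot\cdots\boxdot W_{j_n},W_i)$ be the associated adjoint (surjective) partial isometry. Huang-Lepowsky's vertex tensor category supplies canonical intertwining operators $\mc Y_k$ with charge space $W_{j_k}$ and source/target given by suitable iterated fusion products of the remaining $W_{j_l}$'s; by Rem.~\ref{lb46}, each $\mc Y_k$ is energy-bounded and satisfies the strong intertwining property. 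Pick $I\in\mc J$ and mutually disjoint arg-intervals $\wtd I_1,\ldots,\wtd I_n\subset\wtd I$ arranged clockwise. For homogeneous $w^{(k)}\in W_{j_k}$ and $\wtd f_k\in C_c^\infty(\wtd I_k)$ set
\begin{align*}
T \;=\; P\circ\mc Y_1(w^{(1)},\wtd f_1)\cdots\mc Y_n(w^{(n)},\wtd f_n)\colon\mc H_0^\infty\rightarrow\mc H_i^\infty,
\end{align*}
which is smooth, closable, and localizable by Rem.~\ref{lb54}. Let $\fk B_I$ consist of all such $T$. The density of $\bigvee_{T\in\fk B_I}\Rng(T)$ in $\mc H_i$ follows from the standard Reeh-Schlieder-type density of iterated smeared intertwining operators applied to $\Omega$ within the fusion product $W_{j_1}\boxdot\cdots\boxdot W_{j_n}$ (an analogue of Prop.~\ref{lb4}(a), established directly from Huang-Lepowsky theory plus $L_0$-analyticity), combined with the surjectivity of $P$ onto $\mc H_i$.

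For the strong commutativity, fix homogeneous $v\in V$ and $g\in C_c^\infty(I')$. The strong intertwining property of each $\mc Y_k$ furnishes strong commutativity of $\mc Y_k(w^{(k)},\wtd f_k)$ with $Y_\bullet(v,g)$ on its appropriate intermediate source and target (whose vertex operators are energy-bounded by strong energy-boundedness, and hence preserve the common smooth core $\mc H_\bullet^\infty$). Applying Lem.~\ref{lb35} on this QRI invariant core lifts strong commutativity from each factor to the full iterated product, yielding strong commutativity of $\mc Y_1(w^{(1)},\wtd f_1)\cdots\mc Y_n(w^{(n)},\wtd f_n)$ with $Y(v,g)$ on the source side and with $Y_{W_{j_1}\boxdot\cdots\boxdot W_{j_n}}(v,g)$ on the target side. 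Since $P\in\Hom_V$ intertwines the two module vertex operators, the composition $T$ meets the strong commutativity required by Lem.~\ref{lb43}(3), which completes the verification.

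The main obstacle is the third paragraph: strong commutativity of unbounded closed operators is not preserved under algebraic operations without careful control of domains. The entire iteration must be carried out on a shared QRI invariant core, and this is precisely what Lem.~\ref{lb35} supplies. The technical work consists in checking that every smeared $\mc Y_k$ and every smeared vertex operator preserves $\mc H_\bullet^\infty$, and in arranging the supports $\wtd f_1,\ldots,\wtd f_n,g$ to be pairwise disjoint (so that the clockwise–anticlockwise hypothesis of the strong intertwining property applies at each step); once these are in place, the rest of the argument is structural.
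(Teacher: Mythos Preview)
Your approach is essentially the same as the paper's: verify Lem.~\ref{lb43}(3) by taking $\fk B_I$ to consist of (closures of) products of smeared intertwining operators with charge spaces in $\mc F^V$, and invoke Lem.~\ref{lb35} to pass from the strong intertwining property of each factor to strong commutativity of the product. One cosmetic point: in Lem.~\ref{lb43}(3) the set $\fk B_I$ is required to commute strongly with $Y(v,f)$ for $f\in C_c^\infty(I)$, so your intertwining operators should be localized in $I'$ rather than in $I$ (you have the roles of $I$ and $I'$ swapped), but since the lemma is for all $I\in\mc J$ this is only a relabeling.
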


\begin{proof}
One checks that every unitary (and energy-bounded) $V$-module satisfies Lem. \ref{lb43}-(3) where $\fk B_I$ can be constructed using products of partial isometries in the polar decompositions of smeared intertwining operators localized in $I'$ (with any $\arg_{I'}$). See \cite[Thm. 3.16]{Gui26} for details. (Alternatively, one can define $\fk B_I$ using (the closures of) products of smeared intertwining operators localized in $I'$, and then invoke Lem. \ref{lb35} to check that $\fk B_I$ satisfies the assumption in Lem. \ref{lb43}-(3).)
\end{proof}

\begin{rem}\label{lb49}
Suppose that $V$ satisfies Condition II. Then in Condition II-(3), $\mc F^V$ can be extended to $\mc F^V\cup\{V\}$. In other words, every intertwining operator $\mc Y$ with charge space $V$ satisfies the strong intertwining property. Indeed, we can write $\mc Y=TY_{W_1}$ where $W_1,W_2$ are unitary $V$-modules, and $T\in\Hom_V(W_1,W_2)$. By Thm. \ref{lb48}, $W_1$ is strongly integrable. Therefore, by \cite[Lem. 3.9]{Gui26}, $Y_{W_1}$ satisfies the strong intertwining property. Therefore, by Lem. \ref{lb35}, $\mc Y$ satisfies the strong intertwining property.
\end{rem}

\begin{co}\label{lb57}
Assume that $V$ satisfies Condition II. For each unitary type $W_k\choose W_iW_j$ intertwining operator $\mc Y$ where $W_i\in\mc F^V\cup\{V\}$, each homogeneous $w^{(i)}\in W_i$, each $\wtd I\in\Jtd,\wtd f\in C_c^\infty(\wtd I)$, and  each $y\in\mc A(I')$, the following diagram commutes strongly:
\begin{equation}\label{eq41}
\begin{tikzcd}[column sep=large]
\mc H_j \arrow[d,"{\ovl{\mc Y(w^{(i)},\wtd f)}}"'] \arrow[r,"{\pi_{j,I'}(y)}"] & \mc H_j \arrow[d,"{\ovl{\mc Y(w^{(i)},\wtd f)}}"] \\
\mc H_k \arrow[r,"{\pi_{k,I'}(y)}"]           & \mc H_k        
\end{tikzcd}
\end{equation}
\end{co}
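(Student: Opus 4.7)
The plan is to upgrade the strong intertwining property of $\mc Y$, which provides strong commutation only with the smeared generators $Y_j(v,g), Y_k(v,g)$, to strong commutation with every $\pi_{j,I'}(y), \pi_{k,I'}(y)$ for $y\in\mc A_V(I')$, by exploiting that the closures of these generators generate $\mc A_V(I')$ as a von Neumann algebra.

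First I would collect the two inputs. By Remark \ref{lb49}, Condition II together with the hypothesis $W_i\in\mc F^V\cup\{V\}$ ensures that $\mc Y$ satisfies the strong intertwining property of Definition \ref{lb101}. Combined with the strong integrability of $W_j,W_k$ from Theorem \ref{lb48}, which provides the identifications $\ovl{Y_j(v,g)}=\pi_{j,I'}(\ovl{Y(v,g)})$ and $\ovl{Y_k(v,g)}=\pi_{k,I'}(\ovl{Y(v,g)})$ via \eqref{eq31}, this immediately yields the strong commutativity of \eqref{eq41} in the special case $y=\ovl{Y(v,g)}$, for $v\in V$ homogeneous and $g\in C_c^\infty(I')$.

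The second and main step is the passage from this generating family to all of $\mc A_V(I')$. To avoid tracking two separate commutation relations simultaneously, I would form the direct sum Hilbert space $\mc K=\mc H_j\oplus\mc H_k$, the normal representation $\wht\pi=\pi_j\oplus\pi_k$ of $\mc A_V$ on $\mc K$, and the closed operator $\wht A$ on $\mc K$ whose only nonzero matrix block is $\ovl{\mc Y(w^{(i)},\wtd f)}$ in the lower-left corner. A routine matrix computation then identifies the strong commutativity of \eqref{eq41} for a given $y$ with the strong commutation of the bounded operator $\wht\pi_{I'}(y)=\pi_{j,I'}(y)\oplus\pi_{k,I'}(y)$ with the closed operator $\wht A$, i.e., with the inclusion $\wht\pi_{I'}(y)\in\{\wht A\}''{}'$. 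The first step yields this inclusion whenever $y=\ovl{Y(v,g)}$, so $\{\wht A\}''$ already commutes with the von Neumann algebra $\scr M$ generated by $\{\wht\pi_{I'}(\ovl{Y(v,g)})\}$ as $(v,g)$ varies.

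To finish, observe that $\mc A_V(I')$ is, by the definition of the CKLW net, the von Neumann algebra generated by $\{\ovl{Y(v,g)}:v\in V\text{ homogeneous},\,g\in C_c^\infty(I')\}$. Since the normal representation $\wht\pi_{I'}$ decomposes, up to unitary equivalence, as an amplification followed by compression to a central projection, it transports a generating family of affiliated closed operators to a generating family of the image algebra, so $\scr M=\wht\pi_{I'}(\mc A_V(I'))$. Hence $\wht\pi_{I'}(y)\in\{\wht A\}''{}'$ for all $y\in\mc A_V(I')$, which translates back to the desired strong commutativity of \eqref{eq41}. The only nontrivial point, and the one most likely to require care, is the transport of von Neumann-algebraic generation through a normal representation; this is a routine application of polar decomposition together with the structure theory of normal representations, so no real obstacle is expected.
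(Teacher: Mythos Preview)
Your proposal is correct and follows essentially the same route as the paper: establish strong commutation with the smeared generators $\ovl{Y(v,g)}$ via the strong intertwining property and strong integrability, then pass to all of $\mc A_V(I')$ using that these closed operators generate it as a von Neumann algebra. The paper compresses your second step into the single phrase ``follows easily,'' whereas you spell out the direct-sum reduction and the transport of generation through the normal representation $\pi_j\oplus\pi_k$; both are the same argument.
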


\begin{proof}
By Thm. \ref{lb48}, all unitary $V$-modules are strongly integrable. Thus, by the strong commutativity of \eqref{eq42} (and noting Rem. \ref{lb49}), we see that \eqref{eq41} commutes strongly if $y$ is replaced by $\ovl{Y(v,g)}$ for each homogeneous $v\in V$ and $g\in C_c^\infty(I')$. Since all such closed operators generate $\mc A(I')$, the claim of the corollary follows easily.
\end{proof}

\subsection{Categorical extension $\scr E_V$ associated to $V$; the Wassermann tensorator $\fk W^V:\mc H_i\boxdot_V\mc H_j\rightarrow\mc H_i\boxtimes_{\mc A_V}\mc H_j$}\label{lb65}

Let $V$ be a completely unitary VOA. We now recall the intertwining operators $\Gamma^V$ and $\Delta^V$, which are called $\mc L_i$ and $\mc R_i$ in \cite[Sec. 4.2]{Gui21a} and $\mc L^V,\mc R^V$ in \cite[Sec. 3.5]{Gui26}. All details can be found in \cite[Ch. 4]{Gui21a}.

Recall that $\boxdot_V=\boxdot$ is the tensor $*$-bifunctor and $\ss^V=\ss$ is the braiding in $\RepV$. $\Gamma^V$ associates to each $W_i,W_j\in\Obj(\RepV)$ a type $W_i\boxdot W_j\choose W_i~W_j$ intertwining $\Gamma^V(\cdot,z)$ satisfying that for every $W_k\in\Obj(\RepV)$ and every type $W_k\choose W_iW_j$ intertwining operator $\mc Y$ there is a unique $T\in\Hom_V(W_i\boxdot W_j,W_k)$ such that
\begin{align}\label{eq53}
\mc Y(w^{(i)},z)w^{(j)}=T\circ \Gamma^V(w^{(i)},z)w^{(j)}
\end{align}
for all $w^{(i)}\in W_i,w^{(j)}\in W_j$. Clearly, such $\Gamma^V$ is unique up to multiplication by a unitary endomorphism on the left. The actual expression of $\Gamma^V$ is not important. What is more important is its relationship with the braided $C^*$-tensor structure of $\RepV$, as described below.

$\Delta^V$ associates to each $W_i,W_j$ a type $W_j\boxdot W_i\choose W_i~W_j$ intertwining $\Delta^V(\cdot,z)$ satisfying
\begin{align}\label{eq34}
\Delta^V(w^{(j)},z)w^{(i)}=\ss_{i,j}\circ \Gamma^V(w^{(i)},z)w^{(j)}
\end{align}
for all $w^{(i)}\in W_i,w^{(j)}\in W_j$. Under the identification $(W_i\boxdot W_j)\boxdot W_k=W_i\boxdot(W_j\boxdot W_k)$ via the unitary associator, $\Gamma^V$ and $\Delta^V$ satisfy the braiding relation 
\begin{subequations}\label{eq35}
\begin{gather}
\Gamma^V(w^{(i)},z)\cdot\Delta^V(w^{(j)},\zeta)w^{(k)}=\Delta^V(w^{(j)},\zeta)\cdot\Gamma^V(w^{(i)},z)w^{(k)}\\
(\Gamma^V)^\dagger(w^{(i)},z)\cdot\Delta^V(w^{(j)},\zeta)w^{(i,k)}=\Delta^V(w^{(j)},\zeta)\cdot(\Gamma^V)^\dagger(w^{(i)},z)w^{(i,k)}
\end{gather}
\end{subequations}
for each $w^{(i)}\in W_i,w^{(j)}\in W_j,w^{(k)}\in W_k,w^{(i,k)}\in W_i\boxdot W_k$ and each $z,\zeta\in\Sbb^1$ equipped with $\arg z,\arg\zeta$ such that $\arg z-2\pi<\arg\zeta<\arg z$. Here, 
\begin{align*}
(\Gamma^V)^\dagger(w^{(i)},z)=\Gamma^V(e^{\ovl zL_1}(e^{-\im\pi}\ovl {z^{-2}})^{L_0}w^{(i)},\ovl {z^{-1}})^\dagger
\end{align*} 
is the adjoint intertwining operator of $\Gamma^V$. See \cite[Sec. 4.3]{Gui21a} for details.

If $F\in\Hom_V(W_i,W_k)$ and $G\in\Hom_V(W_j,W_l)$, then for each $w^{(i)}\in W_i,w^{(j)}\in W_j$ we have
\begin{subequations}\label{eq36}
\begin{gather}
(F\boxdot G)\Gamma^V(w^{(i)},z)w^{(j)}=\Gamma^V(Fw^{(i)},z)Gw^{(j)}\\
(G\boxdot F)\Delta^V(w^{(i)},z)w^{(j)}=\Delta^V(Fw^{(i)},z)Gw^{(j)}
\end{gather}
\end{subequations}

Under the identifications $V\boxdot W_i=W_i=W_i\boxdot V$ via the unitors, we have
\begin{subequations}\label{eq37}
\begin{gather}
\Gamma^V(v,z)w^{(i)}=\Delta^V(v,z)w^{(i)}=Y_i(v,z)w^{(i)}\label{eq37a}\\
\Gamma^V(w^{(i)},z)v=\Delta^V(w^{(i)},z)v
\end{gather}
\end{subequations}
for all $w^{(i)}\in W_i,v\in V$.

\begin{rem}\label{lb58}
We have the following smeared version of \eqref{eq35}: If $\wtd J\in\Jtd$ is clockwise to $\wtd I\in\Jtd$, and if $f\in C_c^\infty(\wtd I),g\in C_c^\infty(\wtd J)$, then the following diagram commutes adjointly (cf. \cite[Thm. 4.8]{Gui21a}):
\begin{equation}\label{eq38}
\begin{tikzcd}[column sep=huge]
\quad \mc H_k^\infty\quad \arrow[r,"{\Delta^V(w^{(j)},g)}"] \arrow[d, "{\Gamma^V(w^{(i)},f)}"'] &\quad (\mc H_k\boxdot\mc H_j)^\infty\quad \arrow[d, "{\Gamma^V(w^{(i)},f)}"]\\
(\mc H_i\boxdot\mc H_k)^\infty\arrow[r,"{\Delta^V(w^{(j)},g)}"] &(\mc H_i\boxdot\mc H_k\boxdot\mc H_j)^\infty
\end{tikzcd}
\end{equation}
provided that the four intertwining operators appeared in \eqref{eq38} are energy-bounded. Similarly, the smeared version of \eqref{eq34}, \eqref{eq36}, and \eqref{eq37} hold provided that the intertwining operators involved are energy-bounded: one simply replaces $z$ by $\wtd f\in C_c^\infty(I)$. 
\end{rem}

Recall the following definition:

\begin{df}
Let $W_i$ be a $V$-module. A vector $w\in W_i$ is called \textbf{quasi-primary} if $w$ is homogeneous (i.e. it is an eigenvector of $L_0$), and if $L_1w=0$. We say that a subset $M\subset W_i$ \textbf{generates} $W_i$ if the smallest subspace containing $M$ and invariant under $Y(v)_n$ (for all $v\in V,n\in\Zbb$) is $W_i$. 
\end{df}

\begin{eg}
The set of quasi-primary vectors of $W_i$ generate $W_i$. In fact, this is true when $W_i$ is irreducible, since the lowest weight vectors of $W_i$ are quasi-primary. So this is also true in general since $W_i$ is semi-simple.
\end{eg}

\begin{lm}\label{lb45}
Let $V$ be completely unitary. Let $W_i,W_j$ be unitary $V$-modules. Assume that the type $W_i\boxdot W_j\choose W_i~W_j$ intertwining operator $w^{(i)}\in W_i\mapsto \Gamma^V(\cdot,z)|_{W_j}$ is energy-bounded. Let $M_i\subset W_i$ be a subset of homogeneous vectors generating $W_i$. Then for each $\wtd I\in\Jtd$, the subspace
\begin{align}\label{eq39}
\Span\{\Gamma^V(w^{(i)},\wtd f)w^{(j)}:w^{(i)}\in M_i,w^{(j)}\in W_j\text{ is homogeneous, }\wtd f\in C_c^\infty(\wtd I)\}
\end{align}
is dense in $\mc H_i\boxdot\mc H_j$ (the Hilbert space completion of $W_i\boxdot W_j$, cf. Def. \ref{lb44}).
\end{lm}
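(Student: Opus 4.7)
The plan is to argue by contradiction: suppose $\xi\in\mc H_i\boxdot\mc H_j$ is orthogonal to the subspace \eqref{eq39}, and I will deduce $\xi=0$. For each fixed homogeneous $w^{(i)}\in M_i$ and homogeneous $w^{(j)}\in W_j$, I would consider the formal series
\[
F_{w^{(i)},w^{(j)}}(z)=\sum_s \bk{\Gamma^V(w^{(i)})_s w^{(j)},\xi}\,z^{-s-1}.
\]
The energy-bound hypothesis yields the polynomial estimate $\Vert\Gamma^V(w^{(i)})_s w^{(j)}\Vert\leq C(1+|s|)^b$, while the lower-truncation of intertwining operators gives $\Gamma^V(w^{(i)})_s w^{(j)}=0$ for $s$ sufficiently large. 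Together these imply that $F_{w^{(i)},w^{(j)}}$ converges absolutely on the punctured disk $0<|z|<1$ to a holomorphic function with at worst a pole of finite order at $0$. The orthogonality hypothesis, combined with the expansion $\Gamma^V(w^{(i)},\wtd f)w^{(j)}=\sum_s \hat f(s)\Gamma^V(w^{(i)})_s w^{(j)}$ coming from the definition of the smeared operator, translates into the statement that the distributional boundary value of $F_{w^{(i)},w^{(j)}}$ on $|z|=1$ vanishes on the arc $\arg_I(I)$. A standard boundary-value theorem (a holomorphic function whose distributional boundary value vanishes on an open arc admits an analytic continuation across it and must therefore vanish identically by the identity theorem) then forces $F_{w^{(i)},w^{(j)}}\equiv 0$. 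Hence $\bk{\Gamma^V(w^{(i)})_s w^{(j)},\xi}=0$ for every $s\in\Rbb$, and by linearity for every $w^{(j)}\in W_j$.

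The hard part will be propagating the vanishing from $w^{(i)}\in M_i$ to all $w^{(i)}\in W_i$. Let $K=\{w^{(i)}\in W_i:\bk{\Gamma^V(w^{(i)})_s w^{(j)},\xi}=0\text{ for all }s\in\Rbb,\,w^{(j)}\in W_j\}$; then $M_i\subset K$, and since $M_i$ generates $W_i$ as a $V$-module, it suffices to prove that $K$ is $V$-invariant. The main tool would be the Jacobi identity $[Y_{i\boxdot j}(v)_m,\Gamma^V(w^{(i)})_s]=\sum_l \binom{m}{l}\Gamma^V(Y(v)_l w^{(i)})_{s+m-l}$, applied to $w^{(j)}$ and paired with $\xi$: the contribution $\bk{\Gamma^V(w^{(i)})_s(Y_j(v)_m w^{(j)}),\xi}$ vanishes by $w^{(i)}\in K$ after decomposing $Y_j(v)_m w^{(j)}\in W_j$ into homogeneous components, while the residual term $\bk{\Gamma^V(w^{(i)})_s w^{(j)},Y_{i\boxdot j}(v)_m^*\xi}$ is controlled via the smeared braiding of Rem.~\ref{lb58}: when $g\in C_c^\infty(J)$ is supported in an interval $J\subset I'$ clockwise to $I$, the smeared operator $Y_{i\boxdot j}(v,g)$ adjointly commutes with $\Gamma^V(w^{(i)},\wtd f)$ via the combination of \eqref{eq37a} and Rem.~\ref{lb58}, so taking adjoints shows that $Y_{i\boxdot j}(v,g)^*\xi$ lies in the same smeared orthogonal complement as $\xi$. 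Re-applying the first step to this new vector yields $\bk{\Gamma^V(w^{(i)})_s w^{(j)},Y_{i\boxdot j}(v,g)^*\xi}=0$, and a Fourier/approximation argument (using that $g$ can be chosen in $C_c^\infty(J)$ for any compact $J\subset I'$, combined with the rotation covariance of Prop.~\ref{lb51}) passes from smeared $Y_{i\boxdot j}(v,g)^*$ to the individual mode $Y_{i\boxdot j}(v)_m^*$. Completing this limiting argument cleanly is the main technical hurdle; it is natural in the applications (where $V$ is strongly energy-bounded, ensuring $Y_{i\boxdot j}(v,g)$ is defined as a closed operator on $\mc H_i\boxdot\mc H_j$).

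Finally, by the universal property \eqref{eq53} of the fusion product, the formal coefficients $\Gamma^V(w^{(i)})_s w^{(j)}$ for $w^{(i)}\in W_i$ homogeneous, $w^{(j)}\in W_j$ homogeneous, and $s\in\Rbb$ span the pre-Hilbert space $W_i\boxdot W_j$ (if not, the projection to a proper $V$-submodule quotient would produce, via universality, a nonzero homomorphism contradicting the factorization uniqueness of $\Gamma^V$), which is dense in $\mc H_i\boxdot\mc H_j$ by Def.~\ref{lb44}. Combined with $K=W_i$ from the previous step, this gives $\xi\perp W_i\boxdot W_j$, hence $\xi=0$, contradicting the assumption and proving density.
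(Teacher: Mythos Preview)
Your overall strategy matches the paper's, but there is a genuine gap in your propagation step, and the paper's argument is organized precisely to avoid it.

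The problem is with the term $\bk{\Gamma^V(w^{(i)})_s w^{(j)},Y_{i\boxdot j}(v)_m^*\xi}$. For an arbitrary $\xi\in\mc H_i\boxdot\mc H_j$ there is no reason for $\xi$ to lie in the domain of $Y_{i\boxdot j}(v)_m^*$, nor in the domain of the smeared adjoint $Y_{i\boxdot j}(v,g)^*$; the smeared operator is smooth, so its adjoint is only guaranteed to be defined on $\mc H_{i\boxdot j}^\infty$, not on all of $\mc H_i\boxdot\mc H_j$. (Note also that the lemma does not assume $V$ is strongly energy-bounded, so even the smeared $Y_{i\boxdot j}(v,g)$ is not known to exist.) Your attempted workaround---deducing that $Y_{i\boxdot j}(v,g)^*\xi$ lies in the orthogonal complement and then passing to modes by a further Fourier/analytic-continuation step---therefore does not get off the ground. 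The difficulty you flag as ``the main technical hurdle'' is not merely technical: as written, the argument requires an action on $\xi$ that is simply not available.

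The paper's key maneuver is to reduce to \emph{finite-energy} test vectors before doing any Jacobi-identity computation. It first shows that the orthogonal complement $\mc W$ of \eqref{eq39} is rotation invariant (via Schwarz reflection applied to $z\mapsto\bk{e^{iz\ovl L_0}\xi\,|\,\eta}$, using Prop.~\ref{lb51}); then every spectral projection $P_n\xi$ lies in $\mc W\cap(W_i\boxdot W_j)$. For a genuinely algebraic vector $w\in W_i\boxdot W_j$, the Jacobi identity argument (that $\mc T=\{w^{(i)}:\bk{\Gamma^V(w^{(i)},z)w^{(j)}\,|\,w}=0\text{ for all }w^{(j)}\}$ is $V$-invariant) is a computation with formal Laurent series, with no domains involved. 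Ironically, your own Step~1 already delivers what is needed: the mode-by-mode vanishing $\bk{\Gamma^V(w^{(i)})_s w^{(j)},\xi}=0$ shows that $\xi$ is orthogonal to an $L_0$-graded subspace of $W_i\boxdot W_j$, so each weight component of $\xi$ (which lies in the finite-dimensional $(W_i\boxdot W_j)(n)$) is still orthogonal. Had you observed this, you could replace $\xi$ by a homogeneous vector in $W_i\boxdot W_j$ and run the purely algebraic propagation---exactly what the paper does, in a different order.
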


In fact, we do not need complete unitarity in full power. Instead, one can only assume that $V$ is unitary, $C_2$-cofinite, rational, and that the type $W_i\boxdot W_j\choose W_i~W_j$ intertwining operator involved in Lem. \ref{lb45} is (unitary and) energy-bounded.

\begin{proof}
When $W_i$ is irreducible, and especially when $\Gamma^V$ satisfies the strong intertwining property, this lemma was proved in \cite[Prop. 4.12]{Gui21a}. However, we shall argue that the strong intertwining property is not necessary.

Our starting point is the standard fact that, for any fixed $z\in\Cbb\setminus\{0\}$ with chosen $\arg z$,  any vector $w\in W_i\boxdot W_j$ satisfying
\begin{align}
\bk{\Gamma^V(w^{(i)},z)w^{(j)}|w}=0\qquad\text{for all }w^{(i)}\in W_i,w^{(j)}\in W_j
\end{align}
is zero. (See \cite[Prop. A.3]{Gui19a} for details.) Therefore, any $w\in W_i\boxdot W_j$ satisfying
\begin{align}
\bk{\Gamma^V(w^{(i)},z)w^{(j)}|w}=0\qquad\text{for all }w^{(i)}\in M_i,w^{(j)}\in W_j
\end{align}
is zero. (To see this, it suffices to show that for each fixed $w\in W_i\boxdot W_j$, the subspace $\mc T$ of all $w^{(i)}\in W_i$ satisfying $\bk{\Gamma^V(w^{(i)},z)w^{(j)}|w}=0$ for all $w^{(j)}\in W_j$ is invariant under all $Y(v)_n$. Indeed, the Jacobi-identity for intertwining operators implies that the formal Laurent series $\bk{\Gamma^V(Y(v,\zeta-z)w^{(i)},z)w^{(j)}|w}$ (of $\zeta-z$) and $\bk{\Gamma^V(w^{(i)},z)Y(v,\zeta)w^{(j)}|w}$ (of $\zeta$) are the expansions of the same rational function. Thus, if $w^{(i)}\in\mc T$, then the latter series is zero, and hence the former is also zero, which implies $Y(v)_nw^{(i)}\in\mc T$.) It follows (by letting the test function $\wtd f$ converge to the $\delta$-function at any given point of $\Sbb^1$) that for any fixed $\wtd I$, any $w\in W_i\boxdot W_j$ orthogonal to \eqref{eq39} is zero.

Now, to show that \eqref{eq39} is dense, we let $\mc W$ be the subspace of all $\xi\in\mc H_i\boxdot\mc H_j$ orthogonal to \eqref{eq39}. If we can show that $\mc W$ is rotation invariant, i.e., invariant under $e^{\im t\ovl{L_0}}$ for all $t\in\Rbb$, then for every $\xi\in\mc W$, the image of $\xi$ under any spectral projection of $\ovl{L_0}$ (which must be an element of $W_i\boxdot W_j$) is inside $\mc W$, and hence is $0$ by the above paragraph. Then we conclude $\mc W=0$.

We now show that $\mc W$ is rotation invariant by a standard Reeh-Schlieder type argument. Choose any $\xi\in\mc W$. By Prop. \ref{lb51}, for every $\eta\in\eqref{eq39}$, there is $\delta>0$ such that $\bk{e^{\im t\ovl{L_0}}\xi|\eta}=0$ for all $t\in(-\delta,\delta)$. Since $\ovl{L_0}\geq0$,  the function $f(z)=\bk{e^{\im z\ovl{L_0}}\xi|\eta}$ is holomorphic on $\mbb H=\{z\in\Cbb:\Imag z>0\}$ and continuous on $\ovl{\Hbb}$. By the Schwarz reflection principle, $f$ can be extended to a holomorphic function on $\Cbb\setminus(\Rbb\setminus(-\delta,\delta))$, and hence $f=0$ on that domain since $f=0$ on $(-\delta,\delta)$. Thus $f=0$ on $\ovl{\Hbb}$ by the continuity, and hence $\bk{e^{\im t\ovl{L_0}}\xi|\eta}=0$ for all $t\in\Rbb$. 
\end{proof}

\begin{lm}\label{lb55}
Let $V$ be energy-bounded. Let $I\in\mc J$. Then
\begin{align}
\begin{aligned}\label{eq40}
\mc W_I=\Span\{&Y(v_1,f_1)\cdots Y(v_n,f_n)\Omega:n\in\Zbb_+,\text{and }v_1,\dots,v_n\in V\text{ are quasi-primary, }\\
&I_1,\dots,I_n\in\mc J\text{ are disjoint subintervals of $I$, }f_i\in C_c^\infty(I_i)\text{ for all }i\}
\end{aligned}
  \end{align}
and
\begin{align}\label{eq44}
\mc V_I=\Span\{Y(v,f)\Omega:v\in V\text{ is homogeneous, }f\in C_c^\infty(I) \}
\end{align}
are dense in $\mc H_0$.
\end{lm}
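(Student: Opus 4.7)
Both subspaces contain the smaller set
\begin{align*}
\mc U_I := \Span\{Y(v,f)\Omega : v\in V \text{ quasi-primary},\ f\in C_c^\infty(I)\}.
\end{align*}
Indeed, $\mc U_I\subset \mc V_I$ since quasi-primary vectors are homogeneous, and $\mc U_I\subset \mc W_I$ by taking $n=1$. So the plan is to prove only that $\mc U_I$ is dense in $\mc H_0$; this will give both density statements simultaneously. The argument is a standard Reeh--Schlieder type argument based on rotation covariance (Prop.\ \ref{lb51}), the positivity $\ovl{L_0}\geq 0$, and Fourier analysis on $\Sbb^1$.

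\textbf{Step 1 (rotation).} Suppose $\xi\in\mc H_0$ is orthogonal to $\mc U_I$. Fix a quasi-primary $v\in V$ with $L_0v=dv$, and $f\in C_c^\infty(I)$ whose support is compactly contained in $I$. For $g=\varrho(t)$, Prop.\ \ref{lb51} gives $e^{\im t\ovl{L_0}}Y(v,\wtd f)e^{-\im t\ovl{L_0}}=Y(v,\wtd f_{g,d})$, and $\wtd f_{g,d}$ has $f$-part supported in $\varrho(t)\Supp(f)$. Using $e^{\im t\ovl{L_0}}\Omega=\Omega$, define
\begin{align*}
F(t):=\bk{\xi,e^{\im t\ovl{L_0}}Y(v,f)\Omega}=\bk{\xi,Y(v,\wtd f_{g,d})\Omega}.
\end{align*}
For $|t|$ small enough that $\varrho(t)\Supp(f)\subset I$, the right-hand side vanishes by assumption; hence $F\equiv 0$ on some open interval around $0$.

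\textbf{Step 2 (analytic continuation).} Since $\ovl{L_0}\geq 0$, writing $\eta=Y(v,f)\Omega$ and using the spectral theorem, $F(t)=\bk{\xi,e^{\im t\ovl{L_0}}\eta}$ extends to a bounded holomorphic function on $\Hbb=\{\Imag z>0\}$, continuous on $\ovl{\Hbb}$. Since $F$ vanishes on a real open interval, the Schwarz reflection principle forces $F\equiv 0$ on $\Rbb$. Varying $t\in\Rbb$ then gives $\bk{\xi,Y(v,h)\Omega}=0$ for every $h\in C_c^\infty(J)$ and every rotated subinterval $J=\varrho(t)J_0$ with $J_0\sjs I$. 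Taking finite linear combinations via partition of unity on $\Sbb^1$, we obtain $\bk{\xi,Y(v,h)\Omega}=0$ for every $h\in C^\infty(\Sbb^1)$.

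\textbf{Step 3 (Fourier recovery).} Expand $Y(v,z)\Omega=e^{zL_{-1}}v=\sum_{k\geq 0}\frac{L_{-1}^kv}{k!}z^k$. Testing against the Fourier basis $h(e^{\im\theta})=e^{\im m\theta}$, $m\in\Zbb$, the identity $\bk{\xi,Y(v,h)\Omega}=0$ for all $h\in C^\infty(\Sbb^1)$ yields $\bk{\xi,L_{-1}^kv}=0$ for every $k\geq 0$ and every quasi-primary $v$. Since the Möbius subalgebra $\mathrm{span}\{L_0,L_{\pm 1}\}$ makes $V$ into a unitary $\mathfrak{sl}_2$-module (whose lowest-weight vectors are precisely the quasi-primary vectors), $V$ is spanned by $\{L_{-1}^kv:v\text{ quasi-primary},\ k\geq 0\}$. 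Hence $\xi\perp V$, and $\xi=0$ since $V$ is dense in $\mc H_0$. The only technicality is ensuring the rotated support $\varrho(t)\Supp(f)$ remains inside $I$ for small $t$, which is immediate from the compact containment of $\Supp(f)$ in $I$. No other obstacle arises; energy-boundedness of $V$ guarantees all smeared operators and the invocation of Prop.\ \ref{lb51} are legitimate.
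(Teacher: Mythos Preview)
Your proof is correct and in fact takes a cleaner route than the paper's. The paper proves density of $\mc W_I$ by an inductive ``peeling'' argument: starting from $v\in\mc H_0^\infty$ orthogonal to $\mc W_I$, it applies Schwarz reflection to the \emph{innermost} factor $Y(v_n,f_n)$ (treating $Y(v_{n-1},f_{n-1})^*\cdots Y(v_1,f_1)^*v$ as the test vector) to free $f_n$ from its support constraint, replaces it by a mode $Y(v_n)_{k_n}$, then repeats for $f_{n-1},\dots,f_1$. This yields orthogonality to all products $Y(v_1)_{k_1}\cdots Y(v_n)_{k_n}\Omega$, and concludes by invoking that such products span $V$ (\cite[Prop.~6.6]{CKLW18}).

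Your argument bypasses the products entirely by observing that the $n=1$ vectors already suffice: the creation formula $Y(v,z)\Omega=e^{zL_{-1}}v$ and the $\mathfrak{sl}_2$-decomposition of $V$ (finite-dimensional $L_0$-eigenspaces, $L_1^*=L_{-1}$) give $V=\Span\{L_{-1}^kv:v\text{ quasi-primary},\,k\geq0\}$ directly. This is more elementary---it uses only the Möbius structure rather than the full VOA generating property---and has the bonus of handling both $\mc V_I$ and $\mc W_I$ at once via the common subspace $\mc U_I$. The paper's approach, on the other hand, generalizes more readily to situations where one genuinely needs multi-operator products (e.g., in the proof of the Reeh--Schlieder property in Thm.~\ref{lb47}, where the target space is $\mc H_{i_1}\boxdot\cdots\boxdot\mc H_{i_n}$ rather than $\mc H_0$).
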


\begin{proof}
We prove the density of \eqref{eq40}; the second one is similar. Our proof is similar to that of \cite[Thm. 8.1]{CKLW18}: As in the proof of Thm. \ref{lb45}, by the Schwarz reflection principle  and Prop. \ref{lb51}, it suffices to prove that any $v\in V$ orthogonal to \eqref{eq40} is zero.  In fact, instead of assuming $v\in V$, it suffices to assume $v\in \mc H_0^\infty$.

So let $v\in \mc H_0^\infty$ be orthogonal to \eqref{eq40}. Choose any $n\in\Zbb_+$ and any quasi-primary $v_1,\dots,v_n\in V$. Another application of the Schwarz reflection principle (applied to $\bk{e^{\im z\ovl{L_0}}Y(v_{n-1},f_{n-1})^*\cdots Y(v_1,f_1)^*v|Y(v_n,f_n)\Omega}$) shows that $v$ is orthogonal to $Y(v_1,f_1)\cdots Y(v_n,f_n)\Omega$ where $f_1,\dots,f_{n-1}$ are supported in mutually disjoint subintervals of $I$, and $f_n$ is supported in any element of $\mc J$. By linearity, $v$ is orthogonal to $Y(v_1,f_1)\cdots Y(v_{n-1},f_{n-1})Y(v_n)_{k_n}\Omega$ where $f_1,\dots,f_{n-1}$ are supported in mutually disjoint subintervals of $I$ and $k_n\in\Zbb$. Applying the same argument repeatedly, we see that $v$ is orthogonal to $Y(v_1)_{k_1}\cdots Y(v_n)_{k_n}\Omega$ for all $k_1,\dots,k_n\in\Zbb$. Since this is true for all $n$, and since the vectors of the form $Y(v_1)_{k_1}\cdots Y(v_n)_{k_n}\Omega$ span $V$ (cf. \cite[Prop. 6.6]{CKLW18}), we conclude that $v$ is orthogonal to $V$, and hence $v=0$.
\end{proof}

\begin{df}\label{lb82}
Given arg-valued smooth functions $\wtd f,\wtd g$, we say that $\wtd f$ is \textbf{anticlockwise} to $\wtd g$ (equivalently, $\wtd g$ is \textbf{clockwise} to $\wtd f$) if there exist $\wtd I,\wtd J\in\Jtd$ with $\wtd J$ clockwise to $\wtd I$ such that $\wtd f\in C_c^\infty(\wtd I),\wtd g\in C_c^\infty (\wtd J)$. 
\end{df}

Recall from Def. \ref{lb44} that the braided $C^*$-tensor category $(\scr C,\boxdot,\ss)$ is
\begin{align*}
(\Rep^V(\mc A_V),\boxdot,\ss):=(\fk F^V_\CWX,\id)(\RepV,\boxdot,\ss)
\end{align*}

\begin{thm}\label{lb47}
Suppose that $V$ satisfies Condition II in Def. \ref{lb34}. Let $\mc F^V$ be as in Condition II. For each $W_i\in\Obj(\RepV)$ and $\wtd I\in\Jtd$, let
\begin{subequations}\label{eq45}
\begin{gather}
\fk H_i(\wtd I)_{i_1,\dots,i_n}=\Hom_V(W_{i_1}\boxdot\cdots\boxdot W_{i_n},W_i)\times W_{i_1}\times\cdots\times W_{i_n}\times C_c^\infty(\wtd I)^n\label{eq45a}\\
\fk H_i(\wtd I)=\bigsqcup_{
\begin{subarray}{c}
n\in\Zbb_+\\
W_{i_1},\dots,W_{i_n}\in \mc F^V\cup\{V\}
\end{subarray}
} \fk H_i(\wtd I)_{i_1,\dots,i_n}
\end{gather}
\end{subequations}
For each $\fk a\in\fk H_i(\wtd I)_{i_1,\dots,i_n}$ where
\begin{align}
\fk a=(T,w^{(i_1)},\dots,w^{(i_n)},\wtd f_1,\dots,\wtd f_n)
\end{align}
we set for each $W_j\in\Obj(\RepV)$ that
\begin{subequations}\label{eq46}
\begin{gather}
\mc L(\fk a,\wtd I)\big|_{\mc H_j^\infty}=(T\boxdot\idt_j)\circ \Gamma^V(w^{(i_1)},\wtd f_1)\cdots \Gamma^V(w^{(i_n)},\wtd f_n)\big|_{\mc H_j^\infty}\label{eq46a}\\
\mc R(\fk a,\wtd I)\big|_{\mc H_j^\infty}=(\idt_j\boxdot T)\circ \Delta^V(w^{(i_n)},\wtd f_n)\cdots \Delta^V(w^{(i_1)},\wtd f_1)\big|_{\mc H_j^\infty}
\end{gather}
if $w^{(i_1)},\dots,w^{(i_n)}$ are quasi-primary and $\wtd f_{\nu+1}$ is clockwise to  $\wtd f_\nu$ for each $\nu=1,\dots,n-1$; otherwise, we set $\mc L(\fk a,\wtd I)=0$ and $\mc R(\fk a,\wtd I)=0$. Then these data give a M\"obius covariant weak categorical extension
\begin{gather}\label{eq47}
\scr E^w=(\mc A_V,\Rep^V(\mc A_V),\boxdot_V,\ss^V,\fk H)\equiv(\mc A_V,\Rep^V(\mc A_V),\boxdot,\ss,\fk H)
\end{gather}
\end{subequations}
\end{thm}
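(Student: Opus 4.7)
The plan is to verify each axiom (a)--(i) of Def.~\ref{lb8} together with Möbius covariance for the data in \eqref{eq45}--\eqref{eq46}, leveraging the hypotheses of Condition~II (together with Rem.~\ref{lb49} and Cor.~\ref{lb57}) and the braiding/intertwining relations \eqref{eq34}--\eqref{eq37} of $\Gamma^V$ and $\Delta^V$. First, I would check well-definedness: each $\mc L(\fk a,\wtd I)$ and $\mc R(\fk a,\wtd I)$ is the composition of a bounded morphism $T\boxdot\idt_j$ (respectively $\idt_j\boxdot T$) with a product of smeared intertwining operators $\Gamma^V(w^{(i_\nu)},\wtd f_\nu)$ (resp.\ $\Delta^V(w^{(i_\nu)},\wtd f_\nu)$). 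Condition~II plus Rem.~\ref{lb49} ensures every factor is energy-bounded, and Rem.~\ref{lb54} then guarantees that the product is smooth and localizable on $\mc H_j^\infty$, with $T\boxdot\idt_j$ preserving smoothness/localizability by Rem.~\ref{lb25}. This also gives isotony (a), which is immediate from the definition.

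For the easy structural axioms: naturality (b) follows from \eqref{eq36} applied factor-by-factor, which allows any $G\in\Hom_V(W_k,W_{k'})$ to be pushed past each $\Gamma^V$ or $\Delta^V$ and then absorbed into $T\boxdot G$ via functoriality. Neutrality (c) follows from \eqref{eq37a} combined with the smeared version of \eqref{eq34}: when $\mc H_k=\mc H_0$, the outermost factor reduces a $\Gamma^V$ or $\Delta^V$ with source $V$ to a vertex operator, and the braiding collapses via the unitors so that $\mc L=\mc R$ on $\mc H_0^\infty$. The braiding axiom (h) is the smeared form of \eqref{eq34} applied factor-by-factor. Rotation covariance (i) and Möbius covariance follow directly from Prop.~\ref{lb51}: conjugation by $U_j(g)$ of each smeared factor produces another smeared $\Gamma^V$ or $\Delta^V$ (with an explicit new test function), and $U_j(g)$ preserves $\mc H_j^\infty$ by Rem.~\ref{lb56}.

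For the density axioms (d) and (e): Lem.~\ref{lb45} states that, for a single energy-bounded $\Gamma^V$, the span of $\Gamma^V(w^{(i)},\wtd f)w^{(j)}$ with $w^{(i)}$ in any homogeneous generating set is dense in $\mc H_i\boxdot\mc H_j$. Applying this iteratively to unpack $W_{i_1}\boxdot\cdots\boxdot W_{i_n}\boxdot W_j$, using that $\mc F^V$ is $\boxdot$-generating and that homogeneous quasi-primary vectors generate each $W_{i_\nu}$, shows that the range of $\Gamma^V(w^{(i_1)},\wtd f_1)\cdots\Gamma^V(w^{(i_n)},\wtd f_n)$ on $\mc H_j^\infty$ (with quasi-primary $w^{(i_\nu)}$ and $\wtd f_{\nu+1}$ clockwise to $\wtd f_\nu$) is dense in $(W_{i_1}\boxdot\cdots\boxdot W_{i_n})\boxdot W_j$; letting $T$ range over all intertwiners into $W_i$ (and over all irreducible components of $W_i$) then yields density in $W_i\boxdot W_j$. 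The analogous argument with Lem.~\ref{lb55} handles Reeh-Schlieder (d) with $\mc H_k=\mc H_0$, and the $\mc R$-side is handled symmetrically.

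The main analytic content lies in the intertwining property (f) and weak locality (g). For (f), fix $\fk a\in\fk H_i(\wtd I)$ and $x\in\mc A^\infty(I')$. By Cor.~\ref{lb57}, each factor $\Gamma^V(w^{(i_\nu)},\wtd f_\nu)$ (which is a smeared unitary intertwining operator with charge space in $\mc F^V\cup\{V\}$) commutes strongly with $\pi_{\bullet,I'}(x)$ on $\mc H^\infty$. By Lem.~\ref{lb14}, strong commutativity on a common smooth invariant domain yields commutativity of the composition on $\mc H_k^\infty$; the naturality of $T\boxdot\idt_j$ with respect to $\mc A_V(I')$-actions (since $T\in\Hom_V$ extends to an $\mc A_V$-module morphism in $\scr C$, cf.\ Thm.~\ref{lb42}) handles the outermost factor, giving the adjoint commutativity of the diagrams \eqref{eq18}. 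For (g), the key is the smeared braiding relation \eqref{eq38} of Rem.~\ref{lb58}: each $\Gamma^V(w^{(i_\nu)},\wtd f_\nu)$ (localized in $\wtd I$) commutes adjointly with each $\Delta^V(w^{(j_\mu)},\wtd g_\mu)$ (localized in $\wtd J$ clockwise to $\wtd I$). Iterating this past all $n+m$ factors and absorbing $T\boxdot\idt$ and $\idt\boxdot T'$ via naturality gives the weak locality diagram. The principal obstacle I anticipate is bookkeeping in this last step: tracking that the clockwise ordering among the $\wtd f_\nu$'s and among the $\wtd g_\mu$'s is preserved and that the intermediate fusion products match, so that the iterative application of \eqref{eq38} yields genuine adjoint commutativity rather than merely commutativity on a dense subdomain.
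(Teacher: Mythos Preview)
Your approach is essentially the same as the paper's: an axiom-by-axiom verification using exactly the ingredients you cite. Two points where the paper is more careful and your sketch is a bit loose: (i) the braiding axiom (h) is not simply ``\eqref{eq34} factor-by-factor''---to get $\ss_{i,k}\,\mc L(\fk a,\wtd I)=\mc R(\fk a,\wtd I)$ you must push $\ss_{i_1\boxdot\cdots\boxdot i_n,\,k}$ past the product $\Gamma^V(w^{(i_1)},\wtd f_1)\cdots\Gamma^V(w^{(i_n)},\wtd f_n)$, which requires the hexagon axiom together with \eqref{eq36} and the smeared commutation relation \eqref{eq38}; the paper isolates this as Lem.~\ref{lb59}. (ii) In the density induction you need more than density at each step: the span must be QRI (Def.~\ref{lb32}), since localizability only guarantees that QRI subspaces are cores for the next smeared factor; the paper makes this explicit via Prop.~\ref{lb51} and Rem.~\ref{lb54}.
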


\begin{df}\label{lb62}
The closure of $\scr E^w$ in Thm. \ref{lb47} is denoted by
\begin{align}
\scr E_V=(\mc A_V,\Rep^V(\mc A_V),\boxdot,\ss,\mc H)
\end{align}
and is called the \textbf{(closed) categorical extension associated to $V$}. The tensorator
\begin{align*}
\fk W^V=\fk W^V_{i,j}:\mc H_i\boxdot_V\mc H_j\rightarrow\mc H_i\boxtimes_{\mc A_V}\mc H_j
\end{align*}
associated to $\scr E_V$ (cf. Thm. \ref{lb1}) is called the \textbf{Wassermann tensorator}. So it is determined by
\begin{align}
\fk W^V\circ\scr E_V=\scr E_{\mc A_V,\Connes}\big|_{\Rep^V(\mc A_V)}
\end{align}
where $\scr E_{\mc A_V,\Connes}$ is the Connes categorical extension for $\mc A_V$.
\end{df}

$\scr E_V$ and $\fk W^V$ are independent of the choice of $\mc F^V$. More precisely:

\begin{thm}\label{lb60}
Let $V$ satisfy Condition II. Suppose that we have another set $\wtd{\mc F}^V$ of unitary $V$-modules satisfying the assumptions in Condition II (cf. Def. \ref{lb34}). Then the categorical extension $\scr E_V$ defined by $\mc F^V$ is equal to the one defined by $\wtd{\mc F}^V$. Therefore, the Wassermann tensorator $\fk W^V$ defined by $\mc F^V$ is equal to the one defined by $\wtd{\mc F}^V$.
\end{thm}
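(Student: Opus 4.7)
The plan is to show that the tensorators $\Psi^{(\nu)}:\mc H_\blt\boxdot_V\mc H_\star\to\mc H_\blt\boxtimes_{\mc A_V}\mc H_\star$ associated (via Theorem \ref{lb1}) with the two closed categorical extensions $\scr E_V^{(1)}$ and $\scr E_V^{(2)}$ (defined from $\mc F^V$ and $\wtd{\mc F}^V$ respectively through the recipe of Theorem \ref{lb47}) coincide. Once $\Psi^{(1)}=\Psi^{(2)}$ is proved, Remark \ref{lb99} (or, equivalently, the pullback formula \eqref{eq10}) forces $\scr E_V^{(1)}=\scr E_V^{(2)}$, and the statement about the Wassermann tensorators is immediate from Definition \ref{lb62}.

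The central step will be to verify that each $\mc L$-operator $\mc L^{(2)}(\fk b,\wtd J)$ of the weak categorical extension $\scr E^w_2$ (built from $\wtd{\mc F}^V$ via \eqref{eq46}) is a \textbf{weak left operator} of $\scr E^w_1$ in the sense of Definition \ref{lb9}. Naturality (axiom (a)) will follow from the smeared version of \eqref{eq36}, available because all the intertwining operators in sight are energy-bounded. The commutation requirement (axiom (b)) with any $\mc R^{(1)}(\fk a,\wtd K)$ for $\wtd K$ clockwise to $\wtd J$ will follow from the smeared braiding relation of Remark \ref{lb58} (equation \eqref{eq38}); applying it requires simultaneous energy bounds on the $\Gamma^V$-factors of $\mc L^{(2)}(\fk b,\wtd J)$ (charge spaces in $\wtd{\mc F}^V\cup\{V\}$, supplied by Condition II for $\wtd{\mc F}^V$) and on the $\Delta^V$-factors of $\mc R^{(1)}(\fk a,\wtd K)$ (charge spaces in $\mc F^V\cup\{V\}$, supplied by Condition II for $\mc F^V$).

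Once this verification is in place, I invoke Theorem \ref{lb13} (which applies because $\Rep^V(\mc A_V)\simeq\RepV$ is rigid) to conclude that $\ovl{\mc L^{(2)}(\fk b,\wtd J)}$ is a left operator of $\scr E_V^{(1)}$. By Definition \ref{lb2} and the state-field correspondence of Theorem \ref{lb7}(b), this forces $\ovl{\mc L^{(2)}(\fk b,\wtd J)}=\scr L^{(1)}(\xi,\wtd J)$ with $\xi:=\mc L^{(2)}(\fk b,\wtd J)\Omega\in\mc H_i^\pr(J)$. The defining property of the closure (Theorem \ref{lb16}) simultaneously identifies this same operator with $\scr L^{(2)}(\xi,\wtd J)$. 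Combining both identifications with the characterization \eqref{eq15} of the two tensorators yields
\[
\Psi^{(1)}_{i,j}\circ\ovl{\mc L^{(2)}(\fk b,\wtd J)}\big|_{\mc H_j}=\scr L^\boxtimes(\xi,\wtd J)\big|_{\mc H_j}=\Psi^{(2)}_{i,j}\circ\ovl{\mc L^{(2)}(\fk b,\wtd J)}\big|_{\mc H_j}.
\]
The density-of-fusion-products axiom for $\scr E^w_2$ then guarantees that as $\fk b$ varies and $\mc L^{(2)}(\fk b,\wtd J)$ is applied to vectors in $\mc H_j^\infty$, the resulting images span a dense subspace of $\mc H_i\boxdot_V\mc H_j$; unitarity of $\Psi^{(\nu)}_{i,j}$ promotes the pointwise equality above to the global identity $\Psi^{(1)}_{i,j}=\Psi^{(2)}_{i,j}$ for every $\mc H_i,\mc H_j\in\Obj(\Rep^V(\mc A_V))$.

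The only genuinely nontrivial ingredient is Theorem \ref{lb13}, the categorical-extension analogue of \cite[Thm.~8.1]{CKLW18}; without it one could not absorb a weak operator built from $\wtd{\mc F}^V$ into the closure of a weak categorical extension built from $\mc F^V$. Beyond that single input, the argument is a clean assembly of the smeared braiding relation, the identification of closures with vector-labeled $\scr L$-operators, and the density axioms of weak categorical extensions, so I do not anticipate any essential obstacle beyond careful matching of charge spaces, orderings of intertwiners, and localization regions.
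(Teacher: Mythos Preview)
Your proof is correct, but it takes a genuinely different (and heavier) route than the paper's. The paper introduces the union $\mc G^V:=\mc F^V\cup\wtd{\mc F}^V$ and observes that $\scr E^w_{\mc F^V}$ is literally contained in $\scr E^w_{\mc G^V}$ (every $\mc L$/$\mc R$ operator of the former is already one of the latter). Hence, by Theorem \ref{lb16}, the closures of the $\mc L$/$\mc R$ operators of $\scr E^w_{\mc F^V}$ are left/right operators of $\scr E_{\mc G^V}$, so $\scr E_{\mc G^V}$ is \emph{a} closure of $\scr E^w_{\mc F^V}$, and by uniqueness of closures (also Theorem \ref{lb16}) it equals $\scr E_{\mc F^V}$. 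The same argument with $\wtd{\mc F}^V$ in place of $\mc F^V$ finishes the proof. This sidesteps Theorem \ref{lb13} entirely.

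Your argument instead crosses directly from $\scr E^w_2$ to $\scr E^w_1$: you check that each $\mc L^{(2)}(\fk b,\wtd J)$ satisfies the axioms of a weak left operator of $\scr E^w_1$ (using the smeared naturality \eqref{eq36} and the smeared braiding \eqref{eq38}, with energy bounds supplied by Condition II on both sides), and then invoke the rigidity-based Theorem \ref{lb13} to upgrade it to a genuine left operator of $\scr E_V^{(1)}$. The comparison of tensorators via \eqref{eq15} and density of fusion products is then clean. This works, but the appeal to Theorem \ref{lb13} is overkill here: the paper's union trick replaces that deep input with a two-line inclusion argument. On the other hand, your approach is essentially the same mechanism used later in the paper (e.g.\ in the proof of Theorem \ref{lb83}), so it is a good illustration of how Theorem \ref{lb13} absorbs operators from a different labeling set into an existing closure.
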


\begin{proof}[\textbf{Proof of Thm. \ref{lb47}}]
This theorem is similar to \cite[Thm. 3.22]{Gui26}, except that the construction of $\scr E^w$ is slightly simplified. We need to check all the axioms in Def. \ref{lb8}. To begin with, note that the $\mc L$ and $\mc R$ operators are smooth and localizable by Rem. \ref{lb54} (and \ref{lb25}). Isonoty is obvious. Naturality: By \eqref{eq36}. Neutrality: By \eqref{eq37}.

By the density Lem. \ref{lb45}, the rotation covariance of smeared intertwining operators (cf. Prop. \ref{lb51}), and the fact that smeared intertwining operators are localizable (Rem. \ref{lb54}), one shows by induction on $n$ that for any $W_{i_1},\dots,W_{i_n}\in\mc F^V$,  
\begin{align*}
\Span\{&\Gamma^V(w^{(i_1)},\wtd f_1)\cdots \Gamma^V(w^{(i_n)},\wtd f_n)\mc H_j^\infty:w^{(i_1)}\in W_{i_1},\dots,w^{(i_n)}\in W_{i_n}\\
&\wtd f_1,\dots,\wtd f_n\in C^\infty_c(\wtd I),\wtd f_{\nu+1}\text{ is clockwise to }\wtd f_\nu\}
\end{align*}
is a QRI subspace of $(\mc H_{i_1}\boxdot\cdots\boxdot\mc H_{i_n}\boxdot\mc H_i)^\infty$. (In particular, it is dense.) A similar property holds if $\Gamma^V$ is replaced by $\Delta^V$. This implies the density of fusion products.

Choose $\wtd J,\wtd K\subset \wtd I$ such that $\wtd K$ is clockwise to $\wtd J$. We use the notation in Lem. \ref{lb55}. Then by the rotation covariance, $\mc W_K$ is a QRI subspace of $\mc H_0^\infty$. Therefore, since products of smeared intertwining operators are localizable (Rem. \ref{lb54}), 
\begin{align*}
\Span\{&\Gamma^V(w^{(i_1)},\wtd f_1)\cdots \Gamma^V(w^{(i_n)},\wtd f_n)\cdot \mc W_K:w^{(i_1)}\in W_{i_1},\dots,w^{(i_n)}\in W_{i_n}\\
&\wtd f_1,\dots,\wtd f_n\in C^\infty_c(\wtd J),\wtd f_{\nu+1}\text{ is clockwise to }\wtd f_\nu\}
\end{align*}
is a dense subspace of $\mc H_{i_1}\boxdot\cdots\boxdot\mc H_{i_n}$. A similar property holds when $\Gamma^V$ is replaced by $\Delta^V$. Therefore, since the last few of $W_{i_1},\dots,W_{i_n}$ in \eqref{eq45a} can be chosen to be $V$, we conclude the Reeh-Schlieder property.

Intertwining property: By Cor. \ref{lb57} and Lem. \ref{lb35}.

Weak locality: This follows from Rem. \ref{lb58} and \eqref{eq37}. See the proof of \cite[Thm. 2.44]{Gui26} (especially the commutative diagrams therein) for details.

Braiding: This follows from the naturality of $\ss$ (which implies $\ss\circ(T\boxdot\idt)=(\idt\boxdot T)\circ\ss$) and the following Lem. \ref{lb59}.

M\"obius covariance: By Rem. \ref{lb56} and Prop. \ref{lb51}.
\end{proof}

\begin{proof}[\textbf{Proof of Thm. \ref{lb60}}]
Let $\mc G^V=\mc F^V\cup\wtd{\mc F}^V$. Let $\scr E^w_{\mc F^V}$ and $\scr E^w_{\mc G^V}$ be the weak categorical extensions defined by $\mc F^V$ and $\mc G^V$ respectively as in \eqref{eq47}, and let $\scr E_{\mc F^V}$ and $\scr E_{\mc G^V}$ denote their closures. Then $\scr E^w_{\mc F^V}\subset \scr E^w_{\mc G^V}$, i.e., every $\mc L$ resp. $\mc R$ operator of $\scr E^w_{\mc F^V}$ is a $\mc L$ resp. $\mc R$ operator of $\scr E^w_{\mc G^V}$, and hence its closure is a left resp. right operator of $\scr E_{\mc G^V}$. Therefore, $\scr E_{\mc G^V}$ is a (and hence \textit{the}) closure of $\scr E^w_{\mc F^V}$, cf. Thm. \ref{lb16}. So $\scr E_{\mc F^V}=\scr E_{\mc G^V}$. Similarly, $\scr E_{\wtd{\mc F}^V}=\scr E_{\mc G^V}$.
\end{proof}

\begin{lm}\label{lb59}
Let $V$ and $\mc F^V$ be as in Thm. \ref{lb47}. Let $W_{i_1},\dots,W_{i_n}\in\mc F^V$. Let $w^{(i_1)}\in W_{i_1},\dots,w^{(i_n)}\in W_{i_n}$ be homogeneous. Let $\wtd I\in\Jtd$ and $\wtd f_1,\dots,\wtd f_n\in C_c^\infty(I)$ such that $\wtd f_k$ is anticlockwise to $\wtd f_{k+1}$ for each $k=1,\dots,n-1$. Then for each $W_j\in\Obj(\RepV)$ we have
\begin{align}
\Delta^V(w^{(i_n)},\wtd f_n)\cdots \Delta^V(w^{(i_1)},\wtd f_1)\big|_{\mc H_j^\infty}=\ss_{i_1\boxdot\cdots\boxdot i_n,j}\Gamma^V(w^{(i_1)},\wtd f_1)\cdots \Gamma^V(w^{(i_n)},\wtd f_n)\big|_{\mc H_j^\infty}
\end{align}
\end{lm}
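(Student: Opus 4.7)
The plan is to proceed by induction on $n$, exploiting that both sides are smooth and localizable maps $\mc H_j^\infty\to(\mc H_j\boxdot \mc H_{i_1}\boxdot\cdots\boxdot \mc H_{i_n})^\infty$. The base case $n=1$ is the smeared form of \eqref{eq34}: the pointwise relation $\Delta^V(w^{(i_1)},z)|_{W_j}=\ss_{i_1,j}\Gamma^V(w^{(i_1)},z)|_{W_j}$ is an equality of intertwining operators of the same type; integrating against $\wtd f_1$ and extending from $W_j$ to $\mc H_j^\infty$ is harmless because $\ss_{i_1,j}\in\Hom_V(\mc H_{i_1}\boxdot \mc H_j,\mc H_j\boxdot \mc H_{i_1})$ is a bounded module map, hence smooth and invariant on smooth vectors.

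For the inductive step, I will perform the following chain of rewrites on the LHS. (i) Peel off the innermost $\Delta^V(w^{(i_1)},\wtd f_1)$ and replace it by $\ss_{i_1,j}\Gamma^V(w^{(i_1)},\wtd f_1)$ via the base case. (ii) Push the module map $\ss_{i_1,j}$ leftward past each $\Delta^V(w^{(i_k)},\wtd f_k)$ for $k\geq 2$ via the naturality \eqref{eq36} of $\Delta^V$ with respect to module morphisms; each traversal converts $\ss_{i_1,j}$ into $\ss_{i_1,j}\boxdot\idt_{W_{i_k}}$, yielding after $n-1$ traversals the prefactor $\ss_{i_1,j}\boxdot\idt_{W_{i_2}\boxdot\cdots\boxdot W_{i_n}}$ on the outermost left. (iii) Invoke the smeared $\Gamma^V$--$\Delta^V$ braiding relation in Remark \ref{lb58}: since by hypothesis $\wtd f_k$ is clockwise to $\wtd f_1$ for every $k\geq 2$, $\Gamma^V(w^{(i_1)},\wtd f_1)$ commutes adjointly with each $\Delta^V(w^{(i_k)},\wtd f_k)$ on smooth vectors, so I commute $\Gamma^V(w^{(i_1)},\wtd f_1)$ outward past the entire block of $\Delta^V$'s. (iv) Apply the inductive hypothesis to the remaining $(n-1)$-fold product $\Delta^V(w^{(i_n)},\wtd f_n)\cdots\Delta^V(w^{(i_2)},\wtd f_2)|_{\mc H_j^\infty}$, replacing it by $\ss_{i_2\boxdot\cdots\boxdot i_n,j}\Gamma^V(w^{(i_2)},\wtd f_2)\cdots\Gamma^V(w^{(i_n)},\wtd f_n)|_{\mc H_j^\infty}$. (v) Push the resulting $\ss_{i_2\boxdot\cdots\boxdot i_n,j}$ leftward past $\Gamma^V(w^{(i_1)},\wtd f_1)$ by the naturality \eqref{eq36} of $\Gamma^V$; it becomes $\idt_{W_{i_1}}\boxdot\ss_{i_2\boxdot\cdots\boxdot i_n,j}$. (vi) Combine the two surviving braidings via the hexagon identity
\[
\ss_{i_1\boxdot\cdots\boxdot i_n,j}=(\ss_{i_1,j}\boxdot\idt_{W_{i_2}\boxdot\cdots\boxdot W_{i_n}})\circ(\idt_{W_{i_1}}\boxdot\ss_{i_2\boxdot\cdots\boxdot i_n,j}),
\]
which collapses the prefactor to $\ss_{i_1\boxdot\cdots\boxdot i_n,j}$, leaving precisely the RHS.

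The main obstacle is step (iii): being able to commute $\Gamma^V(w^{(i_1)},\wtd f_1)$ past the $\Delta^V(w^{(i_k)},\wtd f_k)$'s is exactly what forces the clockwise arg-ordering hypothesis on $\wtd f_1,\ldots,\wtd f_n$ and relies on Remark \ref{lb58}, which in turn requires the energy-boundedness of all the intertwining operators involved (guaranteed by Condition II since the $W_{i_k}$ lie in $\mc F^V$). All other steps are formal manipulations using the naturality of $\ss$, the naturality \eqref{eq36} of $\Gamma^V$ and $\Delta^V$ with respect to module morphisms, and the hexagon axiom of the braided $C^*$-tensor category $\RepV$, together with the fact established in the proof of Theorem \ref{lb47} that finite products of smeared intertwining operators are smooth and localizable with invariant smooth domains on each module.
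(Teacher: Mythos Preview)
Your proposal is correct and follows essentially the same approach as the paper's proof, which merely cites \eqref{eq36}, Rem.~\ref{lb58}, and the hexagon axiom for $\ss$, referring to \cite[Prop.~1.7.3]{Gui20} for the details you have spelled out. Your induction on $n$ with steps (i)--(vi) is precisely the unpacking of that reference.
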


\begin{proof}
This follows from \eqref{eq36}, Rem. \ref{lb58}, and the Hexagon axiom for $\ss$. See (the proof of) \cite[Prop. 2.43]{Gui26} for details.
\end{proof}

When $V$ satisfies Condition I, its categorical extension can be generated by a simpler (though not M\"obius covariant) weak categorical extension:

\begin{thm}
Suppose that $V$ satisfies Condition I in Def. \ref{lb34}. For each $W_i\in\Obj(\RepV)$ and $\wtd I\in\Jtd$, let
\begin{gather}
\fk H_i(\wtd I)=W_i\times V\times C_c^\infty(I)\times C_c^\infty(I)
\end{gather}
For each $\fk a\in\fk H_i(\wtd I)$ where
\begin{align*}
\fk a=(w^{(i)},v,f,g)
\end{align*}
we set for each $W_j\in\Obj(\RepV)$ that
\begin{align}\label{eq48}
\mc L(\fk a,\wtd I)|_{\mc H_j^\infty}=\Gamma^V(w^{(i)},f)Y_j(v,g)\big|_{\mc H_j^\infty}\qquad \mc R(\fk a,\wtd I)|_{\mc H_j^\infty}=\Delta^V(w^{(i)},f)Y_j(v,g)\big|_{\mc H_j^\infty}
\end{align}
if $w^{(i)},v$ are homogeneous; otherwise,  we set $\mc L(\fk a,\wtd I)=0$ and $\mc R(\fk a,\wtd I)=0$. Then these data give a weak categorical extension $\scr E^w$. Moreover, the closure of $\scr E^w$ is equal to $\scr E_V$, the categorical extension associated to $V$ (cf. Def. \ref{lb62}).
\end{thm}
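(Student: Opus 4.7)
The plan is to establish the theorem in two linked parts: verify that $\scr E^w$ is a weak categorical extension, and simultaneously identify its closure with $\scr E_V$. Since Condition I implies Condition II (Rem.\ \ref{lb36}) with $\mc F^V$ taken to contain a representative of every irreducible unitary $V$-module, the full M\"obius covariant weak categorical extension $\wtd{\scr E}^w$ from Thm.\ \ref{lb47} is available and has closure $\scr E_V$. The easier axioms of $\scr E^w$ follow directly from the standard properties of $\Gamma^V$, $\Delta^V$, and vertex operators: smoothness and localizability of products come from Rem.\ \ref{lb54}; isotony and naturality follow from \eqref{eq36}; neutrality follows from \eqref{eq37b} after smearing and extending from $V$ to $\mc H_0^\infty$; the braiding axiom follows from the smeared form of \eqref{eq34}; Reeh--Schlieder and density of fusion products are obtained by choosing $v=\Omega$ with $g$ such that $Y_k(\Omega,g)=C\cdot\idt$ with $C\neq 0$, reducing to the density statement of Lem.\ \ref{lb45} (combined with the creation property $\Gamma^V(w^{(i)},z)\Omega=e^{zL_{-1}}w^{(i)}$ for Reeh--Schlieder); rotation covariance follows from Prop.\ \ref{lb51}.

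To handle the remaining axioms (intertwining property and weak locality) and simultaneously identify the closure, I will show that each $\mc L(\fk a,\wtd I)=\Gamma^V(w^{(i)},f)Y_k(v,g)$ of the simpler $\scr E^w$ is a weak left operator of $\wtd{\scr E}^w$ in the sense of Def.\ \ref{lb9}, and analogously for $\mc R$. The naturality condition of Def.\ \ref{lb9} follows from \eqref{eq36} together with $V$-equivariance of vertex operators. The key commutation with $\mc R$-operators of $\wtd{\scr E}^w$ on clockwise intervals is obtained by iterated application of the smeared braiding relation of Rem.\ \ref{lb58}: under the identification $V\boxdot W_k=W_k$ via \eqref{eq37a}, the factor $Y_k(v,g)$ is itself an instance of $\Gamma^V(v,g)$, so both factors of $\mc L(\fk a,\wtd I)$ commute adjointly on smooth vectors with each $\Delta^V(w^{(l_\nu)},\wtd h_\nu)$ from $\mc R(\fk b,\wtd J)$; the central intertwiner $T$ appearing in $\fk b$ is moved past everything by naturality. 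Thm.\ \ref{lb13} then applies to give
\begin{align*}
\ovl{\mc L(\fk a,\wtd I)}\big|_{\mc H_k}=\scr L^{\scr E_V}(\xi,\wtd I)\big|_{\mc H_k}\qquad\ovl{\mc R(\fk a,\wtd I)}\big|_{\mc H_k}=\scr R^{\scr E_V}(\xi,\wtd I)\big|_{\mc H_k}
\end{align*}
with $\xi=\mc L(\fk a,\wtd I)\Omega=\mc R(\fk a,\wtd I)\Omega\in\mc H_i^\pr(I)$. Strong locality of left/right operators in $\scr E_V$ (Thm.\ \ref{lb7}) together with Lem.\ \ref{lb14} delivers the weak locality axiom on smooth vectors; the intertwining property is obtained analogously from Rem.\ \ref{lb15}, or directly from Cor.\ \ref{lb57} combined with Lem.\ \ref{lb35}. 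Once all axioms of Def.\ \ref{lb8} are in place, $\scr E^w$ is a weak categorical extension, and the formulas above match $\scr E_V$ term by term, so the uniqueness in Thm.\ \ref{lb16} forces the closure of $\scr E^w$ to be $\scr E_V$.

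The main obstacle will be the clockwise commutation axiom in Def.\ \ref{lb9}: the test functions $f,g$ in $\fk a$ are only required to lie in the same interval $\wtd I$ (with no ordering between them), while the $\Delta^V$-factors of $\mc R(\fk b,\wtd J)$ are on clockwise-ordered subintervals of $\wtd J\subset \wtd I'$. This forces an inductive argument that passes each of $\Gamma^V(w^{(i)},f)$ and $Y_k(v,g)$ individually past every $\Delta^V(w^{(l_\nu)},\wtd h_\nu)$ while tracking preservation of smooth vectors at each step, and relies essentially on the observation that $Y_k(v,g)$ is just a $\Gamma^V$-factor on the $V$-slot so that Rem.\ \ref{lb58} applies uniformly to both factors.
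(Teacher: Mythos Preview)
Your proposal is correct and follows essentially the same route as the paper: verify the axioms of Def.\ \ref{lb8}, then show that each $\mc L(\fk a,\wtd I)$ and $\mc R(\fk a,\wtd I)$ of the simpler system is a weak left/right operator of the M\"obius covariant $\wtd{\scr E}^w$ from Thm.\ \ref{lb47}, invoke Thm.\ \ref{lb13}, and conclude that $\scr E_V$ is the closure via Thm.\ \ref{lb16}. Two minor points of divergence are worth noting. First, for Reeh--Schlieder the paper simply recycles the proof of Thm.\ \ref{lb47}, replacing the density of $\mc W_I$ in \eqref{eq40} by the density of $\mc V_I$ in \eqref{eq44} (Lem.\ \ref{lb55}); your alternative via $v=\Omega$ and the creation property $\Gamma^V(w^{(i)},z)\Omega=e^{zL_{-1}}w^{(i)}$ also works, but it is not quite ``Lem.\ \ref{lb45}'' --- you still need the Schwarz reflection step (as in the proof of Lem.\ \ref{lb45}) to pass from orthogonality on $\wtd I$ to vanishing of all modes $\Gamma^V(w^{(i)})_n\Omega$, before the creation property yields $\bk{w^{(i)},\xi}=0$. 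Second, the paper verifies the intertwining property and weak locality \emph{directly} (via Cor.\ \ref{lb57}+Lem.\ \ref{lb35} and Rem.\ \ref{lb58}, exactly as in Thm.\ \ref{lb47}), whereas you deduce them \emph{a posteriori} from the strong locality of left/right operators of $\scr E_V$ via Lem.\ \ref{lb14}; both are valid, and you already list the direct route as an alternative.
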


\begin{proof}
That \eqref{eq48} defines a weak categorical extension $\scr E^w$ can be proved in the same way as Thm. \ref{lb60}, except that one uses the density of \eqref{eq44} instead of \eqref{eq40} to conclude the Reeh-Schlieder property.

Let $\wtd{\scr E}^w$ be the M\"obius covariant weak categorical extension defined in Thm. \ref{lb47} by any $\mc F^V$. (For example, we let $\mc F^V$ contain every irreducible module up to unitary isomorphisms.) By definition, its closure is $\scr E_V$. A similar algebraic computation shows that the $\mc L(\fk a,\wtd I)$ and $\mc R(\fk a,\wtd I)$ in \eqref{eq48} are respectively weak left and right operators of $\wtd{\scr E}^w$ (cf. Def. \ref{lb9} and \ref{lb10}). Thus, by Thm. \ref{lb13}, $\mc L(\fk a,\wtd I)$ and $\mc R(\fk a,\wtd I)$ left and right operators of $\scr E_V$. So $\scr E_V$ is a (and hence the) closure of $\scr E^w$. This finishes the proof.
\end{proof}
\begin{rem}
Alternatively, one can show that $\scr E_V$ is the closure of $\scr E^w$ in the following way without using Thm. \ref{lb13}. It is easy to check that $\scr E^w\cup\wtd{\scr E}^w$ is a weak categorical extension. So its closure is also a closure (and hence the closure) of both $\scr E^w$ and $\wtd{\scr E}^w$. Thus,  $\scr E^w$ and $\wtd{\scr E}^w$ have the same closure.
\end{rem}

\section{(Weak) categorical extensions associated to unitary VOA extensions}

\subsection{The unitary VOA extension $U_P$ associated to the Haploid commutative $C^*$-Frobenius algebra $P$}\label{lb107}

We fix a unitary VOA $V$ with vertex operation $Y^V=Y$.

\begin{df}\label{lb63}
A \textbf{(normalized CFT-type) unitary VOA extension} is a triple $(U,Y^U,\iota)$ (or simply $U$), where $(U,Y^U)$ is a (CFT-type) unitary VOA, the inner product space $U$ is equipped with a unitary $V$-module structure
\begin{align*}
U=W_a\text{ where }(W_a,Y_a)\in\Obj(\RepV)
\end{align*}
Moreover, $\iota\in\Hom_V(V,W_a)=\Hom_V(V,U)$ satisfies $\iota^*\iota=\idt_{V}$, and the following conditions are satisfies:
\begin{enumerate}[label=(\arabic*)]
\item For each $v\in V$, we have $Y_a(v)_n=Y^U(\iota v)_n$ for all $n\in\Zbb_+$, i.e.
\begin{align*}
Y_a(v,z)=Y^U(\iota v,z)
\end{align*}
\item $\iota$ sends the vacuum vector of $V$ to that of $U$.
\item $\iota$ sends the conformal vector of $V$ to that of $U$.
\end{enumerate}
\end{df}

\begin{rem}
In Def. \ref{lb63}, for each $v_1,v_2\in V$ we have
\begin{align*}
Y^U(\iota v_1,z)\iota v_2=Y_a(v_1,z)\iota v_2=\iota Y(v_1,z)v_2
\end{align*}
Therefore, identifying $V$ with $\iota(V)$ via $V$, a unitary VOA extension is equivalently a unitary VOA $U$ whose underlying inner product space contains $V$, and whose vertex operation restricts to that of $V$, whose vacuum vectors and conformal vectors are equal to those of $V$.
\end{rem}

\begin{thm}\label{lb75}
Assume that $V$ is completely unitary. Then there is a one-to-one correspondence between a haploid commutative $C^*$-Frobenius algebra $P=(W_a,\mu,\iota)$ in $\RepV$ and a unitary VOA extension $(U_P,Y^{U_P},\iota)$ of $V$. $Y^{U_P}$ is determined by
\begin{align}\label{eq49}
Y^{U_P}(u_1,z)u_2=\mu\circ\Gamma^V(u_1,z)u_2
\end{align}
for all $u_1,u_2\in U_P$.
\end{thm}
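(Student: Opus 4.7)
The plan is to realize this as a special case of the Huang-Kirillov-Lepowsky (HKL) bijection between commutative associative algebras in the representation category of a rational VOA and its VOA extensions, upgraded to the unitary setting. The $C^*$-Frobenius structure on $P$ will correspond to unitarity of the extension, and haploidity to the CFT-type condition.

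For the direction $P\rightsquigarrow U_P$, I would define $Y^{U_P}$ by formula \eqref{eq49} and verify the VOA axioms, using in each case the universal property of $\Gamma^V$ (equation \eqref{eq53}) together with one of the algebra axioms of $P$. The unit axioms of $Y^{U_P}$ reduce, via the naturality \eqref{eq36} and the identity $\Gamma^V(\Omega,z)u=u$ from \eqref{eq37}, to the unit axioms $\mu(\iota\boxdot\idt_a)=\idt_{W_a}=\mu(\idt_a\boxdot\iota)$ of $P$. Locality of $Y^{U_P}$ reduces to the commutativity of $P$ combined with the braiding relations \eqref{eq34} and \eqref{eq35} between $\Gamma^V$ and $\Delta^V$; explicitly, $\mu\Delta^V=\mu\ss_{a,a}\Gamma^V=\mu\Gamma^V$, and then \eqref{eq35} gives the meromorphic operator commutativity. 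Associativity of $Y^{U_P}$ (hence, together with locality, the Jacobi identity) follows from the associativity $\mu(\mu\boxdot\idt_a)=\mu(\idt_a\boxdot\mu)$ of $P$. The conformal vector of $U_P$ is $\iota$ applied to the conformal vector of $V$; the identity $Y^{U_P}(\iota v,z)=Y_a(v,z)$ follows from \eqref{eq49} by the same unitor computation as for the vacuum, so the Virasoro relations are inherited. Haploidity of $P$ forces $U_P(0)=\Cbb\iota\Omega$, ensuring CFT-type.

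For unitarity of $U_P$, I would appeal to \cite[Thm. 4.7]{CGGH23}: every CFT-type VOA extension of a completely unitary VOA is automatically unitary, with its unitary structure agreeing with the given $V$-module inner product on $W_a$. (Alternatively, one checks directly that the adjoint identity for $Y^{U_P}$ is encoded in the Frobenius relation, which is precisely the adjoint-commutativity of the associativity diagram.) For the reverse direction, given a unitary VOA extension $(U,Y^U,\iota)$, I set $W_a=U$ with $V$-module structure $Y_a(v,z)=Y^U(\iota v,z)$. The restriction of $Y^U$ to $U\otimes U$ is a type $W_a\choose W_a W_a$ intertwining operator of $V$, so the universal property \eqref{eq53} yields a unique $\mu\in\Hom_V(W_a\boxdot W_a,W_a)$ with $Y^U(u_1,z)u_2=\mu\Gamma^V(u_1,z)u_2$. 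The VOA axioms of $U$ translate back into the algebra axioms for $(W_a,\mu,\iota)$: vacuum axiom $\rightsquigarrow$ unit; skew-symmetry (combined with \eqref{eq34}) $\rightsquigarrow$ commutativity; Jacobi $\rightsquigarrow$ associativity; unitarity of $Y^U$ $\rightsquigarrow$ Frobenius relation; CFT-type of $U$ $\rightsquigarrow$ haploidity (since any $V$-linear map $V\to U$ is determined by where it sends $\Omega$, and $U(0)=\Cbb\Omega_U$). The two constructions are mutually inverse by the uniqueness clause in \eqref{eq53}.

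The main obstacle is bookkeeping rather than substance: tracking unitor conventions and translating the Frobenius relation into the PCT-type adjoint formula for $Y^{U_P}$. The non-unitary correspondence is worked out in \cite{HKL15}, its $C^*$-enhancement in \cite{CKM24,Gui22}, and the automatic unitarity in \cite[Thm. 4.7]{CGGH23}; my role is primarily to assemble these references rather than rederive them.
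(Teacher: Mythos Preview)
Your proposal is correct and matches the paper's approach: both treat this theorem as an assembly of existing results (the non-unitary HKL correspondence, upgraded to the $C^*$-setting via \cite{Gui22}, with automatic unitarity from \cite{CGGH23}), and your sketch of how the algebra axioms of $P$ correspond to the VOA axioms of $U_P$ is accurate. One small point worth flagging: your claim that ``haploidity of $P$ forces $U_P(0)=\Cbb\iota\Omega$'' skips a nontrivial step. Haploidity says $\dim\Hom_V(V,W_a)=1$, and $\Hom_V(V,W_a)$ is naturally identified with the space of \emph{vacuum-like} vectors in $W_a$; to conclude $\dim W_a(0)=1$ you need to know that every weight-$0$ vector is vacuum-like, which is exactly where the paper invokes Li's classification \cite[Prop.~3.4]{Li94} (see also \cite[Thm.~4.5]{Gui24}). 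This is a genuine lemma, not a tautology, though it is standard and already packaged into the references you cite.
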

Recall that $\mu\in\Hom_V(W_a\boxtimes_V W_a,W_a)$ and $\Gamma^V$ is a type $W_a\boxtimes_V W_a\choose W_a~W_a$ intertwining operator of $V$.

\begin{proof}
The (injective) map $P\mapsto U_P$, where $U_P$ is a (not necessarily unitary) VOA extension of $V$ is due to \cite[Thm. 3.2]{HKL15}. That $U_P$ is of CFT-type is due to Li's classification of vacuum-like vectors \cite[Prop. 3.4]{Li94}; see Thm. 4.5 or Prop. 11.2 of \cite{Gui24} for a detailed explanation. That $U_P$ is a unitary extension, as well as the surjectivity of $P\mapsto U_P$, is \cite[Thm. 2.21]{Gui22}.
\end{proof}

\begin{rem}
In fact, every haploid algebra in $\RepV$ has a unique unitary (i.e. $C^*$) structure. It follows that every (CFT-type) VOA extension of $V$ is unitary  in a unique way.  See \cite[Thm. 4.7]{CGGH23} for details.
\end{rem}

\subsection{The functor $\fk F_\VOA:\Rep^0(P)\rightarrow\RepUP$ and the tensorator $\fk V^\boxdot:W_i\boxdot_P W_j\rightarrow W_i\boxdot_{U_P}W_j$}\label{lb108}

Let $V$ be completely unitary. Recall that
\begin{align*}
(\RepV,\boxdot,\ss)\equiv(\RepV,\boxdot_V,\ss^V)
\end{align*}
is the Huang-Lepowsky braided $C^*$-tensor category (Thm. \ref{lb61}). Let $P$ be a haploid $C^*$-Frobenius algebra in $\RepV$ and let $U_P$ be its associated unitary VOA extension. Then $U_P$ is completely unitary \cite[Thm. 3.30]{Gui22}. Then we also have the Huang-Lepowsky braided $C^*$-tensor category of unitary $U_P$-modules
\begin{align}
(\RepUP,\boxdot_{U_P},\ss^{U_P})
\end{align}
As in Subsec. \ref{lb65}, for each $W_i,W_j\in\RepUP$ we have $U_P$-intertwining operators $\Gamma^{U_P}$ and $\Delta^{U_P}$ of $U_P$ of types $W_i\boxdot_{U_P} W_j\choose W_i~W_j$ and $W_j\boxdot_{U_P} W_i\choose W_i~W_j$ respectively related by
\begin{align*}
\Delta^{U_P}(w^{(i)},z)w^{(j)}=\ss_{i,j}^{U_P}\Gamma^{U_P}(w^{(i)},z)w^{(j)}
\end{align*}
for all $w^{(i)}\in W_i,w^{(j)}\in W_j$. 

Let
\begin{align*}
\Rep^0(P)=\Rep^0_{\RepV}(P)
\end{align*}
be the braided $C^*$-category of dyslectic unitary $P$-modules as in Def. \ref{lb66}. Recall from Thm. \ref{lb19} that any system ($\boxdot_P,\mu_{\blt,\star}$) of fusion products of dyslectic $P$-modules, where
\begin{align*}
\mu_{i,j}\in\Hom_V(W_i\boxdot W_j,W_i\boxdot_PW_j)
\end{align*}
for each dyslectic modules $W_i,W_j$, determines a braided $C^*$-tensor structure on $\Rep^0(P)$.

\begin{thm}\label{lb67}
Let $V$ be completely unitary. Let $P$ be a haploid commutative $C^*$-Frobenius algebra in $\RepV$. Then $U_P$ is completely unitary.

Moreover, choose a system of fusion products $(\boxdot_P,\mu_{\blt,\star})$ in $\Rep^0(P)$ giving a braided $C^*$-tensor category
\begin{align*}
(\Rep^0(P),\boxdot_P,\ss^P)
\end{align*}
as in Thm. \ref{lb19} (satisfying the four conditions as in \eqref{eq50}). Then we have a braided $*$-functor $(\fk F_\VOA,\fk V^\boxdot)$ impletementing an isomorphism of braided $C^*$-tensor categories
\begin{align*}
(\fk F_\VOA,\fk V^\boxdot):\big(\Rep^0(P),\boxdot_P,\ss^P \big)\xlongrightarrow{\simeq}\big(\RepUP,\boxdot_{U_P},\ss^{U_P}\big)
\end{align*}
Here,
\begin{gather}\label{eq54}
\begin{gathered}
\fk F_\VOA:\Rep^0(P)\rightarrow\RepUP\\
(W_i,\mu^i)\in\Obj(\Rep^0(P))\quad\mapsto\quad (W_i,Y^{U_P}_i)\in\Obj(\RepUP)\\
F\in\Hom_P(W_i,W_j)\quad\mapsto\quad F\in\Hom_{U_P}(W_i,W_j)
\end{gathered}
\end{gather}
where $Y_i^{U_P}$ is related to $\mu^i\in\Hom_V(U_P\boxdot W_i,W_i)$ by the fact that for all $u\in U_P,w^{(i)}\in W_i$,
\begin{align}
Y_i^{U_P}(u,z)w^{(i)}=\mu^i\circ\Gamma^V(u,z)w^{(i)}
\end{align}
and the natural map $\fk V^{\boxdot}$ associates to each objects $W_i,W_j$ of $\Rep^0(P)$ a unitary $\fk V^\boxdot_{i,j}\in\Hom_{U_P}(W_i\boxdot_P W_j, W_i\boxdot_{U_P}W_j)$ determined by
\begin{gather}\label{eq51}
\fk V^\boxdot_{i,j}\circ\mu_{i,j}\Gamma^V(w^{(i)},z) w^{(j)}=\Gamma^{U_P}(w^{(i)},z)w^{(j)}
\end{gather}
\end{thm}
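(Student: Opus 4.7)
The plan is to reduce both assertions to results already in \cite{Gui22, HKL15, KO02, CKM24}. The complete unitarity of $U_P$ is \cite[Thm. 3.30]{Gui22}, so the substantive task is constructing the braided $*$-functor $(\fk F_\VOA,\fk V^\boxdot)$ and showing it is an isomorphism of braided $C^*$-tensor categories.

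First, I would verify that the formula $Y_i^{U_P}(u,z)w^{(i)}=\mu^i\circ\Gamma^V(u,z)w^{(i)}$ gives a well-defined unitary $U_P$-module structure on $W_i$ whenever $(W_i,\mu^i)\in\Obj(\Rep^0(P))$. This is the module-level analog of Thm.~\ref{lb75}: the unit, associativity+Frobenius, and dyslectic axioms for $\mu^i$ translate respectively into the vacuum, Jacobi, and $L_0$-compatibility axioms for $Y_i^{U_P}$, following the argument of \cite[Thm.~3.4]{HKL15} in the non-unitary setting and \cite[Thm.~3.22]{Gui22} in the unitary setting. Unitarity of $Y_i^{U_P}$ reflects the unitarity of $\mu^i$ together with the compatibility of $\Gamma^V$ with the adjoint intertwining operator construction. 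That the $V$-structure obtained by restricting $Y_i^{U_P}$ along $\iota$ agrees with the original unitary $V$-module structure on $W_i$ follows from the unit axiom $\mu^i(\iota\boxdot\idt_i)=\idt_i$ together with $\Gamma^V(v,z)w^{(i)}=Y_i^V(v,z)w^{(i)}$ for $v\in V$. On morphisms, the condition $\mu^j(\idt_a\boxdot F)=F\mu^i$ defining a $P$-module morphism is equivalent, via this formula and the naturality of $\Gamma^V$, to the statement that $F$ intertwines $Y_i^{U_P}$ and $Y_j^{U_P}$. Essential surjectivity then follows from \cite[Thm.~2.21]{Gui22}, which was already invoked in the proof of Thm.~\ref{lb75}.

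Next, I would construct the tensorator $\fk V^\boxdot_{i,j}$. The expression $\mu_{i,j}\Gamma^V(w^{(i)},z)w^{(j)}$ is a $V$-intertwining operator of type $\binom{W_i\boxdot_P W_j}{W_i~W_j}$ satisfying the $P$-equivariance built into the definition of a fusion product over $P$ (namely the intertwining property of $\mu^{ij}$ together with the unitarity axiom); these conditions are precisely what is needed to upgrade it to a $U_P$-intertwining operator. By the universal property of the Huang–Lepowsky tensor product $\boxdot_{U_P}$ in $\RepUP$, there is a unique $U_P$-morphism $\fk V^\boxdot_{i,j}$ satisfying \eqref{eq51}. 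Symmetrically, $\Gamma^{U_P}$ satisfies the $P$-equivariance relations (since its restriction to $V$ coincides with $\Gamma^V$ composed with $\mu$'s, essentially by Thm.~\ref{lb75}), so one obtains an inverse map via the universal property of $\boxdot_P$; uniqueness in each universal property makes both compositions identities, and the unitarity of $\fk V^\boxdot_{i,j}$ falls out of matching the unitarity axioms in the two definitions of fusion products.

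Finally, I would verify that $(\fk F_\VOA,\fk V^\boxdot)$ is a braided $*$-functor. Braiding compatibility is immediate from \eqref{eq20}, \eqref{eq34}, and the analogous relation $\Delta^{U_P}=\ss^{U_P}\circ\Gamma^{U_P}$; unitor compatibility follows from \eqref{eq37}; associator compatibility reduces, via the density Lem.~\ref{lb45} (applied to both $V$ and $U_P$) and the universal property of $\Gamma^V,\Gamma^{U_P}$, to the algebraic identity comparing four-point correlators, which is encoded in Thm.~\ref{lb30}. The main obstacle is the bookkeeping for this associator compatibility and the verification that $\fk V^\boxdot$ is genuinely unitary rather than merely invertible; both issues are treated in detail in \cite[Sec.~3]{Gui22} (and in the non-unitary setting in \cite[Ch.~3]{CKM24}), which I would invoke rather than reprove.
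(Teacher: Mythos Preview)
Your proposal is correct and ultimately relies on the same results from \cite{Gui22} as the paper's proof, but the paper organizes the argument more efficiently by invoking the linking-map framework (Rem.~\ref{lb17} and~\ref{lb39}) rather than verifying the braided $*$-functor axioms directly. Concretely, the paper first observes that $\nu_{i,j}\circ\Gamma^V=\Gamma^{U_P}$ defines a \emph{second} system of fusion products $(\boxdot_{U_P},\nu_{\blt,\star})$ in $\Rep^0(P)$ (\cite[Thm.~3.29]{Gui22}) whose associated braided $C^*$-tensor structure is the Huang--Lepowsky one (\cite[Thm.~3.30]{Gui22}); then $\fk V^\boxdot$ is simply the linking map between $(\boxdot_P,\mu_{\blt,\star})$ and $(\boxdot_{U_P},\nu_{\blt,\star})$, and Rem.~\ref{lb39} instantly gives that $(\id,\fk V^\boxdot)$ is a braided $*$-isomorphism. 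Your hands-on verification of associator and braiding compatibility is exactly what underlies Rem.~\ref{lb39}, so you are re-deriving that remark in this special case---correct, but redundant given the machinery already in the paper.

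One small slip: the universal property of $\boxdot_{U_P}$ (via $\Gamma^{U_P}$) produces a map \emph{from} $W_i\boxdot_{U_P}W_j$ to the target, so applied to $\mu_{i,j}\Gamma^V$ it yields $(\fk V^\boxdot_{i,j})^{-1}$, not $\fk V^\boxdot_{i,j}$ as you wrote. The map in the direction of \eqref{eq51} comes from the universal property of $\boxdot_P$ applied to $\nu_{i,j}$. You have the two universal properties swapped in your narrative, though you clearly understand that both are needed to exhibit the isomorphism.
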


Note that in this theorem we also have
\begin{gather}\label{eq52}
\fk V^\boxdot_{j,i}\circ\mu_{j,i}\Delta^V(w^{(i)},z) w^{(j)}=\Delta^{U_P}(w^{(i)},z)w^{(j)}
\end{gather}
since
\begin{align*}
&\Delta^{U_P}(w^{(i)},z)w^{(j)}=\ss^{U_P}_{i,j}\Gamma^{U_P}(w^{(i)},z)w^{(j)}=\ss^{U_P}_{i,j}\fk V^\boxdot_{i,j}\mu_{i,j}\Gamma^V(w^{(i)},z) w^{(j)}\\
=&\fk V^\boxdot_{j,i}\ss^P_{i,j}\mu_{i,j}\Gamma^V(w^{(i)},z) w^{(j)}=\fk V^\boxdot_{j,i}\mu_{j,i}\ss_{i,j}\Gamma^V(w^{(i)},z) w^{(j)}=\fk V^\boxdot_{j,i}\mu_{j,i}\Delta^V(w^{(i)},z) w^{(j)}
\end{align*}
where $\ss^{U_P}_{i,j}\fk V^\boxdot_{i,j}=\fk V^\boxdot_{j,i}\ss^P_{i,j}$ is because $\fk V^\boxdot$ intertwines the braidings $\ss^P_{i,j}$ and $\ss^{U_P}_{i,j}$ (as required by braided $*$-functors), and $\ss^P_{i,j}\mu_{i,j}=\mu_{j,i}\ss_{i,j}$ is due to \eqref{eq20}. Thus, in this way, \eqref{eq51} and \eqref{eq52} are parallel to \eqref{eq32}.

\begin{rem}
In Thm. \ref{lb67}, $\Hom_{U_P}(W_i\boxdot_P W_j, W_i\boxdot_{U_P}W_j)$ is the abbreviation of $\Hom_{U_P}(\fk F_\VOA(W_i\boxdot_P W_j), \fk F_\VOA(W_i)\boxdot_{U_P}\fk F_\VOA(W_j))$. Alternatively, $\Hom_{U_P}(W_i\boxdot_P W_j, W_i\boxdot_{U_P}W_j)$ can be understood by viewing $W_i,W_j,W_i\boxdot_P W_j$ as an objects 
\begin{align}
\big(\RepUP,\boxdot_P,\ss^P\big):=(\fk F_\VOA,\id)\big(\Rep^0(P),\boxdot_P,\ss^P \big)
\end{align}
This is similar to our perspective in Cor. \ref{lb41}.
\end{rem}


\begin{proof}[\textbf{Proof of Thm. \ref{lb67}}]
By \cite[Thm. 3.30]{Gui22}, $V$ is completely unitary. By Sec. 2.5 and especially Thm. 2.30 of \cite{Gui22}, $\fk F_\VOA$ implements an isomorphism of $C^*$-categories. To simplify the following discussion, we identify the $C^*$-categories $\Rep^0(P)$ and $\RepUP$ via $\fk F_\VOA$.  By \eqref{eq53}, for each $W_i,W_j\in\Obj(\RepUP)$, there is a unique morphism $\nu_{i,j}\in\Hom_V(W_i\boxdot W_j,W_i\boxdot_{U_P}W_j)$ such that
\begin{align*}
\nu_{i,j}\circ\Gamma^V(w^{(i)},z) w^{(j)}=\Gamma^{U_P}(w^{(i)},z)w^{(j)}
\end{align*}
for all $w^{(i)}\in W_i,w^{(j)}\in W_j$. By \cite[Thm. 3.29]{Gui22}, $(\boxdot_{U_P},\nu_{\blt,\star})$ is a system of fusion products in $\Rep^0(P)$. By \cite[Thm. 3.30]{Gui22}, the braided $C^*$-tensor structure on $\RepUP$ associated to $(\boxdot_{U_P},\nu_{\blt,\star})$ (in the sense of Thm. \ref{lb19}) is equal to $(\RepUP,\boxdot_{U_P},\ss^{U_P})$, the Huang-Lepowsky braided $C^*$-tensor category. Let
\begin{align*}
\fk V_{\blt,\star}^\boxdot:W_\blt\boxdot_P W_\star\rightarrow W_\blt\boxdot_{U_P}W_\star
\end{align*}
be the (unitary) linking map between the two systems, cf. Rem. \ref{lb17}. Namely, $\nu_{\blt,\star}=\fk V^\boxdot\circ\mu_{\blt,\star}$. So \eqref{eq51} is satisfied. By Rem. \ref{lb39}, $(\id,\fk V^\boxdot)$ implements an isomorphism of braided $C^*$-tensor categories $\big(\Rep^0(P),\boxdot_P,\ss^P \big)\xlongrightarrow{\simeq}\big(\RepUP,\boxdot_{U_P},\ss^{U_P}\big)$.
\end{proof}

\begin{rem}
The non-unitary version of Thm. \ref{lb67} is due to \cite{HKL15} (without addressing the isomorphism of braided tensor structures) and \cite{CKM24}.
\end{rem}

The following two remarks are  parallel to Rem. \ref{lb68}.

\begin{rem}\label{lb73}
The following fact proved in \cite[Thm. 3.29]{Gui22} and used in the proof of Thm. \ref{lb67} is worth noting: Identifying the $C^*$-categories $\Rep^0(P)$ and $\RepUP$ via $\fk F_\VOA$, there is a system of fusion products in $\Rep^0(P)$ such that $\boxdot_P=\boxdot_{U_P}$ and $\fk V^\boxdot=\id$, i.e., the one $(\boxdot_{U_P},\mu_{\blt,\star})$ satisfying
\begin{align}\label{eq60}
\mu_{i,j}\Gamma^V(w^{(i)},z)w^{(j)}=\Gamma^{U_P}(w^{(i)},z)w^{(j)}
\end{align}
for all $W_i,W_j\in\Obj^0(P)$.
\end{rem}

\begin{rem}
Suppose that $(\wht\boxdot_P,\wht\mu_{\blt,\star})$ is another system of fusion products in $\Rep^0(P)$, and that the natural unitary $\wht{\fk V}^\boxdot:W_\blt\wht\boxdot_P W_\star\rightarrow W_\blt\boxdot_{U_P}W_\star$ is defined for this system similar to $\fk V^\boxdot$, i.e.
\begin{align*}
\wht{\fk V}^\boxdot_{i,j}\circ\wht\mu_{i,j}\Gamma^V(w^{(i)},z) w^{(j)}=\Gamma^{U_P}(w^{(i)},z)w^{(j)}
\end{align*}
for all $W_i,W_j\in\Obj(\Rep^0(P))$. From the proof of Thm. \ref{lb67}, it is clear that
\begin{align}\label{eq65}
\wht{\fk V}_{i,j}^\boxdot\circ\Phi_{i,j}=\fk V_{i,j}^\boxdot
\end{align}
where $\Phi:W_\blt\boxdot_P W_\star\rightarrow W_\blt\wht\boxdot_P W_\star$ is the connecting map between the two systems of fusion products (Rem. \ref{lb17}). 
\end{rem}

\subsection{Main theorem on smeared operators and (weak) categorical extensions associated to $Q=(\fk F^V_\CWX,\id)(P)$}

\subsubsection{The setting}\label{lb74}
Let $V$ satisfy Condition II in Def. \ref{lb34}. Let $\mc A_V$ be the CKLW net of $V$. Recall that $(\RepV,\boxdot,\ss)$ is the Huang-Lepowsky braided $C^*$-tensor category of unitary $V$-modules (Thm. \ref{lb61}) where, as usual, we have abbreviated $\boxdot_V$ to $\boxdot$ and $\ss^V$ to $\ss$. Recall from Def. \ref{lb44} that $(\scr C,\boxdot,\ss)$ is chosen to be
\begin{align*}
\big(\Rep^V(\mc A_V),\boxdot,\ss\big):=(\fk F^V_\CWX,\id)\big(\RepV,\boxdot,\ss\big)
\end{align*}
Recall from Thm. \ref{lb42} that $\Rep^V(\mc A_V)$ is a full replete $C^*$-subcategory of $\Rep(\mc A_V)$. As usual, for each $W_i\in\Obj(\RepV)$ we write $\mc H_i=\fk F_\CWX^V(W_i)$. Let
\begin{align*}
\scr E_V=(\mc A_V,\Rep^V(\mc A_V),\boxdot,\ss,\mc H)\qquad\text{with L, R operators }L^\boxdot,R^\boxdot
\end{align*}
be the categorical extension associated $V$ (Def. \ref{lb62}).

Let $P=(W_a,\mu,\iota)$ be a $C^*$-Frobenius algebra in $\RepV$. Let
\begin{align*}
Q=(\fk F^V_\CWX,\id)(P)=(\mc H_a,\mu,\iota)
\end{align*}
be the pushforward $C^*$-Frobenius algebra in $\Rep^V(\mc A_V)$. Thus, by Thm. \ref{lb26}, $Q$ gives a conformal net extension $\mc B_Q$ of $\mc A_V$ determined by
\begin{align}\label{eq77}
\mc B_Q(I)=\{\mu L^\boxdot(\xi,\wtd I)|_{\mc H_a}:\xi\in\mc H_a(I)\}
\end{align}

Recall that $\Rep^0(P)\equiv\Rep^0_\RepV(P)$ and $\Rep^0_{\Rep^V(\mc A_V)}(Q)$ are respectively the $C^*$-categories of dyslectic (unitary) $P$-modules (in $\RepV$) and $Q$-modules in $\Rep^V(\mc A_V)$. Fix a system of fusion products $(\boxdot_P,\mu_{\blt,\star})$ in $\Rep^0(P)$. Its pushforward system in $\Rep^0_{\Rep^V(\mc A_V)}(Q)$ via $(\fk F_\CWX^V,\id)$ is denoted by $(\boxdot_Q,\mu_{\blt,\star})$, i.e.
\begin{align*}
(\fk F_\CWX^V,\id)\big(\boxdot_P,\mu_{\blt,\star}\big)=\big(\boxdot_Q,\mu_{\blt,\star}\big)
\end{align*}
Then these two systems give braided $C^*$-tensor category structures on $\Rep^0(P)$ and $\Rep^0_{\Rep^V(\mc A_V)}(Q)$ (cf. Thm. \ref{lb19}) which are canonically isomorphic:
\begin{align}
(\fk F^V_\CWX,\id):\big(\Rep^0(P),\boxdot_P,\ss^P\big)\xlongrightarrow{\simeq} \big(\Rep^0_{\Rep^V(\mc A_V)}(Q),\boxdot_Q,\ss^Q\big)
\end{align}

Recall from Def. \ref{lb100} that $\Rep_{\Rep^V(\mc A_V)}(\mc B_Q)$ is the $C^*$-category of $\mc B_Q$-modules whose restrictions to $\mc A$ are objects in $\Rep^V(\mc A_V)$. By Recall from Def. \ref{lb70} that
\begin{align}\label{eq68}
\big(\Rep_{\Rep^V(\mc A_V)}(\mc B_Q),\boxdot_Q,\ss^Q\big)=(\fk F_\CN,\id)\big(\Rep^0_{\Rep^V(\mc A_V)}(Q),\boxdot_Q,\ss^Q\big)
\end{align}
where $\fk F_\CN$ is described in Thm. \ref{lb27}.

Let $(\RepUP,\boxdot_{U_P},\ss^{U_P})$ be the Huang-Lepowsky braided $C^*$-tensor category of unitary $U_P$-modules as in Thm. \ref{lb61}. Let
\begin{align}
(\fk F_\VOA,\fk V^\boxdot):\big(\Rep^0(P),\boxdot_P,\ss^P \big)\xlongrightarrow{\simeq}\big(\RepUP,\boxdot_{U_P},\ss^{U_P}\big)
\end{align}
be as in Thm. \ref{lb67}.

\begin{cv}
For each $(W_i,\mu^i)\in\Obj(\Rep^0(P))$, when talking about the corresponding unitary $U_P$-module $(W_i,Y^{U_P}_i)\in\Obj(\RepUP)$, the corresponding dyslectic $Q$-module $(\mc H_i,\mu^i)\in\Obj(\Rep^0_{\Rep^V(\mc A_V)}(Q))$, and the corresponding $\mc B_Q$-module $(\mc H_i,\pi_i)\in\Obj(\Rep_{\Rep^V(\mc A_V)}(\mc B_Q))$, we understand that they are related by the functors
\begin{equation}\label{eq69}
\begin{tikzcd}[row sep=large,column sep=2cm]
\Rep^0(P) \arrow[r,"{\fk F^V_\CWX}","\simeq"'] \arrow[d,"{\fk F_\VOA}"',"\simeq"] & \Rep^0_{\Rep^V(\mc A_V)}(Q) \arrow[d,"{\fk F_\CN}","\simeq"'] \\
\RepUP          & \Rep_{\Rep^V(\mc A_V)}(\mc B_Q)          
\end{tikzcd}
\qquad
\begin{tikzcd}[sep=large]
(W_i,\mu^i) \arrow[r,mapsto]\arrow[d,mapsto] & (\mc H_i,\mu^i)\arrow[d,mapsto]\\
(W_i,Y^{U_P}_i)& (\mc H_i,\pi_i)
\end{tikzcd}
\end{equation}
\end{cv}

\subsubsection{The main theorem}

Assume the setting in Subsubsec. \ref{lb74}. Recall the intertwining operators $\Gamma^V,\Delta^V$ of $V$ and (similarly) $\Gamma^{U_P},\Delta^{U_P}$ of $U_P$ (cf. Subsec. \ref{lb65}).

\begin{thm}[\textbf{Main Theorem}]\label{lb72}
Assume that $V$ satisfies Condition II. Let
\begin{align*}
\scr E_Q=(\mc B_Q,\Rep_{\Rep^V(\mc A_V)}(\mc B_Q),\boxdot_Q,\ss^Q,\mc H):=\mu_{\blt,\star}\circ\scr E_V
\end{align*}
be the categorical extension associated to $Q$, $\scr E_V$, and $(\boxdot_Q,\mu_{\blt,\star})$ with $\mathrm L,\mathrm R$ operators $L^Q,R^Q$ (cf. Def. \ref{lb69}). 
Let $W_j\in\Obj(\RepUP)$. Assume that every unitary intertwining operator of $U_P$ with charge space $W_j$ is energy-bounded. Let $w^{(j)}\in W_j$ be homogeneous. Let $\wtd J\in\Jtd$, and $\wtd g\in C_c^\infty(\wtd J)$. For each $W_k\in\Obj(\RepUP)$, let
\begin{subequations}\label{eq56}
\begin{gather}
\fk L^Q(\yk,\wtd J)\big|_{\mc H_k}=(\fk V_{j,k}^\boxdot)^{-1}\circ\ovl{\Gamma^{U_P}(w^{(j)},\wtd g)}\big|_{\mc H_k}\\
\fk R^Q(\yk,\wtd J)\big|_{\mc H_k}=(\fk V_{k,j}^\boxdot)^{-1}\circ\ovl{\Delta^{U_P}(w^{(j)},\wtd g)}\big|_{\mc H_k}
\end{gather}
\end{subequations}
which are closed operators with dense domains in $\mc H_k$. Then $\fk L^Q(\yk,\wtd J)$ and $\fk R^Q(\yk,\wtd J)$ are respectively left and right operators of $\scr E_Q$.
\end{thm}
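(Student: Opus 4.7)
I will show that the densely-defined operator $(\fk V^\boxdot_{j,\cdot})^{-1} \Gamma^{U_P}(w^{(j)}, \wtd g): \mc H_k^\infty \to (\mc H_j \boxdot_Q \mc H_k)^\infty$ is a weak left operator (Def. \ref{lb9}) of the weak categorical extension $\scr E^w_Q$ constructed in Theorem \ref{lb71}, and then invoke Theorem \ref{lb13} to conclude that its closure, which is precisely $\fk L^Q(\yk, \wtd J)$, is a left operator of $\scr E_Q$. The treatment of $\fk R^Q(\yk, \wtd J)$ is entirely parallel using $\Delta^{U_P}$ in place of $\Gamma^{U_P}$ together with \eqref{eq52}. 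By Remark \ref{lb73}, I first fix the canonical normalization of the system of fusion products in $\Rep^0(P)$, in which $\boxdot_P = \boxdot_{U_P}$ and $\fk V^\boxdot = \id$, so that $\mu_{j,k} \Gamma^V(w^{(j)}, z) = \Gamma^{U_P}(w^{(j)}, z)$ as formal $V$-intertwining operators; the general case transports via the unitary linking map (Remark \ref{lb68}).

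Under this normalization, $\fk L^Q(\yk, \wtd J)|_{\mc H_k}$ is the closure of $\mu_{j,k} \Gamma^V(w^{(j)}, \wtd g)$ on $\mc H_k^\infty$. Smoothness and localizability of the pre-closure operator are immediate from Remarks \ref{lb54} and \ref{lb25}, using that $\Gamma^{U_P}(w^{(j)}, z)$ is energy-bounded by hypothesis and that $(\fk V^\boxdot_{j,k})^{-1}$ is a bounded intertwiner. Naturality with respect to $G \in \Hom_{\mc B_Q}(\mc H_k, \mc H_{k'}) \cong \Hom_{U_P}(W_k, W_{k'})$ follows from the naturality of $\Gamma^{U_P}$ as a $U_P$-intertwining operator.

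The main technical step is commutation with an arbitrary weak right operator $\mc R^Q(\fk b, \wtd I) = \mu_{k,i} \mc R(\fk b, \wtd I)$ of $\scr E^w_Q$ (for $\wtd I$ clockwise to $\wtd J$), where $\mc R(\fk b, \wtd I) = (\idt_k \boxdot T) \Delta^V(w^{(i_n)}, \wtd f_n) \cdots \Delta^V(w^{(i_1)}, \wtd f_1)$ with $W_{i_\alpha} \in \mc F^V \cup \{V\}$ and $T$ a $V$-morphism. Pulling $T$ and the $\mu_{\blt,\star}$'s through the intertwining operators by naturality of $\Gamma^V$ and $\Delta^V$, and using the associativity identity $\mu_{j, k\boxdot_P i}(\idt_j \boxdot \mu_{k,i}) = \mu_{j\boxdot_P k, i}(\mu_{j,k} \boxdot \idt_i)$ from Theorem \ref{lb30}, both sides of the desired equation factor through the canonical projection $W_j \boxdot W_k \boxdot W_i \to W_j \boxdot_P W_k \boxdot_P W_i$ of a formal braiding identity among $V$-intertwining operators. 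The crucial point is that, after these rearrangements, the only intertwining operators actually being smeared are the $\mu$-projected $\mu_{j,\cdot}\Gamma^V = \Gamma^{U_P}$ (energy-bounded by hypothesis) and the $\Delta^V(w^{(i_\alpha)}, \cdot)$ (energy-bounded by Condition II since $W_{i_\alpha} \in \mc F^V \cup \{V\}$); iterated application of the smeared braiding relation \eqref{eq38} (Remark \ref{lb58}) then yields the commutation. To invoke Theorem \ref{lb13}, I note that $\Rep_{\Rep^V(\mc A_V)}(\mc B_Q) \simeq \RepUP$ is rigid, since $\RepV$ is a unitary modular tensor category (Theorem \ref{lb61}) and the category of dyslectic modules over a haploid commutative $C^*$-Frobenius algebra in a rigid braided $C^*$-tensor category is again rigid.

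The principal obstacle is the commutation step. The ``bare'' $V$-intertwining operator $\Gamma^V(w^{(j)}, \cdot)$ with charge $W_j$, viewed merely as a $V$-module, is typically not in $\mc F^V$ and is not a priori energy-bounded, so Remark \ref{lb58} cannot be applied to it directly. The essential trick is that Theorem \ref{lb30} permits all $\mu$-projections to be moved past the commutator by naturality, reducing the smeared braiding to one between only the projected, energy-bounded operators.
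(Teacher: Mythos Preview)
Your overall strategy is correct and matches the paper's: reduce to showing that the pre-closure is a weak left (resp.\ right) operator of the M\"obius covariant weak categorical extension $\scr E^w_Q$ of Theorem~\ref{lb71}, then invoke Theorem~\ref{lb13}. The normalization $\fk V^\boxdot=\id$ via Remark~\ref{lb73}, the checks of smoothness/localizability/naturality, and the rigidity argument are all fine.

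The gap is in the commutation step, and it is exactly the obstacle you flag but do not actually resolve. Remark~\ref{lb58} (the smeared version of \eqref{eq35}) is a statement about the \emph{bare} $\Gamma^V$ and $\Delta^V$ with targets in $V$-fusion products; it requires those four specific operators to be energy-bounded. Your plan is to ``move the $\mu$'s past the commutator'' so that only the projected $\mu_{j,\cdot}\Gamma^V=\Gamma^{U_P}$ and the $\Delta^V(w^{(i_\alpha)},\cdot)$ get smeared. But this cannot be carried out as a diagram argument at the smeared level: once you commute $\Gamma^{U_P}(w^{(j)},\wtd g)$ past a single $\Delta^V(w^{(i_\alpha)},\wtd f_\alpha)$, the intermediate source space becomes $W_k\boxdot W_{i_\alpha}$, which is a $V$-module but \emph{not} a $P$-module, so $\mu_{j,\cdot}\Gamma^V$ is not even defined there. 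The only way to subdivide the big square into cells to which Remark~\ref{lb58} and bounded-operator naturality apply is to first factor $\Gamma^{U_P}=\mu_{j,k}\circ\Gamma^V$ and braid the bare $\Gamma^V(w^{(j)},\cdot)$---precisely what you cannot do without its energy bound.

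The paper closes this gap by a different mechanism: it invokes \cite[Cor.~3.13 (or Prop.~3.12)]{Gui19a}, which says (roughly) that a braiding identity among \emph{arbitrary} energy-bounded intertwining operators holds at the smeared level as soon as it holds at the analytic/formal level. The algebraic rearrangement using \eqref{eq35}, \eqref{eq36}, and Theorem~\ref{lb30} is then performed entirely at the analytic level (where no energy bounds are needed for the intermediate bare $\Gamma^V,\Delta^V$), and the energy-boundedness hypothesis is only required for the ``outermost'' operators appearing in the final smeared identity---namely $(\mu_{\cdot,j}\Delta^V)(w^{(j)},\wtd g)$ and the $\Gamma^V(w^{(i_\alpha)},\wtd f_\alpha)$, all of which are energy-bounded by hypothesis and Condition~II respectively. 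Your proof becomes correct once you replace the appeal to Remark~\ref{lb58} by this analytic-to-smeared passage.
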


Note that by \eqref{eq51} and \eqref{eq52}, we can rewrite \eqref{eq56} as
\begin{gather}\label{eq57}
\fk L^Q(\yk,\wtd J)\big|_{\mc H_k}=\ovl{(\mu_{j,k}\Gamma^V)(w^{(j)},\wtd g)}\big|_{\mc H_k}\qquad
\fk R^Q(\yk,\wtd J)\big|_{\mc H_k}=\ovl{(\mu_{k,j}\Delta^V)(w^{(j)},\wtd g)}\big|_{\mc H_k}
\end{gather}
the smeared intertwining operators defined by the energy-bounded $U_P$-intertwining operators $\mu_{j,k}\Gamma^V$ and $\mu_{k,j}\Delta^V$.\footnote{Note that it is not assumed that $\Gamma^V$ and $\Delta^V$ are always energy bounded. So one cannot write $\mu_{j,k}\circ\Gamma^V(w^{(j)},\wtd g)|_{\mc H_k}$ since it is not known whether the $V$-intertwining operator $\Gamma^V(w^{(j)},z)$ can be smeared.}

\begin{rem}\label{lb76}
In Thm. \ref{lb72}, if $W_j$ is the vacuum $U_P$-module $W_a=U_P$, then under the identifications $U_P\boxdot_{U_P} W_k=W_k=W_k\boxdot_{U_P}U_P$ via the unitors, we have $\fk V_{a,k}^\boxdot=\idt_k$ (cf. \eqref{eq61}). Thus, by \eqref{eq37}, the two equations in \eqref{eq56} become
\begin{align}
\fk L^Q(\yk,\wtd J)\big|_{\mc H_k}=\ovl{Y^{U_P}_k(u,\wtd g)}=\fk R^Q(\yk,\wtd J)\big|_{\mc H_k}
\end{align}
\end{rem}

\begin{proof}[\textbf{Proof of Thm. \ref{lb72}}]
Step 1. We prove only that $\fk R^Q(\yk,\wtd J)$ is a right operator; the treatment of $\fk L^Q(\yk,\wtd J)$ is similar. Let $\mc F^V$ be as in Condition II. Let $\scr E^w=(\mc A_V,\Rep^V(\mc A_V),\boxdot,\ss,\fk H)$ be the M\"obius covariant weak categorical extension in Thm. \ref{lb47}. Let $\scr E_Q^w=(\mc B_Q,\Rep_{\Rep^V(\mc A_V)}(\mc B_Q),\boxdot_Q,\ss^Q,\fk H)$ be the weak categorical extension as in Thm. \ref{lb71}, which is clearly also M\"obius covariant. Thus, for each $W_i,W_k\in\Obj(\Rep^0(P))$ and $\wtd I\in\Jtd$, and for each $\fk a=(T,w^{(i_1)},\dots,w^{(i_n)},\wtd f_1,\dots,\wtd f_n)$ in $\fk H_i(\wtd I)$ as described in \eqref{eq45}, we have 
\begin{subequations}\label{eq62}
\begin{gather}
\mc L^Q(\fk a,\wtd I)\big|_{\mc H_k^\infty}=\mu_{i,k}(T\boxdot\idt_j)\circ \Gamma^V(w^{(i_1)},\wtd f_1)\cdots \Gamma^V(w^{(i_n)},\wtd f_n)\big|_{\mc H_k^\infty}\label{eq62a}\\
\mc R^Q(\fk a,\wtd I)\big|_{\mc H_k^\infty}=\mu_{k,i}(\idt_j\boxdot T)\circ \Delta^V(w^{(i_n)},\wtd f_n)\cdots \Delta^V(w^{(i_1)},\wtd f_1)\big|_{\mc H_k^\infty}\label{eq62b}
\end{gather}
\end{subequations}
By Thm. \ref{lb71}, the closure of $\scr E^w_Q$ is $\scr E_Q$.

If $V$ satisfies Condition I, then $\Gamma^V,\Delta^V$ are always energy-bounded. Comparing \eqref{eq57} with \eqref{eq62b}, we see that $\fk R^Q(\yk,\wtd J)$ equals $\ovl{\mc R^Q(\fk a,\wtd J)}$ for some $\fk a\in\fk H_j(\wtd J)$, and hence is a right operator of $\scr E_Q$. However, we are only assuming that $V$ satisfies Condition II. So we need more effort. In fact, we shall use the powerful Thm. \ref{lb13}. \\[-1ex]

Step 2. Let $B^Q(\yk,\wtd J)$ be the operation associating to each $W_k\in\Obj(\Rep^0(P))$ the smooth and localizable operator
\begin{align*}
B^Q(\yk,\wtd J)\big|_{\mc H_k^\infty}=(\mu_{k,j}\Delta^V)(w^{(j)},\wtd g)\big|_{\mc H_k^\infty}
\end{align*}
By Thm. \ref{lb13}, it suffices to prove that $B^Q(\yk,\wtd J)$ is a weak right operator of $\scr E^w_Q$. To check Def. \ref{lb10}-(a), we choose any $W_{k'}\in\Obj(\Rep^0(P))$ and $G\in\Hom_P(W_k,W_{k'})=\Hom_{\mc B_Q}(\mc H_k,\mc H_{k'})$. Then for each $\chi\in\mc H_k^\infty$ we have
\begin{align*}
&B^Q(\yk,\wtd J)G\chi=\big(\mu_{k',j}\Delta^V\big)(w^{(j)},\wtd g)G\chi\xlongequal{\eqref{eq36}}\big(\mu_{k',j}(G\boxdot\idt_j)\Delta^V\big)(w^{(j)},\wtd g)\chi\\
\xlongequal{\eqref{eq19}}&\big((G\boxdot_Q\idt_j)\mu_{k,j}\Delta^V\big)(w^{(j)},\wtd g)\chi=(G\boxdot_Q\idt_j)B^Q(\yk,\wtd J)\chi
\end{align*}

To check Def. \ref{lb10}-(b), we need to show that if $\wtd J$ is clockwise to $\wtd I$ then $B^Q(\yk,\wtd J)$ commutes with $\mc L^Q(\fk a,\wtd I)$. Namly, for each $W_k\in\Obj(\Rep^0(P))$, we need to prove
\begin{align}
\begin{aligned}
&\big(\mu_{i\boxdot_Qk,j}\Delta^V\big)(w^{(j)},\wtd g)\cdot\mu_{i,k}(T\boxdot\idt_k) \Gamma^V(w^{(i_1)},\wtd f_1)\cdots \Gamma^V(w^{(i_n)},\wtd f_n)\big|_{\mc H_k^\infty}\\
=&\mu_{i,k\boxdot_Qj}(T\boxdot\idt_{k\boxdot_Qj})\Gamma^V(w^{(i_1)},\wtd f_1)\cdots \Gamma^V(w^{(i_n)},\wtd f_n)\cdot\big(\mu_{k,j}\Delta^V\big)(w^{(j)},\wtd g)\big|_{\mc H_k^\infty}
\end{aligned}
\end{align}
By Cor. 3.13 (or Prop. 3.12) of \cite{Gui19a}, it suffices to prove the following braiding relation: If $z_1,\dots,z_n,\zeta\in\Sbb^1$ are equipped with arg values such that $\arg z_1>\cdots>\arg z_n>\arg\zeta>\arg z_1-2\pi$, and if $w^{(k)}\in W_k$, then
\begin{align}
\begin{aligned}
&\mu_{i\boxdot_Qk,j}\Delta^V(w^{(j)},\zeta)\mu_{i,k}(T\boxdot\idt_k) \Gamma^V(w^{(i_1)},z_1)\cdots \Gamma^V(w^{(i_n)},z_n)w^{(k)}\\
=&\mu_{i,k\boxdot_Qj}(T\boxdot\idt_{k\boxdot_Qj})\Gamma^V(w^{(i_1)},z_1)\cdots \Gamma^V(w^{(i_n)},z_n)\mu_{k,j}\Delta^V(w^{(j)},\zeta)w^{(k)}
\end{aligned}
\end{align}
holds where both sides are linear functionals on $W_i\boxdot_Q W_j\boxdot_Q W_k$. (Note that these are not ordinary products of linear operators. They should be understood in terms of analytic continuation. We refer the readers to \cite{Gui19a} Sec. 2.2 (especially the paragraph before Thm. 2.8) for the precise meaning.)

Choose any $w^{(k)}\in W_k$. In view of \cite[Prop. 2.11]{Gui19a} (which says, roughly, that if one has the braid relation $\mc Y_1\mc Y_2\sim\mc Y_3\mc Y_4$, then one also has the braid relation $\mc X_1\mc Y_1\mc Y_2\mc X_2\sim\mc X_1\mc Y_3\mc Y_4\mc X_2$ where $\mc X_1$ and $\mc X_2$ are products of intertwining operators), we are able to do the following calculations:
\begin{align*}
&\mu_{i\boxdot_Qk,j}\Delta^V(w^{(j)},\zeta)\mu_{i,k}(T\boxdot\idt_k) \Gamma^V(w^{(i_1)},z_1)\cdots \Gamma^V(w^{(i_n)},z_n)w^{(k)}\\
\xlongequal{\eqref{eq36}}&\mu_{i\boxdot_Qk,j}(\mu_{i,k}\boxdot\idt_j)(T\boxdot\idt_k\boxdot\idt_j)\Delta^V(w^{(j)},\zeta) \Gamma^V(w^{(i_1)},z_1)\cdots \Gamma^V(w^{(i_n)},z_n)w^{(k)}\\
\xlongequal[\eqref{eq58}]{\eqref{eq35}}&\mu_{i,k\boxdot_Qj}(\idt_i\boxdot\mu_{k,j})(T\boxdot\idt_k\boxdot\idt_j) \Gamma^V(w^{(i_1)},z_1)\cdots \Gamma^V(w^{(i_n)},z_n)\Delta^V(w^{(j)},\zeta)w^{(k)}
\end{align*}
Since $(\idt_i\boxdot\mu_{k,j})(T\boxdot\idt_k\boxdot\idt_j)=T\boxdot\mu_{k,j}=(T\boxdot\idt_{k\boxdot_Qj})(\idt_{i_1\boxdot\cdots\boxdot i_n}\boxdot\mu_{k,j})$, the above expression equals
\begin{align*}
&\mu_{i,k\boxdot_Qj}(T\boxdot\idt_{k\boxdot_Qj})(\idt\boxdot\mu_{k,j})\Gamma^V(w^{(i_1)},z_1)\cdots \Gamma^V(w^{(i_n)},z_n)\Delta^V(w^{(j)},\zeta)w^{(k)}\\
\xlongequal{\eqref{eq36}}&\mu_{i,k\boxdot_Qj}(T\boxdot\idt_{k\boxdot_Qj})\Gamma^V(w^{(i_1)},z_1)\cdots \Gamma^V(w^{(i_n)},z_n)\mu_{k,j}\Delta^V(w^{(j)},\zeta)w^{(k)}
\end{align*}
This finishes the proof.
\end{proof}

With a little more effort, we can prove a stronger version of Thm. \ref{lb72}:

\begin{thm}\label{lb83}
Assume that $V$ satisfies Condition II, and let $\scr E_Q$ be as in Thm. \ref{lb72}. Choose unitary $U_P$-modules $W_{j_1},\dots,W_{j_m},W_j$ and $S\in\Hom_{U_P}(W_{j_1}\boxdot_{U_P}\cdots\boxdot_{U_P} W_{j_m},W_j)$. Assume that every unitary intertwining operator of $U_P$ whose charge space is one of $W_{j_1},\dots,W_{j_m}$ is energy-bounded. Let $w^{(j_1)}\in W_{j_1},\dots,w^{(j_m)}\in W_{j_m}$ be homogeneous. Let $\wtd J\in\Jtd$ and $\wtd g_1,\dots,\wtd g_m\in C_c^\infty(\wtd J)$. For each $W_k\in\RepUP$, let
\begin{subequations}
\begin{gather}
\mc L^Q(\xk,\wtd J)\big|_{\mc H_k^\infty}=(\fk V^\boxdot_{i,k})^{-1}(S\boxdot_{U_P}\idt_k)\circ \Gamma^{U_P}(w^{(j_1)},\wtd g_1)\cdots \Gamma^{U_P}(w^{(j_m)},\wtd g_m)\big|_{\mc H_k^\infty}\\
\mc R^Q(\yk,\wtd J)\big|_{\mc H_k^\infty}=(\fk V^\boxdot_{k,i})^{-1}(\idt_k\boxdot_{U_P} S)\circ \Delta^{U_P}(w^{(j_m)},\wtd g_m)\cdots \Delta^{U_P}(w^{(j_1)},\wtd g_1)\big|_{\mc H_k^\infty}
\end{gather}
\end{subequations}
Then $\ovl{\mc L^Q(\xk,\wtd J)}$ and $\ovl{\mc R^Q(\yk,\wtd J)}$ are respectively left and right operators of $\scr E_Q$ with charge spaces $\mc H_j$.
\end{thm}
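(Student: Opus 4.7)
\textbf{Proof proposal for Theorem \ref{lb83}.} The plan is to mirror the strategy for Theorem \ref{lb72}: show that $\mc L^Q(\xk,\wtd J)$ and $\mc R^Q(\yk,\wtd J)$ are weak left and right operators of the weak categorical extension $\scr E^w_Q$ (constructed in Thm.~\ref{lb71} from the weak categorical extension $\scr E^w$ of Thm.~\ref{lb47}), and then invoke Theorem \ref{lb13} to conclude that their closures are genuine left and right operators of $\scr E_Q$. Rigidity of $\Rep_{\Rep^V(\mc A_V)}(\mc B_Q)$ is inherited from that of the unitary modular tensor category $\Rep^V(\mc A_V)\simeq\RepV$ (via $\fk F_\CN$ and $\Rep^0_{\Rep^V(\mc A_V)}(Q)$), so Thm.~\ref{lb13} does apply.

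\textbf{Step 1 (rewriting via $V$-intertwining operators).} Iterating \eqref{eq51}--\eqref{eq52} and using the naturality \eqref{eq36} to pull the morphism $S\in\Hom_{U_P}(W_{j_1}\boxdot_{U_P}\cdots\boxdot_{U_P}W_{j_m},W_j)$ (equivalently, a $P$-module morphism by the identification in Rem.~\ref{lb73}) across, the operator $(\fk V^\boxdot_{k,j})^{-1}(\idt_k\boxdot_{U_P} S)\Delta^{U_P}(w^{(j_m)},\wtd g_m)\cdots\Delta^{U_P}(w^{(j_1)},\wtd g_1)$ restricted to $\mc H_k^\infty$ equals the composition of $\mu_{k,j}\cdot(\idt_k\boxdot S)$ with an iterated product $\Delta^V(w^{(j_m)},\wtd g_m)\cdots\Delta^V(w^{(j_1)},\wtd g_1)$ interspersed with $\mu$'s; a parallel rewriting holds for $\mc L^Q(\xk,\wtd J)$ in terms of $\Gamma^V$. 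These composites are smooth and localizable on $\mc H_k^\infty$ by Rem.~\ref{lb54}, so the closures are well defined closed operators.

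\textbf{Step 2 (checking the axioms of a weak right/left operator).} The naturality axiom (Def.~\ref{lb10}(a) and Def.~\ref{lb9}(a)) with respect to $G\in\Hom_P(\mc H_k,\mc H_{k'})$ follows from \eqref{eq36} combined with \eqref{eq19} for the various $\mu_{\bullet,\star}$, by the same one-line diagram chase used in Step 2 of the proof of Thm.~\ref{lb72}. The commutation axiom (Def.~\ref{lb10}(b) / Def.~\ref{lb9}(b)) requires that $\mc R^Q(\yk,\wtd J)$ commute with every $\mc L^Q(\fk a,\wtd I)$ from \eqref{eq62a} when $\wtd J$ is clockwise to $\wtd I$. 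Via the rewriting of Step 1, this reduces to an algebraic identity among analytic continuations of compositions of the form
\begin{align*}
\mu_{\bullet,\bullet}(\idt\boxdot S)\Delta^V(w^{(j_m)},\zeta_m)\cdots\Delta^V(w^{(j_1)},\zeta_1)\cdot\mu_{\bullet,\bullet}(T\boxdot\idt)\Gamma^V(w^{(i_1)},z_1)\cdots\Gamma^V(w^{(i_n)},z_n)w^{(k)},
\end{align*}
which is verified by iterated use of the braid relation \eqref{eq35} (invoking \cite[Prop.~2.11]{Gui19a} to slide each $\Delta^V$ past the string of $\Gamma^V$'s), of the naturality \eqref{eq36} to commute $S$ and $T$ through the appropriate intertwining operators, and of the cube diagram \eqref{eq58} of Thm.~\ref{lb30} to merge the $\mu$-factors on the two sides. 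The verification is exactly the one carried out in the single-factor case in the proof of Thm.~\ref{lb72}, performed inductively in $m$ and $n$.

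\textbf{Main obstacle and conclusion.} The only genuine difficulty is bookkeeping: one must carefully track all the fusion channels appearing when braiding $m$ copies of $\Delta^V$ against $n$ copies of $\Gamma^V$ and interleaving the morphisms $S$, $T$, and the various $\mu_{\bullet,\star}$. No further analytic input beyond the assumed energy bounds on the unitary intertwining operators of $U_P$ with charge spaces $W_{j_1},\dots,W_{j_m}$ is needed. Once Step 2 is established, applying Thm.~\ref{lb13} to the weak left/right operators constructed above yields that $\ovl{\mc L^Q(\xk,\wtd J)}$ and $\ovl{\mc R^Q(\yk,\wtd J)}$ are left and right operators of $\scr E_Q$ with charge space $\mc H_j$, as claimed. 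In particular, Thm.~\ref{lb72} is recovered as the special case $m=1$, $S=\idt$.
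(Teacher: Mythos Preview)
Your approach is correct in outline and reaches the same destination via Thm.~\ref{lb13}, but it takes a harder road than the paper. The paper's proof is essentially three lines: first reduce to the system with $\fk V^\boxdot=\id$ (using \eqref{eq33} and \eqref{eq65}, so that the troublesome $\fk V^\boxdot$-factors disappear); then observe that by Thm.~\ref{lb72} each single factor $\Gamma^{U_P}(w^{(j_\nu)},\wtd g_\nu)$ (resp.\ $\Delta^{U_P}$) already yields a weak left (resp.\ right) operator of $\scr E^w_Q$; finally note the general and easy facts that a product of weak left operators is again a weak left operator (provided it is smooth and localizable), and that multiplying a weak left operator on the left by $(S\boxdot_{U_P}\idt_k)$ preserves the property. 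This avoids redoing any braid-relation verification at the $V$-level: Thm.~\ref{lb72} is used as a black box, and the ``bookkeeping'' you identify as the main obstacle simply evaporates.

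Your route---descending to $V$-intertwining operators and re-verifying Def.~\ref{lb9}(b)/\ref{lb10}(b) by an inductive braid computation in $m$ and $n$---works, but there is one imprecision worth flagging. In Step~1 you write the operator as ``an iterated product $\Delta^V(w^{(j_m)},\wtd g_m)\cdots\Delta^V(w^{(j_1)},\wtd g_1)$ interspersed with $\mu$'s''. As the footnote after \eqref{eq57} warns, the individual $\Delta^V$'s need not be energy-bounded under Condition~II alone, so the standalone smeared operator $\Delta^V(w^{(j_\nu)},\wtd g_\nu)$ may be undefined. What is well-defined is the smeared \emph{composite} $(\mu_{\bullet,j_\nu}\Delta^V)(w^{(j_\nu)},\wtd g_\nu)=\Delta^{U_P}(w^{(j_\nu)},\wtd g_\nu)$, which is an energy-bounded $U_P$-intertwining operator by hypothesis. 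Your rewriting must therefore be phrased as a product of these composite smeared operators (together with the bounded morphisms $S$ and $\idt$), not as a product of bare $\Delta^V$'s with $\mu$'s inserted afterward. With that correction the reduction to the algebraic braiding identity via \cite[Cor.~3.13]{Gui19a} goes through.
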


\begin{rem}
In Thm. \ref{lb83}, we do not assume that $\wtd g_{\mu+1}$ is clockwise to $\wtd g_\mu$. Therefore, unlike in Thm. \ref{lb47}, we do not necessarily have $\xk=\yk$. So it is not necessarily true that $\mc R^Q(\yk,\wtd J)|_{\mc H_k^\infty}=\ss_{j,k}^Q\mc L^Q(\xk,\wtd J)|_{\mc H_k^\infty}$.
\end{rem}

\begin{proof}[\textbf{Proof of Thm. \ref{lb83}}]
By \eqref{eq33} and \eqref{eq65}, instead of proving Thm. \ref{lb83} for every system of fusion products $(\boxdot_P,\mu_{\blt,\star})$, it suffices to prove it for one system. So we assume that $(\boxdot_P,\mu_{\blt,\star})=(\boxdot_{U_P},\mu_{\blt,\star})$ is such that $\fk V^\boxdot=\id$ (cf. Rem. \ref{lb73}). Note that $\mc L^Q(\xk,\wtd J)$ is smooth and localizable by Rem. \ref{lb54}. Let $\scr E^w_Q$ be as in Thm. \ref{lb71}. It is easy to check that a product of weak left operators of $\scr E^w_Q$ is again a weak left operator provided that it is smooth and localizable. It is also easy to check that a weak left operator, multiplied by $(S\boxdot_{U_P}\idt)$, is again a weak left operator. Thus, by Thm. \ref{lb72}, $\mc L^Q(\xk,\wtd J)$ is a weak left operator of $\scr E^w_Q$. By Thm. \ref{lb13}, $\ovl{\mc L^Q(\xk,\wtd J)}$ is a left operator of the closure of $\scr E_Q^w$ (which equals $\scr E_Q$ by Thm. \ref{lb71}). The treatment of right operators is similar.
\end{proof}

\section{Applications of Thm. \ref{lb72}}

\subsection{Strong locality and strong integrability}

\begin{lm}\label{lb77}
Let $U$ be a unitary VOA extension of a unitary VOA $V$. Assume that $U$ is $C_2$-cofinite. Assume that $U$, as a unitary $V$-module, is a finite direct sum of unitary irreducible $V$-modules. Let $(W_i,Y^U_i)$ be an irreducible unitary $U$-module. Assume that $Y_i^U(v,z)$ is energy-bounded for each homogeneous $v\in V$. Then $Y_i^U$ is energy-bounded.
\end{lm}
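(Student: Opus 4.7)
The plan is to adapt the method of Carpi--Tomassini \cite{CT23}---which establishes that $V$-energy bounds on $U$ together with $C_2$-cofiniteness imply $U$-energy bounds on $U$---to the module setting, replacing $U$ (as a space of states) by the irreducible unitary $U$-module $W_i$.

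First, I would observe that $W_i$ decomposes as a finite direct sum of irreducible unitary $V$-modules. Since $W_i$ is an irreducible $U$-module and $U$ is $C_2$-cofinite, $W_i$ is a finitely generated $U$-module; combined with the hypothesis that $U$ is a finite direct sum of irreducible $V$-modules, $W_i$ becomes a finitely generated $V$-module with finite-dimensional $L_0$-graded pieces, and complete reducibility as a unitary $V$-module follows from unitarity. Consequently the assumed $V$-energy bounds for $Y_i^U|_V = Y_i^V$ are uniform across the finitely many irreducible $V$-components of $W_i$.

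Next, writing $U = \bigoplus_\alpha V_\alpha$ with each $V_\alpha$ irreducible as a $V$-module, for each homogeneous $u \in V_\alpha$ the restricted vertex operator $Y_i^U(u,\cdot)$ defines a $V$-intertwining operator of type $\binom{W_i}{V_\alpha\; W_i}$. By linearity in $u$ and the finiteness of the index set $\{\alpha\}$, it suffices to establish energy bounds for each such intertwining operator.

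The core technical step is the inductive bootstrapping of \cite{CT23}. Using the commutator formula
\begin{align*}
[Y_i^V(v, z_1),\, Y_i^U(u, z_2)] = \sum_{n \geq 0} Y_i^U(v_{(n)} u, z_2)\, \tfrac{1}{n!}\partial_{z_1}^{n} \delta(z_1 - z_2),
\end{align*}
together with a filtration of $U$ compatible with the $V$-module structure (whose finite length is guaranteed by $C_2$-cofiniteness of $U$), one transfers energy bounds from the hypothesized bounds for $Y_i^V(v,\cdot)$ to $Y_i^U(u,\cdot)$ for $u \in V_\alpha$. The hard part will be verifying that the argument of \cite{CT23}, originally developed in the case $W_i = U$, remains valid verbatim when $W_i$ is a general irreducible unitary $U$-module. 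The commutator identities and the uniform finite-dimensionality estimates on intertwining operators supplied by $C_2$-cofiniteness are algebraic features of $U$ and do not depend on the specific choice $W_i = U$; moreover, $W_i$ shares with $U$ the structural property of being a finite direct sum of irreducible $V$-modules. I therefore expect the adaptation to be essentially routine, with the main care required in tracking the dependence of the polynomial degree in $L_0$ on the filtration level throughout the induction.
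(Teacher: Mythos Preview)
Your sketch misidentifies the mechanism by which $C_2$-cofiniteness enters the Carpi--Tomassini argument. The commutator formula you write down expresses $[Y_i^V(v)_m,\,Y_i^U(u)_n]$ as a combination of modes $Y_i^U(v_{(k)}u)_{m+n-k}$; but for $u\in V_\alpha$ the vectors $v_{(k)}u$ remain in $V_\alpha$, so no filtration of $U$ by $V$-submodules ever reduces $u$ to the base case $u\in V$. Commutator bootstrapping alone is circular here: you are trying to bound $Y_i^U(u,\cdot)$ in terms of other $Y_i^U(\cdot,\cdot)$'s that are equally unknown. The claimed ``filtration of $U$ compatible with the $V$-module structure (whose finite length is guaranteed by $C_2$-cofiniteness of $U$)'' does not exist in the form you need.

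What the paper (and \cite{CT23}) actually use from $C_2$-cofiniteness is Zhu's theorem on the convergence of genus-one one-point functions: the series $\Tr_{W_i}\!\big(Y_i^U(u)_{\wt(u)-1}\,q^{2L_0}\big)$ converges absolutely for $0<|q|<1$ and all homogeneous $u\in U$. As in \cite[Prop.~3.17]{CT23}, this forces $Y_i^U(u)_{\wt(u)-1}\,q^{L_0}$ to be norm-bounded on $\mc H_i$. That is the missing starting point; once you have it, the remaining passage from zero-mode bounds to full energy bounds proceeds as in \cite[Thm.~4.5]{CT23}. The adaptation from $W_i=U$ to a general irreducible unitary $U$-module is indeed routine (Zhu's theorem applies to modules of a $C_2$-cofinite VOA), but the engine is trace-function convergence, not a commutator/filtration induction.
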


\begin{proof}
This is \cite[Thm. 4.6]{CT23} when $W_i=U$. The general case can be proved in the same way: Since $U$ is $C_2$-cofinite, by \cite{Zhu96}, the series $\Tr\big(Y^U_i(u)_{\wt(u)-1}q^{2L_0}\big)$ of $q$ is absolutely convergent on $0<|q|<1$ for all homogeneous $u\in U$. As in the proof of \cite[Prop. 3.17]{CT23}, for every homogenoues $u\in U$ and $0<|q|<1$ one concludes that $Y^U_i(u)_{\wt u-1}q^{L_0}:W_i\rightarrow W_i$ is norm-bounded. Using the same proof as for \cite[Thm. 4.5]{CT23}, one shows that $Y^U_i$ is energy-bounded.
\end{proof}

\begin{thm}\label{lb78}
Assume that $V$ satisfies Condition II. Then every unitary VOA extension $U$ of $V$ is strongly local, and every unitary $U$-module is strongly integrable. 
\end{thm}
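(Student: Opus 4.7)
My approach is to convert the analytic statements for $U$ into statements about the conformal net extension $\mc B_Q$ of $\mc A_V$ and its categorical extension $\scr E_Q$, using the main theorem Thm. \ref{lb72}. By Thm. \ref{lb75} I write $U=U_P$ for a haploid commutative $C^*$-Frobenius algebra $P$ in $\RepV$, and I set $Q=(\fk F^V_\CWX,\id)(P)$; then Thm. \ref{lb26} produces the finite-index conformal net extension $\mc B_Q$ of $\mc A_V$, and Def. \ref{lb69} gives the categorical extension $\scr E_Q=\mu_{\blt,\star}\circ\scr E_V$.

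Before invoking the main theorem, I verify the energy-bound hypothesis for the vacuum $U_P$-module. Since $V$ is strongly energy-bounded (Condition II-(1)), for every unitary $U_P$-module $W_k$ the restriction $Y^{U_P}_k(\cdot,z)|_V$, viewed as a $V$-module vertex operator on the semisimple unitary $V$-module $W_k|_V$, is energy-bounded. Since $U_P$ is $C_2$-cofinite and rational by \cite{CMSY24}, Lem. \ref{lb77} (applied to each irreducible component of $W_k$) upgrades this to energy-boundedness of $Y^{U_P}_k$ itself. Since $U_P$ is completely unitary by Thm. \ref{lb67}, the universal property \eqref{eq53} applied in $\RepUP$ expresses every unitary $U_P$-intertwining operator of type $\binom{W_k}{U_P\ W_j}$ as $T\circ Y^{U_P}_j$, and hence as an energy-bounded operator.

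Now I apply Thm. \ref{lb72} with $W_j=U_P$ the vacuum $U_P$-module. Combined with Rem. \ref{lb76}, for every homogeneous $u\in U_P$, every $\wtd J\in\Jtd$, and every $\wtd g\in C_c^\infty(\wtd J)$, the assignment $\mc H_k\mapsto\ovl{Y^{U_P}_k(u,\wtd g)}$ is \emph{simultaneously} a left and a right operator of $\scr E_Q$ with charge space the vacuum $\mc H_a=U_P$. By Exp. \ref{lb85} (applied with $\mc A$ replaced by $\mc B_Q$), such operators are precisely of the form $\pi_{k,J}(X)$ where $X$ is a closed operator affiliated with $\mc B_Q(J)$; specializing to $W_k=\mc H_a$ identifies $X=\ovl{Y^{U_P}(u,\wtd g)}$. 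Hence every smeared $U_P$-field $\ovl{Y^{U_P}(u,\wtd g)}$ is affiliated with $\mc B_Q(J)$, which gives the subnet inclusion $\mc A_{U_P}(J)\subseteq\mc B_Q(J)$ for every $J\in\mc J$.

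Strong locality of $U_P$ is then immediate: if $I\cap J=\emptyset$, then $\ovl{Y^{U_P}(u_1,f)}$ and $\ovl{Y^{U_P}(u_2,g)}$ (with $f\in C_c^\infty(I),g\in C_c^\infty(J)$) are affiliated with the commuting von Neumann algebras $\mc B_Q(I)$ and $\mc B_Q(J)$, hence strongly commute. For strong integrability of an arbitrary unitary $U_P$-module $W_k$, I use the composite equivalence \eqref{eq69}: $W_k$ corresponds via $\fk F_\VOA^{-1}$ to a dyslectic $P$-module, via $(\fk F^V_\CWX,\id)$ to a dyslectic $Q$-module, and via $\fk F_\CN$ to a $\mc B_Q$-module $(\mc H_k,\pi_k)$. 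Restricting $\pi_k$ to the subnet $\mc A_{U_P}\subseteq\mc B_Q$ equips $\mc H_k$ with an $\mc A_{U_P}$-module structure, and the required identity $\pi_{k,I}(\ovl{Y^{U_P}(u,f)})=\ovl{Y^{U_P}_k(u,f)}$ from Def. \ref{lb37} is exactly the content of the preceding paragraph applied at general $W_k$. The one genuinely substantive input is Thm. \ref{lb72}; once it is available, the entire argument collapses to the subnet inclusion $\mc A_{U_P}\subseteq\mc B_Q$ plus Exp. \ref{lb85}, and the only mild difficulty is to unpack the naturality identifications \eqref{eq69} and verify that the various vertex operators on $\mc H_k$ match the restriction of $\pi_k$ along the subnet.
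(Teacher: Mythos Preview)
Your proof is correct and rests on the same pivotal input as the paper, namely Thm. \ref{lb72} applied with charge space $W_j=U_P$ together with Rem. \ref{lb76}, but the two arguments diverge after that point.

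For strong locality, the paper argues directly from the locality of left and right operators in $\scr E_Q$ (Thm. \ref{lb7}): since $\ovl{Y^{U_P}_\blt(u_1,f)}$ is a left operator and $\ovl{Y^{U_P}_\blt(u_2,g)}$ is a right operator of $\scr E_Q$ localized in disjoint arg-intervals, they commute strongly. You instead pass through Exp. \ref{lb85} to upgrade ``left/right operator with vacuum charge space'' to ``affiliated with $\mc B_Q(J)$'', and then invoke locality of the net $\mc B_Q$. For strong integrability, the paper uses the right operators of $\scr E_Q$ with charge space $\mc H_k$ as the family $\fk B_{I'}$ in the criterion Lem. \ref{lb43}, whereas you directly exhibit the $\mc A_{U_P}$-module on $\mc H_k$ as the restriction of the $\mc B_Q$-module $(\mc H_k,\pi_k)$ produced by $\fk F_\CN$, the verification $\pi_{k,I}(\ovl{Y^{U_P}(u,f)})=\ovl{Y^{U_P}_k(u,f)}$ being exactly the content of Exp. \ref{lb85}. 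Your route is slightly more conceptual and in effect anticipates the first half of the paper's Thm. \ref{lb88} (where the identity $\mc A_{U_P}\subset\mc B_Q$ and the formula \eqref{eq70} are established by the same mechanism); the paper's route keeps Thm. \ref{lb78} self-contained without touching the subnet comparison, at the cost of the small detour through Lem. \ref{lb43}.
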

Thus, we have the CKLW net $\mc A_U$ and CWX functor $\fk F_\CWX^U:\RepU\rightarrow\Rep(\mc A_U)$.

\begin{proof}
By Thm. \ref{lb75}, we have $U=U_P$ for some haploid $C^*$-Frobenius algebra $P$ in $\RepV$. We assume the setting in Subsubsec. \ref{lb74}, where $(\boxdot_P,\mu_{\blt,\star})$ is chosen to satisfy the requirement in Rem. \ref{lb73} and hence $\fk V^\boxdot=\id$. Since $V$ is strongly energy-bounded, by Lem. \ref{lb77}, $U_P$ is strongly energy-bounded. Thus, for each homogeneous $u_1,u_2\in U_P$ and each $f\in C_c^\infty(I),g\in C_c^\infty(I')$ (where $I\in\mc J$), by Thm. \ref{lb72} and Rem. \ref{lb76}, $\ovl{Y^{U_P}_\blt(u_1,f)}$ is a left operator of $\scr E_Q$, and $\ovl{Y^{U_P}_\blt(u_2,g)}$ is a right operator of $\scr E_Q$. Thus, for each unitary $U_P$-module $W_k$, the closed operators $\ovl{Y^{U_P}_k(u_1,f)}$ and $\ovl{Y^{U_P}_k(u_2,g)}$ commute strongly by the Locality of $\scr E_Q$ (Thm. \ref{lb7}). This proves that $U_P$ is strongly local. 

The locality of $\scr E_Q$ also shows that for every right operator $\fk R^Q(\yk,\wtd J)$ with charge space $\mc H_k$ (where $\wtd J\in\Jtd$ is disjoint from $I$), the following diagram commutes strongly:
\begin{equation*}
\begin{tikzcd}[column sep=huge]
\mc H_a \arrow[d,"{\fk R^Q(\yk,\wtd J)}"'] \arrow[r,"{\ovl{Y^{U_P}(u_1,f)}}"] & \mc H_a \arrow[d,"{\fk R^Q(\yk,\wtd J)}"] \\
\mc H_k \arrow[r,"{\ovl{Y_k^{U_P}(u_1,f)}}"]           & \mc H_k        
\end{tikzcd}
\end{equation*}
Thus, by Lem. \ref{lb43} and the density of fusion products (Def. \ref{lb31}), $W_k$ is a strongly integrable $U_P$-module.
\end{proof}

\subsection{Condition I is preserved by unitary extensions}

\begin{thm}\label{lb80}
Assume that $V$ satisfies Condition II. Let $U$ be a unitary VOA extension of $V$. Let $W_i$ be a unitary $U$-module such that any unitary intertwining operator of $U$ with charge space $W_i$ is energy-bounded. Then every unitary intertwining operator of $U$ with charge space $W_i$ satisfies the strong intertwining property.
\end{thm}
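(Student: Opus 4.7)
By Thm.~\ref{lb75}, write $U=U_P$ for a haploid commutative $C^*$-Frobenius algebra $P$ in $\RepV$, and adopt the setup of Subsubsec.~\ref{lb74}. Thm.~\ref{lb78} (via Lem.~\ref{lb77}) ensures that $U_P$ is strongly energy-bounded and strongly local, and that every unitary $U_P$-module is strongly integrable. Any unitary $U_P$-intertwining operator $\mc Y$ of type $W_l\choose W_iW_j$ factors as $\mc Y=T\circ\Gamma^{U_P}(\cdot,z)$ for some $T\in\Hom_{U_P}(W_i\boxdot_{U_P}W_j,W_l)$, so the hypothesis forces $\Gamma^{U_P}(w^{(i)},z)$ to be energy-bounded for every homogeneous $w^{(i)}\in W_i$.

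Applying Thm.~\ref{lb72} with charge module $W_i$, the closed operator $\fk L^Q(\yk,\wtd I):=(\fk V^\boxdot_{i,j})^{-1}\ovl{\Gamma^{U_P}(w^{(i)},\wtd f)}|_{\mc H_j}$ is a left operator of $\scr E_Q$ with charge space $\mc H_i$. By Rem.~\ref{lb76}, for each homogeneous $v\in U_P$ and each $g\in C_c^\infty(I')$, the closed operator $\ovl{Y^{U_P}(v,g)}$ is simultaneously a left and a right operator of $\scr E_Q$ with charge space $\mc H_a$. Locality in Thm.~\ref{lb7} then yields the strongly commuting diagram
\begin{equation*}
\begin{tikzcd}[column sep=large]
\mc H_j \arrow[r,"{\ovl{Y^{U_P}_j(v,g)}}"] \arrow[d,"{\fk L^Q(\yk,\wtd I)}"'] & \mc H_j \arrow[d,"{\fk L^Q(\yk,\wtd I)}"] \\
\mc H_i\boxdot_Q\mc H_j \arrow[r,"{\ovl{Y^{U_P}_{i\boxdot_Q j}(v,g)}}"] & \mc H_i\boxdot_Q\mc H_j
\end{tikzcd}
\end{equation*}
after identifying $\mc H_k\boxdot_Q\mc H_a$ with $\mc H_k$ via the unitor.

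To deduce the strong intertwining property for $\mc Y$, I will post-compose the vertical arrows above with the bounded $U_P$-module intertwiner $T\fk V^\boxdot_{i,j}:\mc H_i\boxdot_Q\mc H_j\to\mc H_l$, turning them into $\ovl{\mc Y(w^{(i)},\wtd f)}$. By the correspondences of Thm.~\ref{lb67} and Thm.~\ref{lb27}, $T\fk V^\boxdot_{i,j}$ is also a $\mc B_Q$-module intertwiner, so it intertwines $\ovl{Y^{U_P}_{i\boxdot_Q j}(v,g)}$ with $\ovl{Y^{U_P}_l(v,g)}$ on the bottom row. Provided post-composition with $T\fk V^\boxdot_{i,j}$ preserves strong commutation, the resulting square with vertical arrow $\ovl{\mc Y(w^{(i)},\wtd f)}$ and horizontal arrows $\ovl{Y^{U_P}_j(v,g)}$ and $\ovl{Y^{U_P}_l(v,g)}$ strongly commutes, which is the strong intertwining property for $\mc Y$.

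The main obstacle is precisely this last transfer. My plan is to verify it via Lem.~\ref{lb35}: the algebraic identity $T\Gamma^{U_P}(w^{(i)},\wtd f)\,Y^{U_P}_j(v,g)=Y^{U_P}_l(v,g)\,T\Gamma^{U_P}(w^{(i)},\wtd f)$ on the common QRI invariant core $\mc H_j^\infty$ follows from the strong commutation of $\fk L^Q$ with $\ovl{Y^{U_P}(v,g)}$ (via Lem.~\ref{lb14}) together with the $U_P$-module intertwining property of $T$; passing to the block Hilbert space $\mc H_j\oplus\mc H_l$, the rotation covariance of smeared intertwining operators (Prop.~\ref{lb51}) together with the localizability of their products (Rem.~\ref{lb54}) supplies a neighborhood of $e^{\im tD}$-conjugates satisfying the pairwise strong commutation needed to invoke Lem.~\ref{lb35}, thereby upgrading the algebraic identity on $\mc H_j^\infty$ to the required strong commutation of the closures.
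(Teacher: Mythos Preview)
Your proposal is correct and follows essentially the same route as the paper: invoke Thm.~\ref{lb72} (and Rem.~\ref{lb76}) to realize both $\ovl{\Gamma^{U_P}(w^{(i)},\wtd f)}$ and $\ovl{Y^{U_P}_\blt(v,g)}$ as left/right operators of $\scr E_Q$, use the locality of $\scr E_Q$ (Thm.~\ref{lb7}) to obtain the strong intertwining property for $\Gamma^{U_P}(\cdot,z)|_{W_j}$, and then pass to an arbitrary $\mc Y=T\Gamma^{U_P}$ via Lem.~\ref{lb35}.

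The only difference is a bookkeeping simplification you missed: the paper first invokes Rem.~\ref{lb73} to choose the system of fusion products $(\boxdot_P,\mu_{\blt,\star})$ so that $\boxdot_P=\boxdot_{U_P}$ and $\fk V^\boxdot=\id$. With this choice, $\ovl{\Gamma^{U_P}(w^{(i)},\wtd f)}|_{\mc H_j}$ is \emph{itself} a left operator of $\scr E_Q$ (no $(\fk V^\boxdot)^{-1}$ needed), and the ``main obstacle'' you flag disappears: one post-composes directly with the bounded $U_P$-morphism $T$ rather than with $T\fk V^\boxdot_{i,j}$. Your version is not wrong---$\fk V^\boxdot_{i,j}$ is a unitary $U_P$-intertwiner, so $T\fk V^\boxdot_{i,j}$ is again a bounded $\mc B_Q$-morphism and the Lem.~\ref{lb35} argument goes through unchanged---but normalizing $\fk V^\boxdot=\id$ at the outset makes the write-up cleaner.
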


\begin{proof}
As in the proof of Thm. \ref{lb78}, we let $U=U_P$, and assume $\boxdot_P=\boxdot_{U_P}$ and $\fk V^\boxdot=\id$. By Thm. \ref{lb72}, for every homogeneous $u\in U,w^{(i)}\in W_i$, any $\wtd I,\wtd J\in\Jtd$ with $\wtd J$ clockwise to $\wtd I$, and any $\wtd f\in C_c^\infty(\wtd I), g\in C_c^\infty(J)$, we have that $\ovl{Y^{U_P}_\blt(u,g)}$ and $\ovl{\Gamma^{U_P}(w^{(i)},\wtd f)}$ are respectively left and right operators of $\scr E_Q$. Thus, the locality of $\scr E_Q$ (Thm. \ref{lb7}) implies that for each $W_j\in\Obj(\RepUP)$, the intertwining operator $\Gamma^{U_P}(\cdot,z)|_{W_j}$ (of type $W_i\boxdot_{U_P}W_j\choose W_i~W_j$) satisfies the strong intertwining property. Thus, by \eqref{eq53} and Lem. \ref{lb35}, every unitary intertwining operator of $U$ with charge space $W_i$ and source space $W_j$ (which is the product of a homomorphism and $\Gamma^{U_P}(\cdot,z)|_{W_j}$) satisfies the strong intertwining property.
\end{proof}

\begin{co}\label{lb81}
Assume that $V$ satisfies Condition II and $U$ is a unitary VOA extension of $V$. Suppose that $\mc F^U$ is a set of unitary $U$-modules $\boxdot_U$-generating $\RepU$, and that every unitary intertwining operator of $U$ whose charge space is in $\mc F^U$ is energy-bounded. Then $U$ satisfies Condition II.
\end{co}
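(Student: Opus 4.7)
The plan is to verify each clause of Condition II (Def.~\ref{lb34}) for $U$ in turn, using the identification $U = U_P$ from Thm.~\ref{lb75} (with $P$ a haploid commutative $C^*$-Frobenius algebra in $\RepV$) and the setup of Subsubsec.~\ref{lb74}. Complete unitarity of $U$ is the content of Thm.~\ref{lb67}; strong locality of $U$ is the first conclusion of Thm.~\ref{lb78}; the $\boxdot_U$-generating set $\mc F^U$ is furnished by hypothesis, and the energy-boundedness of every unitary intertwining operator of $U$ whose charge space lies in $\mc F^U$ is also hypothesized; finally, the strong intertwining property for these same intertwining operators is exactly the conclusion of Thm.~\ref{lb80} applied to each $W_i \in \mc F^U$.

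The only remaining clause is the strong energy-boundedness of $U$, for which a short argument via Lem.~\ref{lb77} is needed. Any (CFT-type) unitary VOA extension of $V$ is $C_2$-cofinite and rational by \cite[Thm.~1.1]{CMSY24} (as used already in the proof of Thm.~\ref{lb102}); in particular every irreducible unitary $U$-module $W_i$ is ordinary and, viewed as a unitary $V$-module, decomposes as a finite direct sum of irreducible unitary $V$-modules. Since $V$ is strongly energy-bounded by Condition II, the operator $Y_i^U(v,z)$ is then energy-bounded for every homogeneous $v \in V$. Lem.~\ref{lb77} upgrades this to the energy-boundedness of $Y_i^U$ on all of $U$, and since a general (semisimple) unitary $U$-module is a finite direct sum of irreducible ones, strong energy-boundedness of $U$ follows.

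I do not anticipate a genuine obstacle. The substantive analytic content---passing from energy bounds to the strong intertwining property---has been packaged into Thm.~\ref{lb80}, which itself rests on Thm.~\ref{lb72} identifying closures of smeared $U_P$-intertwining operators as left and right operators of the categorical extension $\scr E_Q$. In this light the corollary is essentially a bookkeeping assembly of Thm.~\ref{lb67}, Thm.~\ref{lb78}, Lem.~\ref{lb77}, and Thm.~\ref{lb80} into the format of Def.~\ref{lb34}.
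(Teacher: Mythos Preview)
Your proposal is correct and follows essentially the same route as the paper's proof: complete unitarity from Thm.~\ref{lb67}, strong energy-boundedness from Lem.~\ref{lb77}, strong locality from Thm.~\ref{lb78}, and the strong intertwining property from Thm.~\ref{lb80}. The only minor redundancy is that once you have invoked Thm.~\ref{lb67} for complete unitarity, the $C_2$-cofiniteness of $U$ needed for Lem.~\ref{lb77} is already in hand, so the separate appeal to \cite{CMSY24} is unnecessary.
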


\begin{proof}
By Thm. \ref{lb67}, $U$ is completely unitary. By Lem. \ref{lb77}, $U$ is strongly energy-bounded. By Thm. \ref{lb78}, $U$ is strongly local. By Thm. \ref{lb80}, every unitary intertwining operator of $U$ with charge space in $\mc F^U$ satisfies the strong intertwining property. So $U$ satisfies Condition II.
\end{proof}

\begin{co}\label{lb90}
Suppose that $V$ satisfies Condition I. Then every unitary VOA extension of $V$ satisfies Condition I.
\end{co}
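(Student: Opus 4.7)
The plan is to reduce Condition I for $U$ to Condition I for $V$ by observing that, since the extension shares its conformal vector with $V$, a unitary $U$-intertwining operator is nothing but a certain kind of unitary $V$-intertwining operator, with the same energy operator $\ovl{L_0}$ and the same homogeneous grading. Combined with the $V$-side hypothesis (strengthened via Rem. \ref{lb46}), this gives energy bounds for every unitary $U$-intertwining operator; the strong intertwining property is then supplied by Thm. \ref{lb80}.

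First I would dispose of Condition I-(a) for $U$: by Thm. \ref{lb75} we write $U=U_P$ for some haploid commutative $C^*$-Frobenius algebra $P$ in $\RepV$, and Thm. \ref{lb67} yields that $U_P$ is completely unitary. Next, for energy bounds, let $\mc Y$ be any unitary $U$-intertwining operator of type $W_k\choose W_i\ W_j$. Restricting all module structures to $V$ turns $\mc Y$ into a unitary $V$-intertwining operator between the underlying unitary $V$-modules (the Jacobi identity for $Y^U$ specialized to $V$ gives the Jacobi identity for $Y^V$). Now Rem. \ref{lb46} tells us that Condition I-(b) for $V$ upgrades to: every unitary $V$-intertwining operator is energy-bounded. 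So $\mc Y$ is energy-bounded as a $V$-intertwining operator, and because the conformal vector of $U$ equals that of $V$, the operator $\ovl{L_0}$ on each module and the set of homogeneous vectors are the same whether computed for $U$ or for $V$; hence $\mc Y$ is energy-bounded as a $U$-intertwining operator.

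It remains to verify the strong intertwining property for every irreducible unitary $U$-intertwining operator. By Rem. \ref{lb36}, Condition I for $V$ implies Condition II for $V$, so the hypothesis of Thm. \ref{lb80} is in force. Let $W_i$ be a unitary $U$-module. By the previous paragraph, every unitary $U$-intertwining operator with charge space $W_i$ is energy-bounded, so Thm. \ref{lb80} applies and yields the strong intertwining property for every such intertwining operator. Specializing to irreducible modules gives Condition I-(b) for $U$, and together with step one this completes the proof.

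The main subtlety I would expect is the identification of $U$-intertwining operators with $V$-intertwining operators at the analytic level of energy bounds; this step relies essentially on the standing convention in the paper that VOA extensions preserve the conformal vector, so that $L_0^U=L_0^V$ on every $U$-module. Once this is acknowledged, Corollary \ref{lb90} becomes a direct assembly of four inputs from the preceding sections: Thm. \ref{lb67} (complete unitarity of $U$), Rem. \ref{lb46} (propagation of energy bounds from the irreducible setting of Condition I-(b)), Rem. \ref{lb36} (Condition I $\Rightarrow$ Condition II for $V$), and Thm. \ref{lb80} (strong intertwining property from energy bounds, given Condition II for $V$).
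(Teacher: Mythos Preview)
Your proof is correct and follows essentially the same approach as the paper. The only cosmetic difference is that the paper packages your steps (complete unitarity via Thm.~\ref{lb67}, energy bounds via restriction to $V$, and the strong intertwining property via Thm.~\ref{lb80}) into a single invocation of Cor.~\ref{lb81} with $\mc F^U$ containing all irreducible $U$-modules, whereas you unwind that corollary and cite Thm.~\ref{lb67} and Thm.~\ref{lb80} directly.
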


\begin{proof}
If $U$ is a unitary VOA extension of $V$, then every unitary intertwining operator of $U$ is also a unitary intertwining operator of $V$, which is energy-bounded. So the present corollary follows immediately from Cor. \ref{lb81}.
\end{proof}

\subsection{Some preliminary comparison theorems}

We assume the setting described in Subsubsec. \ref{lb74}. 

\begin{thm}\label{lb88}
Assume that $V$ satisfies Condition II. Then we have
\begin{align*}
\mc A_{U_P}=\mc B_Q
\end{align*}
as conformal nets acting on the Hilbert space $\mc H_a$. Moreover, the CWX functor $\fk F^{U_P}_\CWX:\RepUP\rightarrow\Rep(\mc B_Q)$ has image in $\Rep_{\Rep^V(\mc A_V)}(\mc A_{U_P})$, and the following diagram commutes
\begin{equation}\label{eq67}
\begin{tikzcd}[row sep=large,column sep=2cm]
\Rep^0(P) \arrow[r,"{\fk F^V_\CWX}","\simeq"'] \arrow[d,"{\fk F_\VOA}"',"\simeq"] & \Rep^0_{\Rep^V(\mc A_V)}(Q) \arrow[d,"{\fk F_\CN}","\simeq"'] \\
\RepUP \arrow[r,"{\fk F^{U_P}_\CWX}","\simeq"']           & \Rep_{\Rep^V(\mc A_V)}(\mc B_Q)          
\end{tikzcd}
\end{equation}
where each of the four arrows is an isomorphism of $C^*$-categories.
\end{thm}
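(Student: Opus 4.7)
My plan is first to prove $\mc A_{U_P}(I) = \mc B_Q(I)$ as von Neumann algebras on $\mc H_a$ for every $I \in \mc J$; the projective $\Diffp(\Sbb^1)$-representations then agree by construction, since both are integrated from $L_0, L_{\pm 1}$ acting via the common conformal vector of $V$ and $U_P$. The commutativity of \eqref{eq67} will fall out as a corollary. The central tool will be Thm \ref{lb72} combined with Rem \ref{lb76}: applied to the vacuum $U_P$-module $W_j = W_a = U_P$, it exhibits each smeared vertex operator $\ovl{Y^{U_P}_k(u,f)}$ as simultaneously the left and right operator $\fk L^Q(\yk,\wtd I)|_{\mc H_k} = \fk R^Q(\yk,\wtd I)|_{\mc H_k}$ of $\scr E_Q$ with charge space $\mc H_a$ (the vacuum of $\mc B_Q$).

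\textbf{Both inclusions.} For $\mc A_{U_P}(I) \subset \mc B_Q(I)$, the generator $\ovl{Y^{U_P}(u,f)}|_{\mc H_a}$ of $\mc A_{U_P}(I)$ is a left operator of $\scr E_Q$ with charge space the vacuum $\mc H_a$, so Exp \ref{lb85} identifies it as a closed operator affiliated with $\mc B_Q(I)$, whence $\mc A_{U_P}(I) \subset \mc B_Q(I)$. For the reverse inclusion, I plan to take $x \in \mc B_Q(I)$ and, by \eqref{eq77}, write $x = L^Q(\xi,\wtd I)|_{\mc H_a}$ for some $\xi \in \mc H_a(I)$, so that $x$ arises from the bounded left operator $L^Q(\xi,\wtd I)$ of $\scr E_Q$. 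For any homogeneous $u \in U_P$, any $\wtd J \in \Jtd$ clockwise to $\wtd I$ with $J \subset I'$, and any $g \in C_c^\infty(J)$, Thm \ref{lb72} with Rem \ref{lb76} shows that $\ovl{Y^{U_P}(u,g)}|_{\mc H_a}$ is a right operator of $\scr E_Q$ localized in $\wtd J$. The Locality axiom of $\scr E_Q$ (Thm \ref{lb7}-(e)) then forces $x$ to commute strongly with $\ovl{Y^{U_P}(u,g)}|_{\mc H_a}$. Varying $u, g$ exhausts a generating family of $\mc A_{U_P}(I')$, so $x$ strongly commutes with $\mc A_{U_P}(I')$; Haag duality for $\mc A_{U_P}$, which follows from its diffeomorphism covariance via Bisognano--Wichmann, then yields $x \in \mc A_{U_P}(I')' = \mc A_{U_P}(I)$.

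\textbf{Diagram and image.} All four functors in \eqref{eq67} act as the identity on morphisms (as bounded linear maps between fixed Hilbert spaces), so the diagram commutes on morphisms automatically. On an object $(W_i,\mu^i) \in \Rep^0(P)$, I will write $\pi^U = \fk F^{U_P}_\CWX \circ \fk F_\VOA(W_i,\mu^i)$ and $\pi^Q = \fk F_\CN \circ \fk F^V_\CWX(W_i,\mu^i)$, both normal representations of $\mc A_{U_P}(I) = \mc B_Q(I)$ on $\mc H_i$. Applying Exp \ref{lb85} to $\scr E_Q$ at the object $\mc H_i$ (whose $\mc B_Q$-module structure is $\pi^Q$) together with Thm \ref{lb72}/Rem \ref{lb76} read on $\mc H_i$ will give
\begin{align*}
\pi^Q_{i,I}\bigl(\ovl{Y^{U_P}(u,f)}|_{\mc H_a}\bigr) \;=\; \fk L^Q(\yk,\wtd I)|_{\mc H_i} \;=\; \ovl{Y^{U_P}_i(u,f)} \;=\; \pi^U_{i,I}\bigl(\ovl{Y^{U_P}(u,f)}|_{\mc H_a}\bigr)
\end{align*}
for all homogeneous $u \in U_P$ and $f \in C_c^\infty(I)$. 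Since the partial isometries and spectral resolvents of these affiliated operators generate $\mc A_{U_P}(I)$, uniqueness of polar decomposition then forces $\pi^U = \pi^Q$. The image claim will follow from $Y^{U_P}_i(v,z) = Y^V_i(v,z)$ for $v \in V$: the restriction of $\pi^U$ to $\mc A_V$ satisfies the defining property of $\fk F^V_\CWX(W_i|_V)$ and hence lies in $\Rep^V(\mc A_V)$.

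\textbf{Main obstacle.} The only step that is more than bookkeeping is the reverse inclusion, where strong commutation provided by the Locality axiom of $\scr E_Q$ must be combined with Haag duality for the newly-produced net $\mc A_{U_P}$; everything else is a direct application of Thm \ref{lb72}, Exp \ref{lb85}, and the description of $\fk F_\CN$ in Thm \ref{lb27}.
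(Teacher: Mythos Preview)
Your proposal is correct and follows essentially the same approach as the paper: both hinge on Thm.~\ref{lb72} with Rem.~\ref{lb76} to realize each $\ovl{Y^{U_P}_k(u,f)}$ as a left/right operator of $\scr E_Q$ with vacuum charge space, then invoke Exp.~\ref{lb85} to obtain affiliation with $\mc B_Q(I)$ and the key relation $\pi_{k,I}(\ovl{Y^{U_P}(u,f)})=\ovl{Y^{U_P}_k(u,f)}$. The only difference is the reverse inclusion $\mc B_Q(I)\subset\mc A_{U_P}(I)$: the paper gets it in one line by applying Haag duality (for both nets) to the already-established inclusion $\mc A_{U_P}(J)\subset\mc B_Q(J)$ for all $J$, while you re-establish commutation of $\mc B_Q(I)$ with generators of $\mc A_{U_P}(I')$ via Locality in $\scr E_Q$ before invoking Haag duality for $\mc A_{U_P}$---correct, but the paper's shortcut is available to you once the forward inclusion holds for every interval.
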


Note that by Thm. \ref{lb78}, $\mc A_{U_P}$ and $\fk F^{U_P}_\CWX$ can be defined.

\begin{proof}
Choose any $\wtd I\in\Jtd$. By Thm. \ref{lb72} and Rem. \ref{lb76}, for each homogeneous $u\in U_P$ and $f\in C_c^\infty(I)$, $\ovl{Y^{U_P}_\blt(u,f)}$ is a right operator of $\scr E_Q$ with charge space $\mc H_a$ localized in $\wtd I$. By Exp. \ref{lb85}, there exists a closed operator $X$ on $\mc H_a$ with core $\mc H_a(I')$ such that for each $(\mc H_k,\pi_k)\in\Obj(\Rep_{\Rep^V(\mc A_V)}(\mc B_Q))$ we have $\pi_{k,I}(X)=\ovl{Y^{U_P}_k(u,f)}$. Taking $k=a$, we get $X=\ovl{Y^{U_P}(u,f)}$ (and hence $\ovl{Y^{U_P}(u,f)}$ is affiliated with $\mc B_Q(I)$) and
\begin{align}\label{eq70}
\pi_{k,I}\big(\ovl{Y^{U_P}(u,f)}\big)=\ovl{Y^{U_P}_k(u,f)}
\end{align}

That $\ovl{Y^{U_P}(u,f)}$ is affiliated with $\mc A_{U_P}(I)$ for all $u,f$ shows that $\mc A_{U_P}(I)\subset\mc B_Q(I)$. By Haag duality, we get $\mc A_{U_P}(I')\supset\mc B_Q(I')$. By replacing $I$ with $I'$, we get $\mc A_{U_P}(I)\supset\mc B_Q(I)$. This proves $\mc A_{U_P}=\mc B_Q$. To prove that \eqref{eq67} commutes, in view of \eqref{eq69}, it suffices to prove that $\fk F_\CWX^{U_P}(W_k,Y_k^{U_P})=(\mc H_k,\pi_k)$. But this follows directly from \eqref{eq70}
\end{proof}

We now extend the identification of conformal net extensions in Thm. \ref{lb88} to the identification of categorical extensions with the help of Thm. \ref{lb83}.

\begin{sett}\label{lb89}
In addition to the setting in Subsubsec. \ref{lb74}, we let
\begin{align}\label{eq71}
\scr E_Q=(\mc B_Q,\Rep_{\Rep^V(\mc A_V)}(\mc B_Q),\boxdot_Q,\ss^Q,\mc H):=\mu_{\blt,\star}\circ\scr E_V
\end{align}
be the categorical extension associated to $Q$, $\scr E_V$, and $(\boxdot_Q,\mu_{\blt,\star})$ (cf. Def. \ref{lb69}). Let
\begin{align}
\scr E_{U_P}=(\mc A_{U_P},\Rep^{U_P}(\mc A_{U_P}),\boxdot_{U_P},\ss^{U_P},\mc H)
\end{align}
be the categorical extension associated to $U_P$ (cf. Def. \ref{lb62}). Note that by Thm. \ref{lb88}, we have $\mc A_{U_P}=\mc B_Q$ and 
\begin{align}
\Rep^{U_P}(\mc A_{U_P})\xlongequal{\eqref{eq105}}\fk F^{U_P}_\CWX(\RepUP)=\Rep_{\Rep^V(\mc A_V)}(\mc B_Q)
\end{align}
\end{sett}

\begin{thm}\label{lb92}
Assume that both $V$ and $U_P$ satisfy Condition II. Assume Setting \ref{lb89}. Then
\begin{align}
\scr E_{U_P}=\fk V^\boxdot\circ\scr E_Q
\end{align}
\end{thm}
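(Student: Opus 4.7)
The plan is to realize the equality $\scr E_{U_P}=\fk V^\boxdot\circ\scr E_Q$ as a consequence of the uniqueness of the closure of a weak categorical extension (Thm.~\ref{lb16}), applied to the weak categorical extension built from $U_P$.

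First, since $U_P$ satisfies Condition II, I choose a witnessing set $\mc F^{U_P}$ and apply Thm.~\ref{lb47} to $U_P$. This produces a M\"obius covariant weak categorical extension $\scr E_{U_P}^w=(\mc A_{U_P},\Rep^{U_P}(\mc A_{U_P}),\boxdot_{U_P},\ss^{U_P},\fk H)$ whose $\mc L,\mc R$ operators are given by the smeared $\Gamma^{U_P},\Delta^{U_P}$ operators as in \eqref{eq46}, and whose closure is $\scr E_{U_P}$ by Def.~\ref{lb62}. Since $\fk V^\boxdot$ is a natural unitary implementing an equivalence of braided $C^*$-tensor categories from $(\Rep^0(P),\boxdot_P,\ss^P)$ to $(\RepUP,\boxdot_{U_P},\ss^{U_P})$, and since $\scr E_Q$ is a closed vector-labeled categorical extension in $(\Rep_{\Rep^V(\mc A_V)}(\mc B_Q),\boxdot_Q,\ss^Q)$, the data
\begin{align*}
\wtd L(\xi,\wtd I)|_{\mc H_k}=\fk V^\boxdot_{j,k}\circ L^Q(\xi,\wtd I)|_{\mc H_k},\qquad \wtd R(\xi,\wtd I)|_{\mc H_k}=\fk V^\boxdot_{k,j}\circ R^Q(\xi,\wtd I)|_{\mc H_k}
\end{align*}
(for $\xi\in\mc H_j(I)$) define a closed vector-labeled categorical extension $\wtd{\scr E}:=\fk V^\boxdot\circ\scr E_Q$ in $(\Rep^{U_P}(\mc A_{U_P}),\boxdot_{U_P},\ss^{U_P})$; verification of the axioms in Def.~\ref{lb31} is routine since $\fk V^\boxdot$ preserves all the categorical structure and intertwines both braidings and unitors.

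The heart of the argument is to show that $\wtd{\scr E}$ is a closure of $\scr E_{U_P}^w$; the uniqueness part of Thm.~\ref{lb16} will then force $\wtd{\scr E}=\scr E_{U_P}$. For this I must verify that for each $\fk a\in\fk H_j(\wtd I)$, if we set $\xi=\mc L^{U_P}(\fk a,\wtd I)\Omega\in\mc H_j^\pr(I)$ then
\begin{align*}
\ovl{\mc L^{U_P}(\fk a,\wtd I)}|_{\mc H_k}=\fk V^\boxdot_{j,k}\circ\scr L^Q(\xi,\wtd I)|_{\mc H_k}
\end{align*}
for every $\mc H_k$ (and analogously for $\mc R$). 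This is where Thm.~\ref{lb83} (the strengthened form of Thm.~\ref{lb72}) is used: its conclusion is exactly that $(\fk V^\boxdot_{j,k})^{-1}\mc L^{U_P}(\fk a,\wtd I)|_{\mc H_k^\infty}$ closes to a left operator of $\scr E_Q$. By Def.~\ref{lb2}, any left operator of $\scr E_Q$ is of the form $\scr L^Q(\xi',\wtd I)$ for a unique $\xi'\in\mc H_j^\pr(I)$, recovered by evaluating the operator at $\Omega$ with $\mc H_k=\mc H_a$. Under the unitor identifications $\mc H_j\boxdot_{U_P}\mc H_a=\mc H_j=\mc H_j\boxdot_Q\mc H_a$, the tensorator $\fk V^\boxdot_{j,a}$ is the identity (see \eqref{eq61} for braided $*$-functors), so $\xi'=\mc L^{U_P}(\fk a,\wtd I)\Omega=\xi$, giving exactly the desired identification.

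The potential obstacle is the compatibility of the various unitors and the identification $\fk V^\boxdot_{j,0}=\id$; once this bookkeeping is dispatched, the rest follows formally from Thm.~\ref{lb83} and the uniqueness statement in Thm.~\ref{lb16}. The treatment of $R$ operators is parallel: Thm.~\ref{lb83} applies symmetrically to $\Delta^{U_P}$-products composed with $(\fk V^\boxdot_{k,j})^{-1}$, producing right operators of $\scr E_Q$ whose vectors match $\mc R^{U_P}(\fk a,\wtd I)\Omega=\mc L^{U_P}(\fk a,\wtd I)\Omega$ by the Neutrality axiom of $\scr E_{U_P}^w$. Hence $\wtd{\scr E}$ is the unique closure of $\scr E_{U_P}^w$, which equals $\scr E_{U_P}$, completing the proof.
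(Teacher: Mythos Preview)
Your proposal is correct and follows essentially the same approach as the paper: build $\scr E^w_{U_P}$ from $\mc F^{U_P}$ via Thm.~\ref{lb47}, invoke Thm.~\ref{lb83} to see that $(\fk V^\boxdot)^{-1}\ovl{\mc L^{U_P}(\fk a,\wtd I)}$ and $(\fk V^\boxdot)^{-1}\ovl{\mc R^{U_P}(\fk a,\wtd I)}$ are left and right operators of $\scr E_Q$, and conclude by the uniqueness of closures in Thm.~\ref{lb16} that $\fk V^\boxdot\circ\scr E_Q=\scr E_{U_P}$. Your extra care in identifying the vector label $\xi'=\xi$ via $\fk V^\boxdot_{j,a}=\id$ is a useful clarification that the paper leaves implicit.
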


Thus, by our notations (cf. Rem. \ref{lb99}), if $\fk L^Q(\xk,\wtd I)$ and $\fk R^Q(\yk,\wtd I)$ are respectively left and right operators of $\scr E_{U_P}$ with charge space $\mc H_i$ localized in $\wtd I$, then $\fk L^{U_P}(\xk,\wtd I)$ and $\fk R^{U_P}(\yk,\wtd I)$ are respectively left and right operators of $\scr E_{U_P}$ with charge space $\mc H_i$ localized in $\wtd I$, where for each $\mc H_k\in\Obj(\Rep_{\Rep^V(\mc A_V)}(\mc B_Q))$ we set
\begin{subequations}
\begin{gather}
\fk L^{U_P}(\xk,\wtd I)\big|_{\mc H_k}=\fk V^{\boxdot}_{i,k}\circ\fk L^Q(\xk,\wtd I)\big|_{\mc H_k}\\
 \fk R^{U_P}(\yk,\wtd I)\big|_{\mc H_k}=\fk V^{\boxdot}_{k,i}\circ\fk R^Q(\yk,\wtd I)\big|_{\mc H_k}
\end{gather}
\end{subequations}
where
\begin{gather}
\fk V^\boxdot_{\blt,\star}:\mc H_\blt\boxdot_P\mc H_\star\rightarrow\mc H_\blt\boxdot_{U_P}\mc H_\star
\end{gather}
is the closure (i.e., the image under $\fk F^{U_P}_\CWX$) of $\fk V^\boxdot_{\blt,\star}:W_\blt\boxdot_PW_\star\rightarrow W_\blt\boxdot_{U_P}W_\star$.

\begin{proof}
Let $\mc F^{U_P}$ be a $\boxdot_{U_P}$-generating set of unitary $U_P$-modules as described in Condition II. Define a M\"obius covariant categorical extension $\scr E^w_{U_P}=(\mc A_{U_P},\Rep^{U_P}(\mc A_{U_P}),\boxdot_{U_P},\ss^{U_P},\fk H^{U_P})$ as in Thm. \ref{lb47} using $\mc L^{U_P}(\fk a,\wtd I),\mc R^{U_P}(\fk a,\wtd I)$ defined in a similar way as in \eqref{eq46}. So $\scr E_{U_P}$ is the closure of $\scr E_{U_P}^w$. By Thm. \ref{lb83}, $(\fk V^\boxdot)^{-1}\ovl{\mc L^{U_P}(\fk a,\wtd I)}$ and $(\fk V^\boxdot)^{-1}\ovl{\mc R^{U_P}(\fk a,\wtd I)}$ are left and right operators of $\scr E_Q$ localized in $\wtd I$ with the charge space unchanged. Thus, $\ovl{\mc L^{U_P}(\fk a,\wtd I)}$ and $\ovl{\mc R^{U_P}(\fk a,\wtd I)}$ are left and right operators of $\fk V^\boxdot\scr E_Q$, and hence $\fk V^\boxdot\scr E_Q$ is the closure of $\scr E_{U_P}^w$. This proves $\scr E_{U_P}=\fk V^\boxdot\scr E_Q$.
\end{proof}

Finally, we give an equivalent formulation of Thm. \ref{lb92} in terms of comparison of tensorators.

\begin{sett}\label{lb91}
In addition to the settings in Subsubsec. \ref{lb74} and Setting \ref{lb89}: We let 
\begin{align*}
\big(\Rep(\mc B_Q),\boxtimes_{\mc B_Q},\mathbb B^{\mc B_Q}\big)
\end{align*}
be the Connes braided $C^*$-tensor category for $\mc B_Q$ (cf. Thm. \ref{lb12}). Let
\begin{subequations}\label{eq91}
\begin{align}
\fk N^\boxdot:\mc H_\blt\boxdot_Q\mc H_\star\rightarrow\mc H_\blt\boxtimes_{\mc B_Q}\mc H_\star
\end{align}
be the tensorator associated to $\scr E_Q$ (cf. Cor. \ref{lb41}), i.e., it is determined by
\begin{align}\label{eq91b}
\fk N^\boxdot\circ\scr E_Q=\scr E_{\mc B_Q,\Connes}\big|_{\Rep_{\Rep^V(\mc A_V)}(\mc B_Q)}
\end{align}
\end{subequations}
where $\scr E_{\mc B_Q,\Connes}$ is the Connes categorical extension for $\mc B_Q$. Let
\begin{subequations}\label{eq92}
\begin{align}
\fk W^{U_P}:\mc H_\blt\boxdot_{U_P}\mc H_\star\rightarrow\mc H_\blt\boxtimes_{\mc B_Q}\mc H_\star
\end{align}
be the Wassermann tensorator for $U_P$ (cf. Def. \ref{lb62}), i.e., it is determined by
\begin{align}\label{eq92b}
\fk W^{U_P}\circ\scr E_{U_P}=\scr E_{\mc B_Q,\Connes}\big|_{\Rep_{\Rep^V(\mc A_V)}(\mc B_Q)}
\end{align}
\end{subequations}
\end{sett}

\begin{thm}\label{lb94}
Assume that both $V$ and $U_P$ satisfy Condition II. Assume Setting \ref{lb91}. Then the following diagram of braided $*$-functors commutes:
\begin{equation}\label{eq79}
\begin{tikzcd}[row sep=large,column sep=3cm]
\big(\Rep^0(P),\boxdot_P,\ss^P\big) \arrow[r,"{(\fk F^V_\CWX,\id)}","\simeq"'] \arrow[d,"{(\fk F_\VOA,\fk V^\boxdot)}"',"\simeq"] & \big(\Rep^0_{\Rep^V(\mc A_V)}(Q),\boxdot_Q,\ss^Q \big)\arrow[d,"{(\fk F_\CN,\fk N^\boxdot)}","\simeq"'] \\
\big(\RepUP,\boxdot_{U_P},\ss^{U_P} \big)\arrow[r,"{(\fk F^{U_P}_\CWX,\fk W^{U_P})}","\simeq"']           & \big(\Rep_{\Rep^V(\mc A_V)}(\mc B_Q),\boxtimes_{\mc B_Q},\mathbb B^{\mc B_Q} \big)         
\end{tikzcd}
\end{equation}
\end{thm}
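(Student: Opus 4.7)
The underlying $*$-functor equality in \eqref{eq79} is exactly the content of Thm.~\ref{lb88}, so the whole content of \ref{lb94} lies in matching the two composed tensorators. Tracing the standard composition law for tensorators of braided $*$-functors, the tensorator of the top-right path at $(\mc H_i,\mc H_j)$ is $\fk N^\boxdot_{i,j}\circ \fk F_\CN(\id)=\fk N^\boxdot_{i,j}$, while that of the bottom-left path is $\fk W^{U_P}_{i,j}\circ \fk F^{U_P}_\CWX(\fk V^\boxdot_{i,j})$, which (by the fact that $\fk F^{U_P}_\CWX$ preserves morphisms as maps of Hilbert spaces, cf. Thm.~\ref{lb42}) coincides with $\fk W^{U_P}_{i,j}\circ \fk V^\boxdot_{i,j}$. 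Hence it suffices to establish the identity of unitary maps
\begin{align*}
\fk N^\boxdot \;=\; \fk W^{U_P}\circ \fk V^\boxdot \qquad :\quad \mc H_i\boxdot_Q\mc H_j \xlongrightarrow{\simeq} \mc H_i\boxtimes_{\mc B_Q}\mc H_j.
\end{align*}

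To prove this, the plan is to combine Thm.~\ref{lb92} with the characterizing relations \eqref{eq91b} and \eqref{eq92b}. By Thm.~\ref{lb92} we have $\scr E_{U_P}=\fk V^\boxdot\circ \scr E_Q$, i.e., for every $\wtd I\in\Jtd$, every $\mc H_i,\mc H_k\in\Obj(\Rep_{\Rep^V(\mc A_V)}(\mc B_Q))$, and every $\xi\in\mc H_i(I)$, one has $\fk V^\boxdot_{i,k}\circ L^Q(\xi,\wtd I)|_{\mc H_k}=L^{U_P}(\xi,\wtd I)|_{\mc H_k}$, and similarly for the R-operators. Chaining this with \eqref{eq92b} and \eqref{eq91b} yields
\begin{align*}
\fk W^{U_P}\circ\fk V^\boxdot\circ L^Q(\xi,\wtd I)|_{\mc H_k}
&= \fk W^{U_P}\circ L^{U_P}(\xi,\wtd I)|_{\mc H_k} \\
&= L^{\boxtimes_{\mc B_Q}}(\xi,\wtd I)|_{\mc H_k}
= \fk N^\boxdot\circ L^Q(\xi,\wtd I)|_{\mc H_k},
\end{align*}
and an analogous chain of equalities for R-operators.

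Finally, by the uniqueness part of Thm.~\ref{lb1} applied to $\scr E_Q$ (equivalently, by Reeh-Schlieder density of $L^Q(\xi,\wtd I)\eta$ in $\mc H_i\boxdot_Q\mc H_j$ as $\xi\in\mc H_i(I),\eta\in\mc H_j$ vary), any natural unitary $\Psi$ satisfying $\Psi\circ L^Q(\xi,\wtd I)|_{\mc H_k}=L^{\boxtimes_{\mc B_Q}}(\xi,\wtd I)|_{\mc H_k}$ for all such data is uniquely determined. Both $\fk N^\boxdot$ and $\fk W^{U_P}\circ\fk V^\boxdot$ have this property, so they must coincide, which yields the claim. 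The main conceptual difficulty has already been absorbed into Thm.~\ref{lb92} (itself built on Thm.~\ref{lb83}); the only delicate point here is the careful bookkeeping that translates the operational equality $\scr E_{U_P}=\fk V^\boxdot\circ\scr E_Q$ into the categorical equality of composed tensorators, together with an explicit identification of how $\fk F_\CN$ and $\fk F^{U_P}_\CWX$ act on morphisms as bounded linear maps between the Hilbert spaces $\mc H_i\boxdot_Q\mc H_j$ and $\mc H_i\boxdot_{U_P}\mc H_j$.
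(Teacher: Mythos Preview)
Your proposal is correct and follows essentially the same approach as the paper: reduce to the tensorator identity $\fk N^\boxdot=\fk W^{U_P}\circ\fk V^\boxdot$, then derive it by chaining Thm.~\ref{lb92} ($\scr E_{U_P}=\fk V^\boxdot\circ\scr E_Q$) with the defining relations \eqref{eq91b} and \eqref{eq92b}. The paper's proof writes this chain compactly at the level of categorical extensions as $\fk W^{U_P}\circ\fk V^\boxdot\circ\scr E_Q=\fk W^{U_P}\circ\scr E_{U_P}=\scr E_{\mc B_Q,\Connes}|_{\Rep_{\Rep^V(\mc A_V)}(\mc B_Q)}=\fk N^\boxdot\circ\scr E_Q$, whereas you unpack the same computation at the level of L-operators and make the density/uniqueness step explicit; the content is identical.
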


\begin{proof}
By Thm. \ref{lb92}, we have
\begin{align*}
\fk W^{U_P}\circ\fk V^\boxdot\circ\scr E_Q=\fk W^{U_P}\circ\scr E_{U_P}\xlongequal{\eqref{eq92b}}\scr E_{\mc B_Q,\Connes}\big|_{\Rep_{\Rep^V(\mc A_V)}(\mc B_Q)}\xlongequal{\eqref{eq91b}}\fk N^\boxdot\circ\scr E_Q
\end{align*}
Therefore $\fk W^{U_P}\circ\fk V^\boxdot=\fk N^\boxdot$. (More precisely, we have $\fk W^{U_P}\circ\fk F^{U_P}_\CWX(\fk V^\boxdot)=\fk N^\boxdot$.)
\end{proof}

\subsection{The main comparison theorem}

In this subsection, we reformulate the comparison Thm. \ref{lb88} and \ref{lb94} in a more accessible way. For that purpose, we need a setting different from that in Subsubsec. \ref{lb74}. The difference is mainly due to choosing a different braided $C^*$-tensor structure on $\Rep^V(\mc A_V)$ (i.e. the one defined by Connes fusion). 


In the following subsubsection, we lay out the assumptions required for our main comparison Thm. \ref{lb95}. For the reader’s convenience in locating the relevant definitions, the four categories involved in this theorem are highlighted in bold, and the four braided $*$-functors are enclosed in boxes.

\subsubsection{The setting}\label{lb93}

Let $V$ satisfy Condition II. Again, we let $\mc A_V$ be the CKLW net of $V$, and let $(\RepV,\boxdot,\ss)$ be the Huang-Lepowsky braided $C^*$-tensor category of unitary $V$-modules (Thm. \ref{lb61}). Note that the $C^*$-subcategory
\begin{align}
\Rep^V(\mc A_V):=\FVCWX(\RepV)
\end{align}
of $\Rep(\mc A_V)$ is clearly full and replete. Thus, by the tensorator $\mc H_\blt\boxdot\mc H_\star\rightarrow\mc H_\blt\boxtimes\mc H_\star$, $\Rep^V(\mc A_V)$ is clearly closed under Connes fusion $\boxtimes=\boxtimes_{\mc A_V}$. Thus, the Connes braided $C^*$-tensor category for $\mc A_V$ (Thm. \ref{lb12}) restricts to
\begin{align*}
\big(\Rep^V(\mc A_V),\boxtimes,\mbb B \big)\equiv\big(\Rep^V(\mc A_V),\boxtimes_{\mc A_V},\mbb B^{\mc A_V}\big)
\end{align*}
By Thm. \ref{lb1}, we have an isomorphism of braided $C^*$-tensor categories
\begin{align}\label{eq72}
(\FVCWX,\fk W^V):\big(\RepV,\boxdot,\ss\big)\xlongrightarrow{\simeq}\big(\Rep^V(\mc A_V),\boxtimes,\mbb B \big)
\end{align}
where $\fk W^V:\mc H_\blt\boxdot\mc H_\star\rightarrow\mc H_\blt\boxtimes\mc H_\star$ is the Wassermann tensorator for $V$ (cf. Def. \ref{lb62}), i.e., it is determined by
\begin{align}\label{eq76}
\fk W^V\circ\scr E_V=\scr E_{\mc A_V,\Connes}\big|_{\Rep^V(\mc A_V)}
\end{align}
where $\scr E_{\mc A_V,\Connes}$ is the Connes categorical extension for $\mc A_V$ whose L and R operators are denoted by $L^\boxtimes,R^\boxtimes$.

Let $P=(W_a,\mu,\iota)$ be a haploid commutative $C^*$-Frobenius algebra in $\RepV$. Let
\begin{align*}
\Theta=(\FVCWX,\fk W^V)(P)=(\mc H_a,\fk m,\iota)
\end{align*}
be the pushforward of $P$ in $\Rep^V(\mc A_V)$ (and hence in $\Rep(\mc A_V)$). Thus,  $\mc H_a$ is $\FVCWX(W_a)$ as an $\mc A_V$-module, and $\mk:\mc H_a\boxtimes\mc H_a\rightarrow\mc H_a$ is related to $\mu:W_a\boxdot W_a\rightarrow W_a$ (or more precisely, related to the continuous extension $\mu:\mc H_a\boxdot\mc H_a\rightarrow\mc H_a$) by
\begin{align}\label{eq106}
\mk=\mu\circ(\fk W^V_{a,a})^{-1}
\end{align}
Then \eqref{eq72} is lifted to an isomorphism of $C^*$-categories
\begin{align*}
\tFVCWX:\Rep^0(P)\xlongrightarrow{\simeq}\Rep^0_{\Rep^V(\mc A_V)}(\Theta)
\end{align*}
where the target is the category of dyslectic $\Theta$-modules in $\Rep^V(\mc A_V)$. $\tFVCWX$ can be explicitly described as follows: For each $(W_i,\mu^i)\in\Obj(\Rep^0(P))$, 
\begin{align}\label{eq80}
\tFVCWX(W_i,\mu^i)=(\mc H_i,\mk^i)\quad\text{where}\quad\mk^i=\mu^i\circ(\fk W_{a,i}^V)^{-1}:\mc H_a\boxtimes\mc H_i\rightarrow\mc H_i
\end{align}
and action of $\tFVCWX$ on the morphisms is the identity map.\footnote{Since $\fk W^V$ is natural, for each $W_i,W_{i'}\in\Obj(\Rep^0(P))$, an element $T\in\Hom_V(W_i,W_{i'})$ belongs to $\Hom_P(W_i,W_{i'})$ if and only if its closure $T:\mc H_i\rightarrow\mc H_{i'}$ belongs to $\Hom_\Theta(\mc H_i,\mc H_{i'})$.}

Let $(\boxdot_P,\mu_{\blt,\star})$ and $(\boxtimes_\Theta,\mk_{\blt,\star})$ be systems of fusion products in $\Rep^0(P)$ and $\Rep^0_{\Rep^V(\mc A_V)}(\Theta)$ respectively. Our discussion of (unitary) linking map in ensures that for each $W_i,W_j\in\Obj(\Rep^0(P))$, there is a unitary map $\wtd{\fk W}^V_{i,j}:\mc H_i\boxdot_P\mc H_j\rightarrow H_i\boxtimes_\Theta\mc H_j$ intertwining the actions of $P$ and $\Theta$ such that the following diagram commutes
\begin{equation}\label{eq85}
\begin{tikzcd}[column sep=large]
\mc H_i\boxdot\mc H_j\arrow[r,"\fk W^V_{i,j}","\simeq"']\arrow[d,"\mu_{i,j}"'] & \mc H_i\boxtimes\mc H_j\arrow[d,"\mk_{i,j}"]\\
\mc H_i\boxdot_P\mc H_j\arrow[r,"\wtd{\fk W}^V_{i,j}","\simeq"']& \mc H_i\boxtimes_\Theta\mc H_j
\end{tikzcd}
\end{equation}
Then, similar to Rem. \ref{lb39}, we have an isomorphism of braided $C^*$-tensor categories
\begin{align}\label{eq73}
\tcboxmath{(\tFVCWX,\wtd{\fk W}^V):\big(\Rep^0(P),\boxdot_P,\ss^P\big)\xlongrightarrow{\simeq}\big(\Rep_{\Rep^V(\mc A_V)}^0(\Theta),\boxtimes_\Theta,\ss^\Theta\big)}
\end{align}
where $\pmb{(\Rep^0(P),\boxdot_P,\ss^P)}$ and $\pmb{(\Rep_{\Rep^V(\mc A_V)}^0(\Theta),\boxtimes_\Theta,\ss^\Theta)}$ are the braided $C^*$-tensor categories associated to $(\boxdot_P,\mu_{\blt,\star})$ and $(\boxtimes_\Theta,\mk_{\blt,\star})$ respectively (cf. Thm. \ref{lb19}).

Let $\mc B_\Theta$ be the conformal net extension of $\mc A_V$ associated to $\Theta$ (cf. Thm. \ref{lb26}), i.e., for each $\wtd I\in\Jtd$ we have
\begin{align}\label{eq75}
\mc B_\Theta(I)=\big\{\mk L^\boxtimes(\xi,\wtd I)|_{\mc H_a}:\xi\in\mc H_a(I) \big\}
\end{align}
Let $\pmb{(\Rep_{\Rep^V(\mc A_V)}(\mc B_\Theta),\boxtimes_{\mc B_\Theta},\mbb B^{\mc B_\Theta})}$ be the Connes braided $C^*$-tensor category of $\mc B_\Theta$-modules whose restrictions to $\mc A_V$ are objects in $\Rep^V(\mc A_V)$. By Cor. \ref{lb41}, we have an isomorphism of braided $C^*$-tensor categories
\begin{align}\label{eq87}
\tcboxmath{(\fk F_\CN,\fk N^\boxtimes):\big(\Rep_{\Rep^V(\mc A_V)}^0(\Theta),\boxtimes_\Theta,\ss^\Theta\big)\xlongrightarrow{\simeq} \big(\Rep_{\Rep^V(\mc A_V)}(\mc B_\Theta),\boxtimes_{\mc B_\Theta},\mbb B^{\mc B_\Theta}\big)}
\end{align}
where $\fk F_\CN$ is defined in a similar way as in Thm. \ref{lb27}, i.e.,
\begin{gather}\label{eq84}
\begin{gathered}
\fk F_\CN(\mc H_i,\mk^i)=(\mc H_i,\pi_i')\\
\text{where }\pi_{i,I}'(\mk L^\boxtimes(\xi,\wtd I)|_{\mc H_a})=\mk^i L^\boxtimes(\xi,\wtd I)|_{\mc H_i}\qquad(\forall\wtd I\in\Jtd,\forall\xi\in\mc H_a(I))
\end{gathered}
\end{gather}
\footnote{We will see in Thm. \ref{lb95} that $\pi_i'$ equals the $\pi_i$ in \eqref{eq69}} and the action of $\fk F_\CN$ on the morphism spaces is $\id$;   $\fk N^\boxtimes$ is determined by
\begin{align}\label{eq82}
\fk N^\boxtimes\circ \mk_{\blt,\star}\circ\scr E_{\mc A_V,\Connes}\big|_{\Rep^V(\mc A_V)}=\scr E_{\mc B_{\Theta},\Connes}\big|_{\Rep_{\Rep^V(\mc A_V)}(\mc B_\Theta)}
\end{align}
where $\scr E_{\mc B_\Theta,\Connes}$ is the Connes categorical extension for $\mc B_\Theta$.

Finally, we let $\pmb{(\RepUP,\boxdot_{U_P},\ss^{U_P})}$ be the Huang-Lepowsky braided $C^*$-tensor category for $U_P$. Let
\begin{align}
\tcboxmath{(\fk F_\VOA,\fk V^\boxdot):\big(\Rep^0(P),\boxdot_P,\ss^P \big)\xlongrightarrow{\simeq}\big(\RepUP,\boxdot_{U_P},\ss^{U_P}\big)}
\end{align}
be as in Thm. \ref{lb67}.

\subsubsection{The main comparison theorem}

Assume the setting in Subsubsec. \ref{lb93}. If $U_P$ satisfies Condition II, let
\begin{align*}
\tcboxmath{\fk W^{U_P}:\mc H_\blt\boxdot_{U_P}\mc H_\star\rightarrow\mc H_\blt\boxtimes_{\mc A_{U_P}}\mc H_\star}
\end{align*}
be the Wassermann tensorator for $U_P$ (cf. Def. \ref{lb62}). Then Thm. \ref{lb88} and \ref{lb94} can be rewritten as follows.

\begin{thm}[Main comparison theorem]\label{lb95}
Assume that $V$ satisfies Condition II. Then we have
\begin{align*}
\mc A_{U_P}=\mc B_\Theta
\end{align*}
as conformal nets acting on the Hilbert space $\mc H_a$, and the following diagram commutes
\begin{equation}\label{eq74}
\begin{tikzcd}[row sep=large,column sep=2cm]
\Rep^0(P) \arrow[r,"\tFVCWX","\simeq"'] \arrow[d,"{\fk F_\VOA}"',"\simeq"] & \Rep^0_{\Rep^V(\mc A_V)}(\Theta) \arrow[d,"{\fk F_\CN}","\simeq"'] \\
\RepUP \arrow[r,"{\fk F^{U_P}_\CWX}","\simeq"']           & \Rep_{\Rep^V(\mc A_V)}(\mc B_\Theta)          
\end{tikzcd}
\end{equation}
Moreover, if $U_P$ also satisfies Condition II, then \eqref{eq74} can be extended to a commutative diagram of braided $*$-functors
\begin{equation}\label{eq81}
\begin{tikzcd}[row sep=large,column sep=3cm]
\big(\Rep^0(P),\boxdot_P,\ss^P\big) \arrow[r,"{(\tFVCWX,\wtd{\fk W}^V)}","\simeq"'] \arrow[d,"{(\fk F_\VOA,\fk V^\boxdot)}"',"\simeq"] & \big(\Rep^0_{\Rep^V(\mc A_V)}(\Theta),\boxtimes_\Theta,\mathbb B^\Theta \big)\arrow[d,"{(\fk F_\CN,\fk N^\boxtimes)}","\simeq"'] \\
\big(\RepUP,\boxdot_{U_P},\ss^{U_P} \big)\arrow[r,"{(\fk F^{U_P}_\CWX,\fk W^{U_P})}","\simeq"']           & \big(\Rep_{\Rep^V(\mc A_V)}(\mc B_\Theta),\boxtimes_{\mc B_\Theta},\mathbb B^{\mc B_\Theta} \big)         
\end{tikzcd}
\end{equation}
\end{thm}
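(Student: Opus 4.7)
The plan is to deduce Thm.~\ref{lb95} from Thm.~\ref{lb88} and Thm.~\ref{lb94} by pushing everything forward through the Wassermann tensorator $\fk W^V$, which identifies the Huang--Lepowsky braided $C^*$-tensor structure $\boxdot$ on $\Rep^V(\mc A_V)$ (used in Subsubsec.~\ref{lb74}) with the Connes braided $C^*$-tensor structure $\boxtimes$ (used in Subsubsec.~\ref{lb93}). Concretely, set $Q=(\fk F^V_\CWX,\id)(P)$ as in Subsubsec.~\ref{lb74} so that $Q$ and $\Theta$ differ only by transporting the multiplication through $\fk W^V_{a,a}$, namely $\mk=\mu\circ(\fk W^V_{a,a})^{-1}$. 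Choose any system of fusion products $(\boxdot_P,\mu_{\blt,\star})$ in $\Rep^0(P)$ and transport it in two ways: via $(\fk F^V_\CWX,\id)$ to $(\boxdot_Q,\mu_{\blt,\star})$ in $\Rep^0_{\Rep^V(\mc A_V)}(Q)$, and via $(\tFVCWX,\wtd{\fk W}^V)$ to $(\boxtimes_\Theta,\mk_{\blt,\star})$ in $\Rep^0_{\Rep^V(\mc A_V)}(\Theta)$, so that \eqref{eq85} gives $\wtd{\fk W}^V_{i,j}\circ\mu_{i,j}=\mk_{i,j}\circ\fk W^V_{i,j}$.

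For the first assertion, I will combine the defining formulas \eqref{eq77} and \eqref{eq75} with the identity $\fk W^V_{a,a}\circ L^\boxdot(\xi,\wtd I)|_{\mc H_a}=L^\boxtimes(\xi,\wtd I)|_{\mc H_a}$ from Thm.~\ref{lb1} (applied to $\scr E_V$, cf.~\eqref{eq76}) to obtain
\[
\mk\, L^\boxtimes(\xi,\wtd I)|_{\mc H_a}=\mu\circ(\fk W^V_{a,a})^{-1}\circ L^\boxtimes(\xi,\wtd I)|_{\mc H_a}=\mu\, L^\boxdot(\xi,\wtd I)|_{\mc H_a},
\]
which yields $\mc B_\Theta(I)=\mc B_Q(I)$ for every $I$, hence $\mc A_{U_P}=\mc B_Q=\mc B_\Theta$ by Thm.~\ref{lb88}. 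The commutativity of \eqref{eq74} then follows by comparing it with \eqref{eq67}: the two $\fk F_\CN$'s in Thm.~\ref{lb27} and in \eqref{eq84} agree on objects because for $(\mc H_i,\mu^i)\in\Obj(\Rep^0(P))$ and its image $(\mc H_i,\mk^i)$ with $\mk^i=\mu^i\circ(\fk W^V_{a,i})^{-1}$, we have $\mk^i L^\boxtimes(\xi,\wtd I)|_{\mc H_i}=\mu^i L^\boxdot(\xi,\wtd I)|_{\mc H_i}$ by the same identity, so the defining equations \eqref{eq84} and the one in Thm.~\ref{lb27} produce the same $\mc B_Q=\mc B_\Theta$-action on $\mc H_i$; the morphism parts of both functors are the identity, so \eqref{eq74} reduces to \eqref{eq67}.

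For the braided enhancement under the extra hypothesis on $U_P$, it suffices to show $\fk N^\boxtimes\circ\wtd{\fk W}^V=\fk N^\boxdot$, since then Thm.~\ref{lb94} (which asserts $\fk W^{U_P}\circ\fk V^\boxdot=\fk N^\boxdot$) will give $\fk W^{U_P}\circ\fk V^\boxdot=\fk N^\boxtimes\circ\wtd{\fk W}^V$, which is exactly the commutativity of the tensorators in \eqref{eq81}. To establish this equality, apply both sides to $\mu_{i,k}L^\boxdot(\xi,\wtd I)\eta$ with $\xi\in\mc H_i(I)$ and $\eta\in\mc H_k$: on one hand, \eqref{eq91b} yields $\fk N^\boxdot\mu_{i,k}L^\boxdot(\xi,\wtd I)\eta=L^{\boxtimes_{\mc B_Q}}(\xi,\wtd I)\eta$; on the other, using \eqref{eq85}, $\fk W^V\circ L^\boxdot=L^\boxtimes$, and \eqref{eq82} together with $\mc B_\Theta=\mc B_Q$,
\[
\fk N^\boxtimes\wtd{\fk W}^V\mu_{i,k}L^\boxdot(\xi,\wtd I)\eta=\fk N^\boxtimes\mk_{i,k}\fk W^V L^\boxdot(\xi,\wtd I)\eta=\fk N^\boxtimes\mk_{i,k}L^\boxtimes(\xi,\wtd I)\eta=L^{\boxtimes_{\mc B_\Theta}}(\xi,\wtd I)\eta.
\]
Density of fusion products in $\scr E_Q$ finishes the identification. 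The only mildly delicate point is bookkeeping—keeping track of which tensorator intertwines which pair of structures—but no new analytic input beyond Thm.~\ref{lb88} and Thm.~\ref{lb94} is required.
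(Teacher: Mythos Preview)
Your proposal is correct and follows essentially the same approach as the paper: both reduce Thm.~\ref{lb95} to Thm.~\ref{lb88} and Thm.~\ref{lb94} by establishing $\mc B_\Theta=\mc B_Q$ via the identity $\mk L^\boxtimes(\xi,\wtd I)|_{\mc H_a}=\mu L^\boxdot(\xi,\wtd I)|_{\mc H_a}$ (the paper's \eqref{eq83}), matching the two $\fk F_\CN$'s via the analogous identity on modules, and proving the tensorator relation $\fk N^\boxtimes\circ\wtd{\fk W}^V=\fk N^\boxdot$ by the chain \eqref{eq71}--\eqref{eq85}--\eqref{eq76}--\eqref{eq82}--\eqref{eq91b}. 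The paper organizes this last step as a single calculation at the level of categorical extensions ($\fk N^\boxtimes\circ\wtd{\fk W}^V\circ\scr E_Q=\cdots=\fk N^\boxdot\circ\scr E_Q$) and packages the passage between the two $Q$/$\Theta$ pictures into an auxiliary commutative square \eqref{eq78} with an explicit functor $\clubsuit$, whereas you do the equivalent computation pointwise on $\mu_{i,k}L^\boxdot(\xi,\wtd I)\eta$; the content is the same.
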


Recall that by Cor. \ref{lb90}, if $V$ satisfies Condition I, then both $V$ and $U_P$ satisfy Condition II. Then Thm. \ref{lb95} applies to this case.

\begin{proof}
We use the notations in Subsubsec. \ref{lb74}. In \eqref{eq75}, we have
\begin{align}\label{eq83}
\mk L^\boxtimes(\xi,\wtd I)|_{\mc H_a}\xlongequal{\eqref{eq106}}\mu\circ(\fk W^V_{a,a})^{-1}L^\boxtimes(\xi,\wtd I)|_{\mc H_a}\xlongequal{\eqref{eq76}}\mu L^\boxdot(\xi,\wtd I)|_{\mc H_a}
\end{align}
Thus, by \eqref{eq77}, we have
\begin{align*}
\mc B_\Theta=\mc B_Q
\end{align*}
By Thm. \ref{lb88}, we get $\mc B_\Theta=\mc A_{U_P}$. 

Consider the following diagram
\begin{equation}\label{eq78}
\begin{tikzcd}[row sep=large,column sep=large]
\big(\Rep^0_{\Rep^V(\mc A_V)}(Q),\boxdot_Q,\ss^Q \big) \arrow[r,"{(\clubsuit,\wtd{\fk W}^V)}","\simeq"'] \arrow[d,"{(\fk F_\CN,\fk N^\boxdot)}"',"\simeq"] & \big(\Rep^0_{\Rep^V(\mc A_V)}(\Theta),\boxtimes_\Theta,\mathbb B^\Theta \big)\arrow[d,"{(\fk F_\CN,\fk N^\boxtimes)}","\simeq"'] \\
\big(\Rep_{\Rep^V(\mc A_V)}(\mc B_Q),\boxtimes_{\mc B_Q},\mathbb B^{\mc B_Q} \big)\arrow[r,"="]           & \big(\Rep_{\Rep^V(\mc A_V)}(\mc B_\Theta),\boxtimes_{\mc B_\Theta},\mathbb B^{\mc B_\Theta} \big)         
\end{tikzcd}
\end{equation}
The two braided $C^*$-categories in the second row are identical since $\mc B_Q=\mc B_\Theta$. The left vertical arrow of \eqref{eq78} equals the right vertical arrow of \eqref{eq79}. $\clubsuit$ is the $*$-functor sending each $(\mc H_i,\mu^i)$ to $(\mc H_i,\mk^i)$ where $\mk^i=\mu^i\circ(\fk W_{a,i}^V)^{-1}$ as in \eqref{eq80}, and acting on the morphism spaces as the identity. So the first row of \eqref{eq79}, composed with the first row of \eqref{eq78}, equals the first row of \eqref{eq81}. We claim that the diagram \eqref{eq78} commutes. Then the composition of \eqref{eq67} (in Thm. \ref{lb88}) with \eqref{eq78} proves that \eqref{eq74} commutes; when $U_P$ also satisfies Condition II, the composition of \eqref{eq79} (in Thm. \ref{lb94}) with \eqref{eq78} proves that \eqref{eq81} commutes.

Choose any $(W_i,\mu^i)\in\Obj(\Rep^0(P))$. Then $(\mc H_i,\mu^i)\in\Obj(\Rep^0_{\Rep^V(\mc A_V)}(Q))$ is sent by $\clubsuit$ to $(\mc H_i,\mk^i)$, and then sent by $\fk F_\CN$ to the $\mc B_\Theta$-module $(\mc H_i,\pi'_i)$ where for each $\wtd I\in\Jtd,\xi\in\mc H_a(I)$ we have (by \eqref{eq84})
\begin{align*}
\pi'_{i,I}(\mk L^\boxtimes(\xi,\wtd I)|_{\mc H_a})=\mk^i L^\boxtimes(\xi,\wtd I)|_{\mc H_i}
\end{align*}
On the other hand, $(\mc H_i,\mu^i)\in\Obj(\Rep^0_{\Rep^V(\mc A_V)}(Q))$ is sent by the left vertical arrow of \eqref{eq78} to $(\mc H_i,\pi_i)$ where (by Thm. \ref{lb27})
\begin{align*}
\pi_{i,I}(\mu L^\boxdot(\xi,\wtd I)|_{\mc H_a})=\mu^i L^\boxdot(\xi,\wtd I)|_{\mc H_i}
\end{align*}
By \eqref{eq83} and by
\begin{align}
\mk^i L^\boxtimes(\xi,\wtd I)|_{\mc H_i}\xlongequal{\eqref{eq80}}\mu^i\circ(\fk W^V_{a,i})^{-1}L^\boxtimes(\xi,\wtd I)|_{\mc H_i}\xlongequal{\eqref{eq76}}\mu^i L^\boxdot(\xi,\wtd I)|_{\mc H_i}
\end{align}
we have $\pi_i'=\pi_i$. So \eqref{eq78} commutes as a diagram of functors.

Finally, let us take care of the tensorators in \eqref{eq78}. Choose any $W_i,W_j\in\Rep^0(P)$. We need to show that the following diagram commutes:
\begin{equation}\label{eq86}
\begin{tikzcd}
\mc H_i\boxdot_P\mc H_j \arrow[r,"{\wtd {\fk W}^V_{i,j}}","\simeq"'] \arrow[d,"\fk N^\boxdot_{i,j}"',"\simeq"] & \mc H_i\boxdot_\Theta\mc H_j \arrow[d,"\fk N^\boxtimes_{i,j}","\simeq"'] \\
\mc H_i\boxdot_{\mc B_\Theta}\mc H_j \arrow[r,"="]           & \mc H_i\boxdot_{\mc B_\Theta}\mc H_j         
\end{tikzcd}
\end{equation}
(More precisely, we want to prove  $\fk N^\boxdot=\fk N^\boxtimes\circ\clubsuit(\wtd{\fk W}^V)$, and we recall that $\clubsuit$ acts as the identity on morphisms.) We compute that
\begin{align*}
&\fk N^\boxtimes\circ\wtd{\fk W}^V\circ\scr E_Q\xlongequal{\eqref{eq71}}\fk N^\boxtimes\circ\wtd{\fk W}^V\circ \mu_{\blt,\star}\circ\scr E_V\xlongequal{\eqref{eq85}}\fk N^\boxtimes\circ\mk_{\blt,\star}\circ \fk W^V\circ\scr E_V\\
\xlongequal{\eqref{eq76}}&\fk N^\boxtimes\circ\mk_{\blt,\star}\circ \scr E_{\mc A_V,\Connes}|_{\Rep^V(\mc A_V)}\xlongequal{\eqref{eq82}}\scr E_{\mc B_{\Theta},\Connes}\big|_{\Rep_{\Rep^V(\mc A_V)}(\mc B_\Theta)}
\end{align*}
By \eqref{eq91b}, the rightmost term above equals $\fk N^\boxdot\circ\scr E_Q$. This proves $\fk N^\boxdot=\fk N^\boxtimes\circ\wtd{\fk W}^V$. So \eqref{eq86} commutes.
\end{proof}



\begin{co}\label{lb96}
Assume that $V$ satisfies Condition II, and let $U$ be a unitary VOA extension of $V$. Then the CWX functor $\fk F^U_\CWX:\RepU\xrightarrow{\simeq}\Rep_{\Rep^V(\mc A_V)}(\mc A_U)$ can be extended to a braided $*$-functor implementing an isomorphism of braided $C^*$-tensor categories:
\begin{align*}
(\fk F^U_\CWX,?):\big(\RepU,\boxdot_U,\ss^U\big)\xlongrightarrow{\simeq} \big(\Rep_{\Rep^V(\mc A_V)}(\mc A_U),\boxtimes_{\mc A_U},\mathbb B^{\mc A_U} \big) 
\end{align*}
where the RHS is the Connes braided $C^*$-tensor category of $\mc A_U$-modules whose restrictions to $\mc A_V$ are objects of $\Rep^V(\mc A_V)$.
\end{co}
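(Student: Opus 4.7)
The plan is to construct the desired braided $*$-functor by composing three braided $*$-functors that are already available (since only $V$ — not necessarily $U$ — is assumed to satisfy Condition II), and then to invoke the $*$-functor diagram \eqref{eq74} of Thm. \ref{lb95} to check that the underlying functor of this composition is indeed $\fk F^U_\CWX$. By Thm. \ref{lb75}, we may write $U=U_P$ for a unique haploid commutative $C^*$-Frobenius algebra $P$ in $\RepV$, and by Thm. \ref{lb88} we have the equality of conformal nets $\mc A_U=\mc B_\Theta$, with $\fk F^U_\CWX$ landing in $\Rep_{\Rep^V(\mc A_V)}(\mc B_\Theta)$. So it suffices to adopt the setting of Subsubsec. \ref{lb93} and work with $\mc B_\Theta$ in place of $\mc A_U$.

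First, I would fix systems of fusion products $(\boxdot_P,\mu_{\blt,\star})$ in $\Rep^0(P)$ and $(\boxtimes_\Theta,\mk_{\blt,\star})$ in $\Rep^0_{\Rep^V(\mc A_V)}(\Theta)$ as in Subsubsec. \ref{lb93}. This yields the three braided $*$-functors
\begin{align*}
(\fk F_\VOA,\fk V^\boxdot)&:\big(\Rep^0(P),\boxdot_P,\ss^P\big)\xlongrightarrow{\simeq}\big(\RepUP,\boxdot_{U_P},\ss^{U_P}\big),\\
(\tFVCWX,\wtd{\fk W}^V)&:\big(\Rep^0(P),\boxdot_P,\ss^P\big)\xlongrightarrow{\simeq}\big(\Rep^0_{\Rep^V(\mc A_V)}(\Theta),\boxtimes_\Theta,\ss^\Theta\big),\\
(\fk F_\CN,\fk N^\boxtimes)&:\big(\Rep^0_{\Rep^V(\mc A_V)}(\Theta),\boxtimes_\Theta,\ss^\Theta\big)\xlongrightarrow{\simeq}\big(\Rep_{\Rep^V(\mc A_V)}(\mc B_\Theta),\boxtimes_{\mc B_\Theta},\mbb B^{\mc B_\Theta}\big),
\end{align*}
all of which exist and are isomorphisms of braided $C^*$-tensor categories by Thm. \ref{lb67}, \eqref{eq73}, and \eqref{eq87} respectively — \emph{none of them requires $U_P$ to satisfy Condition II}. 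Composing the last two with the inverse of the first produces a braided $*$-functor
\[
(\Phi,\fk T):=(\fk F_\CN,\fk N^\boxtimes)\circ(\tFVCWX,\wtd{\fk W}^V)\circ(\fk F_\VOA,\fk V^\boxdot)^{-1}\colon\big(\RepUP,\boxdot_{U_P},\ss^{U_P}\big)\xlongrightarrow{\simeq}\big(\Rep_{\Rep^V(\mc A_V)}(\mc B_\Theta),\boxtimes_{\mc B_\Theta},\mbb B^{\mc B_\Theta}\big),
\]
which is an isomorphism of braided $C^*$-tensor categories.

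The key step is to show that the underlying $*$-functor $\Phi$ equals the CWX functor $\fk F^U_\CWX$. But this is exactly the commutativity of \eqref{eq74}, which is the content of the first half of Thm. \ref{lb95} and holds as soon as $V$ satisfies Condition II (the second half, requiring Condition II also for $U_P$, is not needed here since we are constructing the tensorator abstractly rather than identifying it with $\fk W^{U_P}$). Therefore $\Phi=\fk F^U_\CWX$, and the natural unitary $\fk T\colon \mc H_\blt\boxdot_U\mc H_\star\xrightarrow{\simeq}\mc H_\blt\boxtimes_{\mc A_U}\mc H_\star$ defined by $\fk T_{i,j}=\fk N^\boxtimes_{i,j}\circ\wtd{\fk W}^V_{i,j}\circ(\fk V^\boxdot_{i,j})^{-1}$ (interpreted with the proper identifications $\fk F_\VOA(W_i\boxdot_P W_j)\simeq W_i\boxdot_{U_P} W_j$, etc.) gives the sought-after tensorator, answering the ``$?$" in the statement. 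The only routine issue is to make sure the identifications on objects along the three functors are consistently tracked, since $(\fk F_\VOA,\fk V^\boxdot)$ is formulated via the linking-map formalism of Rem. \ref{lb17}; this is a bookkeeping matter and is settled in exactly the same way as in the proof of \eqref{eq86} in Thm. \ref{lb95}. I do not foresee any substantive obstacle beyond this bookkeeping, since Thm. \ref{lb95} already does the heavy lifting of identifying the two conformal net extensions and the two functors.
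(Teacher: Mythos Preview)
Your proposal is correct and follows essentially the same approach as the paper: compose the three available braided $*$-functors $(\fk F_\CN,\fk N^\boxtimes)\circ(\tFVCWX,\wtd{\fk W}^V)\circ(\fk F_\VOA,\fk V^\boxdot)^{-1}$ and invoke the first half of Thm.~\ref{lb95} (the commutativity of \eqref{eq74}) to identify the underlying functor with $\fk F^{U_P}_\CWX$. The paper's proof is simply a terse version of what you wrote, presenting the three arrows as a diagram with the bottom edge omitted and remarking that the result is immediate; your additional discussion of the bookkeeping for $\fk T$ is harmless but not needed at the level of detail the paper adopts.
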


\begin{proof}
This is immediate from the first half of Thm. \ref{lb95} and
\begin{equation}
\begin{tikzcd}[row sep=large,column sep=3cm]
\big(\Rep^0(P),\boxdot_P,\ss^P\big) \arrow[r,"{(\tFVCWX,\wtd{\fk W}^V)}","\simeq"'] \arrow[d,"{(\fk F_\VOA,\fk V^\boxdot)}"',"\simeq"] & \big(\Rep^0_{\Rep^V(\mc A_V)}(\Theta),\boxtimes_\Theta,\mathbb B^\Theta \big)\arrow[d,"{(\fk F_\CN,\fk N^\boxtimes)}","\simeq"'] \\
\big(\RepUP,\boxdot_{U_P},\ss^{U_P} \big)     & \big(\Rep_{\Rep^V(\mc A_V)}(\mc B_\Theta),\boxtimes_{\mc B_\Theta},\mathbb B^{\mc B_\Theta} \big)         
\end{tikzcd}
\end{equation}
if we let $U=U_P$. (Note that this Corollary can also be proved by Thm. \ref{lb88} using a similar argument.)
\end{proof}

\subsection{Surjectivity of the CWX functors}

In this subsection, we enhance our main comparison Thm. \ref{lb95} to Cor. \ref{lb117} and provide examples that Cor. \ref{lb117} can be applied. For each conformal net $\mc A$, we let
\begin{align*}
\Repf(\mc A)=\text{the category of dualizable $\mc A$-modules}
\end{align*}
i.e., the category of all $\mc A$-modules which have dual objects in $\Rep(\mc A)$. Then $\Repf(\mc A_V)$ is a rigid braided full $C^*$-tensor subcategory of $\RepV$.

Recall that if $V$ satisfies Condition II and $U=U_P$, then $\Rep^V(\mc A_V):=\FVCWX(\RepV)$, and $\Rep_{\Rep^V(\mc A_V)}(\mc A_U)$ is the category of $\mc A_U$-modules whose restrictions to $\mc A_V$ are objects of $\Rep^V(\mc A_V)$.

\begin{thm}[\cite{Gui26}, Thm. 3.38]\label{lb115}
Assume that $V$ satisfies Condition II, and let $U$ be a unitary VOA extension of $V$. Then $\mc A_V$ is completely rational if and only if $\mc A_U$ is so.
\end{thm}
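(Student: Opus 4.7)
The plan is to leverage the main comparison Thm.~\ref{lb95} in order to reduce the statement to the preservation of complete rationality under finite-index inclusions of conformal nets. First I would write $U = U_P$ via Thm.~\ref{lb75} for a haploid commutative $C^*$-Frobenius algebra $P = (W_a,\mu,\iota)$ in $\RepV$; by Thm.~\ref{lb78}, $U_P$ is strongly local so that $\mc A_{U_P}$ is defined. Let $\Theta = (\fk F^V_\CWX,\fk W^V)(P)$ be the pushforward haploid commutative $C^*$-Frobenius algebra in $\Rep^V(\mc A_V)$. The first half of Thm.~\ref{lb95} then gives $\mc A_U = \mc B_\Theta$, and Thm.~\ref{lb26} tells us that this is a finite-index conformal net extension of $\mc A_V$ (the index being $\dim(\mc H_a)^2$ in $\Rep^V(\mc A_V)$, which is finite because $W_a = U_P$ is a finite direct sum of irreducible unitary $V$-modules).

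Next, with the finite-index inclusion $\mc A_V \subset \mc A_U$ in hand, the equivalence of complete rationality should follow from the three defining properties being preserved by finite-index extensions and subnets. The split property passes in both directions along a finite-index inclusion, since a finite-index extension of a split inclusion of type~III factors is again split (and vice versa via passing to the corners). Strong additivity likewise passes in both directions for finite-index conformal subnets, by results of Longo and Xu. Finally, the $\mu$-indices on the two sides are related by Longo's multiplicativity formula $\mu(\mc A_V) = [\mc A_U : \mc A_V]^2 \cdot \mu(\mc A_U)$, so finiteness of one is equivalent to finiteness of the other. Combining these three, I would conclude that $\mc A_V$ is completely rational if and only if $\mc A_U$ is.

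The substantive analytic content is carried entirely by the identification $\mc A_U = \mc B_\Theta$ supplied by Thm.~\ref{lb95}; once that equality is available, the three preservation inputs are standard in the Longo-Kawahigashi-Xu tradition and require no input from the VOA side. The most delicate of the three is expected to be strong additivity, for which I would invoke that a finite-index inclusion of conformal nets transmits and inherits strong additivity via Haag duality on the real line (equivalently, via the Jones tower for the subfactor $\mc A_V(I) \subset \mc A_U(I)$ being stable under taking complementary intervals); the split property and finiteness of the $\mu$-index drop out immediately from the corresponding index and inclusion formulae.
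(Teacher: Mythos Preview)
Your proposal is correct and follows essentially the same approach as the paper: identify $\mc A_U$ with a finite-index extension of $\mc A_V$ via the $C^*$-Frobenius algebra picture, then invoke that complete rationality is preserved under finite-index inclusions. The paper compresses your second paragraph into a single citation of \cite[Thm.~24]{Lon03} rather than unpacking the split property, strong additivity, and $\mu$-index separately; also, a minor quibble: the index of the extension is $d(\mc H_a)$, not $d(\mc H_a)^2$.
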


\begin{proof}
Let $U=U_P$. Then $\mc A_{U_P}$ is a finite-index extension of $\mc A_V$. So the complete rationality of the two nets are equivalent by \cite[Thm. 24]{Lon03}.
\end{proof}

\begin{thm}\label{lb97}
Assume that $V$ satisfies Condition II. Then $\Rep^V(\mc A_V)$ is a full replete subcategory of $\Repf(\mc A_V)$, and $\Rep_{\Rep^V(\mc A_V)}(\mc A_U)$ is a full replete subcategory of $\Repf(\mc A_U)$. Moreover, consider the conditions
\begin{enumerate}[label=(\arabic*)]
\item $\Rep^V(\mc A_V)=\Repf(\mc A_V)$
\item $\Rep_{\Rep^V(\mc A_V)}(\mc A_U)=\Repf(\mc A_U)$
\end{enumerate}
Then we have (1)$\Rightarrow$(2). We have (2)$\Rightarrow$(1) when $\mc A_V$ is completely rational (equivalently, $\mc A_U$ is completely rational, cf. Thm. \ref{lb115}). In other words, consider the conditions
\begin{enumerate}[label=(\alph*)]
\item The functor $\fk F^V_\CWX:\RepV\rightarrow\Repf(\mc A_V)$ is (essentially) surjective.
\item The functor $\fk F^U_\CWX:\RepU\rightarrow\Repf(\mc A_U)$ is (essentially) surjective.
\end{enumerate}
Then we have (a)$\Rightarrow$(b). In view of Cor. \ref{lb96}, we have (b)$\Rightarrow$(a) when $\mc A_V$ is completely rational (equivalently, $\mc A_U$ is completely rational).
\end{thm}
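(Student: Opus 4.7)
The plan begins by establishing the two inclusions $\Rep^V(\mc A_V) \subset \Repf(\mc A_V)$ and $\Rep_{\Rep^V(\mc A_V)}(\mc A_U) \subset \Repf(\mc A_U)$ together with fullness and repleteness. Fullness and repleteness of $\Rep^V(\mc A_V)$ in $\Rep(\mc A_V)$ is already given by Thm.~\ref{lb42}; for $\Rep_{\Rep^V(\mc A_V)}(\mc A_U)$, repleteness follows because unitary equivalence of $\mc A_U$-modules restricts to unitary equivalence of $\mc A_V$-modules, and $\Rep^V(\mc A_V)$ is replete. The inclusions into $\Repf$ come from transporting rigidity: $(\FVCWX,\fk W^V)$ is a braided tensor equivalence with the modular (hence rigid) $\RepV$, and Cor.~\ref{lb96} yields the analogous equivalence $\RepU \simeq \Rep_{\Rep^V(\mc A_V)}(\mc A_U)$. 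Every object on the target side therefore carries a dual, witnessed by evaluation and coevaluation morphisms that remain morphisms in the ambient $\Rep(\mc A_V)$ and $\Rep(\mc A_U)$.

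For (1)$\Rightarrow$(2) only the inclusion $\Repf(\mc A_U) \subset \Rep_{\Rep^V(\mc A_V)}(\mc A_U)$ needs verification. Given $\mc M \in \Repf(\mc A_U)$, the extension $\mc A_V \subset \mc A_U = \mc B_\Theta$ has finite Jones index by Thm.~\ref{lb26}, and the standard multiplicativity of statistical dimension under restriction across a finite-index inclusion of local factors gives $\mc M|_{\mc A_V} \in \Repf(\mc A_V)$. Hypothesis (1) then places $\mc M|_{\mc A_V}$ in $\Rep^V(\mc A_V)$, so $\mc M \in \Rep_{\Rep^V(\mc A_V)}(\mc A_U)$.

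The converse (2)$\Rightarrow$(1) under complete rationality is the substantive direction, and I plan to prove it by global-dimension counting. By Thm.~\ref{lb115}, $\mc A_V$ is completely rational iff $\mc A_U$ is, and under either condition $\Repf(\mc A_V)$ and $\Repf(\mc A_U)$ are modular tensor categories of finite global dimension. From hypothesis (2) and Cor.~\ref{lb96} the CWX functor yields a braided $*$-equivalence $\RepU \simeq \Repf(\mc A_U)$, so $\dim\RepU = \dim\Repf(\mc A_U)$. Applying the Kirillov--Ostrik dimension formula twice---once to $P \in \RepV$ via $\RepU \simeq \Rep^0(P)$ from Thm.~\ref{lb67}, and once to $\Theta \in \Repf(\mc A_V)$ via the induced equivalence $\Repf(\mc A_U) \simeq \Rep^0_{\Repf(\mc A_V)}(\Theta)$ under complete rationality---and noting that $(\FVCWX,\fk W^V)$ preserves quantum dimensions so $d(W_a) = d(\mc H_a)$, I obtain
\[
\dim\Rep^V(\mc A_V) \;=\; \dim\RepV \;=\; d(W_a)^2\cdot\dim\RepU \;=\; d(\mc H_a)^2\cdot\dim\Repf(\mc A_U) \;=\; \dim\Repf(\mc A_V).
\]
Since $\Rep^V(\mc A_V) \subset \Repf(\mc A_V)$ is a full rigid braided tensor subcategory closed under submodules and direct sums, its simple objects form a subset of those of $\Repf(\mc A_V)$, and equality of the sums of squared quantum dimensions over simples forces these simple-object sets to coincide, giving $\Rep^V(\mc A_V) = \Repf(\mc A_V)$. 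The main technical obstacle will be the unitary Kirillov--Ostrik dimension formula and the invariance of quantum dimensions under the Wassermann equivalence, both of which are standard (see \cite{Gui22} for the unitary version); the rest of the argument is essentially an assembly of these categorical dimension identities via the equivalences established earlier in the paper.
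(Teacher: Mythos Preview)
Your proof is correct and follows essentially the same strategy as the paper. For $(1)\Rightarrow(2)$ you use the finite-index/statistical-dimension argument directly, which the paper records as an alternative; the paper's primary route instead passes through the identification $\Rep(\mc A_U)\simeq\Rep^0(\Theta)$ (Cor.~\ref{lb41}) and invokes the Kirillov--Ostrik criterion that a dyslectic $\Theta$-module is dualizable iff its underlying $\mc A_V$-module is. For $(2)\Rightarrow(1)$ both you and the paper argue by matching global dimensions via the Kirillov--Ostrik formula (the paper cites this as \cite[Thm.~2.7.2]{Gui20}).
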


We clearly have (1)$\Leftrightarrow$(a) and (by Cor. \ref{lb96}) (2)$\Leftrightarrow$(b). If (1) or (a) is satisfied, we simply say that \textbf{$V$ has surjective CWX functor}. Then (2) and (b) both mean that $U$ has surjective CWX functor. It is also clear that $V$ has surjective CWX functor if and only if $\RepV$ and $\Repf(\mc A_V)$ have the same number of irreducibles. 

\begin{proof}
Let $U=U_P$. Recall that $\mc A_{U_P}=\mc B_\Theta$ by Thm. \ref{lb95}. Clearly $\Rep^V(\mc A_V)$ and $\Rep_{\Rep^V(\mc A_V)}(\mc A_{U_P})$ are full replete subcategories of $\Rep(\mc A_V)$ and $\Rep(\mc A_{U_P})$ respectively. Since $\Rep^V(\mc A_V)$ is isomorphic to $\RepV$ as braided  $C^*$-tensor categories, and since every object in $\RepV$ is dualizable (by \cite{Hua08b}), $\Rep^V(\mc A_V)$ is rigid. So $\Rep^V(\mc A_V)\subset \Repf(\mc A_V)$. 

By Cor. \ref{lb41}, the braided $C^*$-tensor category $\Rep^0(\Theta)\equiv\Rep^0_{\Rep(\mc A_V)}(\Theta)$ is canonically isomorphic to $\Rep(\mc A_{U_P})$. Under this isomorphism, $\Rep_{\Rep^V(\mc A_V)}(\mc A_{U_P})$ and $\Repf(\mc A_{U_P})$ become $\Rep^0_{\Rep^V(\mc A_V)}(\Theta)$ and $\Rep^{0,\mathrm f}(\Theta)$ (the category of dualizable dyslectic $\Theta$-modules) respectively. Therefore, condition (2) is equivalent to
\begin{align}
\Rep^0_{\Rep^V(\mc A_V)}(\Theta)=\Rep^{0,\mathrm f}(\Theta)
\end{align}
Therefore, if (1) holds, then (2) is equivalent to that $\Rep^0_{\Repf(\mc A_V)}(\Theta)=\Rep^{0,\mathrm f}(\Theta)$, i.e., that for each object $(\mc H_i,\mk^i)$ of $\Rep^0(\Theta)$ we have
\begin{gather}\label{eq88}
\text{$\mc H_i$ is dualizable as an $\mc A_V$-module iff $(\mc H_i,\mk^i)$ is dualizable}
\end{gather}
But this is well-known, cf. \cite[Thm. 1.15]{KO02}, \cite[Sec. 6]{NY16}, \cite[Thm. 3.18]{Gui22}. (Alternatively, one can directly show that an $\mc A_{U_P}$-module $(\mc H_i,\pi_i)$ is dualizable iff it is dualizable as an $\mc A_V$-module. This is equivalent to that, for any fixed $I\in\mc J$, the subfactor $\pi_{i,I}(\mc A_{U_P}(I))\subset\pi_{i,I}(\mc A_{U_P}(I'))'$ has finite index iff $\pi_{i,I}(\mc A_V(I))\subset\pi_{i,I}(\mc A_V(I'))'$ has finite index, which is true since $\mc A_{U_P}$ is a finite-index extension of $\mc A_V$. See \cite{Lon89,Lon90,Kos98,BDH14}). Thus, we have proved (1)$\Rightarrow$(2), equivalently, (a)$\Rightarrow$(b).

Assuming that $\mc A_V$ (and hence $\mc A_{U_P}$) is completely rational, the direction (b)$\Rightarrow$(a) follows from the same proof as that of \cite[Thm. 3.39]{Gui26} by showing that the global dimension of $\RepUP$ divided by that of $\RepV$ equals the global dimension of $\Repf(\mc A_{U_P})$ divided that of $\Repf(\mc A_V)$. 
\end{proof}

\begin{rem}
Thm. \ref{lb97} generalizes \cite[Thm. 3.39]{Gui26} in that $U$ is not assumed to satisfy Condition II, and that the complete rationality of $\mc A_V$ or $\mc A_U$ is not assumed in the proof of (1)$\Rightarrow$(2) and (equivalently) (a)$\Rightarrow$(b).
\end{rem}

\begin{co}\label{lb117}
Assume the setting in Subsubsec. \ref{lb93}. Assume that $V$ satisfies Condition II and has surjective CWX functor. Then we have
\begin{align*}
\mc A_{U_P}=\mc B_\Theta
\end{align*}
as conformal nets acting on the Hilbert space $\mc H_a$, and the following diagram commutes
\begin{equation}\label{eq89}
\begin{tikzcd}[row sep=large,column sep=2cm]
\Rep^0(P) \arrow[r,"\tFVCWX","\simeq"'] \arrow[d,"{\fk F_\VOA}"',"\simeq"] & \Rep^0_{\Repf(\mc A_V)}(\Theta) \arrow[d,"{\fk F_\CN}","\simeq"'] \\
\RepUP \arrow[r,"{\fk F^{U_P}_\CWX}","\simeq"']           & \Repf(\mc B_\Theta)          
\end{tikzcd}
\end{equation}
Moreover, if $U_P$ also satisfies Condition II, then \eqref{eq89} can be extended to a commutative diagram of braided $*$-functors
\begin{equation}
\begin{tikzcd}[row sep=large,column sep=3cm]
\big(\Rep^0(P),\boxdot_P,\ss^P\big) \arrow[r,"{(\tFVCWX,\wtd{\fk W}^V)}","\simeq"'] \arrow[d,"{(\fk F_\VOA,\fk V^\boxdot)}"',"\simeq"] & \big(\Rep^0_{\Repf(\mc A_V)}(\Theta),\boxtimes_\Theta,\mathbb B^\Theta \big)\arrow[d,"{(\fk F_\CN,\fk N^\boxtimes)}","\simeq"'] \\
\big(\RepUP,\boxdot_{U_P},\ss^{U_P} \big)\arrow[r,"{(\fk F^{U_P}_\CWX,\fk W^{U_P})}","\simeq"']           & \big(\Repf(\mc B_\Theta),\boxtimes_{\mc B_\Theta},\mathbb B^{\mc B_\Theta} \big)         
\end{tikzcd}
\end{equation}
\end{co}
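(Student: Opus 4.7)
The plan is to derive Cor. \ref{lb117} as a direct consequence of the main comparison Thm. \ref{lb95} combined with Thm. \ref{lb97}. The key observation is that the hypothesis that $V$ has surjective CWX functor is, by definition, the equality $\Rep^V(\mc A_V)=\Repf(\mc A_V)$. Invoking the implication (a)$\Rightarrow$(b) of Thm. \ref{lb97} (whose proof, in that direction, does not require the complete rationality of $\mc A_V$), this equality propagates to $U_P$, yielding $\Rep_{\Rep^V(\mc A_V)}(\mc A_{U_P})=\Repf(\mc A_{U_P})$.

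With these two equalities of categories in hand, the proof is essentially a substitution. First, Thm. \ref{lb95} applied to $V$ and $P$ (using only Condition II for $V$) gives the equality $\mc A_{U_P}=\mc B_\Theta$ of conformal nets on $\mc H_a$, together with the commutative diagram \eqref{eq74}. Replacing $\Rep^V(\mc A_V)$ by $\Repf(\mc A_V)$ in the target of $\tFVCWX$, and $\Rep_{\Rep^V(\mc A_V)}(\mc B_\Theta)=\Rep_{\Rep^V(\mc A_V)}(\mc A_{U_P})$ by $\Repf(\mc A_{U_P})=\Repf(\mc B_\Theta)$ in the target of $\fk F^{U_P}_\CWX$, yields precisely the commutative diagram \eqref{eq89}. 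Note that $\Rep^0_{\Repf(\mc A_V)}(\Theta)=\Rep^0_{\Rep^V(\mc A_V)}(\Theta)$ under this substitution, so the left-hand column of \eqref{eq89} is identical to that of \eqref{eq74}.

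For the enhancement to a commutative diagram of braided $*$-functors under the additional hypothesis that $U_P$ satisfies Condition II, the same substitution is applied to the diagram \eqref{eq81} of Thm. \ref{lb95}. Since the equalities of $*$-categories obtained from Thm. \ref{lb97} are genuine equalities (not merely equivalences), and since $\Repf(\mc A_V)$ and $\Repf(\mc B_\Theta)$ are full replete subcategories of their ambient representation categories closed under Connes fusion (by rigidity), the braided $C^*$-tensor structures $(\boxtimes_{\mc A_V},\mathbb B^{\mc A_V})$ and $(\boxtimes_{\mc B_\Theta},\mathbb B^{\mc B_\Theta})$ restrict to them verbatim. Consequently, the tensorators $\wtd{\fk W}^V$, $\fk V^\boxdot$, $\fk N^\boxtimes$, and $\fk W^{U_P}$ appearing in \eqref{eq81} carry over unchanged, and commutativity of the resulting diagram is inherited directly from \eqref{eq81}.

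The main (and essentially only) obstacle is thus reduced to the already-established Thm. \ref{lb97}; no additional analytic or braided-tensor-categorical input is required. In particular, there is no need to revisit the construction of $\fk W^{U_P}$ or the verification that $(\fk F_\CN,\fk N^\boxtimes)$ is a braided $*$-functor; these are supplied by Thm. \ref{lb95} and propagate automatically under the category-level equalities ensured by surjectivity of the CWX functors.
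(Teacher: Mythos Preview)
Your proposal is correct and follows exactly the paper's approach: the paper's proof reads simply ``This is clear from Thm.~\ref{lb95} and \ref{lb97},'' and your argument spells out precisely how the surjectivity hypothesis (via Thm.~\ref{lb97}) converts $\Rep^V(\mc A_V)$ into $\Repf(\mc A_V)$ and $\Rep_{\Rep^V(\mc A_V)}(\mc B_\Theta)$ into $\Repf(\mc B_\Theta)$, so that Thm.~\ref{lb95} becomes Cor.~\ref{lb117} by direct substitution.
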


\begin{proof}
This is clear from Thm. \ref{lb95} and \ref{lb97}.
\end{proof}

In the remainder of this subsection, we present examples to which Cor. \ref{lb117} applies. To simplify the discussion, we introduce the following definition:
\begin{df}
We say $V$ satisfies \textbf{Condition I+} (resp. \textbf{II+}) if $V$ satisfies Condition I (resp. II), if $\mc A_V$ is completely rational, and if $V$ has surjective CWX functor. 
\end{df}

Note that Condition I+ clearly implies Condition II+.

\begin{thm}\label{lb116}
The following are true.
\begin{enumerate}[label=(\arabic*)]
\item $V,V'$ are unitary VOAs satisfying Condition I+ (resp. II+) if and only if the tensor product unitary VOA $V\otimes V'$ satisfies Condition I+ (resp. II+).
\item Assume that $U$ is a unitary VOA extension of $V$. If $V$ satisfies Condition I+, then $U$ satisfies Condition I+. If $V$ satisfies Condition II+ and $U$ satisfies Condition II, then $U$ satisfies Condition II+.
\item Assume that $U$ is a unitary VOA satisfying Condition I+ (resp. II+). Assume that $V$ is a unitary subalgebra of $U$ such that both $V$ and the coset VOA $V^c$ are $C_2$-cofinite and rational. Then $V^c$ satisfies Condition I+ (resp. II+).
\end{enumerate}
\end{thm}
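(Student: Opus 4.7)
The plan is to deduce each part from Thm.~\ref{lb102} (preservation of Conditions I/II), Thm.~\ref{lb115} (preservation of complete rationality of the CKLW net), and Thm.~\ref{lb97} (preservation of CWX surjectivity), supplemented by standard facts about tensor products of completely rational conformal nets. I will prove (1), (2), (3) in this order, since the proof of (3) will invoke (1).

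For part (1), the bare Condition I/II content is Thm.~\ref{lb102}(1), so only the ``+'' parts require work. I would use the identity $\mc A_{V\otimes V'}=\mc A_V\otimes\mc A_{V'}$ (proved in \cite{CKLW18}), together with the standard fact that complete rationality of a tensor product net is equivalent to complete rationality of both factors. For CWX surjectivity, the key input is that $\fk F^{V\otimes V'}_\CWX(W_i\otimes W_j)=\fk F^V_\CWX(W_i)\otimes \fk F^{V'}_\CWX(W_j)$, combined with the classification of irreducibles: the irreducible unitary $V\otimes V'$-modules (resp.\ the irreducible dualizable $\mc A_V\otimes\mc A_{V'}$-modules) are, up to unitary equivalence, outer tensor products of irreducibles of the respective factors. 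Hence the cardinality of the set of irreducibles factors on both sides, and surjectivity of CWX for $V\otimes V'$ is equivalent to surjectivity for each of $V$ and $V'$.

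For part (2), when $V$ satisfies Condition I+, Cor.~\ref{lb90} gives Condition I for $U$; Thm.~\ref{lb115} gives complete rationality of $\mc A_U$ from that of $\mc A_V$; and Thm.~\ref{lb97}(a)$\Rightarrow$(b) transfers CWX surjectivity from $V$ to $U$. When $V$ satisfies Condition II+ and $U$ is assumed to satisfy Condition II, the last two ingredients alone upgrade Condition II for $U$ to Condition II+. For part (3), Thm.~\ref{lb102}(2) already delivers Condition I/II for $V^c$; the substance lies in the ``+'' part. Here I would bring in the mirror-extension picture $V^{cc}\otimes V^c\subset U$: since $V$ and $V^c$ are mutually commuting unitary subalgebras and $\omega_{V^{cc}}=\omega_V$ gives $\omega_{V^{cc}}+\omega_{V^c}=\omega_U$, the subalgebra of $U$ generated by $V^{cc}$ and $V^c$ is isomorphic to $V^{cc}\otimes V^c$, of which $U$ is a (necessarily finite-index, by $C_2$-cofiniteness and rationality \cite{CMSY24}) unitary VOA extension. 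By Thm.~\ref{lb102}(2) applied to $U\supset V^c$, the VOA $V^{cc}$ satisfies Condition I/II; by part (1), so does $V^{cc}\otimes V^c$. Applying Thm.~\ref{lb115} to the inclusion $V^{cc}\otimes V^c\subset U$ yields complete rationality of $\mc A_{V^{cc}\otimes V^c}=\mc A_{V^{cc}}\otimes\mc A_{V^c}$, whence complete rationality of $\mc A_{V^c}$ by the tensor-product fact used in (1). Finally, Thm.~\ref{lb97}(b)$\Rightarrow$(a), now applicable because complete rationality has been secured, transfers CWX surjectivity from $U$ down to $V^{cc}\otimes V^c$, and another application of part (1) transfers it further from $V^{cc}\otimes V^c$ to $V^c$.

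The main obstacle is checking the tensor-product facts on which (1) rests: the identification $\mc A_{V\otimes V'}=\mc A_V\otimes\mc A_{V'}$ of CKLW nets, the compatibility $\fk F^{V\otimes V'}_\CWX(W_i\otimes W_j)=\mc H_i\otimes\mc H_j$, and especially the statement that the irreducibles of $\Repf(\mc A_V\otimes\mc A_{V'})$ are outer tensor products of irreducibles of $\Repf(\mc A_V)$ and $\Repf(\mc A_{V'})$. Once these multiplicativity statements are in place, parts (2) and (3) follow smoothly from the machinery already developed in the paper.
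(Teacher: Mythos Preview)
Your proposal is correct and follows essentially the same route as the paper: Thm.~\ref{lb102} handles the bare Conditions I/II, the identification $\mc A_{V\otimes V'}\simeq\mc A_V\otimes\mc A_{V'}$ together with multiplicativity of the sets of irreducibles (\cite{FHL93} on the VOA side, \cite{KLM01} on the net side) yields part~(1), and Thm.~\ref{lb115} and Thm.~\ref{lb97} deliver the ``+'' in parts~(2) and~(3).

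One point worth noting: in part~(3) you pass through $V^{cc}\otimes V^c\subset U$, first using Thm.~\ref{lb102}(2) (applied to the subalgebra $V^c$ of $U$) to verify that $V^{cc}$ satisfies Condition I/II, whereas the paper writes the extension as $V\otimes V^c\subset U$. Your formulation is the more careful one: invoking Thm.~\ref{lb115} and Thm.~\ref{lb97}(b)$\Rightarrow$(a) requires the sub-VOA to satisfy Condition~II, and the hypotheses of (3) do not assert this for $V$ directly, only that $V$ is $C_2$-cofinite and rational. Since $V\subset V^{cc}$ is a conformal extension (they share the conformal vector $\omega_U-\omega_{V^c}$), and $V^{cc}$ is the coset $(V^c)^c$ with both $V^c$ and $V^{cc}$ being $C_2$-cofinite and rational, Thm.~\ref{lb102}(2) applies and yields Condition I/II for $V^{cc}$. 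The remainder of the argument is then identical to the paper's.
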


\begin{proof}
We only prove the complete rationality and the surjectivity of CWX functors, since the claims about Conditions I and II follow from Thm. \ref{lb102}. (Note that in (2), if $V$ satisfies I, then by Cor. \ref{lb90}, $U$ satisfies I.)

(1): We can assume that $V,V',V\otimes V'$ all satisfy Condition I (resp. II). By \cite[Cor. 8.2]{CKLW18}, we have $\mc A_{V\otimes V'}\simeq\mc A_V\otimes\mc A_{V'}$. Therefore, by \cite[Lem. 25]{Lon03}, $\mc A_{V\otimes V'}$ is completely rational if and only if both $\mc A_V$ and $\mc A_{V'}$ are completely rational.

Let us assume $\mc A_V,\mc A_{V'},\mc A_{V\otimes V'}$ are completely rational, and study the equivalence of the surjectivities of CWX functors. Since every irreducible representation of a completely rational conformal net has finite index (\cite{LX04}), each of $V,V',V\otimes V$ has surjective CWX functor iff the VOA and its CKLW net have the same number of irreducibles. By \cite[Thm. 4.7.4]{FHL93}, the number of irreducibles of $V\otimes V'$ is the multiplication of the numbers of the irreducibles of $V$ and $V'$. By Cor. 14 and Lem. 27 of \cite{KLM01}, the number of irreducibles of $\mc A_V\otimes\mc A_{V'}$ is the multiplication of the numbers of the irreducibles of $\mc A_V$ and $\mc A_{V'}$. This finishes the proof of (1).

(2): This follows from Thm. \ref{lb115} and \ref{lb97}.

(3): As explained at the beginning of \cite[Sec. 3.6]{Gui26}, $U$ is naturally a unitary VOA extension of $V\otimes V^c$. Therefore, by Thm. \ref{lb115}, $\mc A_{V\otimes V^c}$ is completely rational. By Thm. \ref{lb97}, $V\otimes V^c$ has surjective CWX functor. Therefore, as argued for (1), $\mc A_{V^c}$ is completely rational, and $V^c$ has surjective CWX functor. Therefore, $V^c$ satisfies Condition I+ (resp. II+). 
\end{proof}

\begin{eg}\label{lb118}
The following examples satisfy Condition II+:
\begin{itemize}
\item[(a)] All unitary affine VOAs. All even lattice VOAs. All discrete series $W$-algebras of type $ADE$ (in the sense of \cite{ACL19}).  All parafermion VOAs (in the sense of \cite{DR17}).
\end{itemize}
The following examples satisfy Condition I+:
\begin{itemize}
\item[(b)] All unitary affine VOAs of type $ADE$. All even lattice VOAs. All discrete series $W$-algebras of type $ADE$.  All parafermion VOAs of type $ADE$.
\end{itemize}
\end{eg}

\begin{proof}
See \cite[Sec. 3.7]{Gui26}.
\end{proof}

The combination of Exp. \ref{lb118} and Thm. \ref{lb116} yields a vast collection of examples where Cor. \ref{lb117} are applicable, cf. Exp. \ref{lb111} and \ref{lb112}.

\noindent {\small \sc Yau Mathematical Sciences Center, Tsinghua University, Beijing, China.}

\noindent {\textit{E-mail}}: binguimath@gmail.com\qquad bingui@tsinghua.edu.cn
\end{document}